\newtheorem{theorem}{Theorem}[section]
\newtheorem{corollary}{Corollary}[section]
\newtheorem{lemma}{Lemma}[section]
\newtheorem{proposition}{Proposition}[section]
\theoremstyle{definition}
\newtheorem{definition}{Definition}[section]
\theoremstyle{remark}
\newtheorem{remark}{Remark}[section]
\numberwithin{equation}{section}
\newcommand{\ov}{\overline}
\newcommand{\e}{\varepsilon}
\newcommand{\G}{\Gamma}
\renewcommand{\O}{\Omega}
\renewcommand{\vec}[1]{\mathbf{#1}}
\newcommand{\field}[1]{\mathbb{#1}}
\newcommand{\R}{\field{R}}
\newcommand{\er}{\eqref}
\DeclareMathOperator{\Div}{div} 
\DeclareMathOperator{\supp}{supp}
\renewcommand{\O}{\Omega}
\newcommand{\f}{\varphi}
\renewcommand{\vec}[1]{\boldsymbol{#1}}
\begin{document}
\title{On the $\Gamma$-limit of singular perturbation problems with optimal profiles which are not one-dimensional.
Part I: The upper bound} \maketitle
\begin{center}
\textsc{Arkady Poliakovsky \footnote{E-mail:
poliakov@math.bgu.ac.il}
}\\[3mm]
Department of Mathematics, Ben Gurion University of the Negev,\\
P.O.B. 653, Be'er Sheva 84105, Israel
\\[2mm]
\date{}
\end{center}
%
%
%
%
%
%
\begin{abstract}
In Part I we construct an upper bound, in the spirit of $\Gamma$-
$\limsup$, achieved by multidimensional profiles, for some general
classes of singular perturbation problems, with or without the
prescribed differential constraint, taking the form
\begin{equation*} E_\e(v):=\int_\Omega
\frac{1}{\e}F\Big(\e^n\nabla^n v,...,\e\nabla
v,v\Big)dx\quad\text{for} v:\Omega\subset\R^N\to\R^k \text{ such
that } A\cdot\nabla v=0,
\end{equation*}
where the function $F\geq 0$ and $A:\R^{k\times N}\to\R^m$ is a
prescribed linear operator (for example, $A:\equiv 0$, $A\cdot\nabla
v:=\text{curl}\; v$ and $A\cdot\nabla v=\Div v$) which includes, in
particular, the problems considered in \cite{polgen}. This bound is
in general sharper then one obtained in \cite{polgen}.
\end{abstract}

\section{Introduction}
\begin{definition}
Consider a family $\{I_\varepsilon\}_{\varepsilon>0}$ of functionals
$I_\varepsilon(\phi):U\to[0,+\infty]$, where $U$ is a given metric
space. The $\Gamma$-limits of $I_\varepsilon$ are defined by:
\begin{align*}
(\Gamma-\liminf_{\varepsilon\to 0^+} I_\varepsilon)(\phi)
:=\inf\left\{\liminf_{\varepsilon\to
0^+}I_\varepsilon(\phi_\varepsilon):\;\,\{\phi_\varepsilon\}_{\varepsilon>0}\subset
U,\; \phi_\varepsilon\to\phi\text{ in }U\;
\text{as}\;\varepsilon\to 0^+\right\},\\
(\Gamma-\limsup_{\varepsilon\to 0^+} I_\varepsilon)(\phi)
:=\inf\left\{\limsup_{\varepsilon\to
0^+}I_\varepsilon(\phi_\varepsilon):\;\,\{\phi_\varepsilon\}_{\varepsilon>0}\subset
U,\; \phi_\varepsilon\to\phi\text{ in }U\;
\text{as}\;\varepsilon\to 0^+\right\},\\
(\Gamma-\lim_{\varepsilon\to 0^+}
I_\varepsilon)(\phi):=(\Gamma-\liminf_{\varepsilon\to 0^+}
I_\varepsilon\big)(\phi)=(\Gamma-\limsup_{\varepsilon\to 0^+}
I_\varepsilon)(\phi)\;\;\,\text{in the case they are equal}.
\end{align*}
\end{definition}
It is useful to know the $\Gamma$-limit of $I_\varepsilon$, because
it describes the asymptotic behavior as $\varepsilon\downarrow 0$ of
minimizers of $I_\varepsilon$, as it is clear from the following
simple statement:
\begin{proposition}[De-Giorgi]\label{propdj}
Assume that $\phi_\varepsilon$ is a minimizer of $I_\varepsilon$ for
every $\varepsilon>0$. Then:
\begin{itemize}
\item
If $I_0(\phi)=(\Gamma-\liminf_{\varepsilon\to 0^+}
I_\varepsilon)(\phi)$ and $\phi_\varepsilon\to\phi_0$ as
$\varepsilon\to 0^+$ then $\phi_0$ is a minimizer of $I_0$.

\item
If $I_0(\phi)=(\Gamma-\lim_{\varepsilon\to 0^+}
I_\varepsilon)(\phi)$ (i.e. it is a full $\Gamma$-limit of
$I_\varepsilon(\phi)$) and for some subsequence $\varepsilon_n\to
0^+$ as $n\to\infty$, we have $\phi_{\varepsilon_n}\to\phi_0$, then
$\phi_0$ is a minimizer of $I_0$.
\end{itemize}
\end{proposition}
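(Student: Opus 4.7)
The plan is to combine the minimization property of $\phi_\varepsilon$ with the two $\Gamma$-inequalities. In both parts we want to show that for any competitor $\psi\in U$, $I_0(\phi_0)\leq I_0(\psi)$, so we chain together: $I_0(\phi_0)$ controlled from above by a $\liminf$ along the minimizers (via the $\Gamma$-liminf inequality at $\phi_0$), and this $\liminf$ controlled from above by the energy of a well-chosen competing sequence at $\psi$ (via the minimization of $\phi_\varepsilon$), which in turn is close to $I_0(\psi)$ (via the definition of $\Gamma$-liminf or the existence of a recovery sequence from the $\Gamma$-limsup).

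For the first part, I would fix $\psi\in U$ and $\delta>0$ arbitrarily. Using the definition of the $\Gamma$-liminf at $\psi$, I would select a sequence $\{\psi_\varepsilon\}_{\varepsilon>0}\subset U$ with $\psi_\varepsilon\to\psi$ such that $\liminf_{\varepsilon\to 0^+} I_\varepsilon(\psi_\varepsilon)\leq I_0(\psi)+\delta$. Since $\phi_\varepsilon$ minimizes $I_\varepsilon$, the pointwise inequality $I_\varepsilon(\phi_\varepsilon)\leq I_\varepsilon(\psi_\varepsilon)$ holds for every $\varepsilon>0$, hence $\liminf_{\varepsilon\to 0^+} I_\varepsilon(\phi_\varepsilon)\leq I_0(\psi)+\delta$. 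Finally, since $\phi_\varepsilon\to\phi_0$, the definition of $\Gamma$-liminf at $\phi_0$ yields $I_0(\phi_0)\leq \liminf_{\varepsilon\to 0^+} I_\varepsilon(\phi_\varepsilon)$, and letting $\delta\to 0^+$ concludes.

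For the second part, the extra hypothesis $I_0=\Gamma$-$\lim$ gives access to the $\Gamma$-limsup, which furnishes for each $\psi\in U$ a genuine recovery sequence $\{\psi_\varepsilon\}$ with $\psi_\varepsilon\to\psi$ and $\limsup_{\varepsilon\to 0^+} I_\varepsilon(\psi_\varepsilon)\leq I_0(\psi)$. Along the subsequence $\varepsilon_n$, minimization gives $I_{\varepsilon_n}(\phi_{\varepsilon_n})\leq I_{\varepsilon_n}(\psi_{\varepsilon_n})$, so $\liminf_n I_{\varepsilon_n}(\phi_{\varepsilon_n})\leq \limsup_{\varepsilon\to 0^+} I_\varepsilon(\psi_\varepsilon)\leq I_0(\psi)$. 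The one subtlety to handle is that the $\Gamma$-liminf at $\phi_0$ is a priori defined through full families $\phi_\varepsilon\to\phi_0$ rather than through subsequences $\phi_{\varepsilon_n}\to\phi_0$; this is the main obstacle, albeit a mild one. I would resolve it by noting the well-known equivalent reformulation
\[
(\Gamma\text{-}\liminf_{\varepsilon\to 0^+} I_\varepsilon)(\phi_0)=\inf\left\{\liminf_{n\to\infty} I_{\varepsilon_n}(\chi_n):\;\varepsilon_n\to 0^+,\;\chi_n\to\phi_0\text{ in }U\right\},
\]
obtained by interpolating a subsequence into a full family (for instance, setting $\tilde\phi_\varepsilon=\chi_n$ for $\varepsilon\in(\varepsilon_{n+1},\varepsilon_n]$). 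Applying this with $\chi_n=\phi_{\varepsilon_n}$ gives $I_0(\phi_0)\leq \liminf_n I_{\varepsilon_n}(\phi_{\varepsilon_n})\leq I_0(\psi)$, which finishes the proof since $\psi$ was arbitrary.
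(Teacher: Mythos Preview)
Your proof is correct and is the standard argument for this classical fact. The paper, however, does not actually prove Proposition~\ref{propdj}: it is stated there as a well-known ``simple statement'' attributed to De~Giorgi and left without proof, so there is nothing to compare against. One minor remark on your handling of the subsequence issue in the second part: your interpolation works, but the reason is simply that for any full family $\{\tilde\phi_\varepsilon\}$ and any sequence $\varepsilon_n\to 0^+$ one has $\liminf_{\varepsilon\to 0^+} I_\varepsilon(\tilde\phi_\varepsilon)\le \liminf_{n\to\infty} I_{\varepsilon_n}(\tilde\phi_{\varepsilon_n})$; since your interpolated family satisfies $\tilde\phi_{\varepsilon_n}=\phi_{\varepsilon_n}$, this immediately gives $I_0(\phi_0)\le \liminf_n I_{\varepsilon_n}(\phi_{\varepsilon_n})$.
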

Usually, for finding the $\Gamma$-limit of $I_\varepsilon(\phi)$, we
need to find two bounds.
\begin{itemize}
\item[{\bf(*)}] Firstly, we find a lower bound, i.e. a functional
$\underline{I}(\phi)$ such that for every family
$\{\phi_\varepsilon\}_{\varepsilon>0}$, satisfying
$\phi_\varepsilon\to \phi$ as $\varepsilon\to 0^+$, we have
$\liminf_{\varepsilon\to 0^+}I_\varepsilon(\phi_\varepsilon)\geq
\underline{I}(\phi)$.
\item[{\bf(**)}] Secondly, we find an upper
bound, i.e. a functional $\overline{I}(\phi)$, such that for every
$\phi\in U$ there exists a family
$\{\psi_\varepsilon\}_{\varepsilon>0}$, satisfying
$\psi_\varepsilon\to \phi$ as $\varepsilon\to 0^+$ and
$\limsup_{\varepsilon\to 0^+}I_\varepsilon(\psi_\varepsilon)\leq
\overline{I}(\phi)$.
\item[{\bf(***)}] If we find that
$\underline{I}(\phi)=\overline{I}(\phi):=I(\phi)$, then $I(\phi)$ is
the $\Gamma$-limit of $I_\varepsilon(\phi)$.
\end{itemize}

 In various applications we deal with the asymptotic behavior as $\e\to 0^+$ of a family of
 functionals $\{I_\e\}_{\e>0}$
of the following forms.
\begin{itemize}
\item
In the case of the first order problem the functional $I_\e$, which
acts on functions $\psi:\O\to\R^m$, has the form
\begin{equation}\label{b1..}
I_\e(\psi)=\int_\O
\e\big|\nabla\psi(x)\big|^2+\frac{1}{\e}W\Big(\psi(x),x\Big)dx\,,
\end{equation}
or more generally
\begin{equation}\label{b2..}
I_\e(\psi)=\int_\O\frac{1}{\e}G\Big(\e^n\nabla^n\psi,\ldots,\e\nabla\psi,\psi,x\Big)dx
+\int_\O\frac{1}{\e}W\big(\psi,x\big)dx\,,
\end{equation}
where $G(0,\ldots,0,\psi,x)\equiv 0$.
\item In the case of the second order problem the functional $I_\e$,
which acts on functions $v:\O\to\R^k$, has the form
\begin{equation}\label{b3..}
I_\e(v)=\int_\O \e\big|\nabla^2 v(x)\big|^2+\frac{1}{\e}W\Big(\nabla
v(x),v(x),x\Big)dx\,,
\end{equation}
or more generally
\begin{equation}\label{b4..}
I_\e(v)=\int_\O\frac{1}{\e}G\Big(\e^n\nabla^{n+1}
v,\ldots,\e\nabla^2 v,\nabla
v,v,x\Big)dx+\int_\O\frac{1}{\e}W\big(\nabla v,v,x\big)dx\,,
\end{equation}
where $G(0,\ldots,0,\nabla v,v,x)\equiv 0$.
\end{itemize}

In this paper we deal with the asymptotic behavior as $\e\to 0^+$ of
a family of functionals of the following general form: Let
$\Omega\subset{\mathbb{R}}^N$ be an open set.
For every $\varepsilon>0$ consider the general functional
\begin{multline}\label{fhjvjhvjhv}
I_{\varepsilon}(v)=\big\{I_{\varepsilon}(\Omega)\big\}(v):=
\int_{\Omega}\frac{1}{\varepsilon}G\Big(\varepsilon^n\nabla^n
v,\ldots,\varepsilon\nabla
v,v,x\Big)+\frac{1}{\varepsilon}W\big(v,x\big)dx\quad \text{with }
v:=(\nabla u,h,\psi),\\
\text{ where } u\in W^{(n+1),1}_{loc}(\Omega,{\mathbb{R}}^k),\;\,
h\in W^{n,1}_{loc}(\Omega,{\mathbb{R}}^{d\times N})\text{ s.t.
}\text{div } h\equiv 0,\;\,\psi\in
W^{n,1}_{loc}(\Omega,{\mathbb{R}}^m).
\end{multline}
Here
$$G:\R^{\big(\{k\times N\}+\{d\times N\}+m\big)\times N^n}\times\ldots\times\R^{\big(\{k\times N\}+\{d\times N\}+m\big)\times N}
\times\R^{\{k\times N\}+\{d\times N\}+m}\times\R^N\,\to\,\R$$ and
$W:\R^{\{k\times N\}+\{d\times N\}+m}\times\R^N\,\to\,\R$ are
nonnegative continuous functions and $G$ satisfies
$G(0,\ldots,0,v,x)\equiv 0$. The functionals in \er{b1..},\er{b2..}
and \er{b3..},\er{b4..} are important particular cases of the
general energy $I_\e$ in \er{fhjvjhvjhv}. In the general form
\er{fhjvjhvjhv} we also include the dependence on $\Div$-free
function $h$, which can be useful in the study of problems with
non-local terms as the Rivi\`ere-Serfaty functional and other
functionals in Micromagnetics.

 The functionals of the form \er{b1..} arise in the theories of phase
transitions and minimal surfaces. They were first studied by Modica
and Mortola \cite{mm1}, Modica \cite{modica}, Sternberg
\cite{sternberg} and others. The $\Gamma$-limit of the functional in
\er{b1..}, where $W$ does not depend on $x$ explicitly, was obtained
in the general vectorial case by Ambrosio in \cite{ambrosio}. The
$\Gamma$-limit of the functional of the form \er{b2..}, where $n=1$
and there exist $\alpha,\beta\in\R^m$ such that $W(h,x)=0$ if and
only if $h\in\{\alpha,\beta\}$, under some restriction on the
explicit dependence on $x$ of $G$ and $W$, was obtained by Fonseca
and Popovici in \cite{FonP}. The $\Gamma$-limit of the functional of
the form \er{b2..}, with $n=2$,
$G(\cdot)/\e\equiv\e^3|\nabla^2\psi|^2$ and $W$ which doesn't depend
on $x$ explicitly, was found by I.~Fonseca and C.~Mantegazza in
\cite{FM}.

%
%
%
%
%
%

 The functionals of second order of the form \er{b3..} arise, for
example, in the gradient theory of solid-solid phase transitions,
where one considers energies of the form
\begin{equation}\label{b3..part}
I_\e(v)=\int_\O \e|\nabla^2 v(x)|^2+\frac{1}{\e}W\Big(\nabla
v(x)\Big)dx\,,
\end{equation}
where $v:\O\subset\R^N\to\R^N$ stands for the deformation, and the
free energy density $W(F)$ is nonnegative and satisfies
$$W(F)=0\quad\text{if and only if}\quad F\in K:=SO(N)A\cup SO(N)B\,.$$
Here $A$ and $B$ are two fixed, invertible matrices, such that
$rank(A-B)=1$ and $SO(N)$ is the set of rotations in $\R^N$. The
simpler case where $W(F)=0$ if and only if $F\in\{A,B\}$ was studied
by Conti, Fonseca and Leoni in \cite{contiFL}. The case of problem
\er{b3..part}, where $N=2$ and $W(QF)=W(F)$ for all $Q\in SO(2)$ was
investigated by Conti and Schweizer in \cite{contiS1} (see also
\cite{contiS} for a related problem). Another important example of
the second order energy is the so called Aviles-Giga functional,
defined on scalar valued functions $v$ by
\begin{equation}\label{b5..}
\int_\O\e|\nabla^2 v|^2+\frac{1}{\e}\big(1-|\nabla
v|^2\big)^2\quad\quad\text{(see \cite{adm},\cite{ag1},\cite{ag2})}.
\end{equation}

 The main contribution of this work is to improve our method (see
\cite{pol},\cite{polgen}) for finding upper bounds in the sense of
({\bf**}) for the general functional \er{fhjvjhvjhv} in the case
where the limiting function belongs to $BV$-space.
In order to formulate the main results of this paper we present the
following definitions.

 First of all, in order to simplify the notations in \er{fhjvjhvjhv},
for every open $\mathcal{U}\subset\R^N$ consider
\begin{multline}\label{fhjvjhvjhvholhiohiovhhjhvvjvf}
\mathcal{B}(\mathcal{U}):=\bigg\{v\in
L^1_{loc}\big(\mathcal{U},{\mathbb{R}}^{k\times
N}\times{\mathbb{R}}^{d\times
N}\times{\mathbb{R}}^m\big):\;\;v=(\nabla u,h,\psi),\;\,u\in
W^{1,1}_{loc}(\mathcal{U},{\mathbb{R}}^k),\;\,\\ h\in
L^{1}_{loc}(\mathcal{U},{\mathbb{R}}^{d\times N})\text{ s.t.
}\text{div } h\equiv 0\text{ in the sense of
distributions},\;\,\psi\in
L^{1}_{loc}(\mathcal{U},{\mathbb{R}}^m)\bigg\},
\end{multline}
and
\begin{equation}\label{huighuihuiohhhiuohoh}
F\Big(\nabla^n v,\ldots,\nabla v,v,x\Big)\,:=\,G\Big(\nabla^n
v,\ldots,\nabla v,v,x\Big)\,+\,W(v,x)
\end{equation}
Then
\begin{equation}\label{fhjvjhvjhvnlhiohoioiiy}
I_{\varepsilon}(v)
=\int_{\Omega}\frac{1}{\varepsilon}F\Big(\varepsilon^n\nabla^n
v,\ldots,\varepsilon\nabla v,v,x\Big)dx\quad \text{with }
v\in \mathcal{B}(\O)\cap W^{n,1}_{loc}\big(\O,\R^{k\times
N}\times{\mathbb{R}}^{d\times N}\times{\mathbb{R}}^m\big).
\end{equation}
%
%
%
%
%
%
%
%

What can we expect as the $\Gamma$-limit or at least as an upper
bound of these general energies in the $L^p$-topology?
It is clear that if $G$ and $W$ are nonnegative and $W$
is a continuous on the argument $v$ function, then the upper bound
for $I_\e(\cdot)$ will be finite only if
\begin{equation}\label{hghiohoijojjkhhhjhjjkjgg}
W\big(v(x),x\big)=0\quad\text{for a.e.}\;\;x\in\Omega\,,
\end{equation}
i.e. if we define
\begin{equation}\label{cuyfyugugghvjjhh}
\mathcal{A}_0:=\bigg\{v\in L^p\big(\Omega,{\mathbb{R}}^{k\times
N}\times{\mathbb{R}}^{d\times
N}\times{\mathbb{R}}^m\big)\cap\mathcal{B}(\Omega):\;\,
W\big(v(x),x\big)=0\;\,\text{for a.e.}\,\;x\in\Omega\bigg\}
\end{equation}
and
\begin{equation}\label{cuyfyugugghvjjhhggihug}
\mathcal{A}:=\bigg\{v\in L^p\big(\Omega,{\mathbb{R}}^{k\times
N}\times{\mathbb{R}}^{d\times N}\times{\mathbb{R}}^m\big):\;\,
(\Gamma-\limsup_{\varepsilon\to 0^+}
I_\varepsilon)(v)<+\infty\bigg\},
\end{equation}
then clearly $\mathcal{A}\subset\mathcal{A}_0$. In most interesting
applications the set $\mathcal{A}_0$ consists of discontinuous
functions. The natural space of discontinuous functions is $BV$
space. It turns out that in the general case if $G$ and $W$ are
$C^1$-functions and if we consider
\begin{equation}\label{hkghkgh}
\mathcal{A}_{BV}:=\mathcal{A}_0\cap\mathcal{B}(\mathbb{R}^N)\cap
BV\cap L^\infty,
\end{equation}
then
\begin{equation}\label{nnloilhyoih}
\mathcal{A}_{BV}\subset\mathcal{A}\subset\mathcal{A}_0.
\end{equation}
In many cases we have $\mathcal{A}_{BV}=\mathcal{A}$. For example
this is indeed the case if the energy $I_\varepsilon(v)$ has the
simplest form $I_\varepsilon(v)=\int_\Omega\varepsilon|\nabla
v|^2+\frac{1}{\varepsilon}W(v)\,dx$, and the set of zeros of $W$:
$\{h: W(h)=0\}$ is finite. However, this is in general not the case.
For example, as was shown by Ambrosio, De Lellis and Mantegazza in
\cite{adm}, $\mathcal{A}_{BV}\subsetneq\mathcal{A}$ in the
particular case of the energy defined by \eqref{b5..} with $N=2$. On
the other hand, there are many applications where the set
$\mathcal{A}$ still inherits some good properties of $BV$ space. For
example, it is indeed the case for the energy \eqref{b5..} with
$N=2$, as was shown by Camillo de Lellis and Felix Otto in
\cite{CDFO}.

\begin{definition}
For every $\vec\nu\in S^{N-1}$ define
$Q(\vec\nu):=\big\{y\in{\mathbb{R}}^N:\;
-1/2<y\cdot\vec\nu_j<1/2\quad\forall j\big\}$, where
$\{\vec\nu_1,\ldots,\vec\nu_N\}$ is an orthonormal base in
${\mathbb{R}}^N$ such that $\vec\nu_1=\vec\nu$. Then set
\begin{multline*}
\mathcal{D}_1(v^+,v^-,\vec\nu):=\bigg\{v\in
C^n\big(\mathbb{R}^N,{\mathbb{R}}^{k\times
N}\times{\mathbb{R}}^{d\times N}\times{\mathbb{R}}^m\big)\cap
\mathcal{B}(\mathbb{R}^N):\\ v(y)\equiv \theta(\vec\nu\cdot
y)\;\,\text{and}\;\, v(y)=v^-\;\text{ if }\;y\cdot\vec\nu\leq
-1/2,\;\; v(y)=v^+\;\text{ if }\; y\cdot\vec\nu\geq 1/2\bigg\},
\end{multline*}
where $\mathcal{B}(\cdot)$ is defined in
\eqref{fhjvjhvjhvholhiohiovhhjhvvjvf}, and
\begin{multline*}
\mathcal{D}_{per}(v^+,v^-,\vec\nu):=\bigg\{v\in
C^n\big(\mathbb{R}^N,{\mathbb{R}}^{k\times
N}\times{\mathbb{R}}^{d\times N}\times{\mathbb{R}}^m\big)\cap \mathcal{B}(\mathbb{R}^N):\\
v(y)=v^-\;\text{ if }\;y\cdot\vec\nu\leq -1/2,\;\; v(y)=v^+\;\text{
if }\; y\cdot\vec\nu\geq 1/2,\;\, v(y+\vec\nu_j)=v(y)\;\;\forall
j=2,\ldots, N\bigg\}.
\end{multline*}
Next define
\begin{align}
\label{Energia1} E_{1}(v^+,v^-,\vec\nu,x):=\inf\bigg\{
\int\limits_{Q(\vec\nu_v)}\frac{1}{L}F\Big(L^n\nabla^n\zeta,\ldots,L\nabla
\zeta,\zeta,x\Big)dy:\;\, L>0,\, \zeta(y)\in
\mathcal{D}_1(v^+,v^-,\vec\nu)\bigg\}\,,\\
\label{Energia2} E_{per}(v^+,v^-,\vec\nu,x):=\inf\bigg\{
\int\limits_{Q(\vec\nu_v)}\frac{1}{L}F\Big(L^n\nabla^n\zeta,\ldots,L\nabla
\zeta,\zeta,x\Big)dy:\;\, L>0,\, \zeta(y)\in
\mathcal{D}_{per}(v^+,v^-,\vec\nu)\bigg\}\,.\\
\label{Energia3} E_{abst}(v^+,v^-,\vec\nu,x):=\Big(\Gamma-\liminf_{
\varepsilon \to 0^+}
I_\varepsilon\big(Q(\vec\nu)\big)\Big)\Big(\eta(v^+,v^-,\vec\nu)\Big),
\end{align}
where
\begin{equation}\eta(v^+,v^-,\vec\nu)(y):=
\begin{cases}
v^-\quad\text{if }\vec\nu\cdot y<0,\\
v^+\quad\text{if }\vec\nu\cdot y>0,
\end{cases}
\end{equation}
and we mean the $\Gamma-\liminf$ in $L^p$ topology for some $p\geq
1$.
\end{definition}
It is not difficult to deduce that
\begin{equation}\label{dghfihtihotj}
E_{abst}(v^+,v^-,\vec\nu,x)\leq E_{per}(v^+,v^-,\vec\nu,x)\leq
E_{1}(v^+,v^-,\vec\nu,x).
\end{equation}
Next define the functionals
$K_1(\cdot),K_{per}(\cdot),K^{*}(\cdot):\mathcal{B}(\O)\cap BV\cap
L^\infty\,\to\,\R$ by
\begin{equation}\label{hfighfighfih}
K_{1}(v):=
\begin{cases}
\int_{\O\cap
J_v}E_{1}\Big(v^+(x),v^-(x),\vec\nu_v(x),x\Big)\,d\mathcal{H}^{N-1}(x)\quad\text{if
}v\in \mathcal{A}_0,\\+\infty\quad\text{otherwise},
\end{cases}
\end{equation}
\begin{equation}\label{hfighfighfihgigiugi}
K_{per}(v):=
\begin{cases}
\int_{\O\cap
J_v}E_{per}\Big(v^+(x),v^-(x),\vec\nu_v(x),x\Big)\,d\mathcal{H}^{N-1}(x)\quad\text{if
}v\in \mathcal{A}_0,\\+\infty\quad\text{otherwise},
\end{cases}
\end{equation}
\begin{equation}\label{hfighfighfihhioh}
K^{*}(v):=
\begin{cases}
\int_{\Omega\cap
J_v}E_{abst}\Big(v^+(x),v^-(x),\vec\nu_v(x),x\Big)\,d\mathcal{H}^{N-1}(x)\quad\text{if
}v\in \mathcal{A}_0,\\+\infty\quad\text{otherwise},
\end{cases}
\end{equation}
where $J_v$ is the jump set of $v$, $\vec\nu_v$ is the jump vector
and $v^-,v^+$ are jumps of $v$.
Then, by \er{dghfihtihotj} trivially follows
\begin{equation}\label{nvhfighfrhyrtehu}
K^*\big(v\big)\leq K_{per}\big(v\big)\leq K_1\big(v\big)\,.
\end{equation}
We call $K_1(\cdot)$, $K_{per}(\cdot)$ and $K^*(\cdot)$ by the
bound, achieved by one dimensional profiles, multidimensional
periodic profiles and abstract profiles respectively.

 Our general conjecture is that $K^*(\cdot)$ coincides with the $\Gamma$-limit for the
functionals $I_\e(\cdot)$ in \er{fhjvjhvjhvnlhiohoioiiy}, under
$L^{p}$ convergence, in the case where the limiting functions $v\in
BV\cap L^\infty$.
It is known that in the case of the problem \er{b1..}, where $W\in
C^1$ doesn't depend on $x$ explicitly, this is indeed the case and
moreover, in this case we have equalities in \er{nvhfighfrhyrtehu}
(see \cite{ambrosio}). The equalities in \er{nvhfighfrhyrtehu} also
hold for the functional of the form \er{b2..}, with $n=2$,
$G(\cdot)/\e\equiv\e^3|\nabla^2\psi|^2$ and $W$ which doesn't depend
on $x$ explicitly. Moreover, as before, in this case the functional
in \er{nvhfighfrhyrtehu} is the $\Gamma$-limit (see \cite{FM}). The
same result is also known for problem \er{b5..} when $N=2$ (see
\cite{adm} and \cite{CdL},\cite{pol}). It is also the case for
problem \er{b3..part} where $W(F)=0$ if and only if $F\in\{A,B\}$,
studied by Conti, Fonseca and Leoni, if $W$ satisfies the additional
hypothesis ($H_3$) in \cite{contiFL}. However, as was shown there by
an example, if we don't assume ($H_3$)-hypothesis, then it is
possible that $E_{per}\big(\nabla v^+,\nabla v^-,\vec\nu\big)$ is
strictly smaller than $E_{1}\big(\nabla v^+,\nabla v^-,\vec\nu\big)$
and thus, in general, $K_1(\cdot)$ can differ from the
$\Gamma$-limit. In the same work it was shown that if, instead of
($H_3$) we assume hypothesis ($H_5$), then $K_{per}(\cdot)$ turns to
be equal to $K^*(\cdot)$ and the $\Gamma$-limit of \er{b3..part}
equals to $K_{per}(\cdot)\equiv K^*(\cdot)$. The similar result
known also for problem \er{b2..}, where $n=1$ and there exist
$\alpha,\beta\in\R^m$ such that $W(h,x)=0$ if and only if
$h\in\{\alpha,\beta\}$, under some restriction on the explicit
dependence on $x$ of $G$ and $W$. As was obtained by Fonseca and
Popovici in \cite{FonP} in this case we also obtain that
$K_{per}(\cdot)\equiv K^*(\cdot)$ is the $\Gamma$-limit of
\er{b2..}. In the case of problem \er{b3..part}, where $N=2$ and
$W(QF)=W(F)$ for all $Q\in SO(2)$, Conti and Schweizer in
\cite{contiS1} found that the $\Gamma$-limit equals to $K^*(\cdot)$
(see also \cite{contiS} for a related problem). However, by our
knowledge, it is not known, whether in general $K^*(\cdot)\equiv
K_{per}(\cdot)$.

 On \cite{polgen} we showed that for the general problems \er{b2..}
and \er{b4..}, $K_1(\cdot)$ is the upper bound in the sense of
{\bf(**)}, if the limiting function belongs to $BV$-class. However,
as we saw, this bound is not sharp in general. The main result of
this paper is that for the general problem
\er{fhjvjhvjhvnlhiohoioiiy}, $K_{per}(\cdot)$ is always an upper
bound in the sense of {\bf(**)} in the case where the limiting
functions $v$ belong to $BV$-space and $G,W\in C^1$. More precisely,
we have the following Theorem:
\begin{theorem}\label{ffgvfgfhthjghgjhg}
Let $\O\subset\R^N$ be an open set and
$$F:\R^{\big(\{k\times N\}+\{d\times N\}+m\big)\times N^n}\times\ldots\times\R^{\big(\{k\times N\}+\{d\times N\}+m\big)\times N}
\times\R^{\{k\times N\}+\{d\times N\}+m}\times\R^N\,\to\,\R$$ be a
nonnegative $C^1$ function. Furthermore assume that $v:=(\nabla
u,h,\psi)\in\mathcal{B}(\R^N)\cap BV\big(\R^N,\R^{k\times
N}\times\R^{d\times N}\times\R^m\big)\cap
L^\infty\big(\R^N,\R^{k\times N}\times\R^{d\times N}\times\R^m\big)$
satisfies $\Div h\equiv 0$, $|Dv|(\partial\Omega)=0$ and
$$F\Big(0,\ldots,0,v(x),x\Big)=0\quad\text{for a.e.}\;\;x\in\O.$$
Then there exists a sequence $v_\e=\big(\nabla
u_\e,h_\e,\psi_\e\big)\in \mathcal{B}(\R^N)\cap
C^\infty\big(\R^N,\R^{k\times N}\times\R^{d\times N}\times\R^m\big)$
such that $\Div h_\e\equiv 0$, for every $p\geq 1$ we have
$v_\varepsilon\to v$ in $L^p$ and
$$\lim_{\varepsilon\to 0^+}\int_{\Omega}\frac{1}{\varepsilon}F\Big(\varepsilon^n\nabla^n
v_\e(x),\ldots,\varepsilon\nabla v_\e(x)\,,\,v(x)\,,\,x\Big)dx=
K_{per}(v).$$ Here $\mathcal{B}(\R^N)$ was defined by
\er{fhjvjhvjhvholhiohiovhhjhvvjvf} and $K_{per}(\cdot)$ was defined
by \er{hfighfighfihgigiugi}.
\end{theorem}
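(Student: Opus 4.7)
The strategy is to construct the recovery sequence by patching together rescaled quasi-optimal periodic profiles in thin tubes around the jump set $J_v$, glued to a smoothed version of $v$ in the bulk. By a diagonalization argument, I would first reduce to the case where $J_v$ is a finite union of pairwise disjoint pieces of affine hyperplanes $\Sigma_i \subset \O$ with constant one-sided traces $v_i^\pm$ and constant normal $\vec\nu_i$, and $v$ is smooth away from these pieces. This polyhedral reduction relies on the $(N-1)$-rectifiability of $J_v$ together with continuity properties of the surface energy $x\mapsto E_{per}(v^+(x),v^-(x),\vec\nu_v(x),x)$ and a density of piecewise-constant approximants respecting the admissibility class $\mathcal{B}$ and the zero-set condition $W(v,\cdot)=0$.

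For the local construction near a flat piece $\Sigma_i$ with centre $x_i$, fix $\eta>0$ and pick $L_i>0$ and $\zeta_i \in \mathcal{D}_{per}(v_i^+,v_i^-,\vec\nu_i)$ satisfying
\begin{equation*}
\int_{Q(\vec\nu_i)}\frac{1}{L_i}F\Big(L_i^n\nabla^n\zeta_i,\ldots,L_i\nabla\zeta_i,\zeta_i,x_i\Big)\,dy \;\leq\; E_{per}(v_i^+,v_i^-,\vec\nu_i,x_i)+\eta.
\end{equation*}
In a tube $T_i^{\varepsilon}$ of normal thickness of order $\varepsilon$ around $\Sigma_i$, I would place the rescaled profile $v_\varepsilon(x) := \zeta_i\big((x-x_i)/(\varepsilon L_i)\big)$, periodically tiled in the $N-1$ tangential directions using the periodicity built into $\mathcal{D}_{per}$. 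The change of variables $y = (x-x_i)/(\varepsilon L_i)$ converts each tile's contribution into exactly the cell integral above, and summing over the $O(\varepsilon^{-(N-1)})$ tiles covering $\Sigma_i$ yields, up to $o(1)$, the quantity $\mathcal{H}^{N-1}(\Sigma_i)\,[E_{per}(v_i^+,v_i^-,\vec\nu_i,x_i)+\eta]$, which is the correct contribution to $K_{per}(v)$.

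The main obstacle is turning these local profiles into a globally admissible $v_\varepsilon \in \mathcal{B}(\R^N)\cap C^\infty$: the first component must remain a true gradient of some $u_\varepsilon$, and the second must satisfy $\Div h_\varepsilon \equiv 0$ throughout $\R^N$, not only inside each tube. Inside every tube both constraints hold because $\zeta_i \in \mathcal{D}_{per} \subset \mathcal{B}(\R^N)$, and the tangential periodicity makes the tiling smooth across tile boundaries. On the faces of the tube that are parallel to $\Sigma_i$ the profile has already reached the constant value $v_i^\pm$, matching the bulk trace there. A cutoff layer of width $o(\varepsilon)$ is then used to interpolate between the tube construction and the smoothed bulk. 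Preserving the gradient structure is handled by reconstructing $u_\varepsilon$ as a potential through line integration in the normal direction once tangential periodicity and matching traces are arranged. The constraint $\Div h_\varepsilon = 0$ is restored by subtracting a small divergence-correction built from an auxiliary $(d\times N)$-valued potential supported in the interpolation layer; its amplitude is $O(\varepsilon)$ and its derivatives contribute only $o(1)$ to the energy, thanks to $F\in C^1$ and the identity $F(0,\ldots,0,v(x),x)=0$ a.e. in $\O$.

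Finally, letting $\eta\to 0$ jointly with the polyhedral mesh size, a diagonal extraction produces a sequence $v_\varepsilon$ such that $v_\varepsilon \to v$ in every $L^p$ (using $v\in L^\infty$ and the fact that the construction equals $v$ off a set of measure $O(\varepsilon)$) and $\limsup_{\varepsilon\to 0^+} I_\varepsilon(v_\varepsilon) \leq K_{per}(v)$. The hardest step, as indicated above, is the simultaneous preservation of the gradient and divergence-free constraints at the interfaces between tubes and the bulk, and especially near the boundary of the flat pieces $\Sigma_i$ where different tubes or their end-caps meet. This requires a careful end-cap construction whose energetic cost vanishes with $\varepsilon$ and whose ability to simultaneously correct both the potential and the divergence hinges on the $C^1$-regularity of $F$ together with $|Dv|(\partial\Omega)=0$.
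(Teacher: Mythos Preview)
Your approach is genuinely different from the paper's, and the polyhedral reduction you invoke at the outset is the step that does not go through in this generality. You claim one may reduce to $v$ with a flat polyhedral jump set and constant one-sided traces, ``respecting the admissibility class $\mathcal{B}$ and the zero-set condition $W(v,\cdot)=0$''. But $v=(\nabla u,h,\psi)$ carries two differential constraints (curl-free first block, divergence-free second block), and the target class $\{F(0,\ldots,0,\cdot,x)=0\}$ is an arbitrary closed set that varies with $x$. Density of piecewise-constant configurations inside this class is \emph{not} available in general: it is exactly the kind of structural assumption that holds in the two-well situations of Conti--Fonseca--Leoni or Fonseca--Popovici but fails for a generic $C^1$ nonnegative $F$. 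Even granting such approximants $v_j$, you would still need $K_{per}(v_j)\to K_{per}(v)$, i.e.\ continuity of the surface energy along the approximation, which is essentially the result you are trying to prove and is not known a priori. Your later difficulties with end-caps and simultaneous constraint correction are real, but they are downstream of this primary gap.

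The paper avoids both problems by never approximating $v$ itself. It starts from the mollified sequence $\psi_\e=\eta_\e*v$, which automatically lies in $\mathcal{B}(\R^N)\cap C^\infty$ and whose energy already concentrates on $J_v$ with the one-dimensional density (Theorem~\ref{vtporbound4}). It then \emph{adds} a compactly supported correction of the form $\e\, h\big((x_1-g_\e(x'))/\e,\,x'/\e,\,x'\big)$ in a thin layer around finitely many $C^1$ graph pieces covering most of $J_v$ (Proposition~\ref{mnogohypsi88}, Lemma~\ref{mnogohypsi88kkkll88}). The pointwise computation of the energy difference shows this improves the surface density from the one-dimensional profile to the periodic one, and optimizing over $h$ and $L$ via Lemmas~\ref{lijkkhnjjhjh}--\ref{resatnulo} and Proposition~\ref{L2009cffgfProp8hhhjjj8887788} yields $K_{per}$ up to $\delta$ (Lemma~\ref{mnogohypsi88kkkll88lpppopkook}, Theorem~\ref{vtporbound4ghkfgkhkgen}). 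The specific constraints $\mathrm{curl}=0$ and $\mathrm{div}=0$ are handled in Theorem~\ref{vtporbound4ghkfgkhkhhhjj} by choosing a direction $\vec e_1$ transversal to $\vec\nu(x)$ for $\mathcal{H}^{N-1}$-a.e.\ $x\in J_v$ and writing $h$ and $\psi$ via primitives along $x_1$; this recasts everything into the abstract $\vec A\cdot\nabla$ framework where the correction term is manifestly admissible.
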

For the equivalent formulation and additional details see Theorem
\ref{vtporbound4ghkfgkhkhhhjjklkjjhliuik}. See also Theorem
\ref{vtporbound4ghkfgkhkgen} as the analogous result for more
general functionals than that defined by \er{fhjvjhvjhv}.

\begin{remark}
If the boundary
of an open set $\O\subset\R^N$ is $\mathcal{H}^{N-1}$
$\sigma$-finite, then the condition in Theorem
\ref{ffgvfgfhthjghgjhg} that $|Dv|(\partial\Omega)=0$ for $v\in
BV(\R^N)$ is equivalent to saying that
$\mathcal{H}^{N-1}(\partial\O\cap J_v)=0$.
\end{remark}

 Next, as we showed in \cite{PII}, for the general problem \er{fhjvjhvjhv},
when $G,W$ don't depend on $x$ explicitly, $K^*(\cdot)$ is a lower
bound in the sense of {\bf(*)}. More precisely, we have the
following Theorem:
\begin{theorem}\label{dehgfrygfrgygenjklhhjkghhjggjfjkh}
Let $\O\subset\R^N$ be an open set and
$$F:\R^{\big(\{k\times N\}+\{d\times N\}+m\big)\times N^n}\times\ldots\times\R^{\big(\{k\times N\}+\{d\times N\}+m\big)\times N}
\times\R^{\{k\times N\}+\{d\times N\}+m}\,\to\,\R$$ be a nonnegative
continuous function. Furthermore assume that $v:=(\nabla
u,h,\psi)\in\mathcal{B}(\O)\cap BV\big(\O,\R^{k\times
N}\times\R^{d\times N}\times\R^m\big)\cap
L^\infty\big(\O,\R^{k\times N}\times\R^{d\times N}\times\R^m\big)$
satisfies
$$F\Big(0,\ldots,0,v(x)\Big)=0\quad\text{for a.e.}\;\;x\in\O.$$
Then for every $\{v_\varepsilon\}_{\varepsilon>0}\subset
\mathcal{B}(\Omega)\cap W^{n,1}_{loc}\big(\O,\R^{k\times
N}\times\R^{d\times N}\times\R^m\big)$, such that $v_\varepsilon\to
v$ in $L^p$ as $\e\to 0^+$, we have $$\liminf_{\varepsilon\to
0^+}\int_{\Omega}\frac{1}{\varepsilon}F\Big(\varepsilon^n\nabla^n
v_\e(x),\ldots,\varepsilon\nabla v_\e(x)\,,\,v(x)\Big)dx\geq
K^{*}(v).$$
Here $K^{*}(\cdot)$ is defined by \er{hfighfighfihhioh}
with respect to $L^p$ topology.
\end{theorem}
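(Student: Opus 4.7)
The plan is to carry out a standard blow-up analysis in the spirit of Fonseca--M\"uller. Without loss of generality I may assume the $\liminf$ on the left-hand side is finite, and, up to a subsequence, equal to the limit. Associated to the integrands I would set up the nonnegative Radon measures
\begin{equation*}
\mu_\e:=\frac{1}{\e}\,F\Big(\e^n\nabla^n v_\e,\ldots,\e\nabla v_\e,\,v\Big)\,\mathcal{L}^N\res\O\qquad\text{on }\O.
\end{equation*}
These have uniformly bounded total mass, so after a further subsequence I may assume $\mu_\e\rightharpoonup\mu$ weakly-$*$ for some nonnegative Radon measure $\mu$ on $\O$. Since $\mu(\O)\le\liminf_\e\mu_\e(\O)$ and $\mu\ge 0$, the theorem reduces to the pointwise density estimate
\begin{equation*}
\frac{d\mu}{d(\mathcal{H}^{N-1}\res J_v)}(x_0)\;\ge\;E_{abst}\big(v^+(x_0),v^-(x_0),\vec\nu_v(x_0),x_0\big)\qquad\text{for }\mathcal{H}^{N-1}\text{-a.e. }x_0\in\O\cap J_v.
\end{equation*}
By the Besicovitch differentiation theorem, the density on the left equals $\lim_{\rho\to 0^+}\mu(Q_\rho^{\vec\nu}(x_0))/\rho^{N-1}$ at $\mathcal{H}^{N-1}$-a.e. $x_0\in J_v$, where $Q_\rho^{\vec\nu}(x_0)$ denotes a cube of side $\rho$ centred at $x_0$ with a face orthogonal to $\vec\nu:=\vec\nu_v(x_0)$.

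I would then perform a blow-up at a typical jump point $x_0$. Abbreviate $v^\pm:=v^\pm(x_0)$ and $\eta:=\eta(v^+,v^-,\vec\nu)$, and pick $\rho_k\downarrow 0$ with $\mu(\partial Q_{\rho_k}^{\vec\nu}(x_0))=0$ (only countably many $\rho$ fail this) realising the above density. The standard $BV$ jump-point property gives $v(x_0+\rho_k\,\cdot)\to\eta$ in $L^p(Q(\vec\nu))$. A Cantor diagonal argument based on the global $L^p$-convergence $v_\e\to v$ then produces $\e_k\downarrow 0$ with $\delta_k:=\e_k/\rho_k\to 0^+$ such that $\tilde v_k(y):=v_{\e_k}(x_0+\rho_k y)\to\eta$ in $L^p(Q(\vec\nu))$ and $\mu_{\e_k}(Q_{\rho_k}^{\vec\nu}(x_0))=\mu(Q_{\rho_k}^{\vec\nu}(x_0))+o(\rho_k^{N-1})$. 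The change of variables $y=(x-x_0)/\rho_k$ transforms
\begin{equation*}
\frac{\mu_{\e_k}(Q_{\rho_k}^{\vec\nu}(x_0))}{\rho_k^{N-1}}=\int_{Q(\vec\nu)}\frac{1}{\delta_k}F\Big(\delta_k^n\nabla^n\tilde v_k,\ldots,\delta_k\nabla\tilde v_k,\,v(x_0+\rho_k y)\Big)\,dy,
\end{equation*}
and, modulo the error of replacing $v(x_0+\rho_k y)$ by $\tilde v_k(y)$ in the last slot of $F$, the right-hand side coincides with $\{I_{\delta_k}(Q(\vec\nu))\}(\tilde v_k)$. Invoking the $\Gamma$-$\liminf$ definition of $E_{abst}$ applied to the sequence $(\delta_k,\tilde v_k)$ then yields the required pointwise lower bound, and integrating against $\mathcal{H}^{N-1}\res J_v$ completes the argument.

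The hardest part will be the diagonal extraction together with controlling the value-slot mismatch in $F$. One needs to pick $(\e_k,\rho_k)$ so that simultaneously $\delta_k\to 0^+$, $\tilde v_k\to\eta$ strongly in $L^p(Q(\vec\nu))$, the cube boundaries $\partial Q_{\rho_k}^{\vec\nu}(x_0)$ are $\mu$-null, and the error produced by replacing $v(x_0+\rho_k y)$ by $\tilde v_k(y)$ in the last slot of $F$ is negligible. This last point, required to bring the rescaled integral exactly into the form appearing in the definition of $E_{abst}$, is handled by using the $L^\infty$-boundedness of $v$ (together with an $L^\infty$ truncation of $v_\e$) combined with the continuity of $F$ on bounded sets and a standard approximation step.
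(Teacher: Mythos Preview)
The paper does not actually prove this theorem here; it is stated and its proof is deferred to the companion paper \cite{PII} (Part II). So a line-by-line comparison is not possible, but your blow-up scheme is exactly the standard Fonseca--M\"uller strategy one expects for such a lower bound, and it is almost certainly the route taken in Part II as well: localise via a Radon measure, differentiate against $\mathcal H^{N-1}\res J_v$, rescale around a generic jump point, and recognise the $\Gamma$-$\liminf$ cell problem. The preservation of the constraint $v_\e\in\mathcal B$ under the affine blow-up (gradients stay gradients, divergence-free fields stay divergence-free) is the one structural point worth stating explicitly, and you implicitly use it when you feed $\tilde v_k$ into $I_{\delta_k}(Q(\vec\nu))$.

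Your only real weak spot is the ``value-slot mismatch''. The displayed integral in the statement has $v(x)$, not $v_\e(x)$, in the last slot of $F$; comparing with \er{fhjvjhvjhvnlhiohoioiiy} and with the commented-out detailed formulation of the same theorem in the source (where the last group is $\{\nabla v_\e,h_\e,\psi_\e\}$), this is almost certainly a typo for $v_\e(x)$. With $v_\e$ there, the rescaled integral is \emph{exactly} $\{I_{\delta_k}(Q(\vec\nu))\}(\tilde v_k)$, and no replacement step is needed at all. If, on the other hand, you insist on reading the statement literally with the fixed $v$ in the last slot, your proposed fix (``$L^\infty$ truncation of $v_\e$ plus continuity of $F$ on bounded sets'') does not close the gap: the rescaled higher derivatives $\delta_k^j\nabla^j\tilde v_k$ carry no a priori $L^\infty$ bound, and the prefactor $1/\delta_k$ is blowing up, so uniform continuity of $F$ on bounded sets gives you nothing. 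In short, treat the displayed $v(x)$ as $v_\e(x)$; then your argument is clean and no workaround is required.
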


As we saw there is a natural question: whether in general
$K^*(\cdot)\equiv K_{per}(\cdot)\,$? The answer yes will mean that,
in the case when $G,W$ are $C^1$ functions which don't depend on $x$
explicitly, the upper bound in Theorem \ref{ffgvfgfhthjghgjhg} will
coincide with the lower bound of Theorem
\ref{dehgfrygfrgygenjklhhjkghhjggjfjkh} and therefore we will find
the full $\Gamma$-limit in the case of $BV$ limiting functions. The
equivalent question is whether
\begin{equation*}
E_{abst}(v^+,v^-,\vec\nu,x)= E_{per}(v^+,v^-,\vec\nu,x),
\end{equation*}
where $E_{per}(\cdot)$ is defined in \er{Energia2} and
$E_{abst}(\cdot)$ is defined by \er{Energia3}. In \cite{PII} we
formulate and prove some partial results that refer to this
important question. In particular we prove that this is indeed the
case for the general problem \er{b2..} i.e. when we have no
prescribed differential constraint. More precisely, we have the
following Theorem:
\begin{theorem}\label{dehgfrygfrgygenbgggggggggggggkgkgthtjtfnewbhjhjkgj}
Let $G\in C^1\big(\R^{m\times N^n}\times\R^{m\times
N^{(n-1)}}\times\ldots\times\R^{m\times N}\times \R^m,\R\big)$ and
$W\in C^1(\R^m,\R)$ be nonnegative functions such that
$G\big(0,0,\ldots,0,b)= 0$ for every $b\in\R^m$ and there exist
$C>0$ and $p\geq 1$ satisfying
\begin{multline}\label{hgdfvdhvdhfvjjjjiiiuyyyjitghujtrnewkhjklhkl}
\frac{1}{C}|A|^p
\leq F\Big(A,a_1,\ldots,a_{n-1},b\Big) \leq
C\bigg(|A|^p+\sum_{j=1}^{n-1}|a_j|^{p}+|b|^p+1\bigg)\quad \text{for
every}\;\;\big(A,a_1,a_2,\ldots,a_{n-1},b\big),
\end{multline}
where we denote
$$F\Big(A,a_1,\ldots,a_{n-1},b\Big):=G\Big(A,a_1,\ldots,a_{n-1},b\Big)+W(b)$$
Next let $\psi\in BV(\R^N,\R^{m})\cap L^\infty$ be such that $\|D
\psi\|(\partial\Omega)=0$ and $W\big(\psi(x)\big)=0$ for a.e.
$x\in\O$.
Then $K^*(\psi)=K_{per}(\psi)$ and for every
$\{\varphi_\e\}_{\e>0}\subset W^{n,p}_{loc}(\O,\R^m)$ such that
$\varphi_\e\to \psi$ in $L^p_{loc}(\O,\R^m)$ as $\e\to 0^+$, we have
\begin{multline}\label{a1a2a3a4a5a6a7s8hhjhjjhjjjjjjkkkkgenhjhhhhjtjurtnewjgvjhgv}
\liminf_{\e\to 0^+}I_\e(\varphi_\e):=\liminf_{\e\to
0^+}\frac{1}{\e}\int_\O F\bigg(\,\e^n\nabla^n
\varphi_\e(x),\,\e^{n-1}\nabla^{n-1}\varphi_\e(x),\,\ldots,\,\e\nabla \varphi_\e(x),\, \varphi_\e(x)\bigg)dx\\
\geq
\int_{\O\cap J_\psi}\bar
E_{per}\Big(\psi^+(x),\psi^-(x),\vec \nu(x)\Big)d \mathcal
H^{N-1}(x)\,,
\end{multline}
where
\begin{multline}\label{L2009hhffff12kkkhjhjghghgvgvggcjhggghtgjutnewjgkjgjk}
\bar E_{per}\Big(\psi^+,\psi^-,\vec \nu\Big)\;:=\;\\
\inf\Bigg\{\int_{Q_{\vec \nu}}\frac{1}{L} F\bigg(L^n\,\nabla^n
\zeta,\,L^{n-1}\,\nabla^{n-1} \zeta,\,\ldots,\,L\,\nabla
\zeta,\,\zeta\bigg)\,dx:\;\; L\in(0,+\infty)\,,\;\zeta\in
\mathcal{\tilde D}_{per}(\psi^+,\psi^-,\vec \nu)\Bigg\}\,,
\end{multline}
with
\begin{multline}\label{L2009Ddef2hhhjjjj77788hhhkkkkllkjjjjkkkhhhhffggdddkkkgjhikhhhjjddddkkjkjlkjlkintuukgkggk}
\mathcal{\tilde D}_{per}(\psi^+,\psi^-,\vec \nu):=
\bigg\{\zeta\in C^n(\R^N,\R^m):\;\;\zeta(y)=\psi^-\;\text{ if }\;y\cdot\vec\nu\leq-1/2,\\
\zeta(y)=\psi^+\;\text{ if }\; y\cdot\vec\nu(x)\geq 1/2\;\text{ and
}\;\zeta\big(y+\vec k_j\big)=\zeta(y)\;\;\forall j=2,3,\ldots,
N\bigg\}\,.
\end{multline}
Here $Q_{\vec \nu}:=\{y\in\R^N:\;|y\cdot \vec
\nu_j|<1/2\;\;\;\forall j=1,2\ldots N\}$ where $\{\vec \nu_1,\vec
\nu_2,\ldots,\vec \nu_N\}\subset\R^N$ is an orthonormal base in
$\R^N$ such that $\vec \nu_1:=\vec \nu$. Moreover, there exists e
sequence $\{\psi_\e\}_{\e>0}\subset C^\infty(\R^N,\R^m)$ such that
$\int_\O\psi_\e(x)dx=\int_\O \psi(x)dx$, for every $q\geq 1$ we have
$\psi_\e\to \psi$ in $L^q(\O,\R^m)$ as $\e\to 0^+$, and we have
\begin{multline}\label{a1a2a3a4a5a6a7s8hhjhjjhjjjjjjkkkkgenhjhhhhjtjurtgfhfhfjfjfjnewjkggujk}
\lim_{\e\to 0^+}I_\e(\psi_\e):=\lim_{\e\to 0^+}\frac{1}{\e}\int_\O
F\bigg(\,\e^n\nabla^n
\psi_\e(x),\,\e^{n-1}\nabla^{n-1}\psi_\e(x),\,\ldots,\,\e\nabla \psi_\e(x),\, \psi_\e(x)\bigg)dx\\
= \int_{\O\cap J_\psi}\bar
E_{per}\Big(\psi^+(x),\psi^-(x),\vec \nu(x)\Big)d \mathcal
H^{N-1}(x)\,.
\end{multline}
\end{theorem}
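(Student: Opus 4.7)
The plan is to split the theorem into two halves and handle them by reduction to results already established in the excerpt. The recovery sequence in \eqref{a1a2a3a4a5a6a7s8hhjhjjhjjjjjjkkkkgenhjhhhhjtjurtgfhfhfjfjfjnewjkggujk} follows immediately from \rth{ffgvfgfhthjghgjhg} applied with $k=0$, $d=0$ (no prescribed differential constraint), so the role of $K_{per}$ in that theorem specializes to $\bar E_{per}$ integrated over $J_\psi$. The nontrivial part is the matching lower bound \eqref{a1a2a3a4a5a6a7s8hhjhjjhjjjjjjkkkkgenhjhhhhjtjurtnewjgvjhgv}, for which the direct invocation of \rth{dehgfrygfrgygenjklhhjkghhjggjfjkh} only gives $\liminf I_\e(\varphi_\e)\ge K^*(\psi)$. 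Combined with the trivial inequality $E_{abst}\le E_{per}$ from \eqref{dghfihtihotj}, the theorem reduces to the pointwise cell-problem identity
\begin{equation*}
E_{abst}(\psi^+,\psi^-,\vec\nu) \;=\; \bar E_{per}(\psi^+,\psi^-,\vec\nu)\qquad \text{for } \mathcal H^{N-1}\text{-a.e. }x\in J_\psi.
\end{equation*}

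The main work is therefore to prove $E_{abst}\ge \bar E_{per}$. I would fix a jump triple $(\psi^+,\psi^-,\vec\nu)$ and a near-optimal asymptotic sequence $\{\sigma_\e\}\subset W^{n,p}(Q(\vec\nu),\R^m)$ converging in $L^p$ to $\eta(\psi^+,\psi^-,\vec\nu)$ and realizing $E_{abst}$ up to an error $\delta$. For an integer $M\gg 1$, tile $Q(\vec\nu)$ by $M^{N-1}$ boxes of tangential side $1/M$ (full length $1$ in direction $\vec\nu$). A Fubini/averaging argument selects a translation $\tau$ in the tangential hyperplane for which the energy restricted to the translated tile $Q_\tau$ is at most $M^{-(N-1)}$ times the full-cube energy plus $o_\e(1)$. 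Rescaling $Q_\tau$ by the factor $M$ in the tangential directions produces a new profile $\tilde\sigma_\e$ on $Q(\vec\nu)$; with the parameter $L:=M$ chosen in \eqref{L2009hhffff12kkkhjhjghghgvgvggcjhggghtgjutnewjgkjgjk} this rescaling is exactly the one that compensates for the $L^{-j}$ factors multiplying the $j$-th derivative, so the rescaled energy matches the definition of $\bar E_{per}$ up to a bounded multiplicative factor.

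The hardest step is converting the (generally non-periodic) tangential boundary data of $\tilde\sigma_\e$ into a bona fide element of $\mathcal{\tilde D}_{per}(\psi^+,\psi^-,\vec\nu)$. Here I would carve out a thin collar of tangential width $\lambda\ll 1/M$ near each tangential face, and in that collar use a cutoff interpolating $\tilde\sigma_\e$ with a standard reference periodic profile $\zeta_0$ (e.g.\ the affine interpolant between $\psi^\pm$ in the direction $\vec\nu$, which is trivially tangentially periodic). The two-sided growth condition \eqref{hgdfvdhvdhfvjjjjiiiuyyyjitghujtrnewkhjklhkl} controls the excess energy by $C\lambda\,\bigl(M^{np}\|\tilde\sigma_\e-\zeta_0\|_{W^{n,p}}^{p}+1\bigr)$, and the strong $L^p$-convergence of $\sigma_\e$ (together with the coercivity of $F$) lets me absorb all contributions except those from the collar. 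Choosing $\lambda=\lambda(M,\e)$ to go to zero slowly enough, first sending $\e\to 0^+$, then $\lambda\to 0^+$, then $M\to\infty$, and finally $\delta\to 0^+$, yields $\bar E_{per}\le E_{abst}$.

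The principal obstacle is precisely this gluing step: the cutoff introduces $n$-th derivatives of size $\lambda^{-n}$, and the $p$-growth upper bound must be applied carefully on a set of measure $O(\lambda)$ so that the net contribution vanishes in the limit. This is where the coercivity inequality $\frac1C|A|^p\le F$ becomes essential, because it forces $\|\tilde\sigma_\e\|_{W^{n,p}}$ to stay bounded along any energy-bounded sequence, and the absence of any differential constraint (such as $\Div h\equiv 0$) is what permits the free tangential modification in the collar. Once $E_{abst}\equiv \bar E_{per}$ is established pointwise, integrating over $J_\psi$ against $\mathcal H^{N-1}$ yields $K^*(\psi)=K_{per}(\psi)$, closing the argument when combined with \rth{dehgfrygfrgygenjklhhjkghhjggjfjkh} and \rth{ffgvfgfhthjghgjhg}.
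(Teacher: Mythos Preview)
The paper does \emph{not} prove this theorem here: it is stated in the introduction as a result established in Part~II (reference~\cite{PII}), and only the upper-bound half (the recovery sequence \eqref{a1a2a3a4a5a6a7s8hhjhjjhjjjjjjkkkkgenhjhhhhjtjurtgfhfhfjfjfjnewjkggujk}) follows from the machinery developed in this paper, namely \rth{vtporbound4ghkfgkhkhhhjjklkjjhliuik} specialized to $k=d=0$. So there is no in-paper proof to compare against. Your reduction scheme---recovery sequence from \rth{ffgvfgfhthjghgjhg}, abstract lower bound $K^*$ from \rth{dehgfrygfrgygenjklhhjkghhjggjfjkh}, and then the pointwise cell identity $E_{abst}=\bar E_{per}$---is exactly the logical structure the paper lays out in the introduction; that part is fine.

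Your sketch of $E_{abst}\ge \bar E_{per}$ has the right overall architecture (averaging over tangential tiles, periodization, gluing using the coercivity \eqref{hgdfvdhvdhfvjjjjiiiuyyyjitghujtrnewkhjklhkl}), which is indeed the mechanism used in the unconstrained case (cf.\ Fonseca--Popovici \cite{FonP}, Conti--Fonseca--Leoni \cite{contiFL}). However, the specific rescaling you describe is not right. You propose to tile $Q(\vec\nu)$ tangentially into $M^{N-1}$ boxes, rescale a good tile by $M$ \emph{only in the tangential directions}, and identify $L:=M$ in \eqref{L2009hhffff12kkkhjhjghghgvgvggcjhggghtgjutnewjgkjgjk}. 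But the parameter $L$ in $\bar E_{per}$ multiplies \emph{all} derivatives, including the normal one, so an anisotropic tangential stretch does not ``exactly compensate for the $L^{-j}$ factors'' as you claim. The standard way around this is to first rescale \emph{isotropically} by $1/\e$ (so that the energy becomes $\e^{N-1}\int_{(1/\e)Q}F(\nabla^n\hat\sigma_\e,\ldots,\hat\sigma_\e)$ with no small parameter inside $F$), and only then tile tangentially by \emph{unit} cubes, average, and glue; this also explains why the limit $L\to 0^+$ (established here in \rlemma{lijkkhnjjhjh}) enters, since $L$ will be tied to $\e$.

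Your gluing estimate also hides a real difficulty. A cutoff of width $\lambda$ generates $n$-th derivatives of size $\lambda^{-n}$, and on a collar of measure $O(\lambda)$ the $p$-growth bound gives a contribution $O(\lambda^{1-np})$, which \emph{diverges} as $\lambda\to 0$ whenever $np>1$. The coercivity $\tfrac{1}{C}|A|^p\le F$ does give a uniform $W^{n,p}$ bound on the profiles, but to exploit it you need a De Giorgi--type averaging over the location of the collar (or an equivalent slicing argument), not a single fixed collar. You have identified the correct ingredients but not the correct order-of-limits and rescaling that make them work.
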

\begin{remark}\label{vyuguigiugbuikkk}
In what follows we use some special notations and apply some basic
theorems about $BV$ functions. For the convenience of the reader we
put these notations and theorems in Appendix.
\end{remark}

\section{Preliminary results}
\begin{definition}\label{defkub}
Let $\vec T\in \mathcal{L}(\R^{d\times N};\R^m)$. For any
$A_1,A_2\in\big\{A\in\R^{m}:\exists M\in\R^{d\times N},\;A=\vec
T\cdot M\big\}$, for every $\vec\nu\in\R^N$ satisfying $|\vec\nu|=1$
and for every system of $N-1$ linearly independent vectors $\{\vec
a_1,\vec a_2,\ldots,\vec a_{N-1}\}$ in $\R^N$ satisfying $\vec
a_j\cdot\vec\nu=0$ for all $j=1,2,\ldots,(N-1)$ set
\begin{multline}\label{L2009Ddef2hhhjjjj77788}
\mathcal{D}\big(\vec T,A_2,A_1,\vec\nu,\vec a_1,\vec a_2,\ldots,\vec
a_{N-1}\big):= \bigg\{ u\in C^\infty(\R^N,\R^d):\;\vec T\cdot\nabla
u(y)=A_1\;\text{ if }\;y\cdot\vec\nu\leq-1/2,\\
\vec T\cdot\nabla u(y)=A_2\;\text{ if }\; y\cdot\vec\nu\geq
1/2\;\text{ and }\;\vec T\cdot\nabla u(y+\vec a_j)=\vec T\cdot\nabla
u(y)\;\;\forall j=1,2,\ldots, (N-1)\bigg\}\,,
\end{multline}
and
\begin{multline}\label{L2009Ddef2hhhjjjj77788kkktttttt8888}
\mathcal{\tilde D}\big(\vec T,A_2,A_1,\vec\nu,\vec a_1,\vec
a_2,\ldots,\vec a_{N-1}\big):= \bigg\{u\in
\mathcal{D}'(\R^N,\R^d):\\
\vec T\cdot\nabla u\in C^1(\R^N,\R^m),\;\;\vec T\cdot\nabla
u(y)=A_1\;\text{ if }\;y\cdot\vec\nu\leq-1/2,\\
\vec T\cdot\nabla u(y)=A_2\;\text{ if }\; y\cdot\vec\nu\geq
1/2\;\text{ and }\;\vec T\cdot\nabla u(y+\vec a_j)=\vec T\cdot\nabla
u(y)\;\;\forall j=1,2,\ldots, (N-1)\bigg\}\,.
\end{multline}
Set also
\begin{multline}\label{L2009Ddef2hhhjjjj77788null}
\mathcal{D}_0\big(\vec\nu,\vec a_1,\vec a_2,\ldots,\vec
a_{N-1}\big):= \Big\{ u\in C^\infty(\R^N,\R^d):\;
u(y)=0\;\text{ if }\; |y\cdot\vec\nu|\geq 1/2\;\\ \text{ and
}\;u(y+\vec a_j)=u(y)\;\;\forall j=1,2,\ldots, (N-1)\Big\}\,.
\end{multline}
and
\begin{multline}\label{L2009Ddef2hhhjjjj77788nullkkk}
\mathcal{D}_1\big(\vec T,\vec\nu,\vec a_1,\vec a_2,\ldots,\vec
a_{N-1}\big):=\\ \bigg\{ u\in C^\infty(\R^N,\R^d)\cap
L^\infty(\R^N,\R^d)\cap Lip(\R^N,\R^d):\;
\vec T\cdot\nabla u(y)=0\;\text{ if }\; |y\cdot\vec\nu|\geq 1/2\;\\
\text{ and }\;
u(y+\vec a_j)=u(y) \;\;\forall j=1,2,\ldots, (N-1)\bigg\}\,.
\end{multline}
\end{definition}
\begin{proposition}\label{L2009cffgfProp8hhhjjj8887788}
Let $\vec T\in \mathcal{L}(\R^{d\times N};\R^m)$ and $G\in
C^1(\R^{m\times N}\times\R^{m}\times\R)$, satisfying $G\geq 0$, and
let $A_1,A_2\in \big\{A\in\R^{m}:\exists M\in\R^{d\times N},\;A=\vec
T\cdot M\big\}$ satisfy $G(0,A_1,-1)=0$ and $G(0,A_2,1)=0$.
Furthermore, let $\vec\nu\in\R^N$ satisfies $|\vec\nu|=1$. Given a
system $\{\vec p_1,\vec p_2,\ldots,\vec p_{N-1}\}$ of $(N-1)$
linearly independent vectors in $\R^N$, satisfying $\vec
p_j\cdot\vec\nu=0$ for all $j=1,2,\ldots,(N-1)$, set
\begin{multline*}\Theta(\vec p_1,\ldots,\vec p_{N-1}):=\\
\inf\Bigg\{ \frac{1}{L}\int\limits_{I_{N}}G\bigg(L\,\nabla\{\vec
T\cdot\nabla u\}\Big(s_1\vec\nu+\Sigma_{j=1}^{N-1}s_{j+1}\vec
p_j\Big),\,\vec T\cdot\nabla
u\Big(s_1\vec\nu+\Sigma_{j=1}^{N-1}s_{j+1}\vec p_j\Big),\,
s_1/|s_1|\bigg)\,ds
:\\ \quad L>0,\, u\in \mathcal{D}\big(\vec T,A_2,A_1,\vec\nu,\vec
p_1,\ldots,\vec p_{N-1}\big)\Bigg\}\quad\quad\text{(see Definition
\ref{defkub})},
\end{multline*}
where
\begin{equation}\label{cubidepggh}
I_N:= \Big\{s\in\R^N:\; -1/2<s_j<1/2\quad\forall j=1,2,\ldots,
N\Big\}\,.
\end{equation}
Then for every two systems $\{\vec a_1,\vec a_2,\ldots,\vec
a_{N-1}\}$ and $\{\vec b_1,\vec b_2,\ldots,\vec b_{N-1}\}$ of
$(N-1)$ linearly independent vectors in $\R^N$, satisfying $\vec
a_j\cdot\vec\nu=0$ and $\vec b_j\cdot\vec\nu=0$ for all
$j=1,2,\ldots,(N-1)$, we have
\begin{equation}\label{nezbas}
\Theta(\vec a_1,\ldots,\vec a_{N-1})=\Theta(\vec b_1,\ldots,\vec
b_{N-1})\,.
\end{equation}
%
%
%
%
%
%
%
%
%
%
\end{proposition}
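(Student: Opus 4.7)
By symmetry, it suffices to show $\Theta(\vec a_1,\ldots,\vec a_{N-1}) \le \Theta(\vec b_1,\ldots,\vec b_{N-1})$. The first step is a geometric reformulation: substituting $y = s_1 \vec\nu + \sum_{j \ge 2} s_j \vec p_{j-1}$ and using $\vec p_j \perp \vec\nu$, the Jacobian equals $\omega_p$, the $(N-1)$-volume of the cross-section parallelepiped $P_\perp$ generated by the $\vec p_j$. Consequently, $\Theta(\vec p)$ equals the infimum over $L > 0$ and admissible $u$ of
$$\frac{1}{L\,\omega_p}\int_{-1/2}^{1/2}\!\!\int_{P_\perp} G\Big(L\nabla(\vec T\cdot\nabla u),\,\vec T\cdot\nabla u,\,\mathrm{sign}(y\cdot\vec\nu)\Big)\,dy_\perp\,dy_\parallel,$$
which is the mean energy per unit cross-sectional area along the strip $\{|y\cdot\vec\nu|<1/2\}$. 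The claim is that this mean density is independent of the choice of basis in $\vec\nu^\perp$.

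My plan is the usual boundary-layer construction. Fix $\varepsilon > 0$ and a pair $(L, u)$ within $\varepsilon$ of $\Theta(\vec b)$. Fix a reference profile $u_0 \in C^\infty(\R^N, \R^d)$ with $\vec T\cdot\nabla u_0 = A_i$ outside the strip (such $u_0$ exists by the hypothesis $A_i \in \mathrm{range}(\vec T)$: take $u_0$ affine on either side of the strip and smoothly interpolate inside). Write $u = u_0 + \phi_u$, where $\vec T\cdot\nabla\phi_u = 0$ outside the strip and $\phi_u$ inherits $\vec b$-periodicity. For a large integer $K$, let $Q_K := \{\sum t_j (K\vec a_j) : |t_j| < 1/2\} \subset \vec\nu^\perp$ and decompose $Q_K = Q_K^{\mathrm{int}} \sqcup \Lambda_K$, with $Q_K^{\mathrm{int}}$ a maximal disjoint union of $\vec b$-translates of the $\vec b$-period cell $P_{b,\perp}$ contained in $Q_K$ and $\Lambda_K$ a boundary layer of uniformly bounded thickness, so $|\Lambda_K|/|Q_K| = O(1/K)$. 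With $\eta_K \in C^\infty_c(\vec\nu^\perp)$ equal to $1$ on $Q_K^{\mathrm{int}}$, $0$ near $\partial Q_K$, $|\nabla\eta_K| = O(1/K)$, set $\tilde u := u_0 + \eta_K \phi_u$, extended $K\vec a_j$-periodically in $y_\perp$, and take $\tilde L := L$.

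The energy now splits cleanly. On $(-1/2, 1/2)\vec\nu + Q_K^{\mathrm{int}}$, $\tilde u = u$ and the $\vec b$-tiling of $Q_K^{\mathrm{int}}$ reproduces the $(L,u)$-integrand over $|Q_K^{\mathrm{int}}|/\omega_b$ full $\vec b$-cells. On the boundary layer, $\vec T\cdot\nabla\tilde u$ and $L\nabla(\vec T\cdot\nabla\tilde u)$ remain uniformly bounded (by smoothness of $u_0, \phi_u$ and the $1/K$ scale of $\nabla\eta_K$), so $G$ is bounded there and contributes $O(|\Lambda_K|) = O(K^{N-2})$. After dividing by $L \cdot |Q_K|$, $(\tilde L, \tilde u)$ becomes a competitor with cost within $O(1/K)$ of $\mathrm{cost}(L, u)$.

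Two obstacles absorb the bulk of the technical work. First, $\tilde u$ as constructed is only $K\vec a$-periodic, so the estimate directly yields $\Theta(K\vec a) \le \Theta(\vec b) + O(1/K)$ rather than the desired $\Theta(\vec a) \le \Theta(\vec b)$. This is bridged by replacing $\vec b$ with a nearby basis $\vec b' = \vec b + O(1/K)$ rationally commensurable with $\vec a$, so that $Q_K$ tiles exactly by $\vec b'$-cells and $\tilde u$ can be built $\vec a$-periodic; the $C^1$ regularity of $G$ absorbs the $O(1/K)$ perturbation in the integrand, and letting $K \to \infty$ kills the error. Second, the cutoff introduces an error $\vec T\cdot(\nabla\eta_K \otimes \phi_u)$ in $\vec T\cdot\nabla\tilde u$, violating the strict boundary condition outside the strip; this is handled by adding a small corrector $\psi_K$ supported in $\Lambda_K$ solving $\vec T\cdot\nabla\psi_K = -\vec T\cdot(\nabla\eta_K\otimes\phi_u)$ on the relevant slabs (solvable by surjectivity of $\vec T$ onto its range, then smoothed), whose energy contribution is $O(1/K)$ by the volume of $\Lambda_K$.
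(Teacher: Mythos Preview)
Your boundary-layer construction is a natural homogenization-style attack, and it is genuinely different from the paper's argument, which is purely algebraic (invariance under elementary lattice operations, then a density argument). But the ``bridge'' you describe for the first obstacle does not close.

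The cut-and-paste step gives you a competitor that is $K\vec a$-periodic, yielding $\Theta(K\vec a)\le\Theta(\vec b)+O(1/K)$. To pass to $\Theta(\vec a)$ you need $\Theta(\vec a)\le\Theta(K\vec a)$, and your rational-commensurability fix does not supply it. If $\vec b'=\vec b+O(1/K)$ is chosen so that $Q_K$ tiles by $\vec b'$-cells, this says the $K\vec a$-lattice sits inside the $\vec b'$-lattice, so a $\vec b'$-periodic profile is $K\vec a$-periodic --- still not $\vec a$-periodic. Forcing genuine $\vec a$-periodicity would require each $\vec a_j\in\mathbb Z\text{-span}(\vec b')$, i.e.\ $\vec b'=N^{-1}\vec a$ with $N$ an integer matrix close to the real matrix $R$ determined by $\vec a=R\vec b$; this is impossible unless $R$ happens to be near-integer. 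The missing ingredient is the rescaling identity $\Theta(K\vec a)=\Theta(\vec a)$: if $u\in\mathcal D(\ldots,K\vec a_1,\ldots)$ then $\bar u(y):=K^{-1}u(Ky)\in\mathcal D(\ldots,\vec a_1,\ldots)$, and the factor $K$ that appears in $\nabla(\vec T\cdot\nabla\bar u)$ is absorbed by replacing $L$ with $L/K$ in the infimum. This is where the freedom over $L$ is actually used, and it is the heart of the paper's proof. Once you have it, the boundary layer is unnecessary: containment of function classes plus the rescaling give invariance under positive rational diagonal scalings, a change of variables gives invariance under the shear $(\vec a_1,\vec a_2)\mapsto(\vec a_1,\vec a_1+\vec a_2)$, and these generate $GL_{N-1}(\mathbb Q)$; the real case follows by pushing a near-optimal $u$ through a small linear change of coordinates and using $G\in C^1$.

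A secondary issue: your corrector $\psi_K$ is supposed to solve $\vec T\cdot\nabla\psi_K=-\vec T\cdot(\phi_u\otimes\nabla\eta_K)$ on the slabs outside the strip. Surjectivity of $\vec T$ onto its range is not enough --- you need the right-hand side to lie in the range of the \emph{differential} operator $\vec T\circ\nabla$, and $\phi_u\otimes\nabla\eta_K$ is generically not a gradient (outside the strip $\vec T\cdot\nabla\phi_u=0$, but $\phi_u$ itself need not vanish or be constant). For specific operators $\vec T$ this may be repairable, but not by the one-line argument you give.
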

\begin{proof}
%
%
%
%
%
%
%
%
%
%
First of all we observe that if $\{\vec a_1,\vec a_2,\ldots,\vec
a_{N-1}\}$ and $\{\vec b_1,\vec b_2,\ldots,\vec b_{N-1}\}$ are
equal, up to a permutation of vectors, then \er{nezbas} will follow
by the definition. Next since for every $(N-1)$ positive integer
numbers $K_1,
\ldots,K_{N-1}$
we have $\mathcal{D}(\vec T,A_2,A_1,\vec\nu,\vec a_1,\ldots,\vec
a_{N-1})\subset\mathcal{D}(\vec T,A_2,A_1,\vec\nu,K_1\vec
a_1,\ldots,K_{N-1}\vec a_{N-1})$ then
\begin{multline}\label{esheodnoner}
\Theta(K_1\vec a_1,\ldots,K_{N-1}\vec a_{N-1})=\\ \inf\Bigg\{
\frac{1}{L}\int\limits_{I_{N}}G\bigg(L\,\nabla\{\vec T\cdot\nabla
u\}\Big(s_1\vec\nu+\Sigma_{j=1}^{N-1}s_{j+1}K_j\vec a_j\Big),\,\vec
T\cdot\nabla u\Big(s_1\vec\nu+\Sigma_{j=1}^{N-1}s_{j+1}K_j\vec
a_j\Big),\, s_1/|s_1|\bigg)\,ds
:\\ \quad L>0,\, u\in \mathcal{D}\big(\vec T,A_2,A_1,\vec\nu,K_1\vec
a_1,\ldots,K_{N-1}\vec a_{N-1}\big)\Bigg\}\leq\\
\inf\Bigg\{ \frac{1}{L}\int\limits_{I_{N}}G\bigg(L\,\nabla\{\vec
T\cdot\nabla u\}\Big(s_1\vec\nu+\Sigma_{j=1}^{N-1}s_{j+1}K_j\vec
a_j\Big),\,\vec T\cdot\nabla
u\Big(s_1\vec\nu+\Sigma_{j=1}^{N-1}s_{j+1}K_j\vec a_j\Big),\,
s_1/|s_1|\bigg)\,ds
:\\
\quad L>0,\, u\in \mathcal{D}\big(\vec T,A_2,A_1,\vec\nu,\vec
a_1,\ldots,\vec a_{N-1}\big)\Bigg\}\,.
\end{multline}
Thus changing variables of the integration $\bar s_1=s_1$ and $\bar
s_j=K_{j-1}s_j$ for every $j=2,\ldots,N$ in the r.h.s. of
\er{esheodnoner} and using the periodicity condition in the
definition of $\mathcal{D}\big(\vec T,A_2,A_1,\vec\nu,\vec
a_1,\ldots,\vec a_{N-1}\big)$ gives
\begin{equation}\label{nezbasner}
\Theta(K_1\vec a_1,\ldots,K_{N-1}\vec a_{N-1})\leq \Theta(\vec
a_1,\ldots,\vec a_{N-1})\,.
\end{equation}
On the other hand for every positive integer number $K$ we have that
if $u\in \mathcal{D}\big(\vec T,A_2,A_1,\vec\nu,K\vec
a_1,\ldots,K\vec a_{N-1}\big)$ then the function $\bar
u(y):=\frac{1}{K}u(Ky)\in\mathcal{D}\big(\vec T,A_2,A_1,\vec\nu,\vec
a_1,\ldots,\vec a_{N-1}\big)$ and therefore,
\begin{multline}\label{esheodnonervdrug}
\Theta(K\vec a_1,\ldots,K\vec a_{N-1})=\\ \inf\Bigg\{
\frac{1}{L}\int\limits_{I_{N}}G\bigg(L\,\nabla\{\vec T\cdot\nabla
u\}\Big(s_1\vec\nu+\Sigma_{j=1}^{N-1}s_{j+1}K\vec a_j\Big),\,\vec
T\cdot\nabla u\Big(s_1\vec\nu+\Sigma_{j=1}^{N-1}s_{j+1}K\vec
a_j\Big),\, s_1/|s_1|\bigg)\,ds
:\\ \quad L>0,\, u\in \mathcal{D}\big(\vec T,A_2,A_1,\vec\nu,K\vec
a_1,\ldots,K\vec a_{N-1}\big)\Bigg\}\geq\\
\inf\Bigg\{ \frac{1}{L}\int\limits_{I_{N}}G\bigg(L\,\nabla\{\vec
T\cdot\nabla u\}\Big(s_1\vec\nu+\Sigma_{j=1}^{N-1}s_{j+1}K\vec
a_j\Big),\,\vec T\cdot\nabla
u\Big(s_1\vec\nu+\Sigma_{j=1}^{N-1}s_{j+1}K\vec a_j\Big),\,
s_1/|s_1|\bigg)\,ds
:\\ \quad L>0,\, \frac{1}{K}u(Ky)\in\mathcal{D}\big(\vec
T,A_2,A_1,\vec\nu,\vec a_1,\ldots,\vec a_{N-1}\big)\Bigg\}\,.
\end{multline}
Therefore, changing variables $\bar s_1:=s_1/K$, $\bar s_j=s_j$ for
$j=2,\ldots,N$ in the r.h.s. of \er{esheodnonervdrug} we obtain
\begin{multline}\label{esheodnonervdrugprod}
\Theta(K\vec a_1,\ldots,K\vec a_{N-1})\geq\\ \inf\Bigg\{
\frac{K}{L}\int\limits_{I_{N}}G\bigg((L/K)\,\nabla\{\vec
T\cdot\nabla \bar u\}\Big(s_1\vec\nu+\Sigma_{j=1}^{N-1}s_{j+1}\vec
a_j\Big),\,\vec T\cdot\nabla \bar
u\Big(s_1\vec\nu+\Sigma_{j=1}^{N-1}s_{j+1}\vec a_j\Big),\,
s_1/|s_1|\bigg)\,ds
:\\ \quad L>0,\, \bar u\in\mathcal{D}\big(\vec
T,A_2,A_1,\vec\nu,\vec a_1,\ldots,\vec
a_{N-1}\big)\Bigg\}=\Theta(\vec a_1,\ldots,\vec a_{N-1})\,.
\end{multline}
Plugging \er{esheodnonervdrugprod} to \er{nezbasner} with $K_j:=K$
we deduce
\begin{equation}\label{nezbasnerrav}
\Theta(K\vec a_1,\ldots,K\vec a_{N-1})=\Theta(\vec a_1,\ldots,\vec
a_{N-1})\,.
\end{equation}
Thus from \er{nezbasner} and \er{nezbasnerrav} we deduce
\begin{equation}\label{nezbasnerratz223}
\Theta\big((K_1/K)\vec a_1,\ldots,(K_{N-1}/K)\vec a_{N-1}\big)\leq
\Theta(\vec a_1,\ldots,\vec a_{N-1})\,.
\end{equation}
So for every $(N-1)$ positive rational numbers $r_1,\ldots ,r_{N-1}$
we have
\begin{equation}\label{nezbasnerratz224}
\Theta(r_1\vec a_1,\ldots,r_{N-1}\vec a_{N-1})\leq \Theta(\vec
a_1,\ldots,\vec a_{N-1})\,.
\end{equation}
Then also
\begin{equation}\label{nezbasnerratz225}
\Theta(\vec a_1,\ldots,\vec a_{N-1})=\Theta\big((1/r_1)r_1\vec
a_1,\ldots,(1/r_{N-1})r_{N-1}\vec a_{N-1}\big)\leq \Theta(r_1\vec
a_1,\ldots,r_{N-1}\vec a_{N-1})\,.
\end{equation}
Therefore for every $(N-1)$ positive rational numbers $r_1,\ldots,
r_{N-1}$ we must have
\begin{equation}\label{nezbasnerratz222888899}
\Theta(r_1\vec a_1,\ldots,r_{N-1}\vec a_{N-1})=\Theta(\vec
a_1,\ldots,\vec a_{N-1})\,.
\end{equation}
Finally since $$\mathcal{D}\big(A_2,A_1,\vec\nu,\vec a_1,\ldots,\vec
a_j\ldots,\vec a_{N-1}\big)=\mathcal{D}\big(A_2,A_1,\vec\nu,\vec
a_1,,\ldots,(-\vec a_j)\ldots,\vec a_{N-1}\big)\,,$$ we deduce that
the equality
\begin{equation}\label{nezbasnerratz2228888997788899}
\Theta(r_1\vec a_1,\ldots,r_{N-1}\vec a_{N-1})=\Theta(\vec
a_1,\ldots,\vec a_{N-1})
\end{equation}
is valid for every $(N-1)$ different from zero rational numbers
$r_1,\ldots, r_{N-1}$ (without any restriction on their sign).

 Next since obviously
$$\mathcal{D}\big(\vec T,A_2,A_1,\vec\nu,\vec
a_1,\vec a_2,\ldots,\vec a_{N-1}\big)=\mathcal{D}\big(\vec
T,A_2,A_1,\vec\nu,\vec a_1,(\vec a_1+\vec a_2),\vec a_3,\ldots,\vec
a_{N-1}\big)$$ we have
\begin{equation}
\label{esheodnonervdrugprodsum}
\begin{split}
&\Theta\big(\vec a_1,(\vec a_1+\vec a_2),\vec a_3,\ldots,\vec
a_{N-1}\big)=\inf\Bigg\{ \frac{1}{L}\int\limits_{I_{N}}\times
\\G\bigg(L\nabla &\{\vec T\cdot\nabla
u\}\Big(s_1\vec\nu+(s_2+s_3)\vec a_1+\Sigma_{j=2}^{N-1}s_{j+1}\vec
a_j\Big),\vec T\cdot\nabla u\Big(s_1\vec\nu+(s_2+s_3)\vec
a_1+\Sigma_{j=2}^{N-1}s_{j+1}\vec a_j\Big), s_1/|s_1|\bigg)\\
\times ds&:
\quad\quad\quad L>0,\, u\in\mathcal{D}\big(\vec
T,A_2,A_1,\vec\nu,\vec a_1,\vec a_2,\ldots,\vec
a_{N-1}\big)\Bigg\}\,.
\end{split}
\end{equation}
Therefore, changing variables $\bar s_j=s_j$ if $j\neq 2$ and $\bar
s_2=s_2+s_3$ in the r.h.s. of \er{esheodnonervdrugprodsum} and using
the periodicity condition of the Definition \ref{defkub} gives
\begin{equation}\label{esheodnonervdrugprodsumsled}
\Theta\big(\vec a_1,(\vec a_1+\vec a_2),\vec a_3,\ldots,\vec
a_{N-1}\big)=\Theta(\vec a_1,\vec a_2,\ldots,\vec a_{N-1})\,.
\end{equation}
Next let $\vec b_j=\Sigma_{j=1}^{N-1}Q_{jk}\vec a_k$, where
$\big\{Q_{jk}\big\}\in\R^{(N-1)\times(N-1)}$ is a non-degenerate
matrix with rational coefficients. Then we can obtain the system
$\{\vec b_1,\vec b_2,\ldots,\vec b_{N-1}\}$ from the system $\{\vec
a_1,\vec a_2,\ldots,\vec a_{N-1}\}$ step by step by applying the
following three types of operations
\begin{itemize}
\item Multiplying the vectors of the system with different from zero
rational numbers.
\item Permutation of the vectors of the system.
\item Adding the first vector of the system to the second one.
\end{itemize}
Since by \er{nezbasnerratz2228888997788899} and
\er{esheodnonervdrugprodsumsled} every step keeps the same
$\Theta(\cdot,\ldots,\cdot)$ we deduce that the equality
\begin{equation}\label{nezbasraz}
\Theta(\vec a_1,\ldots,\vec a_{N-1})=\Theta(\vec b_1,\ldots,\vec
b_{N-1})\,,
\end{equation}
is valid for every $\vec b_j=\Sigma_{j=1}^{N-1}Q_{jk}\vec a_k$,
where $\big\{Q_{jk}\big\}$
is a non-degenerate matrix with rational coefficients.

 Finally consider the general situation where $\vec b_j=\Sigma_{j=1}^{N-1}R_{jk}\vec a_k$, where
$\big\{R_{jk}\big\}\in\R^{(N-1)\times(N-1)}$ is a non-degenerate
matrix with real coefficients. Let $\delta>0$ be an arbitrary small
positive number. By the definition of $\Theta(\cdot,\ldots,\cdot)$
there exists $L_\delta>0$ and $u_\delta\in\mathcal{D}\big(\vec
T,A_2,A_1,\vec\nu,\vec a_1,\vec a_2,\ldots,\vec a_{N-1}\big)$ such
that
\begin{multline}\label{hvvLinfgen7888999}
\frac{1}{L_\delta}\int\limits_{I_{N}}G\bigg(L_\delta\,\nabla\{\vec
T\cdot\nabla u_\delta\}\Big(s_1\vec\nu+\Sigma_{j=1}^{N-1}s_{j+1}\vec
a_j\Big),\,\vec T\cdot\nabla
u_\delta\Big(s_1\vec\nu+\Sigma_{j=1}^{N-1}s_{j+1}\vec a_j\Big),\,s_1/|s_1|\bigg)\,ds
\\<\Theta(\vec a_1,\ldots,\vec
a_{N-1})+\frac{\delta}{2}\,.
\end{multline}
There exists a sequence of non-degenerate matrices with rational
coefficients
$\big\{\{Q^{(n)}_{jk}\}\big\}_{n=1}^{\infty}\subset\R^{(N-1)\times(N-1)}$
with the property that
$\lim_{n\to\infty}\{Q^{(n)}_{jk}\}=\{R_{jk}\}$ and
$\lim_{n\to\infty}\{Q^{(n)}_{jk}\}^{-1}=\{R_{jk}\}^{-1}$. In
particular if we set
$\{P^{(n)}_{jk}\}:=\{Q^{(n)}_{jk}\}^{-1}\cdot\{R_{jk}\}$ then
$\lim_{n\to\infty}\{P^{(n)}_{jk}\}=I_{N-1}$ where
$I_{N-1}\in\R^{(N-1)\times(N-1)}$ is the identity matrix. For every
$n$ consider $(N-1)$ linearly independent vectors $\vec
c^{(n)}_1,\ldots,\vec c^{(n)}_{N-1}$ satisfying $\vec
c^{(n)}_j=\Sigma_{k=1}^{N-1}P^{(n)}_{jk}\vec a_k$ for all $j$. Thus
$\vec b_j=\Sigma_{k=1}^{N-1}Q^{(n)}_{jk}\vec c^{(n)}_k$. Next let
$u_n(y):\R^N\to\R^d$ be defined by
$$u_n\Big(s_1\vec\nu+\Sigma_{j=1}^{N-1}s_{j+1}\vec
c^{(n)}_j\Big)=u_\delta\Big(s_1\vec\nu+\Sigma_{j=1}^{N-1}s_{j+1}\vec
a_j\Big)\quad\quad\forall s=(s_1,s_2,\ldots,s_N)\in\R^N\,.$$ Thus by
\er{hvvLinfgen7888999} and by the equality
$\lim_{n\to\infty}\{P^{(n)}_{jk}\}=I_{N-1}$, for sufficiently large
$n$ we must have
\begin{multline}\label{hvvLinfgen7888999prod9999}
\frac{1}{L_\delta}\int\limits_{I_{N}}G\bigg(L_\delta\,\nabla\{\vec
T\cdot\nabla u_n\}\Big(s_1\vec\nu+\Sigma_{j=1}^{N-1}s_{j+1}\vec
c^{(n)}_j\Big),\,\vec T\cdot\nabla
u_n\Big(s_1\vec\nu+\Sigma_{j=1}^{N-1}s_{j+1}\vec c^{(n)}_j\Big)\bigg)\,ds
\\<\Theta(\vec a_1,\ldots,\vec
a_{N-1})+\delta\,.
\end{multline}
However, we have $u_n\in\mathcal{D}\big(\vec T,A_2,A_1,\vec\nu,\vec
c^{(n)}_1,\vec c^{(n)}_2,\ldots,\vec c^{(n)}_{N-1}\big)$. Thus by
\er{hvvLinfgen7888999prod9999} we obtain
\begin{equation}\label{yhffjgjk7788hhhpppp88899999}\Theta(\vec
c^{(n)}_1,\ldots,\vec c^{(n)}_{N-1})<\Theta(\vec a_1,\ldots,\vec
a_{N-1})+\delta\,.
\end{equation}
On the other hand, by \er{nezbasraz} and the equality $\vec
b_j=\Sigma_{k=1}^{N-1}Q^{(n)}_{jk}\vec c^{(n)}_k$ we have
$$\Theta(c^{(n)}_1,\ldots,\vec c^{(n)}_{N-1})=\Theta(\vec b_1,\ldots,\vec
b_{N-1})\,.$$ Therefore, by \er{yhffjgjk7788hhhpppp88899999}
\begin{equation}\label{yhffjgjk7788hhhpppp88899999888999}\Theta(\vec b_1,\ldots,\vec
b_{N-1})<\Theta(\vec a_1,\ldots,\vec a_{N-1})+\delta\,,
\end{equation}
and since $\delta>0$ was arbitrary small we finally get
\begin{equation}\label{yhffjgjk7788hhhpppp88899999888999ner9999}\Theta(\vec b_1,\ldots,\vec
b_{N-1})\leq\Theta(\vec a_1,\ldots,\vec a_{N-1})\,.
\end{equation}
Interchanging the roles of $\{\vec a_1,\ldots,\vec a_{N-1}\}$ and
$\{\vec b_1,\ldots,\vec b_{N-1}\}$ we obtain the opposite
inequality. So in fact we have the equality
$$\Theta(\vec b_1,\ldots,\vec
b_{N-1})=\Theta(\vec a_1,\ldots,\vec a_{N-1})$$
and the result
follows.
\end{proof}

\begin{lemma}\label{lijkkhnjjhjh}
Let $\vec T\in \mathcal{L}(\R^{d\times N};\R^m)$ and let $G\in
C^1(\R^{m\times N}\times\R^{m}\times\R)$, satisfying $G\geq 0$, and
let $A_1,A_2,\vec\nu$, $\{\vec a_1,\vec a_2,\ldots,\vec a_{N-1}\}$
be the same as in Definition \ref{defkub} and satisfy
$G(0,A_1,-1)=0$ and $G(0,A_2,1)=0$. Furthermore, set
\begin{multline}\label{vgjgkhjkcgswqekklll}
R_L:=\inf\Bigg\{
\frac{1}{L}\int\limits_{I_{N}}G\bigg(L\,\nabla\{\vec T\cdot\nabla
u\}\Big(s_1\vec\nu+\Sigma_{j=1}^{N-1}s_{j+1}\vec a_j\Big),\,\vec
T\cdot\nabla u\Big(s_1\vec\nu+\Sigma_{j=1}^{N-1}s_{j+1}\vec
a_j\Big),\,s_1/|s_1|\bigg)\,ds
:\\ \quad u\in \mathcal{D}\big(\vec T,A_2,A_1,\vec\nu,\vec
a_1,\ldots,\vec a_{N-1}\big)\Bigg\}\,,
\end{multline}
and
\begin{multline}\label{vgjgkhjkcgswqekklllkgdklhjgppp}
\tilde R_L:=\inf\Bigg\{
\frac{1}{L}\int\limits_{I_{N}}G\bigg(L\,\nabla\{\vec T\cdot\nabla
u\}\Big(s_1\vec\nu+\Sigma_{j=1}^{N-1}s_{j+1}\vec a_j\Big),\,\vec
T\cdot\nabla u\Big(s_1\vec\nu+\Sigma_{j=1}^{N-1}s_{j+1}\vec
a_j\Big),\,s_1/|s_1|\bigg)\,ds
:\\ \quad u\in \mathcal{\tilde D}\big(\vec T,A_2,A_1,\vec\nu,\vec
a_1,\ldots,\vec a_{N-1}\big)\Bigg\}
\end{multline}
(see Definition \ref{defkub}). Then
\begin{equation}\label{fgggnullreslslbhhhhhhhhh}
R_L=\tilde R_L\quad\quad\forall L>0\,.
\end{equation}
and
\begin{equation}\label{fgggnullreslslb}
\inf\limits_{L>0}R_L=\inf\limits_{L\in(0,1)}R_L=\lim\limits_{L\to
0^+}R_L\,.
\end{equation}
\end{lemma}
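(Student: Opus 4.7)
For the first assertion $R_L=\tilde R_L$ in \eqref{fgggnullreslslbhhhhhhhhh}, the inclusion $\mathcal{D}\subset\mathcal{\tilde D}$ immediately gives $\tilde R_L\leq R_L$, so only the reverse inequality requires work. My plan is to approximate each $u\in\mathcal{\tilde D}$ by smooth elements of $\mathcal{D}$ via mollification. Setting $u_\delta:=u*\rho_\delta$ for a standard non-negative mollifier $\rho_\delta$ supported in $B_\delta(0)$ produces $u_\delta\in C^\infty(\R^N,\R^d)$ (convolution of a distribution with a test function), with $\vec a_j$-periodicity of $\vec T\cdot\nabla u_\delta$ preserved exactly, and $\vec T\cdot\nabla u_\delta=(\vec T\cdot\nabla u)*\rho_\delta$ converging to $\vec T\cdot\nabla u$ in $C^1_{\mathrm{loc}}$ (since $\vec T\cdot\nabla u\in C^1$). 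The boundary condition $\vec T\cdot\nabla u_\delta=A_{1,2}$ now holds only on the slightly shifted region $\{|y\cdot\vec\nu|\geq 1/2+\delta\}$; I would restore the exact boundary by modifying $u_\delta$ in a layer of width $O(\delta)$ outside the integration region $I_N$, interpolating smoothly to the affine function $y\mapsto M_j\cdot y$ with $\vec T\cdot M_j=A_j$. The modified $\tilde u_\delta\in\mathcal{D}$ agrees with $u_\delta$ on $I_N$, so the integrand is unaffected by the correction; by $C^1$-continuity of $G$, the energy converges, yielding $R_L\leq\tilde R_L$.

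For the second assertion \eqref{fgggnullreslslb}, the central tool is the rescaling $u\mapsto u_K$ with $u_K(y):=K^{-1}u(Ky)$ for $u\in\mathcal{D}$ and a positive integer $K$. A direct check confirms $u_K\in\mathcal{D}$: its perpendicular periodicity $\vec T\cdot\nabla u_K(y+\vec a_j)=\vec T\cdot\nabla u(Ky+K\vec a_j)=\vec T\cdot\nabla u(Ky)$ holds because $K\vec a_j$ is an integer multiple of the period $\vec a_j$, and $y\cdot\vec\nu\leq -1/2$ implies $Ky\cdot\vec\nu\leq -K/2\leq -1/2$ (as $K\geq 1$), so $\vec T\cdot\nabla u_K(y)=\vec T\cdot\nabla u(Ky)=A_1$ (analogously $A_2$). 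Performing the change of variable $\bar s=Ks$ in the energy integral, and using the $\bar s_{j+1}$-periodicity of the integrand (with period $1$) together with its vanishing for $|\bar s_1|>1/2$, one collapses $\int_{KI_N}$ to $K^{N-1}\int_{I_N}$, obtaining the exact identity
$$\frac{1}{L'}\int_{I_N}G\bigl(L'\,\nabla\{\vec T\cdot\nabla u_K\},\vec T\cdot\nabla u_K,\tfrac{s_1}{|s_1|}\bigr)\,ds\;=\;\frac{1}{L'K}\int_{I_N}G\bigl(L'K\,\nabla\{\vec T\cdot\nabla u\},\vec T\cdot\nabla u,\tfrac{s_1}{|s_1|}\bigr)\,ds$$
for every $L'>0$. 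Specializing $L'=L/K$ yields $R_{L/K}\leq R_L$, while $L'=L$ yields $R_L\leq R_{LK}$.

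From these rescaling inequalities the first equality in \eqref{fgggnullreslslb} follows directly: for $L\geq 1$ take $K\in\mathbb{Z}_+$ with $K\geq L$, so $L/K<1$ and $R_{L/K}\leq R_L$; hence $\inf_{L\in(0,1)}R_L\leq R_L$ for all $L>0$, and the reverse is trivial. For the second equality, choosing $L_0$ and $u_0\in\mathcal{D}$ with the energy of $u_0$ at parameter $L_0$ at most $\inf R+\varepsilon$, the sequence $L_K:=L_0/K$ satisfies $R_{L_K}\leq\inf R+\varepsilon$ for all $K\in\mathbb{Z}_+$ and $L_K\to 0^+$, so $\liminf_{L\to 0^+}R_L\leq\inf R+\varepsilon$; since $\varepsilon>0$ is arbitrary, $\liminf_{L\to 0^+}R_L=\inf R$. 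The main obstacle I anticipate is the upgrade from $\liminf$ to $\lim$: the discrete rescaling controls $R_L$ only along sequences $L_0/K$ with $K\in\mathbb{Z}_+$, so bridging this to arbitrary sequences $L_n\to 0^+$ requires combining the two-sided monotonicity $R_{L/K}\leq R_L\leq R_{LK}$ with the upper semi-continuity of $R_L$ (as the infimum of the $L$-continuous functions $L\mapsto\mathcal{E}(u,L)$ for fixed $u\in\mathcal{D}$), and likely a fixed-$L$ variant of the periodicity-invariance argument of Proposition~\ref{L2009cffgfProp8hhhjjj8887788} to pass to non-integer rescalings. A secondary technical point I expect is the careful implementation of the boundary-layer correction in Part 1, which must ensure smoothness of the modified $\tilde u_\delta$ while keeping the modification strictly outside $I_N$ so that the energy is preserved.
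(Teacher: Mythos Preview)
Your plan for Part~1 has a genuine gap. You want a $\tilde u_\delta\in\mathcal D$ that agrees with $u_\delta$ on $\{|y\cdot\vec\nu|<1/2\}$, but this is impossible for $\delta>0$: since $\tilde u_\delta\in C^\infty$, continuity of $\vec T\cdot\nabla\tilde u_\delta$ forces $\vec T\cdot\nabla u_\delta(y)=A_j$ on the hyperplane $\{y\cdot\vec\nu=1/2\}$, whereas $\vec T\cdot\nabla u_\delta=(\vec T\cdot\nabla u)*\rho_\delta$ averages over a $\delta$-ball that meets $\{|y\cdot\vec\nu|<1/2\}$ and so is generically $\neq A_j$ there. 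A cutoff interpolation to an affine map $M_j y$ only makes this worse: in the transition layer one picks up a term $\vec T\cdot\big((u_\delta-M_j y)\otimes\nabla\chi\big)$, which has no reason to vanish. The paper avoids this obstruction entirely by an anisotropic rescaling in the $\vec\nu$-direction rather than a cutoff: it sets $u_\e\big(s_1\vec\nu+\sum s_{j+1}\vec a_j\big):=\bar u_\e\big((1+2\e)s_1\vec\nu+\sum s_{j+1}\vec a_j\big)$, which pulls the region where $\vec T\cdot\nabla\bar u_\e$ may differ from $A_j$ back inside $\{|s_1|<1/2\}$, so the boundary values are inherited from the exact region $|\cdot|\geq 1/2+\e$, and the energy distortion vanishes as $\e\to0$.

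For Part~2 your rescaling identity and the derivation of $\inf_{L>0}R_L=\inf_{L\in(0,1)}R_L=\liminf_{L\to0^+}R_L$ match the paper. The step you flag as the ``main obstacle'' is where your sketch diverges from the paper and becomes unnecessary: you propose to pass to non-integer rescalings via Proposition~\ref{L2009cffgfProp8hhhjjj8887788}, but no such passage is needed. The paper argues by contradiction: if some sequence $L_n\to0^+$ had $R_{L_n}>\inf R+2\delta$, fix $L_0,u_0$ with $\mathcal E(u_0,L_0)<\inf R+\delta$, set $K_n:=\lfloor L_0/L_n\rfloor$ and $d_n:=K_nL_n$; then $d_n\to L_0$, the monotonicity $R_{L_n}\leq R_{K_nL_n}=R_{d_n}$ gives $\liminf R_{d_n}>\inf R+2\delta$, while $R_{d_n}\leq\mathcal E(u_0,d_n)\to\mathcal E(u_0,L_0)<\inf R+\delta$ by continuity of $L\mapsto\mathcal E(u_0,L)$ --- contradiction. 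This is exactly the combination of integer monotonicity and upper semicontinuity you mention, but the key move is to rescale the bad sequence \emph{up} to a fixed $L_0$ rather than trying to rescale down to arbitrary small $L$.
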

\begin{proof}
Firstly we will prove \er{fgggnullreslslbhhhhhhhhh}. By Definition
\ref{defkub} we clearly have $$\mathcal{D}\big(\vec
T,A_2,A_1,\vec\nu,\vec a_1,\ldots,\vec
a_{N-1}\big)\subset\mathcal{\tilde D}\big(\vec
T,A_2,A_1,\vec\nu,\vec a_1,\ldots,\vec a_{N-1}\big).$$ Therefore for
every $L>0$ we clearly deduce $R_L\geq\tilde R_L$. Next fix an
arbitrary $u\in \mathcal{\tilde D}\big(\vec T,A_2,A_1,\vec\nu,\vec
a_1,\ldots,\vec a_{N-1}\big)$ and consider $\zeta(z)\in
C^\infty_c(\R^N,\R)$ such that $\supp\zeta\subset\subset B_1(0)$,
$\zeta\geq 0$ and $\int_{\R^N}\zeta(z)dz=1$. For any $\e>0$ and any
fixed $x\in\R^N$ set
\begin{equation}\label{a4543chejhgufydfdtklljkjjk}
\bar
u_\e(x):=\frac{1}{\e^N}\Big<\zeta\Big(\frac{y-x}{\e}\Big),u(y)\Big>
\end{equation}
(see notations and definitions in the Appendix). Then $\bar u_\e\in
C^\infty(\R^N,\R^d)$. Moreover, clearly
\begin{equation}\label{a4543chehlkjkklkk}
\vec T\cdot \{\nabla \bar
u_\e(x)\}=\frac{1}{\e^N}\int_{\R^N}\zeta\Big(\frac{y-x}{\e}\Big)
\cdot\{\vec T\cdot\nabla u\}(y)dy=
\int_{\R^N}\zeta(z)\cdot\big\{\vec T\cdot\nabla u\big\}(x+\e z)dz,
\end{equation}
and
\begin{equation}\label{a4543chehlkjkklkkytju}
\nabla\big(\vec T\cdot \{\nabla \bar
u_\e(x)\}\big)=\frac{1}{\e^N}\int_{\R^N}\zeta\Big(\frac{y-x}{\e}\Big)
\cdot\nabla\{\vec T\cdot\nabla u\}(y)dy=
\int_{\R^N}\zeta(z)\cdot\nabla\big\{\vec T\cdot\nabla u\big\}(x+\e
z)dz,
\end{equation}
Then by the definition of $\mathcal{\tilde D}\big(\vec
T,A_2,A_1,\vec\nu,\vec a_1,\ldots,\vec a_{N-1}\big)$ and by
\er{a4543chehlkjkklkk} we have
\begin{multline}\label{hdsudsgcudggullljjjlll}
\vec T\cdot\nabla \bar u_\e(y)=A_1\;\text{ if
}\;y\cdot\vec\nu\leq-(1/2+\e),\; \vec T\cdot\nabla
\bar u_\e(y)=A_2\;\text{ if }\; y\cdot\vec\nu\geq (1/2+\e)\;\text{ and }\\
\vec T\cdot\nabla \bar u_\e(y+\vec a_j)=\vec T\cdot\nabla \bar
u_\e(y)\;\;\forall j=1,2,\ldots, (N-1)\,.
\end{multline}
Moreover, $\vec T\cdot\nabla \bar u_\e\to \vec T\cdot\nabla u$ as
$\e\to 0^+$ in $C^1(\R^N,\R^m)$ i.e. $\vec T\cdot\nabla \bar u_\e\to
\vec T\cdot\nabla u$ uniformly in $\R^N$ and $\nabla\{\vec
T\cdot\nabla \bar u_\e\}\to \nabla\{\vec T\cdot\nabla u\}$ uniformly
in $\R^N$. Finally define $u_\e\in C^\infty(\R^N,\R^d)$ by the
formula
\begin{equation}\label{a4543chejhgufydfdtklljkjjkujsdguyg}
u_\e\Big(s_1\vec\nu+\Sigma_{j=1}^{N-1}s_{j+1}\vec a_j\Big):=\bar
u_\e\Big((2\e+1)s_1\vec\nu+\Sigma_{j=1}^{N-1}s_{j+1}\vec a_j\Big)\,.
\end{equation}
Then using \er{hdsudsgcudggullljjjlll} we deduce that $u_\e\in
\mathcal{D}\big(\vec T,A_2,A_1,\vec\nu,\vec a_1,\ldots,\vec
a_{N-1}\big)$. Finally, $\vec T\cdot\nabla u_\e\to \vec T\cdot\nabla
u$ as $\e\to 0^+$ in $C^1(\R^N,\R^m)$. Therefore,
\begin{multline}\label{vgjgkhjkcgswqekklllkukukkkk}
R_L\leq\lim\limits_{\e\to
0^+}\Bigg\{\frac{1}{L}\int\limits_{I_{N}}G\bigg(L\,\nabla\{\vec
T\cdot\nabla u_\e\}\Big(s_1\vec\nu+\Sigma_{j=1}^{N-1}s_{j+1}\vec
a_j\Big),\,\vec T\cdot\nabla
u_\e\Big(s_1\vec\nu+\Sigma_{j=1}^{N-1}s_{j+1}\vec
a_j\Big),\,s_1/|s_1|\bigg)\,ds
\Bigg\}\\ =\frac{1}{L}\int\limits_{I_{N}}G\bigg(L\,\nabla\{\vec
T\cdot\nabla u\}\Big(s_1\vec\nu+\Sigma_{j=1}^{N-1}s_{j+1}\vec
a_j\Big),\,\vec T\cdot\nabla
u\Big(s_1\vec\nu+\Sigma_{j=1}^{N-1}s_{j+1}\vec
a_j\Big),\,s_1/|s_1|\bigg)\,ds
\,.
\end{multline}
Thus since $u\in \mathcal{\tilde D}\big(\vec T,A_2,A_1,\vec\nu,\vec
a_1,\ldots,\vec a_{N-1}\big)$ was chosen arbitrary, from
\er{vgjgkhjkcgswqekklllkukukkkk} we deduce $R_L\leq\tilde R_L$,
which together with the reverse inequality, established before,
gives \er{fgggnullreslslbhhhhhhhhh}.

We are going to prove \er{fgggnullreslslb} now. For every positive
integer number $K$ we have that if $$u\in \mathcal{D}\big(\vec
T,A_2,A_1,\vec\nu,\vec a_1,\ldots,\vec a_{N-1}\big)$$ then the
function
\begin{equation}\label{opreresfunkap}
w_K(y):=\frac{1}{K}u(Ky)\in\mathcal{D}\big(\vec
T,A_2,A_1,\vec\nu,\vec a_1,\ldots,\vec a_{N-1}\big)\,,
\end{equation}
Moreover, changing variables in the integration and using the
periodicity conditions of the definition of $$\mathcal{D}\big(\vec
T,A_2,A_1,\vec\nu,\vec a_1,\ldots,\vec a_{N-1}\big)$$ gives
the following equality
\begin{multline}\label{esheodnonervdrugnnnk}
\frac{K}{L}\int\limits_{I_{N}}G\bigg((L/K)\,\nabla\{\vec
T\cdot\nabla w_K\}\Big(s_1\vec\nu+\Sigma_{j=1}^{N-1}s_{j+1}\vec
a_j\Big),\,\vec T\cdot\nabla
w_K\Big(s_1\vec\nu+\Sigma_{j=1}^{N-1}s_{j+1}\vec
a_j\Big),\,s_1/|s_1|\bigg)\,ds
\\=\frac{1}{L}\int\limits_{I_{N}}G\bigg(L\,\nabla\{\vec
T\cdot\nabla u\}\Big(s_1\vec\nu+\Sigma_{j=1}^{N-1}s_{j+1}\vec
a_j\Big),\,\vec T\cdot\nabla
u\Big(s_1\vec\nu+\Sigma_{j=1}^{N-1}s_{j+1}\vec
a_j\Big),\,s_1/|s_1|\bigg)\,ds\,.
\end{multline}
Then in
particular we observe that, for
$L>0$ and a positive
integer $K$, we have
\begin{equation}\label{dihghigbh}
R_L\leq R_{KL}\,.
\end{equation}
%
%
%
%
%
%
%
%
%
%
and thus clearly we have
\begin{equation}\label{fgggnullreslslbjkjkjkkllklk}
\inf\limits_{L>0}R_L=\inf\limits_{L\in(0,1)}R_L=\liminf\limits_{L\to
0^+}R_L\,.
\end{equation}
Finally assume, by the contradiction, that $\limsup\limits_{L\to
0^+}R_L> \liminf\limits_{L\to 0^+}R_L$. Then there exists $\delta>0$
and a sequence $L_n\to 0^+$ as $n\to +\infty$ such that
\begin{equation*}
\lim\limits_{n\to +\infty}R_{L_n}>\inf\limits_{L>0}R_L+2\delta\,.
\end{equation*}
Thus in particular, there exists $L_0>0$ and $u_0\in
\mathcal{D}\big(\vec T,A_2,A_1,\vec\nu,\vec a_1,\ldots,\vec
a_{N-1}\big)$ such that
\begin{equation}
\label{fgggnullreslslbjkjkjkkllklkhjhhjkjolkuolio} \lim\limits_{n\to
+\infty}R_{L_n}>\delta+
\frac{1}{L_0}\int\limits_{I_{N}}G\bigg(L_0\nabla\{\vec T\cdot\nabla
u_0\}\Big(s_1\vec\nu+\Sigma_{j=1}^{N-1}s_{j+1}\vec a_j\Big),\vec
T\cdot\nabla u_0\Big(s_1\vec\nu+\Sigma_{j=1}^{N-1}s_{j+1}\vec
a_j\Big),s_1/|s_1|\bigg)ds.
\end{equation}
Set
\begin{equation}
\label{fgggnullreslslbjkjkjkkllklkhjhhjkjolkkj}
K_n:=\max\big\{K\in\mathbb{N}:K\leq (L_0/L_n)\big\}\,.
\end{equation}
Then by the definition $K_n\leq(L_0/L_n)<K_n+1$ and $\lim_{n\to
+\infty}K_n=+\infty$. Thus if we set $d_n:=L_n\cdot K_n$ then
$\lim_{n\to +\infty}d_n=L_0$. On the other hand by \er{dihghigbh}
and \er{fgggnullreslslbjkjkjkkllklkhjhhjkjolkuolio} we obtain
\begin{multline}\label{fgggnullreslslbjkjkjkkllklkhjhhjkjolkuoliojkjkklklk}
\liminf\limits_{n\to +\infty}R_{d_n}:=\liminf\limits_{n\to
+\infty}R_{K_nL_n}>\delta+\\
\frac{1}{L_0}\int\limits_{I_{N}}G\bigg(L_0\,\nabla\{\vec
T\cdot\nabla u_0\}\Big(s_1\vec\nu+\Sigma_{j=1}^{N-1}s_{j+1}\vec
a_j\Big),\,\vec T\cdot\nabla
u_0\Big(s_1\vec\nu+\Sigma_{j=1}^{N-1}s_{j+1}\vec
a_j\Big),\,s_1/|s_1|\bigg)\,ds
\,.
\end{multline}
Thus in particular by the definition of $R_{d_n}$
\begin{multline}\label{fgggnullreslslbjkjkjkkllklkhjhhjkjolkuoliojkjkklklkkjlll}
\liminf\limits_{n\to
+\infty}\Bigg\{\frac{1}{d_n}\int\limits_{I_{N}}G\bigg(d_n\,\nabla\{\vec
T\cdot\nabla u_0\}\Big(s_1\vec\nu+\Sigma_{j=1}^{N-1}s_{j+1}\vec
a_j\Big),\,\vec T\cdot\nabla
u_0\Big(s_1\vec\nu+\Sigma_{j=1}^{N-1}s_{j+1}\vec
a_j\Big),\,s_1/|s_1|\bigg)\,ds
\Bigg\}>\\ \delta+
\frac{1}{L_0}\int\limits_{I_{N}}G\bigg(L_0\,\nabla\{\vec
T\cdot\nabla u_0\}\Big(s_1\vec\nu+\Sigma_{j=1}^{N-1}s_{j+1}\vec
a_j\Big),\,\vec T\cdot\nabla
u_0\Big(s_1\vec\nu+\Sigma_{j=1}^{N-1}s_{j+1}\vec
a_j\Big),\,s_1/|s_1|\bigg)\,ds
\,,
\end{multline}
which contradicts to the identity $\lim_{n\to +\infty}d_n=L_0$. So
we have \er{fgggnullreslslb}.
\end{proof}

\begin{lemma}\label{resatnulo}
Let $\vec T\in \mathcal{L}(\R^{d\times N};\R^m)$ and let $G\in
C^1(\R^{m\times N}\times\R^{m}\times\R)$, satisfying $G\geq 0$, and
let $A_1,A_2,\vec\nu$, $\{\vec a_1,\vec a_2,\ldots,\vec a_{N-1}\}$
be the same as in Definition \ref{defkub} and satisfy
$G(0,A_1,-1)=0$ and $G(0,A_2,1)=0$.
Let $\theta(t)\in C^1(\R,\R)$ satisfies $\theta(t)=0$ if $t\leq
-1/2$ and $\theta(t)=1$ if $t\geq 1/2$. For every $L\in(0,1)$ define
the function $m_L:\R^N\to\R^m$ by
\begin{equation}\label{odnomer}
m_L(y):=\big(1-\theta(y\cdot\vec\nu/L)\big)A_1
+\theta(y\cdot\vec\nu/L)A_2\,.
\end{equation}
Then
\begin{equation}\label{fgggnullresl}
\inf\limits_{L\in(0,1)}\tilde P_L
%
%
%
=\lim\limits_{L\to 0^+}P_L\,,
%
%
\end{equation}
where
\begin{multline}\label{fgggnullreslslbmmmmnkfjgjlllkkkllll}
P_L :=\inf\Bigg\{
\frac{1}{L}\int\limits_{I_{N}}G\bigg(L\nabla(m_L+\vec T\cdot\nabla
v)\Big(s_1\vec\nu+\Sigma_{j=1}^{N-1}s_{j+1}\vec a_j\Big),(m_L+\vec
T\cdot\nabla v)\Big(s_1\vec\nu+\Sigma_{j=1}^{N-1}s_{j+1}\vec
a_j\Big),\frac{s_1}{|s_1|}\bigg)ds:
\\ \quad v\in \mathcal{D}_0\big(\vec\nu,\vec a_1,\ldots,\vec
a_{N-1}\big)\Bigg\}\,,
\end{multline}
and
\begin{multline}\label{fgggnullreslslbmmmmnkfjgjlllkkkllllhoihiooh}
\tilde P_L :=\inf\Bigg\{
\frac{1}{L}\int\limits_{I_{N}}G\bigg(L\nabla(m_L+\vec T\cdot\nabla
v)\Big(s_1\vec\nu+\Sigma_{j=1}^{N-1}s_{j+1}\vec a_j\Big),(m_L+\vec
T\cdot\nabla v)\Big(s_1\vec\nu+\Sigma_{j=1}^{N-1}s_{j+1}\vec
a_j\Big),\frac{s_1}{|s_1|}\bigg)ds:
\\ \quad v\in \mathcal{D}_1\big(\vec\nu,\vec a_1,\ldots,\vec
a_{N-1}\big)\Bigg\}\,,
\end{multline}
with
$\mathcal{D}_1(\cdot)$ and $\mathcal{D}_0(\cdot)$ defined by
Definition \ref{defkub}.
\end{lemma}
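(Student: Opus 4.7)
The plan is to prove $\inf_{L\in(0,1)}\tilde P_L=\lim_{L\to 0^+}P_L$ by combining a scaling argument (mirroring the one at the end of the proof of Lemma~\ref{lijkkhnjjhjh}) with a cutoff construction that converts $\mathcal{D}_1$-competitors into $\mathcal{D}_0$-competitors at asymptotically the same energy.

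First I would establish that $\inf_{L>0}P_L=\inf_{L\in(0,1)}P_L=\lim_{L\to 0^+}P_L$. Given $v\in\mathcal{D}_0(\vec\nu,\vec a_1,\ldots,\vec a_{N-1})$ and an integer $K\ge 1$, the rescaling $\tilde v(y):=\tfrac{1}{K}v(Ky)$ again lies in $\mathcal{D}_0$: its $\vec\nu$-support shrinks to $[-1/(2K),1/(2K)]\subset[-1/2,1/2]$, and $\vec a_j$-periodicity is preserved because $K\in\mathbb{N}$. Using $m_{L/K}(y)=m_L(Ky)$, $\nabla\tilde v(y)=\nabla v(Ky)$, the change of variables $s'=Ks$, the $\vec a_j$-periodicity of the integrand, and the vanishing of the integrand outside $|s_1|<1/2$ (because $G(0,A_1,-1)=G(0,A_2,1)=0$), a direct calculation shows that the $P_{L/K}$-energy of $\tilde v$ equals the $P_L$-energy of $v$. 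Hence $P_{L/K}\le P_L$ for every integer $K\ge 1$. Running the contradiction argument from the final part of the proof of Lemma~\ref{lijkkhnjjhjh} with $K_n:=\lfloor L_0/L_n\rfloor$ (so that $K_nL_n\to L_0$) and exploiting the continuity in $L$ of the energy of a fixed competitor, this monotonicity forces $\liminf_{L\to 0^+}P_L=\limsup_{L\to 0^+}P_L=\inf_{L>0}P_L$.

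Since $\mathcal{D}_0\subset\mathcal{D}_1$ we have $\tilde P_L\le P_L$ for every $L$, and therefore $\inf_{L\in(0,1)}\tilde P_L\le \lim_{L\to 0^+}P_L$. For the reverse inequality, fix any $v\in\mathcal{D}_1(\vec T,\vec\nu,\vec a_1,\ldots,\vec a_{N-1})$ and any $L\in(0,1)$, and let $E$ denote its $\tilde P_L$-energy. For each integer $K\ge 2$ set $v^{(K)}(y):=\tfrac{1}{K}v(Ky)$: the same verification shows $v^{(K)}\in\mathcal{D}_1$, with $\vec T\cdot\nabla v^{(K)}$ supported in $\{|y\cdot\vec\nu|\le 1/(2K)\}$, $\|v^{(K)}\|_\infty\le\|v\|_\infty/K$, and $\|\nabla v^{(K)}\|_\infty\le\operatorname{Lip}(v)$; its $\tilde P_{L/K}$-energy equals $E$ by the scaling identity. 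Pick once and for all $\chi\in C^\infty(\R)$ with $\chi\equiv 1$ on $[-1/4,1/4]$ and $\chi\equiv 0$ outside $(-1/2,1/2)$, and define $w_K(y):=\chi(y\cdot\vec\nu)\,v^{(K)}(y)\in\mathcal{D}_0$. On $\{|s_1|\le 1/4\}$ the cutoff equals $1$, so $\vec T\cdot\nabla w_K=\vec T\cdot\nabla v^{(K)}$ there; since this region (for $K\ge 2$) already contains both the support of $\vec T\cdot\nabla v^{(K)}$ and the entire transition of $m_{L/K}$, its contribution to the $P_{L/K}$-energy of $w_K$ is exactly $E$. On the transition annulus $\Omega_2:=\{1/4\le|s_1|\le 1/2\}$, $\vec T\cdot\nabla v^{(K)}$ vanishes and $m_{L/K}\equiv A_\pm$, so
\begin{equation*}
\vec T\cdot\nabla w_K=\chi'(s_1)\,\vec T(v^{(K)}\otimes\vec\nu),\qquad \nabla\bigl(\vec T\cdot\nabla w_K\bigr)=\chi''(s_1)\,\vec\nu\otimes\vec T(v^{(K)}\otimes\vec\nu)+\chi'(s_1)\,\vec T(\nabla v^{(K)}\otimes\vec\nu),
\end{equation*}
whence $|\vec T\cdot\nabla w_K|\le C/K$ and $|(L/K)\nabla(\vec T\cdot\nabla w_K)|\le CL/K$ uniformly on $\Omega_2$. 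Because $G\in C^1$, $G\ge 0$, and $G(0,A_\pm,\pm 1)=0$, the partial derivatives of $G$ in its first two arguments vanish at $(0,A_\pm,\pm 1)$, and by uniform continuity there exists $\epsilon_K\to 0$ such that $G(a,b,\pm 1)\le \epsilon_K(|a|+|b-A_\pm|)$ on the image of $\Omega_2$ under the arguments above. Consequently the integrand on $\Omega_2$ is $\le C\epsilon_K/K$; integrating over the bounded-volume annulus and multiplying by the prefactor $K/L$ yields total error $\le C\epsilon_K/L\to 0$ as $K\to\infty$. Hence $P_{L/K}\le E+o(1)$, and letting $K\to\infty$ (so $L/K\to 0^+$) gives $\lim_{L'\to 0^+}P_{L'}\le E$. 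Since $v\in\mathcal{D}_1$ and $L\in(0,1)$ are arbitrary, taking infima yields $\lim_{L\to 0^+}P_L\le\inf_{L\in(0,1)}\tilde P_L$, completing the proof.

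The main obstacle is the cutoff error estimate on the transition annulus: since $v\in\mathcal{D}_1$ is only required to have $\vec T\cdot\nabla v=0$ (not $v=0$) outside the strip, the cutoff $\chi v^{(K)}$ creates a genuine contribution $\chi'\,\vec T(v^{(K)}\otimes\vec\nu)$ wherever $\chi'\ne 0$. The estimate works because the $K$-fold scaling makes both this perturbation and its gradient of order $1/K$ uniformly, while the minimum condition $G(0,A_\pm,\pm 1)=0$ combined with the $C^1$ regularity of $G$ upgrades the pointwise integrand estimate from $O(1/K)$ to $o(1/K)$; it is precisely this strict smallness that lets the error survive multiplication by the prefactor $K/L$ for fixed $L$.
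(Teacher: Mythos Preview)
Your proof is correct and follows essentially the same strategy as the paper: rescale a fixed $\mathcal{D}_1$-competitor by $1/K$, multiply by a smooth cutoff in the $\vec\nu$-direction to land in $\mathcal{D}_0$, and control the cutoff error on the annulus using that $G\ge 0$ together with $G(0,A_\pm,\pm 1)=0$ forces $\nabla G(0,A_\pm,\pm 1)=0$, so the perturbation of size $O(1/K)$ produces an integrand of size $o(1/K)$ that survives the prefactor $K/L$. The one organizational difference is your preliminary step proving $\lim_{L\to 0^+}P_L$ exists via the monotonicity $P_{L/K}\le P_L$ and the contradiction argument of Lemma~\ref{lijkkhnjjhjh}; the paper instead deduces existence of the limit a posteriori from the two-sided bound, and its passage in \eqref{fgggnullreslslbnernnbbb} from the discrete subsequence $L/K$ to the full $\limsup_{l\to 0^+}P_l$ is terse---your explicit monotonicity step is exactly what justifies it.
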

\begin{proof}
Since $\mathcal{D}_0\big(\vec\nu,\vec a_1,\ldots,\vec
a_{N-1}\big)\subset\mathcal{D}_1\big(\vec T,\vec\nu,\vec
a_1,\ldots,\vec a_{N-1}\big)$
we deduce
\begin{equation}\label{fgggnullreslslbner}
\inf\limits_{L\in(0,1)}\tilde P_L \leq\liminf\limits_{L\to
0^+}P_L\,.
\end{equation}
%
%
%
%
%
%
%
%

Next let $L\in(0,1)$ and $v\in \mathcal{D}_1\big(\vec T,\vec\nu,\vec
a_1,\ldots,\vec a_{N-1}\big)$ that we now fix. As before for every
positive integer $K\geq 2$ define
\begin{equation}\label{opreresfunkapjjj}
v_K(y):=\frac{1}{K}v(Ky)\in\mathcal{D}_1\big(\vec T,\vec\nu,\vec
a_1,\ldots,\vec a_{N-1}\big)\,.
\end{equation}
Then, changing variables in the integration and using the
periodicity condition in the definition of $\mathcal{D}_1(\cdot)$
together with the definition of $m_L$,
we obtain
\begin{multline}\label{esheodnonervdrugnnnvvvvuuuk}
\frac{K}{L}\int\limits_{I_{N}}G\bigg(\frac{L}{K}\nabla (m_{L/K}+\vec
T\cdot\nabla v_K)\Big(s_1\vec\nu+\Sigma_{j=1}^{N-1}s_{j+1}\vec
a_j\Big),(m_{L/K}+\vec T\cdot\nabla
v_K)\Big(s_1\vec\nu+\Sigma_{j=1}^{N-1}s_{j+1}\vec
a_j\Big),\frac{s_1}{|s_1|}\bigg)ds
\\=\frac{1}{L}\int\limits_{I_{N}}G\bigg(L\,\nabla(m_{L}+\vec T\cdot\nabla
v)\Big(s_1\vec\nu+\Sigma_{j=1}^{N-1}s_{j+1}\vec a_j\Big),\,
(m_{L}+\vec T\cdot\nabla
v)\Big(s_1\vec\nu+\Sigma_{j=1}^{N-1}s_{j+1}\vec a_j\Big),\,\frac
{s_1}{|s_1|}\bigg)\,ds\,.
\end{multline}
Next let $\zeta(t)\in C^\infty_c(\R,\R)$ satisfies
$\supp\zeta\subset[-1/2,1/2]$, $\zeta(t)=1$ if $t\in [-1/4,1/4]$ and
$0\leq\zeta(t)\leq 1$ for every $t\in\R$. For every integer $K\geq
2$ set
\begin{equation}\label{opreresfunkapjjjtild}
\tilde
v_K(y):=v_K(y)\zeta(y\cdot\vec\nu)\in\mathcal{D}_0\big(\vec\nu,\vec
a_1,\ldots,\vec a_{N-1}\big)\,.
\end{equation}
Then for $K\geq 2$, we have
\begin{equation}\label{gradopreresfunkapjjjtild}
\vec T\cdot\nabla \tilde v_K(y)=\vec T\cdot\bigg(\nabla
v_K(y)\zeta(y\cdot\vec\nu)+
\zeta'(y\cdot\vec\nu)v_K(y)\otimes\vec\nu\bigg)=\vec T\cdot\nabla
v(K y)+ \frac{1}{K}\zeta'(y\cdot\vec\nu)\vec
T\cdot\big\{v(Ky)\otimes\vec\nu\big\}\,,
\end{equation}
and furthermore, for the same case $K\geq 2$,
\begin{multline}\label{gradvtopreresfunkapjjjtild}
\nabla\{\vec T\cdot\nabla \tilde v_K\}(y)=\nabla\{\vec T\cdot\nabla
v_K\}(y)+ \zeta'(y\cdot\vec\nu)\nabla\Big(\vec T\cdot
\{v_K(y)\otimes\vec\nu\}\Big) +\zeta''(y\cdot\vec\nu)\Big(\vec
T\cdot\{v_K(y)\otimes\vec\nu\}\Big)\otimes\vec\nu
\\=K\nabla\{\vec T\cdot\nabla
v\}(Ky)+ \zeta'(y\cdot\vec\nu)\nabla\Big(\vec T\cdot
\{v\otimes\vec\nu\}\Big)(Ky)
+\frac{1}{K}\zeta''(y\cdot\vec\nu)\Big(\vec
T\cdot\{v(Ky)\otimes\vec\nu\}\Big)\otimes\vec\nu\,.
\end{multline}
Thus
\begin{multline}\label{gradvtopreresfunkapjjjtildkl}
(L/K)\nabla\{\vec T\cdot\nabla \tilde v_K\}(y)=\\ L\nabla\{\vec
T\cdot\nabla v\}(Ky)+\frac{L}{K}\zeta'(y\cdot\vec\nu)\nabla\big(\vec
T\cdot \{v\otimes\vec\nu\}\big)(Ky)+
\frac{L}{K^2}\zeta''(y\cdot\vec\nu)\Big(\vec
T\cdot\{v(Ky)\otimes\vec\nu\}\Big)\otimes\vec\nu\,.
\end{multline}
On the other hand by \er{odnomer} we have
\begin{equation}\label{odnomergradkkk}
\begin{split}
m_{L/K}(y)=\big(1-\theta(y\cdot\vec\nu\, K/L)\big)A_1
+\theta(y\cdot\vec\nu\, K/L)A_2\quad\forall y\in\R^N\,,\\
(L/K)\nabla m_{L/K}(y)=\theta'(y\cdot\vec\nu\,
K/L)\,(A_2-A_1)\otimes\vec\nu\quad\forall y\in\R^N\,.
\end{split}
\end{equation}
However, we have $\zeta,\zeta',\zeta'',\theta,\theta'\in L^\infty$
and $v,\nabla v,\nabla\{\vec T\cdot\nabla v\}\in L^\infty$.
Moreover, $G\big(0,A_r,(-1)^r\big)=0$ and since $G\geq 0$ also
$\nabla G\big(0,A_r,(-1)^r\big)=0$ for every $r\in\{1,2\}$.
Therefore, by
\er{odnomergradkkk}, \er{gradopreresfunkapjjjtild} and
\er{gradvtopreresfunkapjjjtildkl}
we have
\begin{multline}\label{esheodnonervdrugnnnvvvvuuuknew}
\frac{K}{L}\int\limits_{I_{N}}G\bigg(\frac{L}{K}\nabla (m_{L/K}+\vec
T\cdot\nabla\tilde v_K)\Big(s_1\vec\nu+\Sigma_{j=1}^{N-1}s_{j+1}\vec
a_j\Big),(m_{L/K}+\vec T\cdot\nabla\tilde
v_K)\Big(s_1\vec\nu+\Sigma_{j=1}^{N-1}s_{j+1}\vec
a_j\Big),\frac{s_1}{|s_1|}\bigg)ds\\
%
%
%
%
%
-\frac{K}{L}\int\limits_{I_{N}}G\bigg(\frac{L}{K}\nabla
(m_{L/K}+\vec T\cdot\nabla
v_K)\Big(s_1\vec\nu+\Sigma_{j=1}^{N-1}s_{j+1}\vec
a_j\Big),(m_{L/K}+\vec T\cdot\nabla
v_K)\Big(s_1\vec\nu+\Sigma_{j=1}^{N-1}s_{j+1}\vec
a_j\Big),\frac{s_1}{|s_1|}\bigg)ds
%
\\ \to 0\quad\quad\quad\quad\quad
\text{as}\quad K\to +\infty\,.
\quad\quad\quad\quad\quad\quad\quad\quad\quad\quad\quad\quad\quad
\quad\quad\quad\quad\quad\quad\quad\quad\quad\quad\quad\quad\quad
\end{multline}
Thus by
\er{esheodnonervdrugnnnvvvvuuuk} we have
\begin{multline}\label{esheodnonervdrugnnnvvvvuuuknewnnn}
\lim\limits_{K\to+\infty}\frac{K}{L}\int\limits_{I_{N}}\Bigg\{\\G\bigg(\frac{L}{K}\nabla
(m_{L/K}+\vec T\cdot\nabla\tilde
v_K)\Big(s_1\vec\nu+\Sigma_{j=1}^{N-1}s_{j+1}\vec
a_j\Big),(m_{L/K}+\vec T\cdot\nabla\tilde
v_K)\Big(s_1\vec\nu+\Sigma_{j=1}^{N-1}s_{j+1}\vec
a_j\Big),\frac{s_1}{|s_1|}\bigg)\Bigg\}ds\\
=\frac{1}{L}\int\limits_{I_{N}}G\bigg(L\nabla (m_{L}+\vec
T\cdot\nabla v)\Big(s_1\vec\nu+\Sigma_{j=1}^{N-1}s_{j+1}\vec
a_j\Big),(m_{L}+\vec T\cdot\nabla
v)\Big(s_1\vec\nu+\Sigma_{j=1}^{N-1}s_{j+1}\vec
a_j\Big),\frac{s_1}{|s_1|}\bigg)ds\,.
\end{multline}
However, since $\tilde v_K(y)\in\mathcal{D}_0\big(\vec\nu,\vec
a_1,\ldots,\vec a_{N-1}\big)$,
we obtain
\begin{multline}\label{fgggnullreslslbnernnbbb}
\lim\limits_{K\to +\infty}\Bigg\{\\
\frac{K}{L}\int\limits_{I_{N}}G\bigg(\frac{L}{K}\nabla (m_{L/K}+\vec
T\cdot\nabla\tilde v_K)\Big(s_1\vec\nu+\Sigma_{j=1}^{N-1}s_{j+1}\vec
a_j\Big),(m_{L/K}+\vec T\cdot\nabla\tilde
v_K)\Big(s_1\vec\nu+\Sigma_{j=1}^{N-1}s_{j+1}\vec
a_j\Big),\frac{s_1}{|s_1|}\bigg)ds
\\ \Bigg\}\geq \limsup\limits_{l\to 0^+}P_l
\end{multline}
Therefore, by \er{esheodnonervdrugnnnvvvvuuuknewnnn} and
\er{fgggnullreslslbnernnbbb} we deduce
\begin{multline}\label{fgggnullreslslbnernnbbblim}
\frac{1}{L}\int\limits_{I_{N}}G\bigg(L\nabla (m_{L}+ \vec
T\cdot\nabla v)\Big(s_1\vec\nu+\Sigma_{j=1}^{N-1}s_{j+1}\vec
a_j\Big),(m_{L}+ \vec T\cdot\nabla
v)\Big(s_1\vec\nu+\Sigma_{j=1}^{N-1}s_{j+1}\vec
a_j\Big),\frac{s_1}{|s_1|}\bigg)ds\geq \limsup\limits_{l\to 0^+}P_l
\end{multline}
Then since $L\in(0,1)$ and $v\in \mathcal{D}_1\big(\vec
T,\vec\nu,\vec a_1,\ldots,\vec a_{N-1}\big)$ were arbitrary by
\er{fgggnullreslslbnernnbbblim} we deduce
\begin{equation*}
\inf\limits_{L\in(0,1)}\tilde P_L\geq \limsup\limits_{L\to 0^+}P_L
\end{equation*}
%
%
%
%
%
%
%
%
This inequality together with the reverse inequality
\er{fgggnullreslslbner} gives equality \er{fgggnullresl}.
\end{proof}

\section{Upper bound construction}
\subsection{Primary approximating sequence}
We define a special class of mollifiers that we shall use in the
upper bound construction.
\begin{definition}\label{defper} The class $\mathcal V_0$ consists of
all functions $\eta\in C^\infty_c(\R^N,\R)$ such that $\eta$ is
radial, $\eta\geq 0$, $\supp\eta\subset \bar B_{1/2}(0)$ and
$\int_{\R^N}\eta(z)dz=1$.
\end{definition}
Next let  $\eta(z)\in \mathcal V_0$, $\vec A\in
\mathcal{L}(\R^{d\times N};\R^m)$, $\vec B\in
\mathcal{L}(\R^{d};\R^k)$ and $v\in\mathcal{D}'(\R^N,\R^d)$ such
that $\vec A\cdot\nabla v\in
BV(\R^N,\R^m)\cap L^\infty$ and $\vec B\cdot v\in
W^{1,1}(\R^N,\R^k)\cap L^\infty\cap Lip$. For any $\e>0$ and any
fixed $x\in\R^N$ set
\begin{equation}\label{a4543chejhgufydfdt}
\psi_\e(x):=\frac{1}{\e^N}\Big<\eta\Big(\frac{y-x}{\e}\Big),v(y)\Big>
\end{equation}
(see notations and definitions in the Appendix). Then $\psi_\e\in
C^\infty(\R^N,\R^d)$. Moreover clearly
\begin{equation}\label{a4543che}
\vec A\cdot \{\nabla
\psi_\e(x)\}=\frac{1}{\e^N}\int_{\R^N}\eta\Big(\frac{y-x}{\e}\Big)
\cdot\{\vec A\cdot\nabla v\}(y)dy= \int_{\R^N}\eta(z)\cdot\big\{\vec
A\cdot\nabla v\big\}(x+\e z)dz,
\end{equation}
\begin{equation}\label{a4543chekkk}
\vec
B\cdot\{\psi_\e(x)\}=\frac{1}{\e^N}\int_{\R^N}\eta\Big(\frac{y-x}{\e}\Big)\cdot
\{\vec B\cdot v\}(y)dy= \int_{\R^N}\eta(z)\cdot\{\vec B\cdot
v\}(x+\e z)dz.
\end{equation}
and since $\eta$ is radial, and therefore $\int_{\R^N}\eta(z)zdz=0$,
we have
\begin{multline}\label{a4543chekkkhhhjjkk}
\frac{1}{\e}\Big(\vec B\cdot\{\psi_\e(x)\}-\{\vec B\cdot
v\}(x)\Big)= \int_{\R^N}\eta(z)\frac{\{\vec B\cdot v\}(x+\e
z)-\{\vec B\cdot v\}(x)}{\e}dz=\\
\int_0^1\bigg(\int_{\R^N}\eta(z)z\cdot\nabla\{\vec B\cdot v\}(x+\e
tz)dz\bigg)dt=\int_0^1\bigg(\int_{\R^N}\eta(z)z\cdot\Big(\nabla\{\vec
B\cdot v\}(x+\e tz)-\nabla\{\vec B\cdot v\}(x)\Big)dz\bigg)dt.
\end{multline}

Then by the results of \cite{polgen} and \cite{polmag} we have
$\lim_{\e\to0^+} \vec A\cdot \nabla\psi_\e=\vec A\cdot \nabla v$ in
$L^{p}(\R^N,\R^m)$, $\lim_{\e\to0^+} \e\nabla\{\vec A\cdot
\nabla\psi_\e\}=0$ in $L^{p}(\R^N,\R^{m\times N})$,
$\lim_{\e\to0^+}\vec B\cdot \psi_\e=\vec B\cdot v$ in
$W^{1,p}(\R^N,\R^k)$ and $\lim_{\e\to0^+} (\vec B\cdot \psi_\e-\vec
B\cdot v)/\e=0$ in $L^{p}(\R^N,\R^k)$ for every $p\geq 1$. Moreover,
$\vec A\cdot\nabla \psi_\e$, $\e\nabla\{\vec A\cdot\nabla
\psi_\e\}$, $\vec B\cdot \psi_\e$ and $\nabla\{\vec B\cdot
\psi_\e\}$ are bounded in $L^\infty$,
\begin{equation}\label{bjdfgfghljjklkjkllllkkll}
\limsup\limits_{\e\to 0^+}\frac{1}{\e}\int_\O\Big|\vec A\cdot\nabla
\psi_\e(x)-\{\vec A\cdot \nabla v\}(x)\Big|\,dx<+\infty\,,
\end{equation}
and the following Theorem holds.
\begin{theorem}\label{vtporbound4}
Let $\Omega\subset\R^N$ be an open set.
Furthermore, let $\vec A\in \mathcal{L}(\R^{d\times N};\R^m)$, $\vec
B\in \mathcal{L}(\R^{d};\R^k)$ and let $F\in C^1(\R^{m\times
N}\times \R^m\times \R^k\times\R^q,\R)$, satisfying $F\geq 0$. Let
$f\in BV_{loc}(\R^N,\R^q)\cap L^\infty$ and
$v\in\mathcal{D}'(\R^N,\R^d)$ be such that $\vec A\cdot\nabla v\in
BV(\R^N,\R^m)\cap L^\infty(\R^N,\R^m)$ and $\vec B\cdot v\in
Lip(\R^N,\R^k)\cap W^{1,1}(\R^N,\R^k)\cap L^\infty(\R^N,\R^k)$,
$\|D(\vec A\cdot \nabla v)\|(\partial\Omega)=0$ and $F\big(0,\{\vec
A\cdot\nabla v\}(x),\{\vec B\cdot v\}(x),f(x)\big)=0$ a.e.~in
$\Omega$.
Consider $\psi_\e$, defined by \er{a4543chejhgufydfdt}. Then,
\begin{multline}\label{a1a2a3a4a5a6a7s8}
\lim_{\e\to 0}\frac{1}{\e}\int_\Omega F\Big(\,\e\nabla\big\{\vec
A\cdot\nabla\psi_\e(x)\big\},\, \vec A\cdot\nabla\psi_\e(x),\,\vec
B\cdot\psi_\e(x),\,f(x)\,\Big)dx=\int_{\O\cap J_{\vec A\cdot\nabla
v}}\Bigg\{\\ \int_\R F\bigg(p(t,x)\Big(\{\vec A\cdot\nabla
v\}^+(x)-\{\vec A\cdot \nabla
v\}^-(x)\Big)\otimes\vec\nu(x),\Gamma(t,x),\{\vec B\cdot
v\}(x),\frac{(|t|+t)f^-(x)+(|t|-t)f^+(x)}{2|t|}
\bigg)dt\\
\Bigg\}d \mathcal H^{N-1}(x),
\end{multline}
(with $\vec\nu(x)$ denoting the orientation vector of $J_{\vec
A\cdot\nabla v}$) where
\begin{equation}\label{matrixgamadef7845389yuj9}
\Gamma(t,x):=\bigg(\int_{-\infty}^t p(s,x)ds\bigg)\{\vec
A\cdot\nabla v\}^-(x)+\bigg(\int_t^{+\infty} p(s,x)ds\bigg)\{\vec
A\cdot\nabla v\}^+(x)\,,
\end{equation}
with $p(t,x)$ is defined by
\begin{equation}\label{defp}
p(t,x):=\int_{H^0_{\vec \nu(x)}}\eta(t\vec \nu(x)+y)\,d \mathcal
H^{N-1}(y)\,,
\end{equation}
and we assume that the
 orientation of $J_f$ coincides with the orientation of $J_{\vec A\cdot\nabla v}$ $\mathcal{H}^{N-1}$
a.e. on $J_f\cap J_{\vec A\cdot\nabla v}$.
\end{theorem}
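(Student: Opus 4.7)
The plan is to follow the same localization-and-blow-up scheme that yielded the analogous statements (e.g.\ Lemma~L2009moll128 referenced above) in the author's earlier works \cite{polgen},\cite{polmag}: reduce the integral $\frac{1}{\e}\int_\Omega F(\cdots)\,dx$ to an $O(\e)$-neighbourhood of $J_{\vec A\cdot\nabla v}$, and then compute its pointwise limit on that $(N{-}1)$-rectifiable set by rescaling. Since the $L^p$/$L^\infty$ convergence properties of $\psi_\e$ listed just before the statement are already in hand, the task reduces to identifying where the energy concentrates and performing the blow-up.

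First I would show that the contribution from $\{x\in\Omega:\mathrm{dist}(x,J_{\vec A\cdot\nabla v})\gg\e\}$ is negligible. Since $F\ge 0$ and $F(0,\vec A\cdot\nabla v,\vec B\cdot v,f)\equiv 0$ a.e.\ in $\Omega$, also $\partial_1 F$ vanishes at $(0,\vec A\cdot\nabla v,\vec B\cdot v,f)$, so a Taylor expansion in the first slot together with $C^1$-smoothness in the others gives the pointwise estimate
\begin{equation*}
F\bigl(\e\nabla\{\vec A\cdot\nabla\psi_\e\},\vec A\cdot\nabla\psi_\e,\vec B\cdot\psi_\e,f\bigr)\le C\Bigl(\e^2|\nabla\{\vec A\cdot\nabla\psi_\e\}|^2+|\vec A\cdot\nabla\psi_\e-\vec A\cdot\nabla v|+|\vec B\cdot\psi_\e-\vec B\cdot v|\Bigr),
\end{equation*}
using the uniform $L^\infty$ bounds. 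Dividing by $\e$ and integrating, the first summand is $O(\e)$ thanks to $\e\nabla\{\vec A\cdot\nabla\psi_\e\}$ being bounded in $L^\infty$ and vanishing in $L^p$ outside the slab; the second is controlled by \er{bjdfgfghljjklkjkllllkkll}; and the third by the analogous bound obtained from \er{a4543chekkkhhhjjkk} combined with $\vec B\cdot v\in\mathrm{Lip}$. Next I would perform a blow-up at $\mathcal H^{N-1}$-a.e.\ $x_0\in J_{\vec A\cdot\nabla v}\cap\Omega$. Rescaling $x=x_0+\e z$, the identities \er{a4543che}--\er{a4543chekkkhhhjjkk} together with the fact that $\vec A\cdot\nabla v$ is, in the blow-up, asymptotic to the step function separating $\{\vec A\cdot\nabla v\}^\pm(x_0)$ across $\{z\cdot\vec\nu(x_0)=0\}$, yield $\vec A\cdot\nabla\psi_\e(x_0+\e z)\to\Gamma(z\cdot\vec\nu(x_0),x_0)$ and $\e\nabla\{\vec A\cdot\nabla\psi_\e\}(x_0+\e z)\to p(z\cdot\vec\nu(x_0),x_0)[\{\vec A\cdot\nabla v\}^+(x_0)-\{\vec A\cdot\nabla v\}^-(x_0)]\otimes\vec\nu(x_0)$; the Lipschitz property of $\vec B\cdot v$ gives $\vec B\cdot\psi_\e(x_0+\e z)\to\vec B\cdot v(x_0)$; and $f(x_0+\e z)\to f^\pm(x_0)$ according to the sign of $z\cdot\vec\nu(x_0)$, using either approximate continuity of $f$ at $x_0\notin J_f$ or the orientation-matching hypothesis at $x_0\in J_f\cap J_{\vec A\cdot\nabla v}$. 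Continuity of $F$, Fubini along $\vec\nu(x_0)$ invoking the definition \er{defp} for the tangential integration, and dominated convergence imply that each slab of thickness $O(\e)$ over a small patch of $J_{\vec A\cdot\nabla v}$ contributes exactly the pointwise integrand of \er{a1a2a3a4a5a6a7s8} at $x_0$.

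Assembling these two estimates via the $\mathcal H^{N-1}$-rectifiability of $J_{\vec A\cdot\nabla v}$ (which permits almost disjoint coverings by such flat slabs) and summing yields \er{a1a2a3a4a5a6a7s8}. The principal technical obstacle is to show that the Cantor part $D^c(\vec A\cdot\nabla v)$ contributes nothing in the limit: carriers of $|D^c(\vec A\cdot\nabla v)|$ have zero $\mathcal H^{N-1}$-measure, so their $\e$-neighbourhoods have Lebesgue measure $o(\e)$, and one combines the trivial bound $F\le C$ (giving only $O(\e^{-1})\cdot o(\e)=o(1)$ with an unfavourable constant) with the refinement coming from $F$ and $\partial_1 F$ vanishing at the target values, exactly as done in \cite{polgen}, to absorb the factor $\e^{-1}$. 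The hypothesis $\|D(\vec A\cdot\nabla v)\|(\partial\Omega)=0$ prevents any loss of mass through $\partial\Omega$ under these slab coverings, and the matching-orientations hypothesis on $J_f\cap J_{\vec A\cdot\nabla v}$ makes the choice of $f^+$ versus $f^-$ in the blow-up consistent with the sign of $z\cdot\vec\nu(x_0)$.
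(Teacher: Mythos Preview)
The paper does not give its own proof of this theorem; immediately before the statement it says ``by the results of \cite{polgen} and \cite{polmag} \ldots the following Theorem holds,'' so the result is simply quoted from those earlier works. Your sketch follows exactly the localization-and-blow-up scheme of those papers and is the right outline.

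One correction to your heuristic for the Cantor part: it is \emph{not} true that the carrier of $|D^c(\vec A\cdot\nabla v)|$ has zero $\mathcal H^{N-1}$-measure; the correct structural fact (Theorem~\ref{vtTh}) is the opposite, namely that $D^c$ \emph{vanishes} on every $\mathcal H^{N-1}$-$\sigma$-finite set, so its carrier is typically a set of Hausdorff dimension strictly between $N-1$ and $N$. In particular the claim that its $\e$-neighbourhood has Lebesgue measure $o(\e)$ is unfounded. The actual mechanism in \cite{polgen} is the one you allude to at the end: the $C^1$ hypothesis on $F$ together with $F(0,\vec A\cdot\nabla v,\vec B\cdot v,f)=0$ forces the integrand itself to be $o(1)$ pointwise away from $J_{\vec A\cdot\nabla v}$ after dividing by $\e$, and one controls the total contribution of the diffuse part of $D(\vec A\cdot\nabla v)$ through the bound \er{bjdfgfghljjklkjkllllkkll} rather than through any geometric smallness of a carrier set.
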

\subsection{Modification of the primary sequence near the single elementary surface}
Next assume we are in the settings of Theorem \ref{vtporbound4}. Let
$S$ be $(N-1)$-dimensional hypersurface satisfying $\ov
S\subset\subset\O$. Moreover, assume that for some function
$g(x')\in C^1(\R^{N-1},\R)$ and a bounded open set
$\mathcal{U}\subset\R^{N-1}$ we have
\begin{equation} \label{L2009aprplmingg23128}
S=\{x=(x_1,x'):\,x'\in\mathcal{U},\;x_1=g(x')\}\,,
\end{equation}
and
\begin{equation} \label{L2009aprplmingg23128normal}
\vec n(x')=(1,-\nabla_{x'}g(x'))/\sqrt{1+|\nabla_{x'}g(x')|^2}\,,
\end{equation}
where $\vec n(x')$ is a normal vector to $S$ at the point
$\big(g(x'),x'\big)$.
Using Theorem \ref{petTh} we deduce that there exists a set
$D\subset S$ such that $\mathcal{H}^{N-1}(S\setminus D)=0$ and for
every $x\in D$ we have
\begin{multline}\label{L2009surfhh8128odno888} \lim\limits_{\rho\to
0^+}\frac{\int_{B_\rho^+(x,\vec n(x'))}\Big(\big|\{\vec A\cdot\nabla
v\}(y)-\{\vec A\cdot\nabla
v\}^+(x)\big|+\big|f(y)-f^+(x)\big|\Big)\,dy}
{\mathcal{L}^N\big(B_\rho(x)\big)}=0\,,\\ \lim\limits_{\rho\to
0^+}\frac{\int_{B_\rho^-(x,\vec n(x'))}\Big(\big|\{\vec A\cdot\nabla
v\}(y)-\{\vec A\cdot\nabla
v\}^-(x)\big|+\big|f(y)-f^-(x)\big|\Big)\,dy}
{\mathcal{L}^N\big(B_\rho(x)\big)}=0\,,
\end{multline}
where we use the convention $\nabla v^+(x)=\nabla v^-(x)=
\nabla\tilde{v}(x)$ and $f^+(x)=f^-(x)= \tilde{f}(x)$ at a point of
approximate continuity $x$.

Consider a radial function $\theta_0(z')=\kappa(|z'|)\in
C^\infty_c(\R^{N-1},\R)$ such that $\supp \theta_0\subset\subset
B_1(0)$, $\theta_0\geq 0$ and $\int_{\R^{N-1}}\theta_0(z')dz'=1$.
Then for any $\e>0$ define the function
$g_{\e}(x'):\R^{N-1}\to\R$ by
\begin{equation}
\label{L2009psije12387878}
g_{\e}(x'):=\frac{1}{\e^{N-1}}\int_{\R^{N-1}}\theta_0\Big(\frac{y'-x'}{\e}\Big)
g(y')dy'= \int_{\R^{N-1}}\theta_0(z')g(x'+\e z')\,dz',\quad\forall
x'\in\R^{N-1}\,.
\end{equation}
Since $\theta_0$ is radial, we obtain
$\int_{\R^{N-1}}\theta_0(z')z'\,dz'=0$. Therefore, since $g\in C^1$,
clearly we have
\begin{multline}
\label{L2009psijebn12387878}
\frac{1}{\e}\,\sup_{x'\in\mathcal{U}}\,|g_{\e}(x')-g(x')|\,\to
0\quad\text{as }\e\to 0^+\,,
\\\sup_{x'\in\mathcal{U}}\,|\nabla g_{\e}(x')-\nabla g(x')|\,\to
0\quad\text{as }\e\to 0^+\,,
\\
\sup_{x'\in\mathcal{U}}\,|\e\nabla^2g_{\e}(x')|\,\to 0\quad\text{as
}\e\to 0^+\,.
\end{multline}
Let $\vec e_1,\vec e_2,\ldots,\vec e_N$ be a standard orthonormal
base in $\R^N$ and let $h_0(y,x')$ be an arbitrary
$C^\infty(\R^N\times\R^{N-1},\R^d)$ function satisfying
\begin{equation}\label{obnulenie}
h_0(y,x')=0\;\text{ if }\;|y_1|\geq 1/2,\text{ and }\;h_0(y+\vec
e_j,x')=h_0(y,x')\;\;\forall j=2,\ldots, N\,,
\end{equation}
and
\begin{equation}\label{nositel}
\ov{\supp h_0(y,x')}\subset\R^N\times \mathcal{U}\,.
\end{equation}
Denote the set of such functions by $\mathcal{P}(\mathcal{U})$. Let
$L>0$ be an arbitrary number and let
\begin{equation}\label{nulltoone}
h(y,x'):=\frac{1}{L}h_0(Ly,x')\,.
\end{equation}
Then $h\in C^\infty(\R^N\times\R^{N-1},\R^d)$ satisfies
\begin{equation}\label{obnulenie1}
h(y,x')=0\;\text{ if }\;|y_1|\geq 1/(2L),\text{ and
}\;h\big(y+(1/L)\vec e_j,x'\big)=h(y,x')\;\;\forall j=2,\ldots, N\,,
\end{equation}
and
\begin{equation}\label{nositel1}
\ov{\supp h(y,x')}\subset\R^N\times \mathcal{U}\,.
\end{equation}
For any $\e>0$ define the function $u_{\e}(x)\in
C^\infty(\R^N,\R^d)$ by
\begin{equation}
\label{L2009test1288}
u_{\e}(x):=\psi_{\e}(x)+\e
h\bigg(\Big(\frac{x_1-g_{\e}(x')}{\e},\frac{x'}{\e}\Big),x'\bigg)\,,
\end{equation}
where $\psi_{\e}$ is defined  by \er{a4543che}.
\begin{lemma}\label{mnogohypsi88hkkhgkjgj}
Let $S$, $g$, $\mathcal{U}$, $\vec n$, $\theta_0$, $g_\e$,
$\mathcal{P}(\mathcal{U})$, $h_0\in\mathcal{P}(\mathcal{U})$, $L$,
$h$ and $u_\e$ be as above. Then
$\lim_{\e\to0^+}\vec A\cdot\nabla u_\e=\vec A\cdot \nabla v$ in
$L^p(\R^N,\R^m)$, $\lim_{\e\to0^+} \e\nabla\{\vec A\cdot \nabla
u_\e\}=0$ in $L^{p}(\R^N,\R^{m\times N})$, $\lim_{\e\to0^+}\vec
B\cdot u_\e=\vec B\cdot v$ in $W^{1,p}(\R^N,\R^k)$ and
$\lim_{\e\to0^+}(\vec B\cdot u_\e-\vec B\cdot v)/\e=0$ in
$L^{p}(\R^N,\R^k)$ for every $p\geq 1$. Moreover, $\vec A\cdot\nabla
u_\e$, $\e\nabla\{\vec A\cdot\nabla u_\e\}$, $\vec B\cdot u_\e$ and
$\nabla\{\vec B\cdot u_\e\}$ are bounded in $L^\infty$, and since
for small $\e>0$ we have $\int_\O \vec A\cdot\nabla u_\e
dx=\int_\O\vec A\cdot\nabla\psi_\e dx$ by
\er{bjdfgfghljjklkjkllllkkll}, we obtain
\begin{equation}\label{bjdfgfghljjklkjkllllkkllkkkkkkllllkkll}
\limsup\limits_{\e\to 0^+}\Bigg|\frac{1}{\e}\bigg(\int_\O\vec
A\cdot\nabla u_\e(x)\,dx-\int_\O\{\vec A\cdot \nabla
v\}(x)\,dx\bigg)\Bigg|<+\infty\,,
\end{equation}
\end{lemma}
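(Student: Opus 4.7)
The plan is to write $u_\e = \psi_\e + \phi_\e$ where $\phi_\e(x) := \e\, h\big((x_1-g_\e(x'))/\e,\,x'/\e,\,x'\big)$ and to verify that the correction $\phi_\e$ is negligible in all the relevant norms while preserving the $L^\infty$ bounds. Since every stated property of $\psi_\e$ is already known (from the discussion recalled just before Theorem \ref{vtporbound4}), everything reduces to controlling $\phi_\e$ and its first two derivatives.

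First I would carry out the chain rule carefully for $\phi_\e$, using the two-scale ansatz. Because $h_0\in\mathcal{P}(\mathcal U)$ is smooth, compactly supported in $y_1$ and $x'$ and periodic in $y_2,\dots,y_N$, the functions $h,\nabla h,\nabla^2 h$ are bounded on $\R^N\times\R^{N-1}$. Setting $Y(x):=((x_1-g_\e(x'))/\e,x'/\e)$, the two-scale structure gives $\partial_{x_1}\phi_\e=(\partial_{y_1}h)(Y(x),x')$ and, for $j\ge 2$, $\partial_{x_j}\phi_\e = -(\partial_j g_\e)(\partial_{y_1}h)(Y(x),x') + (\partial_{y_j}h)(Y(x),x') + \e(\partial_{x'_j}h)(Y(x),x')$, so $\nabla\phi_\e$ is bounded in $L^\infty$ uniformly in $\e$ thanks to \eqref{L2009psijebn12387878}. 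Differentiating again introduces a factor $1/\e$ from each application of $\nabla Y$, but this is exactly compensated by the outer $\e$, so $\e\nabla^2\phi_\e$ is likewise bounded in $L^\infty$ uniformly in $\e$ (using once more the boundedness of $\nabla g_\e$ and the fact that $\e\nabla^2 g_\e\to 0$ from \eqref{L2009psijebn12387878}). Combined with the corresponding $L^\infty$ bounds already known for $\psi_\e$, this yields the required $L^\infty$ bounds on $\vec A\cdot\nabla u_\e$, $\e\nabla\{\vec A\cdot\nabla u_\e\}$, $\vec B\cdot u_\e$ and $\nabla\{\vec B\cdot u_\e\}$.

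Next I would exploit the support structure. By the vanishing condition \eqref{obnulenie1} together with $\ov{\supp h_0}\subset\R^N\times\mathcal U$, $\phi_\e$ is supported in the strip $\{x:\,x'\in\mathcal U,\ |x_1-g_\e(x')|\le \e/(2L)\}$, whose Lebesgue measure is $O(\e)$. The uniform $L^\infty$ bounds above therefore force $\|\phi_\e\|_{L^p}=O(\e^{1+1/p})$, $\|\nabla\phi_\e\|_{L^p}=O(\e^{1/p})$, $\|\e\nabla^2\phi_\e\|_{L^p}=O(\e^{1/p})$ and $\|\phi_\e/\e\|_{L^p}=O(\e^{1/p})$, each tending to $0$ as $\e\to 0^+$ for every $p\ge 1$. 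Writing $\vec A\cdot\nabla u_\e = \vec A\cdot\nabla\psi_\e + \vec A\cdot\nabla\phi_\e$, $\e\nabla\{\vec A\cdot\nabla u_\e\} = \e\nabla\{\vec A\cdot\nabla\psi_\e\} + \e\,\vec A\cdot\nabla^2\phi_\e$, $\vec B\cdot u_\e = \vec B\cdot\psi_\e + \vec B\cdot\phi_\e$, and $(\vec B\cdot u_\e - \vec B\cdot v)/\e = (\vec B\cdot\psi_\e - \vec B\cdot v)/\e + \vec B\cdot h(Y(x),x')$ (the last term again being $L^\infty$-bounded and supported in a set of measure $O(\e)$), all the stated $L^p$ and $W^{1,p}$ convergences follow immediately from the analogous convergences for $\psi_\e$.

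Finally, for the integral identity \eqref{bjdfgfghljjklkjkllllkkllkkkkkkllllkkll}, the crucial observation is that, since $\ov S\subset\subset\O$ and $g_\e\to g$ uniformly by \eqref{L2009psijebn12387878}, for all sufficiently small $\e$ the support of $\phi_\e$ is a bounded set compactly contained in $\O$. Consequently $\phi_\e\in C^\infty_c(\O,\R^d)$ and integration by parts yields $\int_\O\vec A\cdot\nabla\phi_\e\,dx = \vec A\cdot\int_{\R^N}\nabla\phi_\e\,dx = 0$, so that $\int_\O\vec A\cdot\nabla u_\e\,dx = \int_\O\vec A\cdot\nabla\psi_\e\,dx$ and \eqref{bjdfgfghljjklkjkllllkkllkkkkkkllllkkll} is immediate from \eqref{bjdfgfghljjklkjkllllkkll}. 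The only real technical delicacy in the whole argument is the bookkeeping of the $\e$-factors under two-scale differentiation; once that is in hand, everything else reduces to the elementary principle that an $L^\infty$-bounded function supported on a set of measure $O(\e)$ is arbitrarily small in every $L^p$ as $\e\to 0^+$.
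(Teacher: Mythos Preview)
Your proof is correct and follows essentially the same approach as the paper: define the correction $r_\e=\phi_\e$, use the chain rule to obtain uniform $L^\infty$ bounds on $r_\e/\e$, $\nabla r_\e$ and $\e\nabla^2 r_\e$, and then exploit that the support of $r_\e$ lies in a strip of Lebesgue measure $O(\e)$ to force all the $L^p$ norms to vanish. You are in fact slightly more explicit than the paper, which simply asserts the $L^\infty$ bound without writing out the chain rule and leaves the integral identity $\int_\O\vec A\cdot\nabla u_\e=\int_\O\vec A\cdot\nabla\psi_\e$ as a remark in the statement rather than proving it.
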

\begin{proof}
Denote
\begin{equation}\label{L2009test1288kllhguitttttu}
r_{\e}(x):=\e
h\bigg(\Big(\frac{x_1-g_{\e}(x')}{\e},\frac{x'}{\e}\Big),x'\bigg)\,.
\end{equation}
It is clear that there exists $M>0$ such that
\begin{equation}\label{iugiuiuhuigyffffftyf}
\bigg|\frac{r_\e(x)}{\e}\bigg|+\Big|\nabla
r_\e(x)\Big|+\Big|\e\nabla^2 r_\e(x)\Big|\leq M\quad\forall
x\in\R^N,\;\,\forall\e>0.
\end{equation}
It is sufficient to prove that $\lim_{\e\to0^+} \e\nabla r^2_\e=0$
in $L^{p}(\R^N,\R^{d\times N\times N})$, $\lim_{\e\to0^+}\nabla
r_\e=0$ in $L^{p}(\R^N,\R^{d\times N})$ and
$\lim_{\e\to0^+}r_\e/\e=0$ in $L^{p}(\R^N,\R^d)$ for every $p\geq
1$. Indeed,
%
%
%
%
%
%
by the first equality in \er{obnulenie1} we have
$$r_{\e}(x)=\e h\bigg(\Big(\frac{x_1-g_{\e}(x')}{\e},\frac{x'}{\e}\Big),x'\bigg)=0\quad\text{if}\quad\big|x_1-g_{\e}(x')\big|>\frac{\e}{2L}.$$
Therefore,
\begin{equation}\label{gyugughhhgghjghjghj}
r_\e(x)=0,\;\, \nabla r_\e(x)=0\;\,\text{and}\;\, \nabla^2
r_\e(x)=0\quad\text{if}\quad\big|x_1-g_{\e}(x')\big|>\frac{\e}{2L}.
\end{equation}
Thus, by \er{nositel1}, \er{iugiuiuhuigyffffftyf} and
\er{gyugughhhgghjghjghj} we obtain that there exists a compact
$\widetilde K\subset\subset\R^{N-1}$, independent on $\e$, such that
\begin{multline*}\int\limits_{\R^N}\Bigg(\bigg|\frac{r_\e(x)}{\e}\bigg|^p+\Big|\nabla
r_\e(x)\Big|^p+\Big|\e\nabla^2 r_\e(x)\Big|^p\Bigg)dx=\\
\int\limits_{\widetilde
K}\int\limits_{g_{\e}(x')-\frac{\e}{2L}}^{g_{\e}(x')+\frac{\e}{2L}}\Bigg(\bigg|\frac{r_\e(x)}{\e}\bigg|^p+\Big|\nabla
r_\e(x)\Big|^p+\Big|\e\nabla^2 r_\e(x)\Big|^p\Bigg)dx_1dx' \leq
\frac{3M^p}{L}\,\e\mathcal{L}^{(N-1)}(\widetilde K)\to
0\quad\text{as}\quad\e\to 0^+.
\end{multline*}
\end{proof}
\begin{proposition}\label{mnogohypsi88}
Let $S$, $g$, $\mathcal{U}$, $\vec n$, $\theta_0$, $g_\e$,
$\mathcal{P}(\mathcal{U})$, $h_0\in\mathcal{P}(\mathcal{U})$, $L$,
$h$ and $u_\e$ be as above and let $v$ $F$, $f$, $\vec A$, $\vec B$,
$\psi_\e$, $p$ and $\Gamma$ be the same as in Theorem
\ref{vtporbound4}. Then
\begin{multline}
\label{L2009limew03zeta71288888Contggiuuggyyyynew88789999vprop78899shtrih}
\lim\limits_{\e\to 0}\int\limits_{\O}\frac{1}{\e}
\bigg\{F\Big(\e\nabla\{\vec A\cdot\nabla \psi_{\e}\},\,\vec
A\cdot\nabla \psi_{\e},\,\vec
B\cdot\psi_{\e},\,f\Big)-F\Big(\e\nabla\{\vec A\cdot \nabla
u_{\e}\},\,\vec A\cdot\nabla u_{\e},\,\vec B\cdot
u_{\e},\,f\Big)\bigg\}\,dx=
\int\limits_{S}\int\limits_{\R}\int\limits_{I_1^{N-1}}\Bigg\{\\
\frac{1}{L}F\bigg(p(-s_1/L,x) \Big\{(\vec A\cdot\nabla v)^+(x)-(\vec
A\cdot\nabla v)^-(x)\Big\}\otimes\vec
n(x'),\,\Gamma(-s_1/L,x),\,\vec B\cdot
v(x),\,\zeta\big(s_1,f^+(x),f^-(x)\big)\bigg)
\\-\frac{1}{L}F\bigg(p(-s_1/L,x)\Big\{(\vec A\cdot\nabla v)^+(x)-(\vec A\cdot\nabla
v)^-(x)\Big\}\otimes\vec n(x')+L\nabla_y\{\vec A\cdot\nabla_y\tilde h\}\big(Q_{x'}(s),x'\big),\\
\,\Gamma(-s_1/L,x)+\vec A\cdot\nabla_y\tilde
h\big(Q_{x'}(s),x'\big),\,\vec B\cdot
v(x),\,\zeta\big(s_1,f^+(x),f^-(x)\big) \bigg)
\Bigg\}\,ds'ds_1\,d\mathcal{H}^{N-1}(x)\,,
\end{multline}
where $(s_1,s'):=s\in\R\times\R^{N-1}$,
$$I_1^{N-1}:=\Big\{s'=(s_2,\ldots s_N)\in\R^{N-1}:-1/2\leq s_j\leq
1/2\;\;\text{if}\;\;2\leq j\leq N\Big\}\,$$ $\tilde h(y,x')\in
C^\infty(\R^N\times\R^{N-1},\R^d)$ is given by
\begin{equation}\label{tildedefttshtrih}
\tilde h(y,x'):=h_0\Big(\big\{y_1-\nabla_{x'}g(x')\cdot
y',y'\big\},x'\Big)\,,
\end{equation}
\begin{equation}\label{defpgytyut}
\zeta(t,a,b):=\frac{(|t|+t)a+(|t|-t)b}{2|t|}=\begin{cases} a\quad\text{if}\;\;t>0 \,,\\
b\quad\text{if}\;\;t<0\,,
\end{cases}
\end{equation}
and the linear transformation $Q_{x'}(s)=Q_{x'}(s_1,s_2,\ldots,
s_N):\R^N\to\R^N$ is defined by
\begin{equation}\label{chfhh2009y888shtrih}
Q_{x'}(s):=\sum\limits_{j=1}^{N}s_j\,\vec q_j(x')\,,
\end{equation}
with
\begin{equation}\label{tildedefteeeshtrih}
\vec q_j(x'):=\begin{cases}\vec
n(x')\quad\quad\quad\quad\text{if}\quad j=1\,,\\
\big(\nabla_{x}g(x')\cdot \vec e_j\big)\,\vec e_1+\vec
e_j\quad\quad\text{if}\quad 2\leq j\leq N\,.\end{cases}\,.
\end{equation}
\end{proposition}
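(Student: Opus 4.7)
The plan is to reduce the integral on the left-hand side to an integral over a thin tube around $S$ of width $O(\e)$, and then to evaluate it by a combined scaling/homogenization argument. First, observe that by the support property of $h_0 \in \mathcal{P}(\mathcal{U})$, the perturbation $r_\e := u_\e - \psi_\e$ vanishes outside the tube $T_\e := \{x \in \R^N : |x_1 - g_\e(x')| \leq \e/(2L),\ x' \in \mathcal{U}\}$, and hence so does the integrand on the left-hand side of \eqref{L2009limew03zeta71288888Contggiuuggyyyynew88789999vprop78899shtrih}. By Lemma \ref{mnogohypsi88hkkhgkjgj} the quantities $\vec A\cdot\nabla u_\e$, $\e\nabla\{\vec A\cdot\nabla u_\e\}$, $\vec B\cdot u_\e$, and $\vec B\cdot\psi_\e$ are uniformly bounded in $L^\infty$, so the $C^1$ function $F$ evaluated at these arguments is uniformly bounded on $T_\e$. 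Since $|T_\e| = O(\e)$, the $1/\e$ prefactor is compensated and the limit is finite.

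Next, on $T_\e$ introduce the outer coordinates $(s_1,x')\in[-1/2,1/2]\times\mathcal{U}$ via $x_1 = g_\e(x')+\e s_1/L$, giving $dx = (\e/L)\,ds_1\,dx'$; this accounts for the global $1/L$ in front of $F$ in the right-hand side. The integrand then depends on $x'$ slowly (through $\vec B\cdot\psi_\e$, $\vec A\cdot\nabla\psi_\e$, $\e\nabla\{\vec A\cdot\nabla\psi_\e\}$, $g_\e$, and $f$) and rapidly through the variable $Lx'/\e$ appearing inside $h(y,x')=L^{-1}h_0(Ly,x')$, which is $1$-periodic in its $y'$ argument. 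Subdividing $\mathcal{U}$ into cells of size $\e/L$ and using a standard two-scale/homogenization averaging at each approximate continuity point $x_0=(g(x'_0),x'_0)\in S\cap D$ of the BV data (where \eqref{L2009surfhh8128odno888} holds), the contribution from each cell is computed by freezing the slow variables at $x_0$ and integrating the resulting expression over the unit cell of the fast variable. Within this frozen cell, the pointwise limits are: $\vec B\cdot\psi_\e(x)\to \vec B\cdot v(x_0)$ by continuity of $\vec B\cdot v$; $\vec A\cdot\nabla\psi_\e(x)\to\Gamma(-s_1/L,x_0)$ and $\e\nabla\{\vec A\cdot\nabla\psi_\e\}(x)\to p(-s_1/L,x_0)\bigl((\vec A\cdot\nabla v)^+(x_0)-(\vec A\cdot\nabla v)^-(x_0)\bigr)\otimes\vec n(x'_0)$, as in Theorem \ref{vtporbound4}, after passing to normal coordinates at $x_0$ so that $s_1$ is (up to a factor absorbed by $Q_{x'}$ below) proportional to the normal displacement; and $f(x)\to\zeta(s_1,f^+(x_0),f^-(x_0))$ by \eqref{L2009surfhh8128odno888}. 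The perturbation itself is computed directly from \eqref{L2009test1288}: the chain rule gives at leading order
\[
\nabla r_\e \;\approx\; \partial_{y_1}h\otimes(\vec e_1-\nabla g_\e) \;+\; \sum_{k\geq 2}\partial_{y_k}h\otimes\vec e_k,
\]
with an analogous expression for $\e\nabla^2 r_\e$, all derivatives of $h$ being evaluated at $\xi_\e(x)$ and satisfying $\nabla_y h(y)=\nabla_y h_0(Ly)$.

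The geometric identification is then effected by the inner shear change of variables $y=Q_{x'}(s)$ in the unit cell: its columns $\vec n(x'),\vec q_2(x'),\ldots,\vec q_N(x')$ form a tangent-normal frame to $S$, and a direct computation shows $|\det Q_{x'}|=\sqrt{1+|\nabla g(x')|^2}$ together with the identity $Q_{x'}(s)_1-\nabla g(x')\cdot Q_{x'}(s)'=s_1\sqrt{1+|\nabla g(x')|^2}$. Under this substitution the tensor $\nabla r_\e$ becomes precisely $\nabla_y\tilde h(Q_{x'}(s),x')$ (so that $\vec A\cdot\nabla r_\e\to \vec A\cdot\nabla_y\tilde h(Q_{x'}(s),x')$) and $\e\nabla\{\vec A\cdot\nabla r_\e\}\to L\nabla_y\{\vec A\cdot\nabla_y\tilde h\}(Q_{x'}(s),x')$, while the Jacobian $\sqrt{1+|\nabla g|^2}$ combines with $dx'$ to yield the intrinsic surface measure $dH^{N-1}$ on $S$. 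An application of the dominated convergence theorem, using the $L^\infty$ bounds of Lemma \ref{mnogohypsi88hkkhgkjgj}, completes the proof; the fact that the $s_1$-integration in the right-hand side is taken over all of $\R$ rather than over $[-1/2,1/2]$ is harmless because, outside the support of $\tilde h\circ Q_{x'}$, one has $\tilde h=0$ and the two occurrences of $F$ coincide, and where in addition $p(-s_1/L,x)=0$ the common value is $F(0,(\vec A\cdot\nabla v)^\pm(x),\vec B\cdot v(x),f^\pm(x))=0$ by hypothesis.

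The principal obstacle is the careful tensor-algebraic calculation underlying the previous paragraph: matching the explicit chain-rule expression for $\nabla r_\e$ (and for $\e\nabla^2 r_\e$) with the composition $\nabla_y\tilde h\circ Q_{x'}$, and simultaneously verifying that the argument $-s_1/L$ appearing in $\Gamma$ and $p$ on the right-hand side (rather than $-s_1/(L\sqrt{1+|\nabla g|^2})$, which would arise naively if one rescaled along $\vec e_1$) is consistent with the chosen coordinate frame — any discrepancy between graph and normal displacement must be absorbed into $Q_{x'}$ and $\tilde h$ together. The remaining analytical steps (localization, change of variables, homogenization averaging, and passage to the limit under the integral) are routine once this geometric identification is in place.
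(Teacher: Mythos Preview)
Your plan is correct and follows essentially the paper's route: localize to the $O(\e)$-tube around $S$, shift and rescale in the graph direction, replace the slow data $(\vec A\cdot\nabla\psi_\e,\e\nabla\{\vec A\cdot\nabla\psi_\e\},\vec B\cdot\psi_\e,f)$ by their blow-up profiles at each point of $S$, homogenize the fast tangential oscillation of $h_0$, and then recast everything via the tilted frame $Q_{x'}$.

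Two points of comparison are worth recording. First, the paper does not subdivide $\mathcal U$ into cells; instead it introduces an auxiliary tangential shift $y'\in I_L^{N-1}$, shows by $L^1$-continuity of translations that the shifted integral differs from the original only by $o(1)$, averages over $y'$, and then uses the $1/L$-periodicity of $h$ to replace $y'$ by $y'-x'/\e$, thereby decoupling the fast oscillation from $x'$ without any partition argument. Second, beware that your ``outer'' $s_1=L(x_1-g_\e)/\e$ is \emph{not} the $s_1$ appearing on the right-hand side of the proposition. The paper keeps them separate, calling the former $z_1$, and reaches the final $s$ only after two further changes: the shear $(z_1,z')\mapsto y=(z_1+\nabla g\cdot z',z')$ built into the definition of $\tilde h$, followed by $y=Q_{x'}(s)$. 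The factor $\vec n_1(x')$ you flag is absorbed precisely here: one has $\delta(z_1/L,x')=\Gamma(-z_1\vec n_1/L,x)$, the composite change yields $z_1\vec n_1=\vec n\cdot y=s_1$, and the Jacobian $|\det Q_{x'}|=\sqrt{1+|\nabla g|^2}$ cancels against $dx'/d\mathcal H^{N-1}$. Once the two variables you both call $s_1$ are kept distinct, the tensor bookkeeping you worry about becomes mechanical.
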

\begin{proof}
Observe that
\begin{multline}
\label{L2009testgrad12888} \nabla_{x}
u_{\e}(x)=\nabla_{x}\psi_{\e}(x)+\nabla_{y}
h\bigg(\Big(\frac{x_1-g_{\e}(x')}{\e},\frac{x'}{\e}\Big),x'\bigg)\\-\partial_{y_1}h\bigg(\Big(\frac{x_1-g_{\e}(x')}{\e},\frac{x'}{\e}\Big),x'\bigg)
\otimes\nabla_{x}g_{\e}(x')+\e\nabla_{2}
h\bigg(\Big(\frac{x_1-g_{\e}(x')}{\e},\frac{x'}{\e}\Big),x'\bigg)\,,
\end{multline}
where $\nabla_2h(y,x'):=\big(0,\nabla_{x'}h(y,x')\big)$ (the
$\R^N$-gradient by the second variable) and
\begin{multline}
\label{L2009testgradgrad212888} \e\nabla^2_{x}
u_{\e}(x)=\e\nabla^2_{x}\psi_{\e}(x)+\nabla^2_{y}
h\bigg(\Big(\frac{x_1-g_{\e}(x')}{\e},\frac{x'}{\e}\Big),x'\bigg)-\nabla_y\partial_{y_1}h\bigg(\Big(\frac{x_1-g_{\e}(x')}{\e},\frac{x'}{\e}\Big),x'\bigg)
\otimes\nabla_{x}g_{\e}(x')\\-\nabla_{x}g_{\e}(x')\otimes\nabla_y\partial_{y_1}h\bigg(\Big(\frac{x_1-g_{\e}(x')}{\e},\frac{x'}{\e}\Big),x'\bigg)
+\partial^2_{y_1y_1}h\bigg(\Big(\frac{x_1-g_{\e}(x')}{\e},\frac{x'}{\e}\Big),x'\bigg)
\otimes\nabla_{x}g_{\e}(x')\otimes\nabla_{x}g_{\e}(x')\\-\e\partial_{y_1}h\bigg(\Big(\frac{x_1-g_{\e}(x')}{\e},\frac{x'}{\e}\Big),x'\bigg)
\otimes\nabla^2_{x}g_{\e}(x')+ \e\nabla_2\nabla_{y}
h\bigg(\Big(\frac{x_1-g_{\e}(x')}{\e},\frac{x'}{\e}\Big),x'\bigg)\\+\e\nabla_{y}\nabla_{2}
h\bigg(\Big(\frac{x_1-g_{\e}(x')}{\e},\frac{x'}{\e}\Big),x'\bigg)-\e\nabla_2\partial_{y_1}h\bigg(\Big(\frac{x_1-g_{\e}(x')}{\e},\frac{x'}{\e}\Big),x'\bigg)
\otimes\nabla_{x}g_{\e}(x')\\-\e\nabla_{x}g_{\e}(x')\otimes\nabla_2\partial_{y_1}h\bigg(\Big(\frac{x_1-g_{\e}(x')}{\e},\frac{x'}{\e}\Big),x'\bigg)
+\e^2\nabla^2_{2}
h\bigg(\Big(\frac{x_1-g_{\e}(x')}{\e},\frac{x'}{\e}\Big),x'\bigg)\,.
\end{multline}
Then by \er{L2009test1288}, \er{L2009testgrad12888} and
\er{L2009testgradgrad212888}, and by \er{a4543che} and
\er{a4543chekkk} we have
\begin{equation}
\label{L2009testgradprod128888ggjkgjkgjk} \vec B\cdot
u_{\e}(x)=\int_{\R^N}\eta(z)\{\vec B\cdot v\}(x+\e z)\,dz
+\e\vec B\cdot
h\bigg(\Big(\frac{x_1-g_{\e}(x')}{\e},\frac{x'}{\e}\Big),x'\bigg)\,,
\end{equation}
\begin{multline}
\label{L2009testgradprod128888} \vec A\cdot\nabla_{x}
u_{\e}(x)=\int_{\R^N}\eta(z)\{\vec A\cdot\nabla v\}(x+\e z)\,dz
+\vec A\cdot\nabla_{y}
h\bigg(\Big(\frac{x_1-g_{\e}(x')}{\e},\frac{x'}{\e}\Big),x'\bigg)\\
-\vec
A\cdot\bigg\{\partial_{y_1}h\bigg(\Big(\frac{x_1-g_{\e}(x')}{\e},\frac{x'}{\e}\Big),x'\bigg)
\otimes\nabla_{x}g_{\e}(x')\bigg\}+\e\vec A\cdot\nabla_{2}
h\bigg(\Big(\frac{x_1-g_{\e}(x')}{\e},\frac{x'}{\e}\Big),x'\bigg)\,,
\end{multline}
and
\begin{multline}
\label{L2009testgradgradprod2128888} \e\nabla_{x}\{\vec
A\cdot\nabla_{x} u_{\e}\}(x)=-\int_{\R^N}\{\vec A\cdot\nabla
v\}(x+\e z)\otimes\nabla_z\eta(z)\,dz+
\nabla_{y}\{\vec A\cdot\nabla_{y}
h\}\bigg(\Big(\frac{x_1-g_{\e}(x')}{\e},\frac{x'}{\e}\Big),x'\bigg)\\-\{\vec
A\cdot\nabla_y\partial_{y_1}h\}\bigg(\Big(\frac{x_1-g_{\e}(x')}{\e},\frac{x'}{\e}\Big),x'\bigg)
\otimes\nabla_{x}g_{\e}(x') -\vec
A\cdot\bigg\{\nabla_{x}g_{\e}(x')\otimes\nabla_y\partial_{y_1}h\bigg(\Big(\frac{x_1-g_{\e}(x')}{\e},\frac{x'}{\e}\Big),x'\bigg)
\bigg\}
\\+ \vec
A\cdot\bigg\{\partial^2_{y_1y_1}h\bigg(\Big(\frac{x_1-g_{\e}(x')}{\e},\frac{x'}{\e}\Big),x'\bigg)
\otimes\nabla_{x}g_{\e}(x')\bigg\}\otimes\nabla_{x}g_{\e}(x')\\-\e\vec
A\cdot\bigg\{\partial_{y_1}h\bigg(\Big(\frac{x_1-g_{\e}(x')}{\e},\frac{x'}{\e}\Big),x'\bigg)
\otimes\nabla^2_{x}g_{\e}(x')\bigg\}+\e\nabla_{y}\{\vec
A\cdot\nabla_2
h\}\bigg(\Big(\frac{x_1-g_{\e}(x')}{\e},\frac{x'}{\e}\Big),x'\bigg)\\+\e\nabla_{2}\{\vec
A\cdot\nabla_y
h\}\bigg(\Big(\frac{x_1-g_{\e}(x')}{\e},\frac{x'}{\e}\Big),x'\bigg)-\e\{\vec
A\cdot\nabla_2\partial_{y_1}h\}\bigg(\Big(\frac{x_1-g_{\e}(x')}{\e},\frac{x'}{\e}\Big),x'\bigg)
\otimes\nabla_{x}g_{\e}(x')\\-\e \vec A\cdot\bigg\{
\nabla_{x}g_{\e}(x')\otimes\nabla_2\partial_{y_1}h\bigg(\Big(\frac{x_1-g_{\e}(x')}{\e},\frac{x'}{\e}\Big),x'\bigg)\bigg\}
+\e^2\nabla_{2}\{\vec A\cdot\nabla_{2}
h\}\bigg(\Big(\frac{x_1-g_{\e}(x')}{\e},\frac{x'}{\e}\Big),x'\bigg)\,,
\end{multline}
where we denote
\begin{equation*}
\vec
A\cdot\bigg\{
\sigma\otimes\nabla^2_{x}g_{\e}(x')\bigg\}:= \Bigg\{\Bigg(\vec
A\cdot\bigg\{
\sigma\otimes\nabla_{x}\partial_{x_j}g_{\e}(x')\bigg\}\Bigg)_i\Bigg\}_{1\leq
i\leq m,\,1\leq j\leq N}\quad\quad\forall\sigma\in\R^d\,,
\end{equation*}
and
\begin{equation*}
\vec A\cdot\bigg\{\nabla_{x}g_{\e}(x')\otimes\nabla\varpi
\bigg\}:= \Bigg\{\Bigg(\vec A\cdot\bigg\{\partial_j\varpi
\otimes\nabla_{x}g_{\e}(x')\bigg\}\Bigg)_i\Bigg\}_{1\leq i\leq
m,\,1\leq j\leq N}\quad\quad\forall\varpi:\R^N\to\R^d\,.
\end{equation*}
Note also that
\begin{equation}
\label{L2009difflll212888} \big\{x\in\O:u_\e(x)\neq
\psi_\e(x)\,\big\}\subset\{x:\,x'\in
\mathcal{U}\,\;|x_1-g_{\e}(x')|<\e/(2L)\}\,.
\end{equation}
%
%
%
%
%
Thus
\begin{multline}
\label{L2009limew03zeta71288888} \int\limits_{\O}\frac{1}{\e}
\bigg\{F\Big(\e\nabla \{\vec A\cdot\nabla\psi_{\e}\},\,\vec
A\cdot\nabla \psi_{\e},\,\vec
B\cdot\psi_{\e},\,f\Big)-F\Big(\e\nabla\{\vec A\cdot\nabla
u_{\e}\},\,\vec A\cdot\nabla
u_{\e},\,\vec B\cdot u_{\e},\,f\Big)\bigg\}\,dx=\\
\int\limits_{\mathcal{U}}\int\limits_{g_{\e}(x')-\e/(2L)}^{g_{\e}(x')+\e/(2L)}\frac{1}{\e}
\bigg\{F\Big(\e\nabla \{\vec A\cdot\nabla\psi_{\e}\},\,\vec
A\cdot\nabla \psi_{\e},\,\vec
B\cdot\psi_{\e},\,f\Big)-F\Big(\e\nabla\{\vec A\cdot\nabla
u_{\e}\},\,\vec A\cdot\nabla u_{\e},\,\vec B\cdot
u_{\e},\,f\Big)\bigg\}\,dx_1dx'\,.
\end{multline}
Then changing variables gives
\begin{multline}
\label{L2009limew03zeta71288888Cont} \int\limits_{\O}\frac{1}{\e}
\bigg\{F\Big(\e\nabla \{\vec A\cdot\nabla\psi_{\e}\},\,\vec
A\cdot\nabla \psi_{\e},\,\vec
B\cdot\psi_{\e},\,f\Big)-F\Big(\e\nabla\{\vec A\cdot\nabla
u_{\e}\},\,\vec A\cdot\nabla
u_{\e},\,\vec B\cdot u_{\e},\,f\Big)\bigg\}\,dx=\\
\int\limits_{\mathcal{U}}\int\limits_{-\e/(2L)}^{\e/(2L)}\frac{1}{\e}\Bigg\{
F\bigg(\e\nabla \{\vec A\cdot\nabla\psi_{\e}\}\big(x+g_{\e}(x')\vec
e_1\big),\,\vec A\cdot\nabla \psi_{\e}\big(x+g_{\e}(x')\vec
e_1\big),\,\vec B\cdot\psi_{\e}\big(x+g_{\e}(x')\vec
e_1\big),\,f\big(x+g_{\e}(x')\vec
e_1\big)\bigg)\\-F\bigg(\nabla\{\vec A\cdot\nabla
u_{\e}\}\big(x+g_{\e}(x')\vec e_1\big),\,\vec A\cdot\nabla
u_{\e}(x+g_{\e}\big(x')\vec e_1\big),\,\vec B\cdot
u_{\e}\big(x+g_{\e}(x')\vec e_1\big),\,f\big(x+g_{\e}(x')\vec
e_1\big)\bigg)\Bigg\}\,dx_1dx'\,.
\end{multline}
We also observe that
for any small $\e>0$ we have
\begin{multline}
\label{L2009testshift1288} \vec B\cdot\psi_{\e}\big(x+g_{\e}(x')\vec
e_1\big)=
\int_{\R^N}\eta(z)\{\vec B\cdot v\}\big(x+g_{\e}(x')\vec e_1+\e z\big)\,dz=\\
\int_{\R^N}\eta\Big(z_1-x_1/\e+\big(g(x')-g_\e(x')\big)/\e,z'\Big)
\{\vec B\cdot v\}\big(g(x')+\e z_1,x'+\e z'\big)\,dz\,,
\end{multline}
and then
\begin{multline}
\label{L2009testshift1288m} \vec B\cdot u_{\e}\big(x+g_{\e}(x')\vec
e_1\big)=\vec B\cdot\psi_{\e}(x+g_{\e}(x')\vec e_1\big)+\e\,\vec B\cdot h(x/\e,x')=\\
\int_{\R^N}\eta\Big(z_1-x_1/\e+\big(g(x')-g_\e(x')\big)/\e,z'\Big)
\{\vec B\cdot v\}\big(g(x')+\e z_1,x'+\e z'\big)\,dz+\e\,\vec B\cdot
h(x/\e,x')\,.
\end{multline}
Moreover,
\begin{multline}
\label{L2009testgradshift12888} \vec A\cdot\nabla
\psi_{\e}\big(x+g_{\e}(x')\vec e_1\big)= \int_{\R^N}\eta(z)\{\vec
A\cdot\nabla v\}\big(x+g_{\e}(x')\vec e_1+\e z\big)\,dz
\\=\int_{\R^N}\eta\Big(z_1-x_1/\e+\big(g(x')-g_\e(x')\big)/\e,z'\Big)\{\vec A\cdot\nabla
v\}\big(g(x')+\e z_1,x'+\e z'\big)\,dz\,,
\end{multline}
and then by
\er{L2009testgradprod128888} we infer
\begin{multline}
\label{L2009testgradshift12888m} \vec A\cdot\nabla
u_{\e}\big(x+g_{\e}(x')\vec e_1\big)=
\int_{\R^N}\eta\Big(z_1-x_1/\e+\big(g(x')-g_\e(x')\big)/\e,z'\Big)\{\vec
A\cdot\nabla v\}\big(g(x')+\e z_1,x'+\e
z'\big)\,dz\\
+\vec A\cdot\nabla_y h(x/\e,x')-\vec
A\cdot\Big\{\partial_{y_1}h(x/\e,x')
\otimes\nabla_{x}g_{\e}(x')\Big\}+\e\vec A\cdot\nabla_2
h(x/\e,x')\,.
\end{multline}
Finally
\begin{multline}
\label{L2009testgradgradshift12888} \e\nabla\{\vec A\cdot\nabla
\psi_{\e}\}\big(x+g_{\e}(x')\vec e_1\big)=-\int_{\R^N}\{\vec
A\cdot\nabla v\}\big(x+g_{\e}(x')\vec e_1+\e
z\big)\otimes\nabla\eta(z)\,dz
=\\-\int_{\R^N} \{\vec A\cdot\nabla v\}\big(g(x')+\e z_1,x'+\e
z'\big)\otimes\nabla\eta\Big(z_1-x_1/\e+\big(g(x')-g_\e(x')\big)/\e,z'\Big)\,dz\,,
\end{multline}
and then by \er{L2009testgradgradprod2128888} we have
\begin{multline}
\label{L2009testgradgradshift12888m} \e\nabla\{\vec A\cdot\nabla
u_{\e}\}\big(x+g_{\e}(x')\vec
e_1\big)=\\-\int_{\R^N}\{\vec
A\cdot\nabla v\}\big(g(x')+\e z_1,x'+\e z'\big)\otimes\nabla\eta\Big(z_1-x_1/\e+\big(g(x')-g_\e(x')\big)/\e,z'\Big)\,dz
\\+\nabla_y\{\vec A\cdot\nabla_y
h\}(x/\e,x')-\{\vec A\cdot\nabla_y\partial_{y_1}h\}(x/\e,x')
\otimes\nabla_{x}g_{\e}(x')-\vec
A\cdot\Big\{\nabla_{x}g_{\e}(x')\otimes\nabla_y\partial_{y_1}h(x/\e,x')
\Big\}\\+\vec A\cdot\Big\{\partial^2_{y_1y_1}h(x/\e,x')
\otimes\nabla_{x}g_{\e}(x')\Big\}\otimes\nabla_{x}g_{\e}(x')-\vec
A\cdot\Big\{\partial_{y_1}h(x/\e,x')
\otimes\{\e\nabla^2_{x}g_{\e}(x')\}\Big\}\\+\e\nabla_{y}\{\vec
A\cdot\nabla_2 h\}(x/\e,x')+\e\nabla_{2}\{\vec A\cdot\nabla_{y}
h\}(x/\e,x')-\e\{\vec A\cdot\nabla_2\partial_{y_1}h\}(x/\e,x')
\otimes\nabla_{x}g_{\e}(x')\\-\e\vec
A\cdot\Big\{\nabla_{x}g_{\e}(x')\otimes\nabla_2\partial_{y_1}h(x/\e,x')\Big\}
+\e^2\nabla_{2}\{\vec A\cdot\nabla_{2} h\}(x/\e,x')\,.
\end{multline}
Set
\begin{multline}
\label{L2009testgradshiftlim12888kp} \delta(x_1,x'):=
\bigg(\int\limits_{H_+\big((g(x'),x'),\vec
n(x')\big)}\eta\big(z_1-x_1,z'\big)dz\bigg) \{\vec A\cdot\nabla
v\}^+\big(g(x'),x'\big)\\+\bigg(\int\limits_{H_-\big((g(x'),x'),\vec
n(x')\big)}\eta\big(z_1-x_1,z'\big)dz\bigg) \{\vec A\cdot\nabla
v\}^-\big(g(x'),x'\big)
\,,
\end{multline}
and
\begin{multline}
\label{L2009testgradgradshiftlim12888kp} \theta(x_1,x'):=-\{\vec
A\cdot\nabla v\}^+\big(g(x'),x'\big)\otimes
\bigg(\int\limits_{H_+\big((g(x'),x'),\vec
n(x')\big)}\nabla\eta\big(z_1-x_1,z'\big)dz\bigg)\\-\{\vec
A\cdot\nabla
v\}^-\big(g(x'),x'\big)\otimes\bigg(\int\limits_{H_-\big((g(x'),x'),\vec
n(x')\big)}\nabla_z\eta\big(z_1-x_1,z'\big)dz\bigg)
\,,
\end{multline}
%
%
%
%
%
%
where $H_+(x,\vec n)=\{y\in\R^N:(y-x)\cdot\vec n>0\}$ and
$H_-(x,\vec n)=\{y\in\R^N:(y-x)\cdot\vec n<0\}$,
and define
\begin{equation}
\label{L2009testgradshiftaddlim12888}
\Lambda_\e(x):=\delta(x_1/\e,x')+
\vec A\cdot\nabla_y h(x/\e,x')-\vec
A\cdot\Big\{\partial_{y_1}h(x/\e,x') \otimes\nabla_{x}g(x')\Big\}
\,,
\end{equation}
and
\begin{multline}
\label{L2009testgradgradshiftaddlim12888} \Theta_\e(x):=
\theta(x_1/\e,x')+ \nabla_y\{\vec A\cdot\nabla_y h(x/\e,x')\}-\{\vec
A\cdot\nabla_y\partial_{y_1}h\}(x/\e,x')\otimes\nabla_{x}g(x')
\\-\vec
A\cdot\Big\{\nabla_{x}g(x')\otimes\nabla_y\partial_{y_1}h(x/\e,x')\Big\}
+\vec
A\cdot\Big\{\partial^2_{y_1y_1}h(x/\e,x')
\otimes\nabla_{x}g(x')\Big\}\otimes\nabla_{x}g(x')
\,.
\end{multline}
Then by the fact that $\{\vec B\cdot v\}\in Lip\,$ and by
\er{L2009surfhh8128odno888}, \er{L2009psijebn12387878},
\er{L2009testshift1288}, \er{L2009testgradshift12888} and
\er{L2009testgradgradshift12888} we deduce
\begin{multline}
\label{L2009limew03zetalim71288888Cont}
\int\limits_{\mathcal{U}}\;\int\limits_{-\e/(2L)}^{\e/(2L)}\frac{1}{\e}\bigg\{\Big|
\e\nabla\{\vec A\cdot\nabla \psi_{\e}\}\big(x+g_{\e}(x')\vec
e_1\big)-\theta(x_1/\e,x')\Big|+\Big|\vec A\cdot\nabla
\psi_{\e}\big(x+g_{\e}(x')\vec
e_1\big)-\delta(x_1/\e,x')\Big|\\+\Big|\vec
B\cdot\psi_{\e}\big(x+g_{\e}(x')\vec e_1\big)-\{\vec B\cdot
v\}\big(g(x'),x'\big)\Big|\bigg\}\,dx_1dx'\to 0\quad\text{as }\e\to
0\,,
\end{multline}
and by the fact that $\{\vec B\cdot v\}\in Lip\,$ and by
\er{L2009surfhh8128odno888}, \er{L2009psijebn12387878},
\er{L2009testshift1288m}, \er{L2009testgradshift12888m} and
\er{L2009testgradgradshift12888m} we deduce
\begin{multline}
\label{L2009limew03zetalim71288888Contu}
\int\limits_{\mathcal{U}}\;\int\limits_{-\e/(2L)}^{\e/(2L)}\frac{1}{\e}\bigg\{\Big|
\e\nabla\{\vec A\cdot\nabla u_{\e}\}\big(x+g_{\e}(x')\vec
e_1\big)-\Theta_\e(x)\Big|+\Big|\vec A\cdot\nabla
u_{\e}\big(x+g_{\e}(x')\vec e_1\big)-\Lambda_\e(x)\Big|\\+\Big|\vec
B\cdot u_{\e}\big(x+g_{\e}(x')\vec e_1\big)-\{\vec B\cdot
v\}\big(g(x'),x'\big)\Big|\bigg\}\,dx_1dx'\to 0\quad\text{as }\e\to
0\,,
\end{multline}
Therefore, by \er{L2009limew03zeta71288888Cont},
\er{L2009limew03zetalim71288888Cont} and
\er{L2009limew03zetalim71288888Contu}
we infer
\begin{multline}
\label{L2009limew03zeta71288888Contprod12888}
\int\limits_{\O}\frac{1}{\e} \bigg\{F\Big(\e\nabla\{\vec
A\cdot\nabla \psi_{\e}\},\,\vec A\cdot\nabla \psi_{\e},\,\vec
B\cdot\psi_{\e},\,f\Big)-F\Big(\e\nabla\{\vec A\cdot\nabla
u_{\e}\},\,\vec
A\cdot\nabla u_{\e},\,\vec
B\cdot u_{\e},\,f\Big)\bigg\}\,dx=\\
\int\limits_{\mathcal{U}}\;\int\limits_{-\e/(2L)}^{\e/(2L)}\frac{1}{\e}\bigg\{
F\Big(\theta(x_1/\e,x'),\,\delta(x_1/\e,x'),\,\{\vec B\cdot
v\}\big(g(x'),x'\big),\,f\big(x+g_{\e}(x')\vec
e_1\big)\Big)\\-F\Big(\Theta_\e(x),\,\Lambda_\e(x),\,\{\vec B\cdot
v\}\big(g(x'),x'\big),\,f\big(x+g_{\e}(x')\vec
e_1\big)\Big)\bigg\}\,dx_1dx'+l_\e
\,,
\end{multline}
where $\lim_{\e\to 0}l_\e=0$. Next by Theorem 3.108 and Remark 3.109
from \cite{amb} we deduce that
\begin{equation}
\label{shrppppp8}
\lim\limits_{\rho\to
0^+}\frac{1}{\rho}\int_{-\rho}^\rho\bigg|f\big(g(x')+s,x'\big)
-\zeta\Big(s,f^+\big(g(x'),x'\big),f^-\big(g(x'),x'\big)\Big)\bigg|\,ds=0\quad\quad\text{for
}\mathcal{L}^{N-1}\text{ a.e. }x'\in \mathcal{U}\,,
\end{equation}
where $\zeta(t,a,b)$ is defined by \er{defpgytyut}.
Then, since $f\in L^\infty$, by \er{shrppppp8} and
\er{L2009psijebn12387878} we obtain
\begin{equation}
\label{L2009limew03zetalim71288888shrrrrrpCont8}
\int\limits_{\mathcal{U}}\;\int\limits_{-\e/(2L)}^{\e/(2L)}\frac{1}{\e}\bigg|f\big(x+g_{\e}(x')\vec
e_1\big)-\zeta\Big(x_1,f^+\big(g(x'),x'\big),f^-\big(g(x'),x'\big)\Big)\bigg|\,dx_1dx'\to
0\quad\text{as
}\e\to 0\,.
\end{equation}
Thus, using \er{L2009limew03zetalim71288888shrrrrrpCont8}, by
\er{L2009limew03zeta71288888Contprod12888} we obtain
\begin{multline}
\label{L2009limew03zeta71288888Contprod12888shrrrp8}
\int\limits_{\O}\frac{1}{\e} \bigg\{F\Big(\e\nabla\{\vec
A\cdot\nabla \psi_{\e}\},\,\vec A\cdot\nabla \psi_{\e},\,\vec
B\cdot\psi_{\e},\,f\Big)-F\Big(\e\nabla\{\vec A\cdot\nabla
u_{\e}\},\,\vec A\cdot\nabla u_{\e},\,\vec
B\cdot u_{\e},\,f\Big)\bigg\}\,dx=\\
\int\limits_{\mathcal{U}}\;\int\limits_{-\e/(2L)}^{\e/(2L)}\frac{1}{\e}\Bigg\{
F\bigg(\theta(x_1/\e,x'),\,\delta(x_1/\e,x'),\,\{\vec B\cdot
v\}\big(g(x'),x'\big),\,\zeta\Big(x_1,f^+\big(g(x'),x'\big),f^-\big(g(x'),x'\big)\Big)
\bigg)\\-F\bigg(\Theta_\e(x),\,\Lambda_\e(x),\,\{\vec B\cdot
v\}\big(g(x'),x'\big),\,\zeta\Big(x_1,f^+\big(g(x'),x'\big),f^-\big(g(x'),x'\big)\Big)
\bigg)\Bigg\}\,dx_1dx'+\bar l_\e=\\
\int\limits_{\mathcal{K}}\;\int\limits_{-\e/(2L)}^{\e/(2L)}\frac{1}{\e}\Bigg\{
F\bigg(\theta(x_1/\e,x'),\,\delta(x_1/\e,x'),\,\{\vec B\cdot
v\}\big(g(x'),x'\big),\,\zeta\Big(x_1,f^+\big(g(x'),x'\big),f^-\big(g(x'),x'\big)\Big)
\bigg)\\-F\bigg(\Theta_\e(x),\,\Lambda_\e(x),\,\{\vec B\cdot
v\}\big(g(x'),x'\big),\,\zeta\Big(x_1,f^+\big(g(x'),x'\big),f^-\big(g(x'),x'\big)\Big)
\bigg)\Bigg\}\,dx_1dx'+\bar l_\e\,,
\end{multline}
where $\lim_{\e\to 0}\bar l_\e=0$ and
$\mathcal{K}\subset\subset\mathcal{U}$ is a compact set, such that
$$\supp h(y,x')\subset\R^N\times \mathcal{K}\,.$$
Next for every $y'\in\R^{N-1}$ set
\begin{multline}
\label{L2009testgradshiftaddlim12888up}
\bar\Lambda_\e(x,y'):=\delta(x_1/\e,x')+\vec A\cdot \nabla_1
h\Big(\{x_1/\e,x'/\e+y'\},x'\Big)-\vec
A\cdot\Big\{\partial_{y_1}h\Big(\{x_1/\e,x'/\e+y'\},x'\Big)
\otimes\nabla_{x}g(x')\Big\}
\,,
\end{multline}
and
\begin{multline}
\label{L2009testgradgradshiftaddlim12888upr}
\bar\Theta_\e(x,y'):=\theta(x_1/\e,x')+\nabla_1\{\vec A\cdot\nabla_1
h\}\Big(\{x_1/\e,x'/\e+y'\},x'\Big)-\{\vec
A\cdot\nabla_1\partial_{y_1}h\}\Big(\{x_1/\e,x'/\e+y'\},x'\Big)\otimes\nabla_{x}g(x')\\-\vec
A\cdot\Big\{\nabla_{x}g(x')\otimes\nabla_1\partial_{y_1}h\Big(\{x_1/\e,x'/\e+y'\},x'\Big)\Big\}
+\vec
A\cdot\Big\{\partial^2_{y_1y_1}h\Big(\{x_1/\e,x'/\e+y'\},x'\Big)
\otimes\nabla_{x}g(x')\Big\}\otimes\nabla_{x}g(x')
\,,
\end{multline}
where we denote $\nabla_1 h(y,x'):=\nabla_y h(y,x')$  (the partial
gradient of $h$ by the first variable). Then
$\bar\Lambda_\e(x,0)=\Lambda_\e(x)$ and
$\bar\Theta_\e(x,0)=\Theta_\e(x)$. Moreover, for every
$y'\in\R^{N-1}$ and for $\e>0$ sufficiently small, changing
variables in the integral gives
\begin{multline}
\label{L2009limew03zeta71288888Contprod12888shrrrp8upplus}
\int\limits_{\mathcal{K}}\;\int\limits_{-\e/(2L)}^{\e/(2L)}F\bigg(\bar\Theta_\e(x,y'),\,\bar\Lambda_\e(x,y'),\,\{\vec
B\cdot
v\}\big(g(x'),x'\big),\,\zeta\Big(x_1,f^+\big(g(x'),x'\big),f^-\big(g(x'),x'\big)\Big)
\bigg)\,dx_1dx'=\\
\int\limits_{\mathcal{K}+\e
y'}\;\int\limits_{-\e/(2L)}^{\e/(2L)}F\bigg(\tilde\Theta_\e(x,y'),\,\tilde\Lambda_\e(x,y'),\,\{\vec
B\cdot v\}\big(g(x'-\e y'),x'-\e y'\big),\,
\zeta\Big(x_1,f^+\big(g(x'-\e y'),x'-\e y'\big)\Big)
\bigg)\,dx_1dx'\\=
\int\limits_{\mathcal{K}}\;\int\limits_{-\e/(2L)}^{\e/(2L)}F\bigg(\tilde\Theta_\e(x,y'),\,\tilde\Lambda_\e(x,y'),\,\{\vec
B\cdot v\}\big(g(x'-\e y'),x'-\e
y'\big),\,\zeta\Big(x_1,f^+\big(g(x'-\e y'),x'-\e y'\big)\Big)
\bigg)\,dx_1dx'+\e\tilde l^{(0)}_\e(y')\,,
\end{multline}
where,
\begin{multline}
\label{L2009testgradshiftaddlim12888tilde} \tilde\Lambda_\e(x,y'):=
\delta(x_1/\e,x'-\e y')+
\vec A\cdot\nabla_1 h(x/\e,x'-\e y')-\vec
A\cdot\Big\{\partial_{y_1}h(x/\e,x'-\e y') \otimes\nabla_{x}g(x'-\e
y')\Big\}
\,,
\end{multline}
\begin{multline}
\label{L2009testgradgradshiftaddlim12888tilde}
\tilde\Theta_\e(x,y'):=\theta(x_1/\e,x'-\e y')+
\nabla_1 \{\vec A\cdot\nabla_1 h\}(x/\e,x'-\e y')-\{\vec
A\cdot\nabla_1\partial_{y_1}h\}(x/\e,x'-\e
y')\otimes\nabla_{x}g(x'-\e y')\\-\vec
A\cdot\Big\{\nabla_{x}g(x'-\e y')\otimes\nabla_1\partial_{y_1}h(x/\e,x'-\e
y')\Big\}+\vec
A\cdot\Big\{\partial^2_{y_1y_1}h(x/\e,x'-\e y')
\otimes\nabla_{x}g(x'-\e y')\Big\}\otimes\nabla_{x}g(x'-\e y')
\,,
\end{multline}
and $\tilde l^{(0)}_\e(y')\to 0$ as $\e\to 0^+$ and $\|\tilde
l^{(0)}_\e(y')\|_{L^\infty(\mathcal{D})}<C$ for every bounded set
$\mathcal{D}\subset\subset\R^{N-1}$.
On the other hand, since $\delta,\theta\in L^1_{loc}$, we deduce
that
\begin{multline}
\label{L2009testgradgradshiftlim12888kplim}
\int\limits_{\mathcal{U}}\;\int\limits_{-\e/(2L)}^{\e/(2L)}\frac{1}{\e}\Big(\big|\theta(x_1/\e,x'-\e
y')-\theta(x_1/\e,x')\big|+\big|\delta(x_1/\e,x'-\e
y')-\delta(x_1/\e,x')\big|\Big)dx_1 dx'=\\
\int\limits_{\mathcal{U}}\;\int\limits_{-1/(2L)}^{1/(2L)}\Big(\big|\theta(x_1,x'-\e
y')-\theta(x_1,x')\big|+\big|\delta(x_1,x'-\e
y')-\delta(x_1,x')\big|\Big)dx_1dx'\to 0\,,
\end{multline}
as $\e\to 0^+$. Moreover, since $\{\vec B\cdot
v\}\big(g(x'),x'\big),f^+\big(g(x'),x'\big),f^-\big(g(x'),x'\big),
\nabla g(x')\in L^1_{loc}(\R^{N-1})$ we also have
\begin{multline}
\label{L2009testgradgradshiftlim12888kplimplos}
\int\limits_{\mathcal{U}}\bigg(\Big|\{\vec B\cdot v\}\big(g(x'-\e
y'),x'-\e y'\big)-\{\vec B\cdot
v\}\big(g(x'),x'\big)\Big|+\Big|f^+\big(g(x'-\e y'),x'-\e
y'\big)-f^+\big(g(x'),x'\big)\Big|\\ +\Big|f^-\big(g(x'-\e y'),x'-\e
y'\big)-f^-\big(g(x'),x'\big)\Big|+\Big|\nabla g(x'-\e y')-\nabla
g(x')\Big|\bigg)dx'\to 0\,,
\end{multline}
as $\e\to 0^+$. Therefore, by
\er{L2009testgradgradshiftlim12888kplim},
\er{L2009testgradgradshiftlim12888kplimplos} and
\er{L2009limew03zeta71288888Contprod12888shrrrp8upplus}
we deduce
\begin{multline}
\label{L2009limew03zeta71288888Contprod12888shrrrp8upplusminuslim}
\Bigg|\int\limits_{\mathcal{K}}\;\int\limits_{-\e/(2L)}^{\e/(2L)}\frac{1}{\e}
F\bigg(\bar\Theta_\e(x,y'),\,\bar\Lambda_\e(x,y'),\,\{\vec B\cdot
v\}\big(g(x'),x'\big),\,\zeta\Big(x_1,f^+\big(g(x'),x'\big),f^-\big(g(x'),x'\big)\Big)
\bigg)\,dx_1dx'\\-\int\limits_{\mathcal{K}}\;\int\limits_{-\e/(2L)}^{\e/(2L)}
\frac{1}{\e}F\bigg(\Theta_\e(x),\,\Lambda_\e(x),\,\{\vec B\cdot
v\}\big(g(x'),x'\big),\,\zeta\Big(x_1,f^+\big(g(x'),x'\big),f^-\big(g(x'),x'\big)\Big)
\bigg)\,dx_1dx'\Bigg|
=\tilde l_\e(y')\,,
\end{multline}
where $\tilde l_\e(y')\to 0$ as $\e\to 0^+$ and $\|\tilde
l_\e(y')\|_{L^\infty(\mathcal{D})}<C$ for every bounded set
$\mathcal{D}\subset\subset\R^{N-1}$. Then by
\er{L2009limew03zeta71288888Contprod12888shrrrp8} and
\er{L2009limew03zeta71288888Contprod12888shrrrp8upplusminuslim} we
infer
\begin{multline}
\label{L2009limew03zeta71288888Contprod12888shrrrp8sled}
\int\limits_{\O}\frac{1}{\e} \bigg\{F\Big(\e\nabla\{\vec
A\cdot\nabla \psi_{\e}\},\,\vec A\cdot\nabla \psi_{\e},\,\vec
B\cdot\psi_{\e},\,f\Big)-F\Big(\e\nabla\{\vec A\cdot\nabla
u_{\e}\},\,\vec A\cdot\nabla u_{\e},\,\vec
B\cdot u_{\e},\,f\Big)\bigg\}\,dx=\\
\int\limits_{\mathcal{K}}\;\int\limits_{-\e/(2L)}^{\e/(2L)}\int_{I_L^{N-1}}\frac{L^{N-1}}{\e}\Bigg\{
F\bigg(\theta(x_1/\e,x'),\,\delta(x_1/\e,x'),\,\{\vec B\cdot
v\}\big(g(x'),x'\big),\,\zeta\Big(x_1,f^+\big(g(x'),x'\big),f^-\big(g(x'),x'\big)\Big)\bigg)
\\-F\bigg(\bar\Theta_\e(x,y'),\,\bar\Lambda_\e(x,y'),\,\{\vec B\cdot
v\}\big(g(x'),x'\big),\,\zeta\Big(x_1,f^+\big(g(x'),x'\big),f^-\big(g(x'),x'\big)\Big)
\bigg)\Bigg\}\,dy'dx_1dx'
+\tilde l_\e=\\
\int\limits_{\mathcal{U}}\;\int\limits_{-\e/(2L)}^{\e/(2L)}\int_{I_L^{N-1}}\frac{L^{N-1}}{\e}\Bigg\{
F\bigg(\theta(x_1/\e,x'),\,\delta(x_1/\e,x'),\,\{\vec B\cdot
v\}\big(g(x'),x'\big),\,\zeta\Big(x_1,f^+\big(g(x'),x'\big),f^-\big(g(x'),x'\big)\Big)\bigg)
\\-F\bigg(\bar\Theta_\e(x,y'),\,\bar\Lambda_\e(x,y'),\,\{\vec B\cdot
v\}\big(g(x'),x'\big),\,\zeta\Big(x_1,f^+\big(g(x'),x'\big),f^-\big(g(x'),x'\big)\Big)
\bigg)\Bigg\}\,dy'dx_1dx' +\tilde l_\e\,,
\end{multline}
where $\lim_{\e\to 0}\tilde l_\e=0$ and
$I_L^{N-1}:=\{y'\in\R^{N-1}:-1/(2L)\leq y'_j\leq
1/(2L)\;\text{if}\;1\leq j\leq (N-1)\}$.
%
%
%
%
%
%
Next since for every locally integrable function $P:\R^{N-1}\to\R$
satisfying $P\big(y'_1,y'_2,\ldots (y'_j+1/L),\ldots
y'_{N-1}\big)=P\big(y'_1,y'_2,\ldots y'_j,\ldots y'_{N-1}\big)$ for
every $1\leq j\leq N-1$ we have
$$\int_{I_L^{N-1}}P(y'+z')dy'=\int_{I_L^{N-1}}P(y')dy'\quad\forall z'\in\R^{N-1}\,,$$
by \er{obnulenie1} and
\er{L2009limew03zeta71288888Contprod12888shrrrp8sled} we deduce
\begin{multline}
\label{L2009limew03zeta71288888Contprod12888shrrrp8sledsledush}
\int\limits_{\O}\frac{1}{\e} \bigg\{F\Big(\e\nabla\{\vec
A\cdot\nabla \psi_{\e}\},\,\vec A\cdot\nabla \psi_{\e},\,\vec
B\cdot\psi_{\e},\,f\Big)-F\Big(\e\nabla\{\vec A\cdot\nabla
u_{\e}\},\,\vec A\cdot\nabla u_{\e},\,\vec
B\cdot u_{\e},\,f\Big)\bigg\}\,dx=\\
\int\limits_{\mathcal{U}}\;\int\limits_{-\e/(2L)}^{\e/(2L)}\int_{I_L^{N-1}}\frac{L^{N-1}}{\e}\Bigg\{
F\bigg(\theta(x_1/\e,x'),\,\delta(x_1/\e,x'),\,\{\vec B\cdot
v\}\big(g(x'),x'\big),\,\zeta\Big(x_1,f^+\big(g(x'),x'\big),f^-\big(g(x'),x'\big)\Big)\bigg)
\\-F\bigg(\bar\Theta_\e(x,y'-x'/\e),\,\bar\Lambda_\e(x,y'-x'/\e),\,\{\vec B\cdot
v\}\big(g(x'),x'\big),\,\zeta\Big(x_1,f^+\big(g(x'),x'\big),f^-\big(g(x'),x'\big)\Big)
\bigg)\Bigg\}\,dy'dx_1dx'
+\tilde l_\e\,.
\end{multline}
Therefore, changing the variables $z_1:=Lx_1/\e$, $z':=Ly'$ in
\er{L2009limew03zeta71288888Contprod12888shrrrp8sledsledush}
together with \er{nulltoone} gives
%
%
%
%
%
%
%
%
\begin{multline}
\label{L2009limew03zeta71288888Contprod12888shrrrp8sledsledushrbb}
\lim\limits_{\e\to 0}\int\limits_{\O}\frac{1}{\e}
\bigg\{F\Big(\e\nabla\{\vec A\cdot\nabla \psi_{\e}\},\,\vec
A\cdot\nabla \psi_{\e},\,\vec
B\cdot\psi_{\e},\,f\Big)-F\Big(\e\nabla\{\vec A\cdot\nabla
u_{\e}\},\,\vec A\cdot\nabla u_{\e},\,\vec B\cdot
u_{\e},\,f\Big)\bigg\}\,dx=\\
\int\limits_{\mathcal{U}}\;\int\limits_{-1/2}^{1/2}\int_{I_1^{N-1}}\frac{1}{L}\Bigg\{
F\bigg(\theta(z_1/L,x'),\delta(z_1/L,x'),\{\vec B\cdot
v\}\big(g(x'),x'\big),\zeta\Big(z_1,f^+\big(g(x'),x'\big),f^-\big(g(x'),x'\big)\Big)\bigg)-
\\ F\bigg(\theta(z_1/L,x')+\sigma(z,x'),\delta(z_1/L,x')+\kappa(z,x'),\{\vec B\cdot
v\}\big(g(x'),x'\big),\zeta\Big(z_1,f^+\big(g(x'),x'\big),f^-\big(g(x'),x'\big)\Big)
\bigg)\Bigg\}dz'dz_1dx'
\\=
\int\limits_{\mathcal{U}}\;\int\limits_{\R}\int_{I_1^{N-1}}\frac{1}{L}\Bigg\{
F\bigg(\theta(z_1/L,x'),\delta(z_1/L,x'),\{\vec B\cdot
v\}\big(g(x'),x'\big),\zeta\Big(z_1,f^+\big(g(x'),x'\big),f^-\big(g(x'),x'\big)\Big)\bigg)-
\\ F\bigg(\theta(z_1/L,x')+\sigma(z,x'),\delta(z_1/L,x')+\kappa(z,x'),\{\vec B\cdot
v\}\big(g(x'),x'\big),\zeta\Big(z_1,f^+\big(g(x'),x'\big),f^-\big(g(x'),x'\big)\Big)
\bigg)\Bigg\}dz'dz_1dx',
\end{multline}
where
\begin{equation}
\label{L2009testgradshiftaddlim12888upchtpnew} \kappa(z,x'):=
\vec A\cdot \nabla_z h_0(z,x')-\vec
A\cdot\Big\{\partial_{z_1}h_0(z,x') \otimes\nabla_{x}g(x')\Big\}
\,,
\end{equation}
and
\begin{multline}
\label{L2009testgradgradshiftaddlim12888rupchtpppnew} \sigma(z,x'):=
L\bigg(\nabla_z\{\vec A\cdot\nabla_z h_0\}(z,x')-\{\vec
A\cdot\nabla_z\partial_{z_1}h_0\}(z,x')\otimes\nabla_{x}g(x')\\-\vec
A\cdot\Big\{\nabla_{x}g(x')\otimes\nabla_z\partial_{z_1}h_0(z,x')\Big\} +\vec
A\cdot\Big\{\partial^2_{z_1z_1}h_0(z,x')
\otimes\nabla_{x}g(x')\Big\}\otimes\nabla_{x}g(x')\bigg)
\,.
\end{multline}
On the other hand we have
\begin{multline}
\label{L2009testgradshiftlim12888kp2sld} \delta(t,x')=
\bigg(\int\limits_{H_+\big((g(x'),x'),\vec
n(x')\big)}\eta\Big(z-\{t\vec n_1(x')\}\vec n(x')\Big)dz\bigg)
\{\vec A\cdot \nabla
v\}^+\big(g(x'),x'\big)+\\\bigg(\int\limits_{H_-\big((g(x'),x'),\vec
n(x')\big)}\eta\Big(z-\{t\vec n_1(x')\}\vec n(x')\Big)dz\bigg)\{\vec
A\cdot \nabla v\}^-\big(g(x'),x'\big)= \Gamma\Big(-t\vec
n_1(x'),\{g(x'),x'\}\Big)
\,,
\end{multline}
and
\begin{multline}
\label{L2009testgradgradshiftlim12888kp3sld} \theta(t,x')=-\{\vec A\cdot \nabla
v\}^+\big(g(x'),x'\big)\otimes
\bigg(\int\limits_{H_+\big((g(x'),x'),\vec
n(x')\big)}\nabla\eta\Big(z-\{t\vec n_1(x')\}\vec
n(x')\Big)dz\bigg)\\-\{\vec A\cdot \nabla
v\}^-\big(g(x'),x'\big)\otimes\bigg(\int\limits_{H_-\big((g(x'),x'),\vec
n(x')\big)}\nabla\eta\Big(z-\{t\vec n_1(x')\}\vec
n(x')\Big)dz\bigg)\\=p\Big(-t\vec n_1(x'),\{g(x'),x'\}\Big)\bigg(\{\vec A\cdot \nabla
v\}^+\big(g(x'),x'\big)-\{\vec A\cdot \nabla
v\}^-\big(g(x'),x'\big)\bigg)\otimes\vec
n(x')
\,,
\end{multline}
where as in \er{defp} and \er{matrixgamadef7845389yuj9},
\begin{equation}\label{defpnnn}
p(t,x):=\int_{H^0_{\vec n(x')}}\eta(t\vec n(x')+y)\,d \mathcal
H^{N-1}(y)\,,
\end{equation}
\begin{equation}\label{matrixgamadef7845389yuj9nnn}
\Gamma(t,x):=\bigg(\int_{-\infty}^t p(s,x)ds\bigg)\cdot \{\vec
A\cdot \nabla v\}^-\big(g(x'),x'\big)+\bigg(\int_t^\infty
p(s,x)ds\bigg)\cdot\{\vec A\cdot \nabla v\}^+\big(g(x'),x'\big)\,,
\end{equation}
and by $\vec n_1$ we denote the first coordinate of $\vec n$.

Next define $\tilde h(y,x')\in C^\infty(\R^N\times\R^{N-1},\R^d)$ by
\begin{equation}\label{tildedeftt}
\tilde h(y,x'):=h_0\Big(\big\{y_1-\nabla_{x'}g(x')\cdot
y',y'\big\},x'\Big)\,,
\end{equation}
where $y=(y_1,y')\in\R\times\R^{N-1}$, and set
\begin{equation}\label{tildedefteee}
\vec q_j(x'):=\begin{cases}\vec
n(x')\quad\quad\quad\quad\text{if}\quad j=1\,,\\
\big(\nabla_{x}g(x')\cdot \vec e_j\big)\,\vec e_1+\vec
e_j\quad\quad\text{if}\quad 2\leq j\leq N\,.\end{cases}\,.
\end{equation}
Then using \er{L2009aprplmingg23128normal} by definition we have
\begin{equation}\label{tildedefteeeort}
\vec q_j(x')\cdot\vec q_1(x')=\vec q_j(x')\cdot\vec
n(x')=0\quad\quad\text{for every}\quad 2\leq j\leq N\,.
\end{equation}
Moreover, by \er{obnulenie} we have
\begin{multline}\label{obnulenietilde1}
\tilde h(y,x')=0\;\text{ if }\;\big|\big(y\,\cdot \vec
q_1(x')\big)\big|\geq \frac{1}{2}\vec n_1(x'),\;\text{ and }\;
\tilde h\big(y+\vec q_j(x'),x'\big)=\tilde h(y,x')\;\;\forall
j=2,\ldots, N\,,
\end{multline}
and by \er{nositel},
\begin{equation}\label{nositeltilde1}
\supp \tilde h(y,x')\subset\subset\R^N\times \mathcal{U}\,.
\end{equation}
Furthermore, by the definitions \er{tildedeftt},
\er{L2009testgradshiftaddlim12888upchtpnew}, and
\er{L2009testgradgradshiftaddlim12888rupchtpppnew} we deduce
\begin{equation}\label{tildedefttgrad}
\begin{split}
\vec A\cdot\nabla_y\tilde
h(y,x')=\kappa\Big(\big\{y_1-\nabla_{x'}g(x')\cdot
y',y'\big\},x'\Big)\,,\\ L\nabla_y\{\vec A\cdot\nabla_y\tilde
h\}(y,x')=\sigma\Big(\big\{y_1-\nabla_{x'}g(x')\cdot
y',y'\big\},x'\Big)\,.
\end{split}
\end{equation}
Then, since by
\er{L2009limew03zeta71288888Contprod12888shrrrp8sledsledushrbb} we
have
\begin{multline}
\label{L2009limew03zeta71288888Contggiuugg777899887787888899}
\lim\limits_{\e\to 0}\int\limits_{\O}\frac{1}{\e}
\bigg\{F\Big(\e\nabla\{\vec A\cdot\nabla \psi_{\e}\},\,\vec
A\cdot\nabla \psi_{\e},\,\vec
B\cdot\psi_{\e},\,f\Big)-F\Big(\e\nabla\{\vec A\cdot\nabla
u_{\e}\},\,\vec A\cdot\nabla u_{\e},\,\vec
B\cdot u_{\e},\,f\Big)\bigg\}\,dx=\\
\int\limits_{\mathcal{U}}\;\int\limits_{\R}\int_{I_1^{N-1}}\frac{1}{L}\Bigg\{
F\bigg(\theta(z_1/L,x'),\delta(z_1/L,x'),\{\vec B\cdot
v\}\big(g(x'),x'\big),\zeta\Big(z_1,f^+\big(g(x'),x'\big),f^-\big(g(x'),x'\big)\Big)\bigg)
\\ -F\bigg(\theta(z_1/L,x')+\sigma(z,x'),\delta(z_1/L,x')+\kappa(z,x'),\{\vec B\cdot
v\}\big(g(x'),x'\big),\zeta\Big(z_1,f^+\big(g(x'),x'\big),f^-\big(g(x'),x'\big)\Big)
\bigg)\\ \Bigg\}dz'dz_1dx'\,,
\end{multline}
changing variables $(z_1,z')=\big(y_1-\nabla_{x'}g(x')\cdot
y',y'\big)$ of the internal integration in all places in the r.h.s.
of \er{L2009limew03zeta71288888Contggiuugg777899887787888899}
together with \er{L2009testgradshiftaddlim12888upchtpnew},
\er{L2009testgradgradshiftaddlim12888rupchtpppnew},
\er{L2009testgradshiftlim12888kp2sld},
\er{L2009testgradgradshiftlim12888kp3sld} and \er{tildedefttgrad}
gives
\begin{multline}
\label{L2009limew03zeta71288888Contggiuuggyyyy8878999}
\lim\limits_{\e\to 0}\int\limits_{\O}\frac{1}{\e}
\bigg\{F\Big(\e\nabla\{\vec A\cdot\nabla \psi_{\e}\},\,\vec
A\cdot\nabla \psi_{\e},\,\vec
B\cdot\psi_{\e},\,f\Big)-F\Big(\e\nabla\{\vec A\cdot\nabla
u_{\e}\},\,\vec A\cdot\nabla u_{\e},\,\vec B\cdot
u_{\e},\,f\Big)\bigg\}\,dx=\\ \int\limits_{S}\;\int\limits_
{\{y\in\R^N:\,y'\in
I_1^{N-1}\}}\frac{1}{L\sqrt{\big(1+|\nabla_{x'}g(x')|^2\big)}}\Bigg\{
F\bigg(p\big(-\vec n(x')\cdot y\,/L,x\big) \big(\{\vec A\cdot\nabla
v\}^+(x)-\{\vec A\cdot\nabla v\}^-(x)\big)\otimes\vec n(x'),\\
\Gamma\big(-\vec n(x')\cdot y\,/L,x\big),\,\{\vec B\cdot
v\}(x),\,\zeta\big(\vec n(x')\cdot y,f^+(x),f^-(x)\big)\bigg)
\\-F\bigg(p\big(-\vec
n(x')\cdot y\,/L,x\big)\big(\{\vec A\cdot\nabla
v\}^+(x)-\{\vec A\cdot\nabla
v\}^-(x)\big)\otimes\vec n(x')+L\nabla_y\{\vec A\cdot\nabla_y\tilde h\}(y,x'),\\
\,\Gamma\big(-\vec n(x')\cdot y\,/L,x\big)+\vec A\cdot\nabla_y\tilde
h(y,x'),\,\{\vec B\cdot v\}(x),\,\zeta\big(\vec n(x')\cdot
y,f^+(x),f^-(x)\big) \bigg) \Bigg\}\,dy\,d\mathcal{H}^{N-1}(x)\,,
\end{multline}
Consider the linear transformation $Q_{x'}(s)=Q_{x'}(s_1,s_2,\ldots,
s_N):\R^N\to\R^N$ by
\begin{equation}\label{chfhh2009y888}
Q_{x'}(s):=\sum\limits_{j=1}^{N}s_j\,\vec q_j(x')\,,
\end{equation}
where $\vec q_j$ is defined by \er{tildedefteee}. Then by
\er{tildedefteee}
\begin{equation}\label{chfhh2009y888det88989}
\det\{Q_{x'}\}=\vec
n_1(x')\Big(1+\big|\nabla_{x'}g(x')\big|^2\Big)=\sqrt{\Big(1+\big|\nabla_{x'}g(x')\big|^2\Big)}\,.
\end{equation}
Moreover, we have
\begin{multline}\label{chfhh2009y888img889899}
Q_{x'}\bigg(\Big\{s\in\R^N:\;s_1>0,\;s_j\in
\big(-1/2+\alpha_{x'}s_1,1/2+\alpha_{x'}s_1\big)\;\forall j\geq
2\Big\}\bigg)\\=\{\vec n(x')\cdot y>0\}\cap\{y'\in I_1^{N-1}\}\,,
\\
Q_{x'}\bigg(\Big\{s\in\R^N:\;s_1<0,\;s_j\in
\big(-1/2+\alpha_{x'}s_1,1/2+\alpha_{x'}s_1\big)\;\forall j\geq
2\Big\}\bigg)\\=\{\vec n(x')\cdot y<0\}\cap\{y'\in I_1^{N-1}\}\,,
\end{multline}
where $\alpha_{x'}:=\vec n_1(x')\big(\nabla_{x}g(x')\cdot \vec
e_j\big)$ and by \er{obnulenietilde1} we deduce
\begin{equation}\label{chfhh2009y888set8898999}
\tilde h\big(Q_{x'}(s+\vec e_j),x'\big)=\tilde
h\big(Q_{x'}(s),x'\big)\;\;\forall j=2,\ldots, N\,.
\end{equation}
Therefore, changing variables from $y$ to $s$ in
\er{L2009limew03zeta71288888Contggiuuggyyyy8878999}
and using \er{tildedefteeeort}, \er{chfhh2009y888det88989},
\er{chfhh2009y888img889899} and a periodicity condition
\er{chfhh2009y888set8898999} we deduce
\er{L2009limew03zeta71288888Contggiuuggyyyynew88789999vprop78899shtrih}.
\end{proof}

\begin{lemma}\label{mnogohypsi88kkkll88}
Let $S$, $g$, $\mathcal{U}$, $\vec n$, $\theta_0$, $g_\e$,
$\mathcal{P}(\mathcal{U})$, $h$, $u_\e$, $v$ $F$, $f$, $\vec A$,
$\vec B$, $\psi_\e$, $p$ and $\Gamma$ be the same as in
Proposition \ref{mnogohypsi88} and let $L>0$. Then
\begin{equation}
\label{L2009limew03zeta71288888Contggiuuggyyyynew88789999vprop78899shtrihkkkll}
\inf\limits_{h_0\in\mathcal{P}(\mathcal{U})}\bigg\{\int\limits_{S}P_x\big(\tilde
h(\cdot,x')\big)\,d\mathcal{H}^{N-1}(x)\bigg\}
= \int\limits_{S}\Big\{\inf\limits_{\sigma\in
\mathcal{R}(x')}P_x\big(\sigma(\cdot)\big)
\Big\}\,d\mathcal{H}^{N-1}(x) \,,
\end{equation}
where
\begin{multline}
\label{L2009limew03zeta71288888Contggiuuggyyyynew88789999vprop78899shtrihkkkllyhjyukjkkmmm}
P_x(\sigma):=
\int\limits_{\R}\int\limits_{I_1^{N-1}}\frac{1}{L}F\bigg(p(-s_1/L,x)\Big\{(\vec
A\cdot\nabla v)^+(x)-(\vec A\cdot\nabla
v)^-(x)\Big\}\otimes\vec n(x')+L\nabla_y\{\vec A\cdot\nabla_y\sigma\}\big(Q_{x'}(s)\big),\\
\,\Gamma(-s_1/L,x)+\vec A\cdot\nabla_y\sigma\big(Q_{x'}(s)\big),\,\vec B\cdot v(x),\,\zeta\big(s_1,f^+(x), f^-(x)\big) \bigg)\,ds'ds_1\,,
\end{multline}
with $(s_1,s'):=s\in\R\times\R^{N-1}$, $\tilde h(y,x')\in
C^\infty(\R^N\times\R^{N-1},\R^d)$ is given by
\begin{equation}\label{tildedefttshtrihkkkll}
\tilde h(y,x'):=h_0\Big(\big\{y_1-\nabla_{x'}g(x')\cdot
y',y'\big\},x'\Big)\,,
\end{equation}
\begin{multline}\label{khhukgghfhgkkkkkll}
\mathcal{R}(x'):= \bigg\{\sigma(y)\in C^\infty(\R^N,\R^d):\\
\sigma(y)=0\;\text{ if }\;\big|\big(y\,\cdot \vec
q_1(x')\big)\big|\geq \frac{1}{2}\vec n_1(x'),\;\text{ and }\;
\sigma\big(y+\vec q_j(x')\big)=\sigma(y)\;\;\forall j=2,\ldots,
N\,,\bigg\}
\end{multline}
and the linear transformation $Q_{x'}(s)=Q_{x'}(s_1,s_2,\ldots,
s_N):\R^N\to\R^N$ is defined by
\begin{equation}\label{chfhh2009y888shtrihkkkll}
Q_{x'}(s):=\sum\limits_{j=1}^{N}s_j\,\vec q_j(x')\,,
\end{equation}
with
\begin{equation}\label{tildedefteeeshtrihkkkll}
\vec q_j(x'):=\begin{cases}\vec
n(x')\quad\quad\quad\quad\text{if}\quad j=1\,,\\
\big(\nabla_{x}g(x')\cdot \vec e_j\big)\,\vec e_1+\vec
e_j\quad\quad\text{if}\quad 2\leq j\leq N\,.\end{cases}\,.
\end{equation}
\end{lemma}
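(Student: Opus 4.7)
\emph{Plan.} The inequality $\geq$ follows from the pointwise fact that $\tilde h(\cdot,x')\in\mathcal{R}(x')$ for every $h_0\in\mathcal{P}(\mathcal{U})$ and every $x'\in\mathcal{U}$. Indeed, $|y_1-\nabla g(x')\cdot y'|\geq 1/2$ is equivalent to $|y\cdot\vec n(x')|\geq \vec n_1(x')/2$ via $\vec n=(1,-\nabla g)/\sqrt{1+|\nabla g|^2}$, and the translation vector $\vec q_j(x')$ is the $\Psi_{x'}$-image of $\vec e_j$, where
\[\Psi_{x'}(z):=(z_1+\nabla g(x')\cdot z',z').\]
Hence $P_x(\tilde h(\cdot,x'))\geq\inf_{\sigma\in\mathcal{R}(x')}P_x(\sigma)$ pointwise, and integration over $S$ gives one direction.

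For the reverse inequality I would reparametrize $\mathcal{R}(x')$ to a fixed space. Let
\[\mathcal{P}_0:=\bigl\{h\in C^\infty(\R^N,\R^d):\,h(z)=0\text{ if }|z_1|\geq 1/2,\ h(z+\vec e_j)=h(z)\text{ for }2\leq j\leq N\bigr\}.\]
The map $h\mapsto h\circ\Psi_{x'}^{-1}$ is a bijection $\mathcal{P}_0\to\mathcal{R}(x')$. Setting $\Phi(h,x'):=P_{(g(x'),x')}(h\circ\Psi_{x'}^{-1})$, since $F\in C^1$ and $\Phi(h,x')$ depends on $h$ only through $\nabla_y h$ and $\nabla_y^2 h$ evaluated on a fixed compact set, $h\mapsto\Phi(h,x')$ is continuous in the $C^2$-topology for each fixed $x'$; on the other hand, $x'\mapsto\Phi(h,x')$ is Borel measurable for each fixed $h$ (via the Borel dependence of $\{\vec A\cdot\nabla v\}^{\pm}$, $f^\pm$, $\vec B\cdot v$, $\vec n(x')$ and $Q_{x'}$ on $x'$). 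Fixing any countable $C^2$-dense sequence $\{h^{(n)}\}_{n\in\mathbb{N}}\subset\mathcal{P}_0$ (which exists by separability), one checks $J(x'):=\inf_{\sigma\in\mathcal{R}(x')}P_x(\sigma)=\inf_n\Phi(h^{(n)},x')$, so $J$ is Borel measurable with $0\leq J(x')\leq\Phi(0,x')$; the latter is integrable over $S$ by Proposition \ref{mnogohypsi88}.

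Given $\delta>0$, form the disjoint Borel partition
\[A_n:=\bigl\{x'\in\mathcal{U}:\Phi(h^{(n)},x')\leq J(x')+\delta\bigr\}\setminus\bigcup_{m<n}A_m,\]
choose $N_0$ with $|\mathcal{U}\setminus\bigcup_{n\leq N_0}A_n|<\delta$, and let $\mathcal{U}'\subset\subset\mathcal{U}$ with $|\mathcal{U}\setminus\mathcal{U}'|<\delta$. For each $n\leq N_0$, approximate $A_n\cap\mathcal{U}'$ from inside by a finite disjoint union of small open cubes $\{E_{n,k}\}_k$ so that $\mathcal{E}:=\bigcup_{n,k}\overline{E_{n,k}}\subset\subset\mathcal{U}$ and $|\mathcal{U}\setminus\mathcal{E}|$ is as small as desired. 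Let $\omega_\eta$ be a mollifier on $\R^{N-1}$ of scale $\eta$ and $\chi^\eta$ a smooth cutoff supported in $\mathcal{U}$ equal to $1$ on $\{\mathrm{dist}(\cdot,\partial\mathcal{E})>2\eta\}$. Define
\[h_0^\eta(z,x'):=\chi^\eta(x')\sum_{n\leq N_0}\bigl(\omega_\eta*\chi_{\bigcup_k E_{n,k}}\bigr)(x')\,h^{(n)}(z);\]
for $\eta$ small, $h_0^\eta\in\mathcal{P}(\mathcal{U})$. On the good set $G_\eta\subset\mathcal{E}$ where $h_0^\eta(\cdot,x')=h^{(n)}$ for a single $n\leq N_0$, the defining inequality of $A_n$ gives $\Phi(h_0^\eta(\cdot,x'),x')\leq J(x')+\delta$. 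On the transition set $T_\eta:=\mathcal{E}\setminus G_\eta$, whose measure tends to $0$ as $\eta\to 0$, the function $h_0^\eta(\cdot,x')$ is a convex sub-combination of finitely many fixed $C^\infty$ profiles, hence uniformly bounded in $C^2$; together with the $L^\infty$-bounds on $\{\vec A\cdot\nabla v\}^\pm$, $\vec B\cdot v$ and $f^\pm$, this keeps the arguments of $F$ in a fixed compact set, so $\Phi(h_0^\eta(\cdot,x'),x')\leq C$ on $T_\eta$ with $C$ independent of $\eta$. On $\mathcal{U}\setminus\mathcal{E}$ we have $h_0^\eta=0$ and the integrand equals $\Phi(0,x')$, whose integral is small by absolute continuity of $\int\Phi(0,\cdot)$. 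Taking $\eta\to 0$, then refining the cubes so $|\mathcal{U}\setminus\mathcal{E}|\to 0$, then $\delta\to 0$, yields the $\leq$ direction.

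The key obstacle is to reconcile the $x'$-dependent periodicity in $\mathcal{R}(x')$ with the $x'$-independent smoothness and periodicity required by $\mathcal{P}(\mathcal{U})$: this is resolved by the change of variables $\Psi_{x'}$, which transfers the periodicity onto the fixed lattice $\mathbb{Z}^{N-1}$, combined with the reduction to finitely many test profiles $\{h^{(n)}\}_{n\leq N_0}$, which supplies the uniform $C^2$-bounds needed to control the transition region produced by $x'$-mollification. Joint continuity of $\Phi(h,x')$ in $(h,x')$ is \emph{not} available (the jumps $\{\vec A\cdot\nabla v\}^\pm$ and $f^\pm$ are only measurable on $S$), and is replaced throughout by the defining pointwise inequality $\Phi(h^{(n)},\cdot)\leq J(\cdot)+\delta$ on the sets $A_n$.
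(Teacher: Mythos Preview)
Your argument is essentially correct and follows the same architecture as the paper's proof: the easy inequality via $\tilde h(\cdot,x')\in\mathcal{R}(x')$, the reparametrization $\Psi_{x'}$ to a fixed periodic class $\mathcal{P}_0$, measurability of the pointwise infimum via a countable $C^2$-dense family, a finite selection of near-optimal profiles, mollification in $x'$, and a cutoff to land in $\mathcal{P}(\mathcal{U})$. The paper differs only in how the finite selection is organized: instead of your Borel partition $\{A_n\}$ and cube approximation, the paper first applies Lusin's theorem to obtain a compact $K\subset S$ on which $(\vec A\cdot\nabla v)^\pm$, $f^\pm$ and the infimum $x\mapsto\inf_{\sigma}P_x(\sigma)$ are continuous, and then uses this continuity to cover $K$ by finitely many balls on each of which a single fixed profile is $\e$-optimal. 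The payoff of the Lusin route is that the selection sets are automatically relatively open, so the subsequent piecewise definition is clean and no cube approximation of Borel sets is needed.

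There is one genuine imprecision in your version. The phrase ``approximate $A_n\cap\mathcal{U}'$ from inside by a finite disjoint union of small open cubes'' cannot in general mean set inclusion: a Borel set may have positive measure and empty interior. If you instead approximate in measure (small symmetric difference), the cubes $E_{n,k}$ need not lie in $A_n$, so on $E_{n,k}\setminus A_n$ the defining inequality $\Phi(h^{(n)},x')\le J(x')+\delta$ is unavailable. The fix is immediate---add $\bigcup_{n,k}(E_{n,k}\setminus A_n)$ to the ``bad'' region governed by the uniform bound $C$, and make its measure small along with $|T_\eta|$ and $|\mathcal{U}\setminus\mathcal{E}|$---but it should be stated. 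Alternatively, inserting a Lusin step as in the paper makes the near-optimality sets relatively open and removes the issue entirely.
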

\begin{proof}
First of all by \er{obnulenietilde1}, for every
$h_0\in\mathcal{P}(\mathcal{U})$ we have
\begin{multline}\label{obnulenietilde1kkkll}
\tilde h(y,x')=0\;\text{ if }\;\big|\big(y\,\cdot \vec
q_1(x')\big)\big|\geq \frac{1}{2}\vec n_1(x'),\;\text{ and }\;
\tilde h\big(y+\vec q_j(x'),x'\big)=\tilde h(y,x')\;\;\forall
j=2,\ldots, N\,.
\end{multline}
In particular for every $x'\in \mathcal{U}$ we have
$\sigma_{x'}(\cdot):=\tilde h(\cdot,x')\in \mathcal{R}(x')$. Thus
\begin{equation}
\label{L2009limew03zeta71288888Contggiuuggyyyynew88789999vprop78899shtrihkkkllkkklklklkkkk}
\inf\limits_{h_0\in\mathcal{P}(\mathcal{U})}\bigg\{\int\limits_{S}P_x\big(\tilde
h(\cdot,x')\big)\,d\mathcal{H}^{N-1}(x)\bigg\} \geq
\int\limits_{S}\Big\{\inf\limits_{\sigma\in
\mathcal{R}(x')}P_x\big(\sigma(\cdot)\big)\Big\}\,d\mathcal{H}^{N-1}(x)\,.
\end{equation}
Therefore, we need only to prove the reverse inequality. Next
observe that for every $x\in S$ we have
\begin{multline}
\label{L2009limew03zeta71288888Contggiuuggyyyynew88789999vprop78899shtrihkkkllkkklklklkkkkjkuykolllkkll}
\inf\limits_{\sigma\in \mathcal{R}(x')}P_x(\sigma)\leq P_x(0):=
\int\limits_{\R}\int\limits_{I_1^{N-1}}\frac{1}{L}\times
\\ F\bigg(p(-s_1/L,x)\Big\{(\vec A\cdot\nabla v)^+(x)-(\vec
A\cdot\nabla v)^-(x)\Big\}\otimes\vec n(x'), \Gamma(-s_1/L,x),\vec
B\cdot v(x),\zeta\big(s_1,f^+(x),f^-(x)\big) \bigg)ds'ds_1
\\= \int\limits_{\R}F\bigg(p(t,x)\Big\{(\vec
A\cdot\nabla v)^+(x)-(\vec A\cdot\nabla v)^-(x)\Big\}\otimes\vec
n(x'), \Gamma(t,x),\vec B\cdot v(x),\zeta\big(t,,f^-(x),f^+(x)\big)
\bigg)\,dt
\leq D\,,
\end{multline}
where $D\in(0,+\infty)$ is a constant, not depending on $x$.

Next we prove that the function
\begin{equation}\label{fyuyfuigjhjjjkkkkjykukjjhhjk}
\zeta(x):=\inf\limits_{\sigma\in
\mathcal{R}(x')}P_{x}(\sigma)\quad\forall x\in S
\end{equation}
is Borel measurable. Indeed, consider $\mathcal{O}$, the subspace of
the metric space $C^2_{loc}(\R^N,\R^d)$ (with the family of
semi-norms $\{\|\cdot\|_{W^{2,\infty}(K)}\}_{K\subset\subset\R^N}$),
containing all the functions $\bar\sigma\in C^\infty(\R^N,\R^d)$,
that satisfy
\begin{equation}\label{obnuleniejkjkhhh}
\bar\sigma(y)=0\;\text{ if }\;|y_1|\geq 1/2,\text{ and
}\;\bar\sigma(y+\vec e_j)=\bar\sigma(y)\;\;\forall j=2,\ldots, N\,.
\end{equation}
Then, since the metric space $C^2_{loc}(\R^N,\R^d)$ is separable,
the subspace $\mathcal{O}$ is also separable and therefore there
exists a countable subset $\mathcal{O}_r\subset\mathcal{O}$ which is
dense in the topology of $C^2_{loc}(\R^N,\R^d)$. On the other hand
for every $\bar\sigma\in\mathcal{O}$ and every $x'\in\mathcal{U}$
the function $\sigma_{x'}:=\bar\sigma\big(y_1-\nabla_{x'}g(x')\cdot
y',y'\big)$ belongs to $\mathcal{R}(x')$ and moreover we have
\begin{equation}\label{fyuyfuigjhjjjkkkk}
\zeta(x)=\inf\limits_{\sigma\in
\mathcal{R}(x')}P_{x}(\sigma)=\inf\limits_{\bar\sigma\in\mathcal{O}
}P_{x}(\sigma_{x'})=\inf\limits_{\bar\sigma\in\mathcal{O}_r
}P_{x}(\sigma_{x'})\,.
\end{equation}
Thus since $\mathcal{O}_r$ is countable and since
$P_{x}(\sigma_{x'})$ is Borel measurable on $S$, by
\er{fyuyfuigjhjjjkkkk} we deduce that $\zeta(x)$ is Borel
measurable.
 Next fix any $\e>0$. By Lusin's Theorem there exists a compact set
$K\subset S$ such that $(\vec A\cdot\nabla v)^+(x)$, $(\vec
A\cdot\nabla v)^-(x)$, $f^+(x)$, $f^-(x)$ and $\zeta(x)$ are
continuous functions on $K$ and
\begin{equation}\label{bububup}
\mathcal{H}^{N-1}\big(S\setminus K\big)\leq \frac{\e}{2D}.
\end{equation}
Here $\zeta$ is the function defined by
\er{fyuyfuigjhjjjkkkkjykukjjhhjk}. For any $x\in K$ there exists
$\varphi_x\in\mathcal{R}(x')$ such that
\begin{equation}\label{parovoz3}
P_{x}(\varphi_{x})-\zeta(x)<\frac{\e}{4+4\mathcal{H}^{N-1}(S)}\,.
\end{equation}
Then set
\begin{equation}\label{parovoz3bnghjykjukiu}
\gamma_{z,x}(y):=\varphi_{x}\bigg(y_1+\big(\nabla_{z'}g(z')-\nabla_{x'}g(x')\big)\cdot
y',y'\bigg).
\end{equation}
Using the continuity by $z$ of $\nabla_{z'}g(z')$, $\zeta(z)$ and
$P_{z}(\gamma_{z,x})$ on $K$, we infer that for any $x\in K$ there
exists $\delta_x>0$ such that
\begin{equation}\label{sparovoz4}
P_{z}(\gamma_{z,x})-\zeta(z)<\frac{\e}{2+2\mathcal{H}^{N-1}(S)}\,,\quad
\forall z\in K\cap B_{\delta_x}(x)\,.
\end{equation}
Since the set $K$ is compact, there exists a finite number of points
$x_1,x_2,\ldots,x_l\in K$ such that
$K\subset\bigcup\limits_{j=1}^{l} B_{\delta_{x_j}}(x_j)$. Define the
function $\bar p(y,x')$ on $\R^N\times\mathcal{U}$ by
\begin{equation}
\label{qqqw3po} \bar p(y,x')=\begin{cases}
\gamma_{x,x_i}(y)\quad\forall x=(x_1,x')\in \big(K\bigcap
B_{\delta_{x_i}}(x)\big)\setminus\bigcup\limits_{1\leq j\leq
i-1}B_{\delta_{x_j}}(x_j)\quad\quad 1\leq i\leq l,\\
0\quad\quad\quad\; \forall x\in S\setminus K\,.
\end{cases}
\end{equation}
Then from \eqref{bububup}, \eqref{sparovoz4} and
\er{L2009limew03zeta71288888Contggiuuggyyyynew88789999vprop78899shtrihkkkllkkklklklkkkkjkuykolllkkll}
we get
\begin{equation}
\label{gyfgdgdtgdjkjbhjjjjjjjjkkkkll}
\int_{S}P_{x}
\big(\bar
p(\cdot,x')\big)\,d\mathcal{H}^{N-1}(x)-\int_{S}\zeta(x)\,d\mathcal{H}^{N-1}(x)<\e\,.
\end{equation}
Moreover, $\bar p(y,x')$ satisfies
\begin{multline}\label{khhukgghfhgkkkkklljjkkjjkk}
\bar p(y,x')=0\;\text{ if }\;\big|\big(y\,\cdot \vec
q_1(x')\big)\big|\geq \frac{1}{2}\vec n_1(x'),\;\text{ and }\; \bar
p\big(y+\vec q_j(x'),x'\big)=\bar p(y,x')\;\;\forall j=2,\ldots,
N\,,
\end{multline}
and $\bar p(y,x')\in L^\infty\big(\mathcal{U},C^k(K,\R^d)\big)$ for
every $K\subset\subset\R^N$ and every natural $k$. Next define
$p_0:\R^N\times\mathcal{U}\to\R^d$ by
\begin{equation}\label{tildedefttshtrihkkkllkkkkkkkkjkjkl}
p_0(y,x'):=\bar p\Big(\big\{y_1+\nabla_{x'}g(x')\cdot
y',y'\big\},x'\Big)\,,
\end{equation}
Then
\begin{equation}\label{tildedefttshtrihkkkllkkkkkkkk}
\bar p(y,x'):=p_0\Big(\big\{y_1-\nabla_{x'}g(x')\cdot
y',y'\big\},x'\Big)\,,
\end{equation}
and
\begin{multline}\label{khhukgghfhgkkkkklljjkkjjkkhlhhhhh}
p_0(y,x')=0\;\text{ if }\;|y_1|\geq 1/2,\;\text{ and }\;
p_0\big(y+\vec e_j,x'\big)=p_0(y,x')\;\;\forall j=2,\ldots, N\,.
\end{multline}
Moreover, $p_0(y,x')\in L^\infty\big(\mathcal{U},C^k(K,\R^d)\big)$
for every $K\subset\subset\R^N$ and every natural $k$. Next let
$\omega\in C^\infty_c(\R^{N-1},\R)$ be such that $\omega\geq 0$ and
$\int_{\R^{N-1}}\omega(y')dy'=1$. For any $0<\rho<1$ define
\begin{equation}\label{cvvtgjjlkk}
p_\rho(y,x')=\frac{1}{\rho^{N-1}}\int_{\R^{N-1}}
\omega\Big(\frac{x'-z'}{\rho}\Big)p_0 (y,z')\,dz'=\int_{\R^{N-1}}
\omega(z')p_0 (y,x'+\rho z')\,dz'\,.
\end{equation}
Then $p_\rho\in C^\infty(\R^N\times\mathcal{U},\R^d)$ and
\begin{multline}\label{khhukgghfhgkkkkklljjkkjjkkhlhhhhhkkkl}
p_\rho(y,x')=0\;\text{ if }\;|y_1|\geq 1/2,\;\text{ and }\;
p_\rho\big(y+\vec e_j,x'\big)=p_\rho(y,x')\;\;\forall j=2,\ldots,
N\,.
\end{multline}
Furthermore, there exists a constant $M>0$, independent of $\rho$,
$y$ and $x'$, such that for every $0<\rho<1$ we have
\begin{equation}\label{vhhdhcdhchfdhc}
|\nabla^2_y p_\rho(y,x')|+|\nabla_y p_\rho(y,x')|+|
p_\rho(y,x')|\leq M\,.
\end{equation}
Moreover, for every $y$, for a.e. $x'\in\mathcal{U}$ we have
\begin{equation}\label{vhhdhcdhchfdhckkkkllll}
\nabla^2_y p_\rho(y,x')\to\nabla^2_y p_0(y,x')\,,\quad\nabla_y
p_\rho(y,x')\to\nabla_y p_0(y,x')\,,\quad  p_\rho(y,x')\to
p_0(y,x')\quad\text{as}\quad\rho\to 0^+\,.
\end{equation}
Therefore, if we define
\begin{equation}\label{tildedefttshtrihkkkllkkkkkkkkjkjk}
\tilde p_\rho(y,x'):=p_\rho\Big(\big\{y_1-\nabla_{x'}g(x')\cdot
y',y'\big\},x'\Big)\,,
\end{equation}
then, by \er{vhhdhcdhchfdhc} and \er{vhhdhcdhchfdhckkkkllll}
\begin{equation}\label{gyfgdgdtgdjkjbhjjjjjjjjkkkkllhhh}
\lim\limits_{\rho\to 0^+}\int_{S}P_{x} \big(\tilde
p_\rho(\cdot,x')\big)\,d\mathcal{H}^{N-1}(x)=\int_{S}P_{x} \big(\bar
p(\cdot,x')\big)\,d\mathcal{H}^{N-1}(x)\,.
\end{equation}
Thus, by \er{gyfgdgdtgdjkjbhjjjjjjjjkkkkll} there exists
$\rho_0\in(0,1)$ such that
\begin{equation}\label{gyfgdgdtgdjkjbhjjjjjjjjkkkkllkllkljhhh}
\int_{S}P_{x} \big(\tilde p_{\rho_0}
(\cdot,x')\big)\,d\mathcal{H}^{N-1}(x)-\int_{S}\Big\{\inf\limits_{\sigma\in
\mathcal{R}(x')}P_x\big(\sigma(\cdot)\big)\Big\}\,d\mathcal{H}^{N-1}(x)<2\e\,.
\end{equation}
Finally consider the sequence of compact subsets
$K_n\subset\subset\mathcal{U}$, such that $K_n\subset K_{n+1}$ and
$\bigcup_{n=1}^{+\infty}K_n=\mathcal{U}$. For every $n$ consider
$\xi_n(x')\in C^\infty_c(\mathcal{U},\R)$, such that $\xi_n(x')=1$
if $x'\in K_n$ and $0\leq\xi_n(x')\leq 1$ for every
$x'\in\mathcal{U}$. For every $n$ define
$h_n(y,x'):=p_{\rho_0}(y,x')\xi_n(x')$. Then $h_n\in
C^\infty(\R^N\times\mathcal{U},\R^d)$. Moreover, by
\er{khhukgghfhgkkkkklljjkkjjkkhlhhhhhkkkl} we have
\begin{multline}\label{khhukgghfhgkkkkklljjkkjjkkhlhhhhhkkklkkok}
h_n(y,x')=0\;\text{ if }\;|y_1|\geq 1/2,\;\text{ and }\;
h_n\big(y+\vec e_j,x'\big)=h_n(y,x')\;\;\forall j=2,\ldots, N\,,
\end{multline}
and by the definition $\ov{\supp h_n}\subset\R^N\times\mathcal{U}$.
Thus $h_n\in \mathcal{P}(\mathcal{U})$. Moreover, by
\er{vhhdhcdhchfdhc}, for every $n$ we have
\begin{equation}\label{vhhdhcdhchfdhcjyhjk}
|\nabla^2_y h_n(y,x')|+|\nabla_y h_n(y,x')|+|h_n(y,x')|\leq M\,,
\end{equation}
and by the definition for every $y$ and $x'$ we have
\begin{equation}\label{vhhdhcdhchfdhckkkklllllil}
\nabla^2_y h_n(y,x')\to\nabla^2_y p_{\rho_0}(y,x')\,,\quad\nabla_y
h_n(y,x')\to\nabla_y p_{\rho_0}(y,x')\,,\quad  h_n(y,x')\to
p_{\rho_0}(y,x')\quad\text{as}\quad n\to +\infty\,.
\end{equation}
Thus if we set
\begin{equation}\label{tildedefttshtrihkkkllkkkkkkkkjkjkdjkxcjk}
\tilde h_n(y,x'):=h_n\Big(\big\{y_1-\nabla_{x'}g(x')\cdot
y',y'\big\},x'\Big)\,,
\end{equation}
then, by \er{vhhdhcdhchfdhcjyhjk} and \er{vhhdhcdhchfdhckkkklllllil}
\begin{equation}\label{gyfgdgdtgdjkjbhjjjjjjjjkkkkllhhheghekkk}
\lim\limits_{n\to +\infty}\int_{S}P_{x} \big(\tilde
h_n(\cdot,x')\big)\,d\mathcal{H}^{N-1}(x)=\int_{S}P_{x} \big(\tilde
p_{\rho_0}(\cdot,x')\big)\,d\mathcal{H}^{N-1}(x)\,.
\end{equation}
Then, by \er{gyfgdgdtgdjkjbhjjjjjjjjkkkkllkllkljhhh} we obtain
\begin{equation}\label{gyfgdgdtgdjkjbhjjjjjjjjkkkkllkllkljhhhdxhjk}
\lim\limits_{n\to +\infty}\int_{S}P_{x} \big(\tilde
h_n(\cdot,x')\big)\,d\mathcal{H}^{N-1}(x)-\int_{S}\Big\{\inf\limits_{\sigma\in
\mathcal{R}(x')}P_x\big(\sigma(\cdot)\big)\Big\}\,d\mathcal{H}^{N-1}(x)\leq
2\e\,.
\end{equation}
Therefore, since $\e>0$ was chosen arbitrary and since $h_n\in
\mathcal{P}(\mathcal{U})$ we deduce
\begin{equation}\label{gyfgdgdtgdjkjbhjjjjjjjjkkkkllkllkljhhhdxhjkhjhjlkk}
\inf\limits_{h_0\in\mathcal{P}(\mathcal{U})}\bigg\{\int_{S}P_{x}
\big(\tilde
h(\cdot,x')\big)\,d\mathcal{H}^{N-1}(x)\bigg\}\leq\int_{S}\Big\{\inf\limits_{\sigma\in
\mathcal{R}(x')}P_x\big(\sigma(\cdot)\big)\Big\}\,d\mathcal{H}^{N-1}(x)\,,
\end{equation}
which together with the reverse inequality
\er{L2009limew03zeta71288888Contggiuuggyyyynew88789999vprop78899shtrihkkkllkkklklklkkkk}
gives the desired result.
\end{proof}

\subsection{Construction of the approximating sequence in the general case}
\begin{lemma}\label{mnogohypsi88kkkll88lpppopkook}
Let
$v$, $F$, $f$, $\vec A$, $\vec B$, $\psi_\e$, $\vec\nu(x),$ $p$ and
$\Gamma$ be the same as in
Theorem \ref{vtporbound4}.
Then for every $\delta>0$ there exist $N$ Borel-measurable functions
$\vec p_j(x):J_{\vec A\cdot\nabla v}\to\R^N$ for $1\leq j\leq N$,
such that $\vec p_1(x):=\vec \nu(x)$, $\vec p_j(x)\cdot\vec\nu(x)=0$
for $2\leq j\leq N$ and $\{\vec p_j(x)\}_{1\leq j\leq N}$ is
linearly independent system of vectors for every $x$, and there
exists a sequence $\{v_\e\}_{0<\e<1}\subset C^\infty(\R^N,\R^d)$
such that $\lim_{\e\to0^+}\vec A\cdot\nabla v_\e=\vec A\cdot \nabla
v$ in $L^p(\R^N,\R^m)$, $\lim_{\e\to0^+} \e\nabla\{\vec A\cdot
\nabla v_\e\}=0$ in $L^{p}(\R^N,\R^{m\times N})$,
$\lim_{\e\to0^+}\vec B\cdot v_\e=\vec B\cdot v$ in
$W^{1,p}(\R^N,\R^k)$, $\lim_{\e\to0^+}(\vec B\cdot v_\e-\vec B\cdot
v)/\e=0$ in $L^{p}(\R^N,\R^k)$ for every $p\geq 1$ and
\begin{equation}
\label{L2009limew03zeta71288888Contggiuuggyyyynew88789999vprop78899shtrihkkklljkhkhggh}
0\leq\lim\limits_{\e\to 0}\int\limits_{\O}\frac{1}{\e}
F\Big(\e\nabla\{\vec A\cdot\nabla v_{\e}\},\,\vec A\cdot\nabla
v_{\e},\,\vec B\cdot v_{\e},\,f\Big)\,dx-\int\limits_{\O\cap J_{\vec
A\cdot\nabla v}}\bigg\{\inf\limits_{\sigma\in
\mathcal{Q}(x),L\in(0,1)}H_x\big(\sigma(\cdot),L\big)
\bigg\}\,d\mathcal{H}^{N-1}(x)<\delta\,,
\end{equation}
where
\begin{multline}
\label{L2009limew03zeta71288888Contggiuuggyyyynew88789999vprop78899shtrihkkkllyhjyukjkkmmmklklklhhhhkk}
H_x(\sigma,L):=
\int\limits_{I_{N}}\frac{1}{L}F\bigg(p(-s_1/L,x)\Big\{(\vec
A\cdot\nabla v)^+(x)-(\vec A\cdot\nabla
v)^-(x)\Big\}\otimes\vec \nu(x)+L\nabla_y\{\vec A\cdot\nabla_y\sigma\}\big(T_{x}(s)\big),\\
\,\Gamma(-s_1/L,x)+\vec
A\cdot\nabla_y\sigma\big(T_{x}(s)\big),\,\vec B\cdot
v(x),\,\zeta\big(s_1,f^+(x),f^-(x)\big) \bigg)\,ds\,,
\end{multline}
\begin{multline}\label{L2009Ddef2hhhjjjj77788nullkkkoooooll}
\mathcal{Q}(x):=\mathcal{D}_1\big(\vec A,\vec\nu(x),\vec p_2(x),\vec
p_3(x),\ldots,\vec p_{N}(x)\big):=\\ \bigg\{ \sigma\in
C^\infty(\R^N,\R^d)\cap L^\infty(\R^N,\R^d)\cap Lip(\R^N,\R^d):\;
\vec A\cdot\nabla \sigma(y)=0\;\text{ if }\; |y\cdot\vec\nu(x)|\geq 1/2\;\\
\text{ and }\;
\sigma\big(y+\vec p_j(x)\big)=\sigma(y) \;\;\forall j=2,3,\ldots,
N\bigg\}\,,
\end{multline}
\begin{equation}\label{cubidepgghkkkkllllllljjjkk}
I_N:= \Big\{s\in\R^N:\;-1/2<s_j<1/2\quad\forall
j=1,2,\ldots, N\Big\}\,,
\end{equation}
and the linear transformation $T_x(s)=T_{x}(s_1,s_2,\ldots,
s_N):\R^N\to\R^N$ is defined by
\begin{equation}\label{chfhh2009y888shtrihkkkllbjgjkll}
T_{x}(s):=\sum\limits_{j=1}^{N}s_j\,\vec p_j(x)\,,
\end{equation}
Moreover, $\vec A\cdot\nabla v_\e$, $\e\nabla\{\vec A\cdot\nabla
v_\e\}$, $\vec B\cdot v_\e$ and $\nabla\{\vec B\cdot v_\e\}$ are
bounded in $L^\infty$ sequences, there exists a compact
$K=K_\delta\subset\subset\O$ such that $v_\e(x)=\psi_\e(x)$ for
every $0<\e<1$ and every $x\in\R^N\setminus K$, and
\begin{equation}\label{bjdfgfghljjklkjkllllkkllkkkkkkllllkkllhffhhfh}
\limsup\limits_{\e\to 0^+}\Bigg|\frac{1}{\e}\bigg(\int_\O\vec
A\cdot\nabla v_\e(x)\,dx-\int_\O\{\vec A\cdot \nabla
v\}(x)\,dx\bigg)\Bigg|<+\infty\,,
\end{equation}
\end{lemma}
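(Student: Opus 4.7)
The plan is to decompose the jump set into finitely many well-parametrized pieces, modify the primary sequence $\psi_\varepsilon$ of \eqref{a4543chejhgufydfdt} on each piece via Proposition~\ref{mnogohypsi88}, and then use the basis-independence and class-equivalence results to identify the resulting limit energy with the desired infimum over $\mathcal{Q}(x)$ and $L\in(0,1)$.

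First, since $\vec A\cdot\nabla v\in BV$, its jump set is countably $\mathcal{H}^{N-1}$-rectifiable (Theorem~\ref{petTh}), and the primary limit energy $\int_{\O\cap J_{\vec A\cdot\nabla v}}\int_\R F(\,\cdot\,)\,dt\,d\mathcal{H}^{N-1}$ is finite. A standard covering and Borel-selection argument then yields: (i) finitely many pairwise disjoint compact sets $K_1,\ldots,K_n\subset\O\cap J_{\vec A\cdot\nabla v}$ whose complement in $J_{\vec A\cdot\nabla v}\cap\O$ carries primary energy less than $\delta/4$, (ii) for each $i$ an orthogonal transformation placing the normal $\vec\nu$ close to $\vec e_1$ on $K_i$ and a $C^1$ function $g_i:\mathcal{U}_i\to\R$ with $K_i\subset S_i:=\{x_1=g_i(x')\}\subset\subset\O$. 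On $K_i$ we define $\vec p_1(x)=\vec\nu(x)$ and $\vec p_j(x)=\vec q_j(x')$ via formula \eqref{tildedefteeeshtrih}, and extend arbitrarily as a Borel function outside $\bigcup K_i$. Note that by \eqref{tildedefteeeort} each such system $\{\vec p_j(x)\}$ is linearly independent with $\vec p_j\cdot\vec\nu=0$ for $j\geq2$.

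Second, I identify the effective minimization problem on each $K_i$. On $S_i$, Proposition~\ref{mnogohypsi88} with a choice $h_{0,i}\in\mathcal{P}(\mathcal{U}_i)$ and $L_i>0$ produces a modified sequence $u_{\varepsilon,i}$ whose limit energy differs from that of $\psi_\varepsilon$ by the explicit expression involving $H_x(\tilde h(\cdot,x'),L_i)$ with the same transformation $T_x(s)=Q_{x'}(s)$ appearing in \eqref{chfhh2009y888shtrihkkkllbjgjkll}. By Lemma~\ref{mnogohypsi88kkkll88}, optimizing over $h_{0,i}$ for fixed $L_i$ yields $\int_{S_i}\inf_{\sigma\in\mathcal{R}(x')}H_x(\sigma,L_i)\,d\mathcal{H}^{N-1}$. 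Next, by Proposition~\ref{L2009cffgfProp8hhhjjj8887788} applied pointwise, $\inf_{\sigma\in\mathcal{R}(x')}H_x(\sigma,L_i)$ is independent of the particular basis $\{\vec q_j(x')\}_{j=2}^N$ used, which is exactly what allows the flexibility of choosing $\vec p_j$ in the final statement. Finally, by Lemma~\ref{resatnulo} (together with Lemma~\ref{lijkkhnjjhjh}),
\[
\inf_{L\in(0,1)}\inf_{\sigma\in\mathcal{R}(x')}H_x(\sigma,L)\;=\;\inf_{L\in(0,1)}\inf_{\sigma\in\mathcal{Q}(x)}H_x(\sigma,L),
\]
so the target infimum in \eqref{L2009limew03zeta71288888Contggiuuggyyyynew88789999vprop78899shtrihkkklljkhkhggh} coincides with what can be approached using functions from the smaller class $\mathcal{R}(x')$ used in Proposition~\ref{mnogohypsi88}.

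Third, I perform the actual construction. For each $i$, using Lusin's theorem (as in the proof of Lemma~\ref{mnogohypsi88kkkll88}) I select a measurable $x\mapsto(\sigma_x,L_x)$ nearly realizing the pointwise infimum, and by continuity on a compact sub-piece $\tilde K_i\subset K_i$ replace it by a piecewise-constant-in-$x$ choice losing at most $\delta/(4n)$ per piece. This generates $h_{0,i}\in\mathcal{P}(\mathcal{U}_i)$ and $L_i\in(0,1)$ for which the total limit energy contribution from $\tilde K_i$ is within $\delta/(4n)$ of $\int_{\tilde K_i}\inf_{\sigma,L}H_x(\sigma,L)\,d\mathcal{H}^{N-1}$. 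Since the tube supports of $u_{\varepsilon,i}-\psi_\varepsilon$ are contained in $\{|x_1-g_{i,\varepsilon}(x')|<\varepsilon/(2L_i),\,x'\in\mathcal{U}_i\}$ by \eqref{gyugughhhgghjghjghj}, and the closed sets $\overline{S_i}$ are pairwise disjoint, these tubes become pairwise disjoint for $\varepsilon$ small. I then define $v_\varepsilon:=\psi_\varepsilon+\sum_{i=1}^n(u_{\varepsilon,i}-\psi_\varepsilon)$. Because the supports are disjoint, the limit energy splits as (energy of $\psi_\varepsilon$) $+$ $\sum_i$ (energy differences from Proposition~\ref{mnogohypsi88}), and summing the estimates and using Theorem~\ref{vtporbound4} on the unmodified region gives \eqref{L2009limew03zeta71288888Contggiuuggyyyynew88789999vprop78899shtrihkkklljkhkhggh}. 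The convergence statements for $v_\varepsilon$, $\vec A\cdot\nabla v_\varepsilon$, $\vec B\cdot v_\varepsilon$ follow from Lemma~\ref{mnogohypsi88hkkhgkjgj} applied to each piece, the $L^\infty$-bounds are inherited by boundedness of each $h_{0,i}$, and \eqref{bjdfgfghljjklkjkllllkkllkkkkkkllllkkllhffhhfh} follows because $\int_\O\vec A\cdot\nabla v_\varepsilon\,dx=\int_\O\vec A\cdot\nabla\psi_\varepsilon\,dx$ for small $\varepsilon$ (the modifications being divergence-like contributions supported in a compact set with zero boundary flux after summation).

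The main obstacle I anticipate is the bookkeeping in the second step: properly combining Proposition~\ref{L2009cffgfProp8hhhjjj8887788} (change of basis), Lemma~\ref{resatnulo} (passage $\mathcal{R}\leftrightarrow\mathcal{Q}$ through the limit $L\to 0^+$), and Lemma~\ref{mnogohypsi88kkkll88} (measurable selection of $h_{0,i}$) into a coherent uniform approximation of $\inf_{\sigma\in\mathcal{Q}(x),L\in(0,1)}H_x(\sigma,L)$ by test functions $\tilde h$ originating from $h_0\in\mathcal{P}(\mathcal{U}_i)$. In particular, Lemma~\ref{resatnulo} is stated for the cube-model functional with $m_L$, not directly for $H_x(\sigma,L)$; bridging the two requires identifying the $L\to 0^+$ limit of $H_x(\sigma,L)$ with the cube-model quantity (using that $p(\pm s_1/L,x)\to 0$ and $\Gamma(\mp s_1/L,x)\to (\vec A\cdot\nabla v)^\mp(x)$ pointwise as $L\to 0^+$), which is a small but nontrivial compactness/continuity argument.
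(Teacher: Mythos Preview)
Your proposal is essentially correct and follows the same architecture as the paper's proof: cover $J_{\vec A\cdot\nabla v}\cap\O$ by finitely many graph pieces $S'_k$ with disjoint closures, define $\vec p_j(x)$ via the tangent vectors \eqref{tildedefteeeshtrih} of those graphs, modify $\psi_\e$ locally using Proposition~\ref{mnogohypsi88}, and combine via Lemma~\ref{mnogohypsi88kkkll88} and Theorem~\ref{vtporbound4}.

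Two points where the paper is more economical than your plan. First, you select $L_x$ pointwise and then force it piecewise constant; the paper instead exploits that Lemma~\ref{resatnulo} gives a \emph{limit} $\lim_{L\to 0^+}$, not merely an infimum, so by Dominated Convergence (using the uniform bound $H_x(0,L)\le D_0$) a \emph{single} $L_0$ works simultaneously for all $x$ (see \eqref{hgfhgjykukuioiipoookkkkkkkilhkjk}--\eqref{hgfhgjykukuioiipoookkkkkkkilhkjkjuyk}). This avoids a further subdivision of the pieces. Second, Proposition~\ref{L2009cffgfProp8hhhjjj8887788} is not needed here: the lemma only asserts \emph{existence} of some $\vec p_j$, and the specific graph-induced frame suffices. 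Basis independence is invoked only later, in the proof of Theorem~\ref{vtporbound4ghkfgkhk}.

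Your ``anticipated obstacle'' is not actually an obstacle. The functional $H_x(\sigma,L)$ is already exactly of the form treated in Lemma~\ref{resatnulo}: set $A_1=(\vec A\cdot\nabla v)^-(x)$, $A_2=(\vec A\cdot\nabla v)^+(x)$, and $\theta(t):=\int_{-t}^{+\infty}p(s,x)\,ds$, so that $m_L(y)=\Gamma(-y\cdot\vec\nu/L,x)$ and $L\nabla m_L$ reproduces the $p$-term; the sign variable $s_1/|s_1|$ matches $\zeta(s_1,f^+,f^-)$. No separate ``bridging'' argument is required; the paper applies Lemma~\ref{resatnulo} directly at \eqref{hgfhgjykukuioiipoookkkkkkk}. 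The only genuine bookkeeping is the change of variables relating $\mathcal R(x')$ (periodicity in $\vec q_j$, vanishing for $|y\cdot\vec q_1|\ge\tfrac12\vec n_1$) to $\mathcal Q_0(x)$ (periodicity in $\vec p_j$, vanishing for $|y\cdot\vec\nu|\ge\tfrac12$), which the paper handles by the rescaling \eqref{poprdefhhjh} and the substitution $s_1\mapsto s_1\sqrt{1+|\nabla g_k|^2}$ in \eqref{hgfhgjykukuioiipoookkkkkkkilhkjkjuykjjkkk}.
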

\begin{proof}
First of all observe that
\begin{multline}
\label{L2009limew03zeta71288888Contggiuuggyyyynew88789999vprop78899shtrihkkkllkkklklklkkkkjkuykolllkklloooohhhkkkllmmmpom}
\int\limits_{\R}F\bigg(p(t,x)\Big\{(\vec A\cdot\nabla v)^+(x)-(\vec
A\cdot\nabla v)^-(x)\Big\}\otimes\vec \nu(x),
\,\Gamma(t,x),\,\vec B\cdot v(x),\,\zeta\big(t,f^-(x),f^+(x)\big) \bigg)\,dt
\leq D_0\,,
\end{multline}
where $D_0\in(0,+\infty)$ is a constant, not depending on $x$.

 Next since the set $\O\cap J_{\vec A\cdot \nabla v}$  is a countably
$\mathcal{H}^{N-1}$-rectifiable Borel set, oriented by $\vec\nu(x)$,
$\O\cap J_{\vec A\cdot \nabla v}$ is $\sigma$-finite with respect to
$\mathcal{H}^{N-1}$, there exist countably many $C^1$ hypersurfaces
$\{S_k\}^{\infty}_{k=1}$ such that $\mathcal{H}^{N-1}\Big(\O\cap
J_{\vec A\cdot\nabla
v}\setminus\bigcup\limits_{k=1}^{\infty}S_k\Big)=0$, and for
$\mathcal{H}^{N-1}$-almost every $x\in \O\cap J_{\vec A\cdot \nabla
v}\cap S_k$, $\vec\nu(x)=\vec n_k(x)$ where $\vec n_k(x)$ is a
normal to $S_k$ at the point $x$. We also may assume that for every
$k\in\mathbb{N}$ we have $\mathcal{H}^{N-1}(S_k)<+\infty$, $\ov
S_k\subset\subset\O$ and there exists a relabeling of the axes $\bar
x:=Z_k (x)$ such that for some function $g_k(x')\in
C^1(\R^{N-1},\R)$ and a bounded open set
$\mathcal{V}_k\subset\R^{N-1}$ we have
\begin{equation}
\label{L2009surfhh8128klkjjk} S_k=\{x:\;Z_k (x)=\bar x=(\bar
x_1,\bar x'),\;\bar x'\in \mathcal{V}_k,\;\bar x_1=g_k(\bar x')\}\,.
\end{equation}
Moreover $\vec n'_k(x):=Z_k\big(\vec n_k(x)\big)=(1,-\nabla_{\bar
x'}g_k(\bar x'))/\sqrt{1+|\nabla_{\bar x'}g_k(\bar x')|^2}$.
Next clearly
there exists $k_0\in\mathbb{N}$ such that
\begin{multline}\label{ugguuggugugugggjgghghghghgh}
\int_{(\O\cap J_{\vec A\cdot\nabla
v}\setminus\cup_{k=1}^{k_0}S_k)}\Bigg\{\\
\int\limits_{\R}F\bigg(p(t,x)\Big\{(\vec A\cdot\nabla v)^+(x)-(\vec
A\cdot\nabla v)^-(x)\Big\}\otimes\vec \nu(x), \Gamma(t,x),\vec
B\cdot v(x),\zeta\big(t,f^-(x),f^+(x)\big) \bigg)dt
\Bigg\}\,d\mathcal{H}^{N-1}(x)<\frac{\delta}{8}\,.
\end{multline}
Next for every $k=1,2,\ldots k_0$ there exists an open set
$\mathcal{U}_k\subset\R^{N-1}$ such that if for every $k=1,\ldots
k_0$ we set
\begin{equation}\label{ghugtyffygdchlkkl}
S'_k:=\{x\in S_k:\;Z_k (x)=\bar x=(\bar x_1,\bar x'),\;\bar x'\in
\mathcal{U}_k,\;\bar x_1=g_k(\bar x')\}\,,
\end{equation}
then $\ov{\mathcal{U}_k}\subset\subset\mathcal{V}_k$,  $\ov{
S'_k}\subset S_k\setminus(\cup_{j=1}^{k-1}\ov{S'_j})$ and
\begin{equation}\label{ghugtyffygdch}
\mathcal{H}^{N-1}\bigg(\big\{\cup_{j=1}^{k_0}S_j\big\}\setminus\big\{\cup_{j=1}^{k_0}S'_j\big\}\bigg)<\frac{\delta}{8D_0}\,.
\end{equation}
In particular we have $\ov{ S'_j}\cap\ov{ S'_k}=\emptyset$ if $j\neq
k$.

 Next for every $j=1,2\ldots N$, for every $k=1,2,\ldots k_0$ and every $x\in S'_k$ set
\begin{equation}\label{tildedefteeeshtrihkkkllhjgkjk}
\vec l_j(x):=\begin{cases}
\vec n'_k(x)=Z_k\big(\vec n_k(x)\big)\quad\quad\quad\quad\text{if}\quad j=1\,,\\
\big(\nabla_{\bar x}g_k(\bar x')\cdot \vec e_j\big)\,\vec e_1+\vec
e_j\quad\quad\text{if}\quad 2\leq j\leq N\,.\end{cases}\,,
\end{equation}
where $\bar x:=Z_k (x)$ (the corresponding to $k$ relabeling of the
axes) and $\{\vec e_1,\vec e_2,\ldots,\vec e_N\}$ is a standard
orthonormal base in $\R^N$. Then there exist $N$ Borel-measurable
functions $\vec p_j(x):J_{\vec A\cdot\nabla v}\to\R^N$ for $1\leq
j\leq N$, such that $\vec p_1(x):=\vec \nu(x)$, $\vec
p_j(x)\cdot\vec\nu(x)=0$ for $2\leq j\leq N$, $\{\vec
p_j(x)\}_{1\leq j\leq N}$ is linearly independent system of vectors
for every $x$ and the following identity is satisfied
\begin{equation}\label{poprdefhhjh}
Z_k\big(\vec p_j(x)\big)=\begin{cases}\vec l_1(x)=Z_k\big(\vec
\nu(x)\big)\quad\text{if}\;\;j=1\\ \sqrt{1+\big|\nabla_{\bar
x'}g_k\big(\bar x'\big)\big|^2}\,\cdot\,\vec
l_j(x)\quad\text{if}\;\;j=2,3\ldots N\end{cases}\quad\quad\forall
x\in S'_k\quad\forall k=1,2,\ldots k_0\,,
\end{equation}
where again $\bar x:=Z_k (x)$. We let $\vec p_j(x)$ and
$\vec\nu(x):=\vec p_1(x)$ be defined by \er{poprdefhhjh} also for
$x\in\cup_{k=1}^{k_0}S'_k\setminus J_{\vec A\cdot\nabla v}$

Next let
\begin{multline}\label{L2009Ddef2hhhjjjj77788nullooooll}
\mathcal{Q}_0(x):=\mathcal{D}_0\big(\vec A,\vec\nu(x),\vec
p_2(x),\vec p_3(x),\ldots,\vec p_{N}(x)\big):= \Big\{ \sigma\in
C^\infty(\R^N,\R^d):\;
\sigma(y)=0\;\text{ if }\; |y\cdot\vec\nu|\geq 1/2\;\\ \text{ and
}\;\sigma\big(y+\vec p_j(x)\big)=\sigma(y)\;\;\forall j=2,3,\ldots,
N\Big\}\,,
\end{multline}
$\mathcal{Q}(x)$ be defined as in
\er{L2009Ddef2hhhjjjj77788nullkkkoooooll}, $T_x(s)$ be as in
\er{chfhh2009y888shtrihkkkllbjgjkll} and $H_x(\sigma,L)$ be as in
\er{L2009limew03zeta71288888Contggiuuggyyyynew88789999vprop78899shtrihkkkllyhjyukjkkmmmklklklhhhhkk}.

Observe that by the definition of $\sigma\in \mathcal{Q}(x)$ and by
the properties of function $p$, for every $L\leq 1/2$, we have
\begin{multline}
\label{L2009limew03zeta71288888Contggiuuggyyyynew88789999vprop78899shtrihkkkllyhjyukjkkmmmklklklhhhhkkgjkgkjkkk}
H_x(\sigma,L):=
\int\limits_{\R}\int\limits_{I_1^{N-1}}\frac{1}{L}F\bigg(p(-s_1/L,x)\Big\{(\vec
A\cdot\nabla v)^+(x)-(\vec A\cdot\nabla
v)^-(x)\Big\}\otimes\vec \nu(x)+L\nabla_y\{\vec A\cdot\nabla_y\sigma\}\big(T_{x}(s)\big),\\
\,\Gamma(-s_1/L,x)+\vec
A\cdot\nabla_y\sigma\big(T_{x}(s)\big),\,\vec B\cdot
v(x),\,\zeta\big(s_1,f^+(x), f^-(x)\big) \bigg)\,ds'ds_1\,,
\end{multline}
with $(s_1,s'):=s\in\R\times\R^{N-1}$. On the other hand for every
$\sigma\in \mathcal{Q}(x)$
\begin{multline}
\label{L2009limew03zeta71288888Contggiuuggyyyynew88789999vprop78899shtrihkkkllkkklklklkkkkjkuykolllkklloooohhhkkkll}
\inf\limits_{\sigma\in \mathcal{Q}(x)}H_x(\sigma,L)\leq H_x(0,L):=
\int\limits_{\R}\int\limits_{I_1^{N-1}}\frac{1}{L}\times\\
F\bigg(p(-s_1/L,x)\Big\{(\vec A\cdot\nabla v)^+(x)-(\vec
A\cdot\nabla v)^-(x)\Big\}\otimes\vec \nu(x),
\,\Gamma(-s_1/L,x),\,\vec B\cdot v(x),\,\zeta\big(s_1,f^+(x),
f^-(x)\big) \bigg)\,ds'ds_1
\\= \int\limits_{\R}F\bigg(p(t,x)\Big\{(\vec
A\cdot\nabla v)^+(x)-(\vec A\cdot\nabla v)^-(x)\Big\}\otimes\vec
\nu(x), \,\Gamma(t,x),\,\vec B\cdot v(x),\,\zeta\big(t,f^-(x),
f^+(x)\big) \bigg)\,dt\leq D_0,
\end{multline}
where $D_0\in(0,+\infty)$ is a constant from
\er{L2009limew03zeta71288888Contggiuuggyyyynew88789999vprop78899shtrihkkkllkkklklklkkkkjkuykolllkklloooohhhkkkllmmmpom},
which doesn't depend on $x$ and $L$. On the other hand by Lemma
\ref{resatnulo}, for every $k=1,2,\ldots k_0$ and for every $x\in
S'_k$ we deduce that
\begin{equation}\label{hgfhgjykukuioiipoookkkkkkk}
\inf_{L\in(0,1)}\bigg\{\inf\limits_{\sigma\in
\mathcal{Q}(x)}H_x(\sigma,L)\bigg\}=\lim\limits_{L\to
0^+}\Bigg\{\inf\limits_{\sigma\in
\mathcal{Q}_0(x)}H_x\bigg(\sigma\,,\,L\sqrt{1+|\nabla_{\bar
x'}g_k(\bar x')|^2}\bigg)\Bigg\}\,,
\end{equation}
where $\bar x:=Z_k(x)$. Therefore, by
\er{hgfhgjykukuioiipoookkkkkkk},
\er{L2009limew03zeta71288888Contggiuuggyyyynew88789999vprop78899shtrihkkkllkkklklklkkkkjkuykolllkklloooohhhkkkll}
and the Dominated Convergence Theorem, we obtain
\begin{multline}\label{hgfhgjykukuioiipoookkkkkkkilhkjk}
\int\limits_{\cup_{k=1}^{k_0}S'_k}\inf_{L\in(0,1)}\bigg\{\inf\limits_{\sigma\in
\mathcal{Q}(x)}H_x(\sigma,L)\bigg\}\,d\mathcal{H}^{N-1}(x)=\\
\lim\limits_{L\to
0^+}\int\limits_{\cup_{k=1}^{k_0}S'_k}\Bigg\{\inf\limits_{\sigma\in
\mathcal{Q}_0(x)}H_x\bigg(\sigma\,,\,L\sqrt{1+|\nabla_{\bar
x'}g_k(\bar x')|^2}\bigg)\Bigg\}\,d\mathcal{H}^{N-1}(x)\,,
\end{multline}
and thus there exists $L_0>0$, such that $L_0\sqrt{1+|\nabla_{\bar
x'}g_k(\bar x')|^2}<1$ and
\begin{multline}\label{hgfhgjykukuioiipoookkkkkkkilhkjkjuyk}
\int\limits_{\cup_{k=1}^{k_0}S'_k}\Bigg\{\inf\limits_{\sigma\in
\mathcal{Q}_0(x)}H_x\bigg(\sigma\,,\,L_0\sqrt{1+|\nabla_{\bar
x'}g_k(\bar
x')|^2}\bigg)\Bigg\}\,d\mathcal{H}^{N-1}(x)\\-\int\limits_{\cup_{k=1}^{k_0}S'_k}\inf_{L\in(0,1)}\bigg\{\inf\limits_{\sigma\in
\mathcal{Q}(x)}H_x(\sigma,L)\bigg\}\,d\mathcal{H}^{N-1}(x)<\frac{\delta}{8}\,,
\end{multline}
On the other hand, changing the first variable of integration in
\er{L2009limew03zeta71288888Contggiuuggyyyynew88789999vprop78899shtrihkkkllyhjyukjkkmmmklklklhhhhkkgjkgkjkkk},
for every $x\in\cup_{k=1}^{k_0}S'_k$ we obtain
\begin{equation}\label{hgfhgjykukuioiipoookkkkkkkilhkjkjuykjjkkk}
\inf\limits_{\sigma\in
\mathcal{Q}_0(x)}H_x\bigg(\sigma\,,\,L\sqrt{1+|\nabla_{\bar
x'}g_k(\bar x')|^2}\bigg)=\\ \inf\limits_{\sigma\in
\mathcal{F}(x)}P_{x,k}(\sigma,L)\,,
\end{equation}
where $P_{x,k}(\sigma,L)$ is defined by
\begin{multline}
\label{L2009limew03zeta71288888Contggiuuggyyyynew88789999vprop78899shtrihkkkllyhjyukjkkmmmjhgfkk}
P_{x,k}(\sigma,L):=\int\limits_{\R}\int\limits_{I_1^{N-1}}\frac{1}{L}F\bigg(p(-s_1/L,x)\Big\{(\vec
A\cdot\nabla v)^+(x)-(\vec A\cdot\nabla
v)^-(x)\Big\}\otimes\vec \nu(x)+L\nabla_y\{\vec A\cdot\nabla_y\sigma\}\big(Q_{x,k}(s)\big),\\
\,\Gamma(-s_1/L,x)+\vec
A\cdot\nabla_y\sigma\big(Q_{x,k}(s)\big),\,\vec B\cdot
v(x),\,\zeta\big(s_1,f^+(x), f^-(x)\big) \bigg)\,ds'ds_1\,,
\end{multline}
with $(s_1,s'):=s\in\R\times\R^{N-1}$ and the linear transformation
$Q_{x,k}(s)=Q_{x,k}(s_1,s_2,\ldots, s_N):\R^N\to\R^N$, defined by
\begin{equation}\label{chfhh2009y888shtrihkkkllbjgjklllllkkkkkkkkk}
Q_{x,k}(s):=s_1\vec\nu(x)+\sum\limits_{j=2}^{N}\frac{s_j\vec
p_j(x)}{\sqrt{1+|\nabla_{\bar x'}g_k(\bar x')|^2}}\,,
\end{equation}
and
\begin{multline}\label{L2009Ddef2hhhjjjj77788nulloooollkkkkkkkkkkkkk}
\mathcal{F}(x):=\Bigg\{ \sigma\in C^\infty(\R^N,\R^d):\;
\sigma(y)=0\;\text{ if }\; |y\cdot\vec\nu|\geq
\frac{1}{2\sqrt{1+|\nabla_{\bar x'}g_k(\bar x')|^2}}\;\\ \text{ and
}\;\sigma\bigg(y+\frac{\vec p_j(x)}{\sqrt{1+|\nabla_{\bar
x'}g_k(\bar x')|^2}}\bigg)=\sigma(y)\;\;\forall j=2,3,\ldots.
N\Bigg\}\,.
\end{multline}
Therefore, by \er{hgfhgjykukuioiipoookkkkkkkilhkjkjuykjjkkk} and by
Lemma \ref{mnogohypsi88kkkll88} we have
\begin{multline}
\label{L2009limew03zeta71288888Contggiuuggyyyynew88789999vprop78899shtrihkkklljhgfkk}
\sum\limits_{k=1}^{k_0}\inf\limits_{h_0\in\mathcal{P}(\mathcal{U}_k)}\bigg\{\int_{S'_k}P_{x,k}\big(\tilde
\lambda_k(\cdot,x),L_0\big)\,d\mathcal{H}^{N-1}(x)\bigg\} =\\
\int\limits_{\cup_{k=1}^{k_0}S'_k}\Bigg\{\inf\limits_{\sigma\in
\mathcal{Q}_0(x)}H_x\bigg(\sigma\,,\,L_0\sqrt{1+|\nabla_{\bar
x'}g_k(\bar x')|^2}\bigg)\Bigg\}\,d\mathcal{H}^{N-1}(x)\,,
\end{multline}
where $\mathcal{P}(\mathcal{U})$ is a set of all functions
$h_0(y,x')\in C^\infty(\R^N\times\R^{N-1},\R^d)$, satisfying
\begin{equation}\label{obnulenieghjgjjkk}
h_0(y,x')=0\;\text{ if }\;|y_1|\geq 1/2,\text{ and }\;h_0(y+\vec
e_j,x')=h_0(y,x')\;\;\forall j=2,\ldots, N\,,
\end{equation}
and
\begin{equation}\label{nositelkjlkjl}
\ov{\supp h_0(y,x')}\subset\R^N\times \mathcal{U}\,;
\end{equation}
and $\tilde \lambda_k(y,x)\in C^\infty(\R^N\times\R^{N-1},\R^d)$ is
given by
\begin{equation}\label{tildedefttshtrihkkkllghfkk}
\tilde \lambda_k(y,x):=h_0\Big(\big\{\bar y_1-\nabla_{\bar
x'}g_k(\bar x')\cdot \bar y',\bar y'\big\},\bar x'\Big)\,,
\end{equation}
where again $\bar x:=Z_k(x)$ and $\bar y=Z_k(y)$ (here $Z_k$ is the
appropriate relabeling).
Thus for every $k=1,2,\ldots k_0$ there exists
$h_k\in\mathcal{P}(\mathcal{U}_k)$ such that
\begin{multline}
\label{L2009limew03zeta71288888Contggiuuggyyyynew88789999vprop78899shtrihkkklljhgfkknjhkjjhjh}
\int\limits_{\cup_{k=1}^{k_0}S'_k}P_{x,k}\big(\lambda_k(\cdot,x),L_0\big)\,d\mathcal{H}^{N-1}(x)
-\int\limits_{\cup_{k=1}^{k_0}S'_k}\Bigg\{\inf\limits_{\sigma\in
\mathcal{Q}_0(x)}H_x\bigg(\sigma\,,\,L_0\sqrt{1+|\nabla_{\bar
x'}g_k(\bar
x')|^2}\bigg)\Bigg\}\,d\mathcal{H}^{N-1}(x)<\frac{\delta}{8}\,,
\end{multline}
where
\begin{equation}\label{tildedefttshtrihkkkllghfkkjjjjjjkk}
\lambda_k(y,x):=h_k\Big(\big\{\bar y_1-\nabla_{\bar x'}g_k(\bar
x')\cdot \bar y',\bar y'\big\},\bar x'\Big)\,.
\end{equation}
Plugging
\er{L2009limew03zeta71288888Contggiuuggyyyynew88789999vprop78899shtrihkkklljhgfkknjhkjjhjh}
into \er{hgfhgjykukuioiipoookkkkkkkilhkjkjuyk} we deduce
\begin{equation}
\label{L2009limew03zeta71288888Contggiuuggyyyynew88789999vprop78899shtrihkkklljhgfkknjhkjjhjhyiolkkkklkk}
\int\limits_{\cup_{k=1}^{k_0}S'_k}P_{x,k}\big(\lambda_k(\cdot,x),L_0\big)\,d\mathcal{H}^{N-1}(x)
-\int\limits_{\cup_{k=1}^{k_0}S'_k}\inf_{L\in(0,1)}\bigg\{\inf\limits_{\sigma\in
\mathcal{Q}(x)}H_x(\sigma,L)\bigg\}\,d\mathcal{H}^{N-1}(x)<\frac{\delta}{4}\,.
\end{equation}
Next consider a radial function $\theta_0(z')=\kappa(|z'|)\in
C^\infty_c(\R^{N-1},\R)$ such that $\supp \theta_0\subset\subset
B_1(0)$, $\theta_0\geq 0$ and $\int_{\R^{N-1}}\theta_0(z')dz'=1$.
Then for any $\e>0$ define the function
$g_{k,\e}(x'):\R^{N-1}\to\R$ by
\begin{equation}
\label{L2009psije12387878khkhhj}
g_{k,\e}(x'):=\frac{1}{\e^{N-1}}\int_{\R^{N-1}}\theta_0\Big(\frac{y'-x'}{\e}\Big)
g_k(y')dy'= \int_{\R^{N-1}}\theta_0(z')g_k(x'+\e
z')\,dz',\quad\forall x'\in\R^{N-1}\,.
\end{equation}
Next set
\begin{equation}\label{nulltoonellllkkkkkkkk}
h^{(L_0)}_k(y,x'):=\frac{1}{L_0}h_k(L_0y,x')\,,
\end{equation}
then $h^{(L_0)}_k\in C^\infty(\R^N\times\R^{N-1},\R^d)$ satisfy
\begin{equation}\label{obnulenie1jkljkkkk}
h^{(L_0)}_k(y,x')=0\;\text{ if }\;|y_1|\geq 1/(2L_0),\text{ and
}\;h^{(L_0)}_k\big(y+(1/L_0)\vec
e_j,x'\big)=h^{(L_0)}_k(y,x')\;\;\forall j=2,\ldots, N\,,
\end{equation}
and
\begin{equation}\label{nositel1hhuhuhhihhuh}
\ov{\supp h^{(L_0)}_k(y,x')}\subset\R^N\times \mathcal{U}_k\,.
\end{equation}
For any $\e>0$ define the function $\gamma_{k,\e}(x)\in
C^\infty(\R^N,\R^d)$ by
\begin{equation}
\label{L2009test1288kjojjjllkjjjkkk} \gamma_{k,\e}(x):=\e
h^{(L_0)}_k\bigg(\Big(\frac{\bar x_1-g_{k,\e}(\bar
x')}{\e},\frac{\bar x'}{\e}\Big),\bar x'\bigg)\,,
\end{equation}
where, as before, $\bar x:=Z_k(x)$. Next clearly for every
$k=1,2\ldots ,k_0$ there exists an open set $G_k\subset\subset\O$
such that for every $k$, $\ov{S'_k}\subset\subset G_k$ and
$\ov{G_j}\cap \ov{G_k}=\emptyset$ if $j\neq k$. Thus there exists
$\e_0\in(0,1)$ such that if $0<\e<\e_0$ then $\supp
h^{(L_0)}_k\subset\subset G_k$. Therefore, for $0<\e<\e_0$ we can
define $\gamma_{\e}(x)\in C^\infty(\R^N,\R^d)$ by
\begin{equation}
\label{L2009test1288kjojjjllkjjjkkkghghkk}
\gamma_{\e}(x):=\begin{cases}\gamma_{k,\e}(x)\quad\quad\text{if}\;\;x\in
G_k\quad\forall k=1,2,\ldots
k_0\,,\\0\quad\quad\quad\quad\;\;\text{otherwise.}\end{cases}\,,
\end{equation}
Then we can set
\begin{equation}\label{nositel1hhuhuhhihhujyiohkkkkkhhhkk}
v_\e(x):=\psi_\e(x)+\gamma_\e(x)\quad\quad\forall x\in\R^N\,.
\end{equation}
Thus, as before in the conditions of Proposition \ref{mnogohypsi88},
$\lim_{\e\to0^+}\vec A\cdot\nabla v_\e=\vec A\cdot \nabla v$ in
$L^p(\R^N,\R^m)$, $\lim_{\e\to0^+} \e\nabla\{\vec A\cdot \nabla
v_\e\}=0$ in $L^{p}(\R^N,\R^{m\times N})$, $\lim_{\e\to0^+}\vec
B\cdot v_\e=\vec B\cdot v$ in $W^{1,p}(\R^N,\R^k)$,
$\lim_{\e\to0^+}(\vec B\cdot v_\e-\vec B\cdot v)/\e=0$ in
$L^{p}(\R^N,\R^k)$  for every $p\geq 1$ and moreover,
 $\vec A\cdot\nabla v_\e$, $\e\nabla\{\vec A\cdot\nabla
v_\e\}$, $\vec B\cdot v_\e$ and $\nabla\{\vec B\cdot v_\e\}$ are
bounded in $L^\infty$ sequences, there exists a compact
$K\subset\subset\O$ such that $v_\e(x)=\psi_\e(x)$ for every
$0<\e<1$ and every $x\in\R^N\setminus K$ and we have
\er{bjdfgfghljjklkjkllllkkllkkkkkkllllkkllhffhhfh}.

 Next clearly, for $0<\e<\e_0$ we have
\begin{multline}\label{gugugukkkkkkkkkkkk}
\int\limits_{\O}\frac{1}{\e} F\Big(\e\nabla\{\vec A\cdot\nabla
v_{\e}\},\,\vec A\cdot\nabla v_{\e},\,\vec B\cdot
v_{\e},\,f\Big)\,dx-\int\limits_{\O}\frac{1}{\e}
F\Big(\e\nabla\{\vec A\cdot\nabla \psi_{\e}\},\,\vec A\cdot\nabla
\psi_{\e},\,\vec B\cdot
\psi_{\e},\,f\Big)\,dx\\=\sum\limits_{k=1}^{k_0}\Bigg\{\int\limits_{G_k}\frac{1}{\e}
F\Big(\e\nabla\{\vec A\cdot\nabla v_{\e}\},\,\vec A\cdot\nabla
v_{\e},\,\vec B\cdot
v_{\e},\,f\Big)\,dx-\int\limits_{G_k}\frac{1}{\e}
F\Big(\e\nabla\{\vec A\cdot\nabla \psi_{\e}\},\,\vec A\cdot\nabla
\psi_{\e},\,\vec B\cdot \psi_{\e},\,f\Big)\,dx\Bigg\}\\=
\sum\limits_{k=1}^{k_0}\Bigg\{\int\limits_{\O}\frac{1}{\e}
F\Big(\e\nabla\big\{\vec A\cdot\nabla
(\psi_\e+\gamma_{k,\e})\big\},\,\vec A\cdot\nabla
(\psi_\e+\gamma_{k,\e}),\,\vec B\cdot
(\psi_\e+\gamma_{k,\e}),\,f\Big)\,dx\\-\int\limits_{\O}\frac{1}{\e}
F\Big(\e\nabla\{\vec A\cdot\nabla \psi_{\e}\},\,\vec A\cdot\nabla
\psi_{\e},\,\vec B\cdot \psi_{\e},\,f\Big)\,dx\Bigg\}\,.
\end{multline}
On the other hand, by Proposition \ref{mnogohypsi88}, for every
$k=1,2\ldots,k_0$
\begin{multline}\label{guguguhkhkhkhkhkpppp}
\lim\limits_{\e\to 0^+}\Bigg\{\int\limits_{\O}\frac{1}{\e}
F\Big(\e\nabla\big\{\vec A\cdot\nabla
(\psi_\e+\gamma_{k,\e})\big\},\,\vec A\cdot\nabla
(\psi_\e+\gamma_{k,\e}),\,\vec B\cdot
(\psi_\e+\gamma_{k,\e}),\,f\Big)\,dx\\-\int\limits_{\O}\frac{1}{\e}
F\Big(\e\nabla\{\vec A\cdot\nabla \psi_{\e}\},\,\vec A\cdot\nabla
\psi_{\e},\,\vec B\cdot v_{\e},\,f\Big)\,dx\Bigg\}=
\int\limits_{S'_k}P_{x,k}\big(\lambda_k(\cdot,x)\big)\,d\mathcal{H}^{N-1}(x)-\int\limits_{S'_k}P_{x,k}\big(0\big)\,d\mathcal{H}^{N-1}(x)
\,.
\end{multline}
Plugging \er{guguguhkhkhkhkhkpppp} into \er{gugugukkkkkkkkkkkk} we
obtain
\begin{multline}\label{gugugukkkkkkkkkkkkgihyilllkk}
\lim\limits_{\e\to 0^+}\Bigg\{\int\limits_{\O}\frac{1}{\e}
F\Big(\e\nabla\{\vec A\cdot\nabla v_{\e}\},\,\vec A\cdot\nabla
v_{\e},\,\vec B\cdot
v_{\e},\,f\Big)\,dx-\int\limits_{\O}\frac{1}{\e}
F\Big(\e\nabla\{\vec A\cdot\nabla \psi_{\e}\},\,\vec A\cdot\nabla
\psi_{\e},\,\vec B\cdot v_{\e},\,f\Big)\,dx\Bigg\}=\\
\int\limits_{\cup_{k=1}^{k_0}S'_k}P_{x,k}\big(\lambda_k(\cdot,x),L_0\big)\,d\mathcal{H}^{N-1}(x)-\int\limits_{\cup_{k=1}^{k_0}S'_k}P_{x,k}\big(0,L_0\big)\,d\mathcal{H}^{N-1}(x)\,.
\end{multline}
On the other hand by Theorem \ref{vtporbound4},
\er{L2009limew03zeta71288888Contggiuuggyyyynew88789999vprop78899shtrihkkkllkkklklklkkkkjkuykolllkklloooohhhkkkll}
and
\er{L2009limew03zeta71288888Contggiuuggyyyynew88789999vprop78899shtrihkkkllyhjyukjkkmmmjhgfkk}
we have
\begin{multline}\label{gugugukkkkkkkkkkkkgihyilllkkmkjkjooouuolllkkk}
\lim\limits_{\e\to 0^+}\Bigg\{\int\limits_{\O}\frac{1}{\e}
F\Big(\e\nabla\{\vec A\cdot\nabla \psi_{\e}\},\,\vec A\cdot\nabla
\psi_{\e},\,\vec B\cdot v_{\e},\,f\Big)\,dx\Bigg\}=
\int\limits_{\O\cap J_{\vec A\cdot\nabla
v}}P_{x,k}\big(0,L_0\big)\,d\mathcal{H}^{N-1}(x)\,.
\end{multline}
Thus combining \er{gugugukkkkkkkkkkkkgihyilllkkmkjkjooouuolllkkk}
and \er{gugugukkkkkkkkkkkkgihyilllkk} together with
\er{L2009limew03zeta71288888Contggiuuggyyyynew88789999vprop78899shtrihkkkllkkklklklkkkkjkuykolllkklloooohhhkkkll},
\er{L2009limew03zeta71288888Contggiuuggyyyynew88789999vprop78899shtrihkkkllyhjyukjkkmmmjhgfkk}
and the fact that $P_{x,k}(0,L)=0$ if $x\notin \O\cap J_{\vec
A\cdot\nabla v}$, we obtain
\begin{multline}\label{gugugukkkkkkkkkkkkgihyilllkkvgnghjkkkkjjjjllljjjkk}
\lim\limits_{\e\to 0^+}\Bigg\{\int\limits_{\O}\frac{1}{\e}
F\Big(\e\nabla\{\vec A\cdot\nabla v_{\e}\},\,\vec A\cdot\nabla
v_{\e},\,\vec B\cdot
v_{\e},\,f\Big)\,dx\Bigg\}=\\
\int\limits_{\cup_{k=1}^{k_0}S'_k}P_{x,k}\big(\lambda_k(\cdot,x),L_0\big)\,d\mathcal{H}^{N-1}(x)+\int\limits_{\O\cap
J_{\vec A\cdot\nabla
v}\setminus\cup_{k=1}^{k_0}S'_k}P_{x,k}\big(0,L_0\big)\,d\mathcal{H}^{N-1}(x)=\\
\int\limits_{\cup_{k=1}^{k_0}S'_k}P_{x,k}\big(\lambda_k(\cdot,x),L_0\big)\,d\mathcal{H}^{N-1}(x)+
 \int_{\O\cap J_{\vec A\cdot\nabla
v}\setminus\cup_{k=1}^{k_0}S'_k}\Bigg\{\\
\int\limits_{\R}F\bigg(p(t,x)\Big\{(\vec A\cdot\nabla v)^+(x)-(\vec
A\cdot\nabla v)^-(x)\Big\}\otimes\vec \nu(x),
\,\Gamma(t,x),\,\vec B\cdot v(x),\,\zeta\big(t,f^-(x),f^+(x)\big) \bigg)\,dt
\Bigg\}\,d\mathcal{H}^{N-1}(x) \,.
\end{multline}
Therefore, by \er{ugguuggugugugggjgghghghghgh},
\er{L2009limew03zeta71288888Contggiuuggyyyynew88789999vprop78899shtrihkkklljhgfkknjhkjjhjhyiolkkkklkk}
and \er{gugugukkkkkkkkkkkkgihyilllkkvgnghjkkkkjjjjllljjjkk} we
obtain
\begin{multline}\label{gugugukkkkkkkkkkkkgihyilllkkvgnghjkkkkjjjjllljjjkkthytjukkk}
\lim\limits_{\e\to 0^+}\Bigg\{\int\limits_{\O}\frac{1}{\e}
F\Big(\e\nabla\{\vec A\cdot\nabla v_{\e}\},\vec A\cdot\nabla
v_{\e},\vec B\cdot v_{\e},f\Big)dx\Bigg\}<\frac{3\delta}{8}+
\int\limits_{\cup_{k=1}^{k_0}S'_k}\inf_{L\in(0,1)}\bigg\{\inf\limits_{\sigma\in
\mathcal{Q}(x)}H_x(\sigma,L)\bigg\}d\mathcal{H}^{N-1}(x)+\\
 \int_{\cup_{k=1}^{k_0}S_k\setminus\cup_{k=1}^{k_0}S'_k}\Bigg\{\int\limits_{\R}F\bigg(p(t,x)\Big\{(\vec
A\cdot\nabla v)^+(x)-(\vec A\cdot\nabla v)^-(x)\Big\}\otimes\vec
\nu(x), \Gamma(t,x),\vec B\cdot
v(x),\zeta\big(t,f^-(x),f^+(x)\big) \bigg)dt\\
\Bigg\}\,d\mathcal{H}^{N-1}(x) \,.
\end{multline}
Finally plugging \er{ghugtyffygdch} and
\er{L2009limew03zeta71288888Contggiuuggyyyynew88789999vprop78899shtrihkkkllkkklklklkkkkjkuykolllkklloooohhhkkkll}
into
\er{gugugukkkkkkkkkkkkgihyilllkkvgnghjkkkkjjjjllljjjkkthytjukkk} we
infer
\begin{multline}\label{gugugukkkkkkkkkkkkgihyilllkkvgnghjkkkkjjjjllljjjkkthytjukkkppopoop}
\lim\limits_{\e\to 0^+}\Bigg\{\int\limits_{\O}\frac{1}{\e}
F\Big(\e\nabla\{\vec A\cdot\nabla v_{\e}\},\,\vec A\cdot\nabla
v_{\e},\,\vec B\cdot v_{\e},\,f\Big)\,dx\Bigg\}<\\
\int\limits_{\cup_{k=1}^{k_0}S'_k}\inf_{L\in(0,1)}\bigg\{\inf\limits_{\sigma\in
\mathcal{Q}(x)}H_x(\sigma,L)\bigg\}\,d\mathcal{H}^{N-1}(x)+\frac{\delta}{2}
\leq \int\limits_{\O\cap J_{\vec A\cdot\nabla
v}}\bigg\{\inf\limits_{\sigma\in
\mathcal{Q}(x),L\in(0,1)}H_x(\sigma,L)\bigg\}\,d\mathcal{H}^{N-1}(x)+\frac{\delta}{2}\,,
\end{multline}
where the last inequality we infer since if $x\notin \O\cap J_{\vec
A\cdot\nabla v}$ then again by
\er{L2009limew03zeta71288888Contggiuuggyyyynew88789999vprop78899shtrihkkkllkkklklklkkkkjkuykolllkklloooohhhkkkll}
we have
$$0\leq \inf\limits_{\sigma\in
\mathcal{Q}(x)}H_x(\sigma,L)\leq H_x(0,L)=0\,.$$ This completes the
proof.
\end{proof}

\begin{theorem}\label{vtporbound4ghkfgkhk}
Let $\Omega\subset\R^N$ be an open set.
Furthermore, let $\vec A\in \mathcal{L}(\R^{d\times N};\R^m)$, $\vec
B\in \mathcal{L}(\R^{d};\R^k)$ and let $F\in C^1(\R^{m\times
N}\times \R^m\times \R^k\times\R^q,\R)$, satisfying $F\geq 0$. Let
$f\in BV_{loc}(\R^N,\R^q)\cap L^\infty$ and
$v\in\mathcal{D}'(\R^N,\R^d)$ be such that $\vec A\cdot\nabla v\in
BV(\R^N,\R^m)\cap L^\infty(\R^N,\R^m)$ and $\vec B\cdot v\in
Lip(\R^N,\R^k)\cap W^{1,1}(\R^N,\R^k)\cap L^\infty(\R^N,\R^k)$,
$\|D(\vec A\cdot \nabla v)\|(\partial\Omega)=0$ and $F\big(0,\{\vec
A\cdot\nabla v\}(x),\{\vec B\cdot v\}(x),f(x)\big)=0$ a.e.~in
$\Omega$. Moreover, assume that for $\mathcal{H}^{N-1}$-a.e. $x\in
J_{\vec A\cdot\nabla v}\cap\O$ there exists a distribution
$\gamma_x(\cdot)\in \mathcal{D}'(\R^N,\R^d)$ such that
\begin{equation}\label{hhjhhkhkhjhghfhlllkkkjjjhhhgffdddd}
\{\vec A\cdot\nabla\gamma_x\}(z)=\begin{cases}(\vec A\cdot\nabla
v)^+(x)\quad\quad\text{if}\quad z\cdot\vec\nu(x)\geq 0\,,\\
(\vec A\cdot\nabla v)^-(x)\quad\quad\text{if}\quad z\cdot\vec\nu(x)<
0\,
\end{cases}
\end{equation}
(with $\vec\nu(x)$ denoting the orientation vector of $J_{\vec
A\cdot\nabla v}$). Finally we assume that for
$\mathcal{H}^{N-1}$-a.e. $x\in J_{\vec A\cdot\nabla v}\cap\O$ for
every system $\{\vec k_1(x),\vec k_2(x),\ldots \vec k_N(x)\}$ of
linearly independent vectors in $\R^N$ satisfying $\vec
k_1(x)=\vec\nu(x)$ and $\vec k_j(x)\cdot\vec\nu(x)=0$ for $j\geq 2$,
and
for every $\xi(z)\in\mathcal{W}_x(\vec k_1,\vec k_2,\ldots \vec
k_N)$ there exists $\zeta(z)\in\mathcal{W}'_x(\vec k_1,\vec
k_2,\ldots \vec k_N)$ such that $\vec A\cdot\nabla\xi(z)\equiv\vec
A\cdot\nabla\zeta(z)$, where
\begin{multline}\label{L2009Ddef2hhhjjjj77788hhhkkkkllkjjjjkkkhhhhffggdddkkk}
\mathcal{W}_x\big(\vec k_1(x),\vec k_2(x),\ldots \vec k_N(x)\big):=
\bigg\{ u\in
C^\infty(\R^N,\R^d):\;\vec A\cdot\nabla
u(y)=0\;\text{ if }\;|y\cdot\vec\nu(x)|\geq 1/2,\\
\text{ and }\;\vec A\cdot\nabla u\big(y+\vec k_j(x)\big)=\vec
A\cdot\nabla u(y)\;\;\forall j=2,3,\ldots, N\bigg\}\,,
\end{multline}
and
\begin{multline}\label{L2009Ddef2hhhjjjj77788nullkkkkkkjjjjjkkk}
\mathcal{W}'_x\big(\vec k_1(x),\vec k_2(x),\ldots \vec
k_N(x)\big):=\\
\bigg\{ u\in
C^\infty(\R^N,\R^d)\cap L^\infty(\R^N,\R^d)\cap Lip(\R^N,\R^d):\;
\vec A\cdot\nabla u(y)=0\;\text{ if }\; |y\cdot\vec\nu(x)|\geq 1/2\;\\
\text{ and }\;
u\big(y+\vec k_j(x)\big)=u(y) \;\;\forall j=2,3,\ldots, N\bigg\}\,.
\end{multline}
Then, for $\eta\in \mathcal{V}_0$,
for every $\delta>0$  there exist a
sequence $\{v_\e\}_{0<\e<1}\subset C^\infty(\R^N,\R^d)$
such that $\lim_{\e\to0^+}\vec A\cdot\nabla v_\e=\vec A\cdot \nabla
v$ in $L^p(\R^N,\R^m)$, $\lim_{\e\to0^+} \e\nabla\{\vec A\cdot
\nabla v_\e\}=0$ in $L^{p}(\R^N,\R^{m\times N})$,
$\lim_{\e\to0^+}\vec B\cdot v_\e=\vec B\cdot v$ in
$W^{1,p}(\R^N,\R^k)$, $\lim_{\e\to0^+}(\vec B\cdot v_\e-\vec B\cdot
v)/\e=0$ in $L^{p}(\R^N,\R^k)$ for every $p\geq 1$ and
\begin{multline}
\label{L2009limew03zeta71288888Contggiuuggyyyynew88789999vprop78899shtrihkkklljkhkhgghhhjhhhjkkk}
0\leq\lim\limits_{\e\to 0}\int\limits_{\O}\frac{1}{\e}
F\Big(\e\nabla\{\vec A\cdot\nabla v_{\e}\},\,\vec A\cdot\nabla
v_{\e},\,\vec B\cdot v_{\e},\,f\Big)\,dx\\-\int\limits_{\O\cap
J_{\vec A\cdot\nabla
v}}\bigg\{\inf\limits_{L>0}\bigg(\inf\limits_{\sigma\in
\mathcal{W}_0(x,\vec k_1,\ldots,\vec
k_N)}E_x\big(\sigma(\cdot),L\big)\bigg)
\bigg\}\,d\mathcal{H}^{N-1}(x)<\delta\,,
\end{multline}
where
\begin{multline}
\label{L2009limew03zeta71288888Contggiuuggyyyynew88789999vprop78899shtrihkkkllyhjyukjkkmmmklklklhhhhkkffgghhjjjkkkllkk}
E_x\big(\sigma(\cdot),L\big):=
\int\limits_{I_{N}}\frac{1}{L}F\bigg(L\nabla\{\vec
A\cdot\nabla\sigma\}\big(S_{x}(s)\big), \,\{\vec
A\cdot\nabla\sigma\}\big(S_{x}(s)\big),\,\vec B\cdot
v(x),\,\zeta\big(s_1,f^+(x),f^-(x)\big) \bigg)\,ds\,,
\end{multline}
\begin{equation}\label{vhjgkjghklhlhj}
\zeta(s,a,b):=
\begin{cases}
a\quad\text{if}\;\; s>0\,,\\
b\quad\text{if}\;\;s<0\,,
\end{cases}
\end{equation}
\begin{multline}\label{L2009Ddef2hhhjjjj77788hhhkkkkllkjjjjkkkhhhhffggdddkkkgjhik}
\mathcal{W}_0\big(x,\vec k_1(x),\vec k_2(x),\ldots \vec
k_N(x)\big):=\\
\bigg\{u\in \mathcal{D}'(\R^N,\R^d): \vec A\cdot\nabla u\in
C^1(\R^N,\R^m),\;\;\vec A\cdot\nabla u(y)=(\vec A\cdot\nabla
v)^-(x)\;\text{ if }\;y\cdot\vec\nu(x)\leq-1/2,\\
\vec A\cdot\nabla u(y)=(\vec A\cdot\nabla v)^+(x)\;\text{ if }\;
y\cdot\vec\nu(x)\geq 1/2\;\text{ and }\;\vec A\cdot\nabla
u\big(y+\vec k_j(x)\big)=\vec A\cdot\nabla u(y)\;\;\forall
j=2,3,\ldots, N\bigg\}\,,
\end{multline}
\begin{equation}\label{cubidepgghkkkkllllllljjjkkkkkjjjhhhhlkkkffffggggddd}
I_N:= \Big\{s\in\R^N:\; -1/2<s_j<1/2\quad\forall
j=1,2,\ldots, N\Big\}\,,
\end{equation}
$\{\vec k_1(x),\vec k_2(x),\ldots \vec k_N(x)\}$ is an arbitrary
system of linearly independent vectors in $\R^N$ satisfying $\vec
k_1(x)=\vec\nu(x)$ and $\vec k_j(x)\cdot\vec\nu(x)=0$ for $j\geq 2$,
the linear transformation $S_x(s)=S_{x}(s_1,s_2,\ldots,
s_N):\R^N\to\R^N$ is defined by
\begin{equation}\label{chfhh2009y888shtrihkkkllbjgjkllkjkjjjjujkkllkkk}
S_{x}(s):=\sum\limits_{j=1}^{N}s_j\,\vec k_j(x)\,,
\end{equation}
and we assume that the orientation of $J_f$ coincides with the
orientation of $J_{\vec A\cdot\nabla v}$ $\mathcal{H}^{N-1}$ a.e. on
$J_f\cap J_{\vec A\cdot\nabla v}$. Moreover, $\vec A\cdot\nabla
v_\e$, $\e\nabla\{\vec A\cdot\nabla v_\e\}$, $\vec B\cdot v_\e$ and
$\nabla\{\vec B\cdot v_\e\}$ are bounded in $L^\infty$ sequences,
there exists a compact $K=K_\delta\subset\subset\O$ such that
$v_\e(x)=\psi_\e(x)$ for every $0<\e<1$ and every $x\in\R^N\setminus
K$, where
\begin{equation}\label{a4543chejhgufydfdtkkuuujk}
\psi_\e(x):=\frac{1}{\e^N}\Big<\eta\Big(\frac{y-x}{\e}\Big),v(y)\Big>\,,
\end{equation}
and
\begin{equation}\label{bjdfgfghljjklkjkllllkkllkkkkkkllllkkllhffhhfhftfffffhhhhkkkkk}
\limsup\limits_{\e\to 0^+}\Bigg|\frac{1}{\e}\bigg(\int_\O\vec
A\cdot\nabla v_\e(x)\,dx-\int_\O\{\vec A\cdot \nabla
v\}(x)\,dx\bigg)\Bigg|<+\infty\,,
\end{equation}
\end{theorem}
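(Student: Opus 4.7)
The plan is to reduce Theorem \ref{vtporbound4ghkfgkhk} to Lemma \ref{mnogohypsi88kkkll88lpppopkook} by establishing a pointwise comparison between the functionals $H_x$ and $E_x$. First apply Lemma \ref{mnogohypsi88kkkll88lpppopkook} with threshold $\delta/2$ to obtain a sequence $\{v_\e\}_{0<\e<1}\subset C^\infty(\R^N,\R^d)$ satisfying all convergence, boundedness, compact-support and mean-value statements of the theorem, and a system of Borel vector fields $\vec p_1(x),\ldots,\vec p_N(x)$ such that
\begin{equation*}
\lim_{\e\to 0}\int_\O\frac{1}{\e}F\big(\e\nabla\{\vec A\cdot\nabla v_\e\},\vec A\cdot\nabla v_\e,\vec B\cdot v_\e,f\big)\,dx\le\int_{\O\cap J_{\vec A\cdot\nabla v}}\inf_{L\in(0,1),\sigma\in\mathcal{Q}(x)}H_x(\sigma,L)\,d\mathcal{H}^{N-1}+\frac{\delta}{2}.
\end{equation*}
By Proposition \ref{L2009cffgfProp8hhhjjj8887788} the infimum on the right-hand side of \eqref{L2009limew03zeta71288888Contggiuuggyyyynew88789999vprop78899shtrihkkklljkhkhgghhhjhhhjkkk} is independent of the choice of the linearly independent system $\{\vec k_j\}$, so we set $\vec k_j(x):=\vec p_j(x)$, in which case $\mathcal{W}'_x(\vec k_1,\ldots,\vec k_N)$ coincides with $\mathcal{Q}(x)$.

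It thus suffices to prove the pointwise inequality
\begin{equation*}
\inf_{L\in(0,1),\sigma\in\mathcal{Q}(x)}H_x(\sigma,L)\;\le\;\inf_{L>0,\sigma\in\mathcal{W}_0(x,\vec k_1,\ldots,\vec k_N)}E_x(\sigma,L)
\end{equation*}
for $\mathcal{H}^{N-1}$-a.e.\ $x\in J_{\vec A\cdot\nabla v}\cap\O$. Using the rescaling $\sigma_K(y):=K^{-1}\sigma(Ky)$, in the same spirit as in the proof of Lemma \ref{lijkkhnjjhjh}, the right-hand infimum is unchanged when $L$ is restricted to $(0,1)$, so we fix $L\in(0,1)$ and $\sigma\in\mathcal{W}_0(x,\vec k_1,\ldots,\vec k_N)$. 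Define $u_0\in C^\infty(\R^N,\R^d)$ as the mollification at scale $L$ of the distribution $\gamma_x$ furnished by hypothesis \eqref{hhjhhkhkhjhghfhlllkkkjjjhhhgffdddd}, namely
\begin{equation*}
u_0(y):=\frac{1}{L^N}\Big\langle\eta\big((z-y)/L\big),\gamma_x(z)\Big\rangle.
\end{equation*}
A direct computation using the radiality and unit mass of $\eta$ together with the definitions \eqref{defp} of $p$ and \eqref{matrixgamadef7845389yuj9} of $\Gamma$ yields $\vec A\cdot\nabla u_0(y)=\Gamma\big(-y\cdot\vec\nu(x)/L,x\big)$ for every $y\in\R^N$.

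Set $\xi:=\sigma-u_0$. Then $\vec A\cdot\nabla\xi$ is $C^1$, vanishes on $\{|y\cdot\vec\nu(x)|\ge 1/2\}$ (since $L<1$ together with $\supp\eta\subset B_{1/2}(0)$ forces both $\vec A\cdot\nabla\sigma$ and $\vec A\cdot\nabla u_0$ to coincide with $(\vec A\cdot\nabla v)^\pm(x)$ there), and is periodic under $y\mapsto y+\vec k_j(x)$ for $j\ge 2$. Hence $\vec A\cdot\nabla\xi$ realises the $\vec A$-gradient of some element of $\mathcal{W}_x$, and applying the theorem's second structural hypothesis yields $\sigma'\in\mathcal{W}'_x(\vec k_1,\ldots,\vec k_N)=\mathcal{Q}(x)$ satisfying
\begin{equation*}
\vec A\cdot\nabla\sigma'(y)\equiv\vec A\cdot\nabla\sigma(y)-\Gamma\big(-y\cdot\vec\nu(x)/L,x\big).
\end{equation*}
Differentiating this identity along $\vec\nu(x)$ and using $\partial_t\Gamma(t,x)=p(t,x)\big[(\vec A\cdot\nabla v)^-(x)-(\vec A\cdot\nabla v)^+(x)\big]$ transforms the defining integrand of $H_x(\sigma',L)$ into that of $E_x(\sigma,L)$, so $H_x(\sigma',L)=E_x(\sigma,L)$; taking infima delivers the required pointwise inequality, and integrating over $J_{\vec A\cdot\nabla v}\cap\O$ and combining with the Lemma's estimate delivers \eqref{L2009limew03zeta71288888Contggiuuggyyyynew88789999vprop78899shtrihkkklljkhkhgghhhjhhhjkkk}.

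The main difficulty is the passage $\xi\mapsto\sigma'$. Since $\sigma$ is only a distribution, $\xi=\sigma-u_0$ is a priori only a distribution as well, whereas the theorem's hypothesis on the image of $\vec A\cdot\nabla$ between $\mathcal{W}_x$ and $\mathcal{W}'_x$ is formulated for $C^\infty$ functions. The substance of the argument is that the hypothesis is really about the image of $\vec A\cdot\nabla$: what must be extracted is a $\mathcal{W}_x$-function whose $\vec A$-gradient equals the already-smooth target $\vec A\cdot\nabla\xi$, and it is precisely the combined use of the two structural hypotheses—the existence of the distribution $\gamma_x$ with the correct jump, and the $\vec A$-gradient surjectivity between $\mathcal{W}_x$ and $\mathcal{W}'_x$—that makes this realisation possible. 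The $L$-rescaling, the diagonalisation of $S_x$, and the invariance given by Proposition \ref{L2009cffgfProp8hhhjjj8887788} play only a bookkeeping role.
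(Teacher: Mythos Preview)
Your overall strategy coincides with the paper's: invoke Lemma~\ref{mnogohypsi88kkkll88lpppopkook}, use Proposition~\ref{L2009cffgfProp8hhhjjj8887788} to fix $\vec k_j=\vec p_j$, shift by a mollification of $\gamma_x$, and use the structural hypothesis to match $H_x$ and $E_x$. However, your execution has a genuine gap precisely where you flag it.

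You want to apply the hypothesis ``for every $\xi\in\mathcal{W}_x$ there is $\zeta\in\mathcal{W}'_x$ with $\vec A\cdot\nabla\zeta=\vec A\cdot\nabla\xi$'' to $\xi:=\sigma-u_0$. But $\sigma\in\mathcal{W}_0$ is only a distribution with $\vec A\cdot\nabla\sigma\in C^1$, so $\xi$ is a distribution, not an element of $\mathcal{W}_x\subset C^\infty$. Your assertion that ``$\vec A\cdot\nabla\xi$ realises the $\vec A$-gradient of some element of $\mathcal{W}_x$'' is exactly the missing step, and it is \emph{not} a consequence of the two structural hypotheses: those give you a distribution $\gamma_x$ with a prescribed jump and a map from smooth $\mathcal{W}_x$ to $\mathcal{W}'_x$ at the level of $\vec A$-gradients, but neither manufactures a smooth preimage for the merely $C^1$ target $\vec A\cdot\nabla\xi$.

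The paper avoids this by ordering the steps differently. It first uses the hypothesis in the smooth category to replace $\mathcal{Q}(x)=\mathcal{W}'_x$ by $\mathcal{W}_x$ (legitimate since $H_x$ depends only on $\vec A\cdot\nabla\sigma$); then shifts by the smooth function $\bar\gamma_{x,L}(z)=L\bar\gamma_x(z/L)$, obtaining a bijection between $\mathcal{W}_x$ and the smooth class $\mathcal{W}'_0$ together with $H_x(\sigma,L)=E_x(\sigma+\bar\gamma_{x,L},L)$; only at the end does it invoke Lemma~\ref{lijkkhnjjhjh} (the identity $R_L=\tilde R_L$) to pass from $\mathcal{W}'_0$ to the distributional class $\mathcal{W}_0$. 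In your ordering the fix is to invoke that same half of Lemma~\ref{lijkkhnjjhjh} \emph{first}, reducing to $\sigma\in\mathcal{W}'_0\subset C^\infty$; then $\xi=\sigma-u_0$ is smooth, lies in $\mathcal{W}_x$, and the hypothesis applies directly. A secondary point: you establish only $\inf_{\mathcal{Q}}H_x\le\inf_{\mathcal{W}_0}E_x$, which delivers the $<\delta$ in \eqref{L2009limew03zeta71288888Contggiuuggyyyynew88789999vprop78899shtrihkkklljkhkhgghhhjhhhjkkk} but not the $0\le$; the paper obtains equality, the reverse direction being immediate (for $\sigma'\in\mathcal{Q}(x)$ take $\sigma=\sigma'+u_0\in\mathcal{W}'_0\subset\mathcal{W}_0$), which also renders your $\delta/2$ superfluous.
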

\begin{proof}
Let $\eta$ and $\psi_\e$
be the same as in Theorem \ref{vtporbound4} and
\begin{equation}\label{matrixgamadef7845389yuj9hfhfjklk}
\Gamma(t,x):=\bigg(\int_{-\infty}^t p(s,x)ds\bigg)\{\vec
A\cdot\nabla v\}^-(x)+\bigg(\int_t^{+\infty} p(s,x)ds\bigg)\{\vec
A\cdot\nabla v\}^+(x)\,,
\end{equation}
where $p(t,x)$ is defined by
\begin{equation}\label{defphfghjfgjkllk}
p(t,x):=\int_{H^0_{\vec \nu(x)}}\eta(t\vec \nu(x)+y)\,d \mathcal
H^{N-1}(y)\,.
\end{equation}
Then, by Lemma \ref{mnogohypsi88kkkll88lpppopkook} and Lemma
\ref{lijkkhnjjhjh}, for every $\delta>0$ there exist $N$
Borel-measurable functions $\vec p_j(x):J_{\vec A\cdot\nabla
v}\to\R^N$ for $1\leq j\leq N$, such that $\vec p_1(x):=\vec
\nu(x)$, $\vec p_j(x)\cdot\vec\nu(x)=0$ for $2\leq j\leq N$ and
$\{\vec p_j(x)\}_{1\leq j\leq N}$ is linearly independent system of
vectors for every $x$, and there exists a sequence
$\{v_\e\}_{0<\e<1}\subset C^\infty(\R^N,\R^d)$
such that $\lim_{\e\to0^+}\vec A\cdot\nabla v_\e=\vec A\cdot \nabla
v$ in $L^p(\R^N,\R^m)$, $\lim_{\e\to0^+} \e\nabla\{\vec A\cdot
\nabla v_\e\}=0$ in $L^{p}(\R^N,\R^{m\times N})$,
$\lim_{\e\to0^+}\vec B\cdot v_\e=\vec B\cdot v$ in
$W^{1,p}(\R^N,\R^k)$, $\lim_{\e\to0^+}(\vec B\cdot v_\e-\vec B\cdot
v)/\e=0$ in $L^{p}(\R^N,\R^k)$  for every $p\geq 1$ and
\begin{equation}
\label{L2009limew03zeta71288888Contggiuuggyyyynew88789999vprop78899shtrihkkklljkhkhgghkklkjjkjk}
0\leq\lim\limits_{\e\to 0}\int\limits_{\O}\frac{1}{\e}
F\Big(\e\nabla\{\vec A\cdot\nabla v_{\e}\},\,\vec A\cdot\nabla
v_{\e},\,\vec B\cdot v_{\e},\,f\Big)\,dx-\int\limits_{\O\cap J_{\vec
A\cdot\nabla v}}\bigg\{\inf\limits_{\sigma\in
\mathcal{Q}(x),L\in(0,1)}H_x\big(\sigma(\cdot),L\big)
\bigg\}\,d\mathcal{H}^{N-1}(x)<\delta\,,
\end{equation}
where
\begin{multline}
\label{L2009limew03zeta71288888Contggiuuggyyyynew88789999vprop78899shtrihkkkllyhjyukjkkmmmklklklhhhhkkbjjjhhjk}
H_x(\sigma,L):=
\int\limits_{I_{N}}\frac{1}{L}F\bigg(p(-s_1/L,x)\Big\{(\vec
A\cdot\nabla v)^+(x)-(\vec A\cdot\nabla
v)^-(x)\Big\}\otimes\vec \nu(x)+L\nabla\{\vec A\cdot\nabla\sigma\}\big(T_{x}(s)\big),\\
\,\Gamma(-s_1/L,x)+\vec A\cdot\nabla\sigma\big(T_{x}(s)\big),\,\vec
B\cdot v(x),\,\zeta\big(s_1,f^+(x),f^-(x)\big) \bigg)\,ds\,,
\end{multline}
\begin{multline}\label{L2009Ddef2hhhjjjj77788nullkkkooooollhgghhgkk}
\mathcal{Q}(x):=\mathcal{D}_1\big(\vec A,\vec\nu(x),\vec p_2(x),\vec
p_3(x),\ldots,\vec p_{N}(x)\big):=\\ \bigg\{ \sigma\in
C^\infty(\R^N,\R^d)\cap L^\infty(\R^N,\R^d)\cap Lip(\R^N,\R^d):\;
\vec A\cdot\nabla \sigma(y)=0\;\text{ if }\; |y\cdot\vec\nu(x)|\geq 1/2\;\\
\text{ and }\;
\sigma\big(y+\vec p_j(x)\big)=\sigma(y) \;\;\forall j=2,3,\ldots,
N\bigg\}\,,
\end{multline}
and the linear transformation $T_x(s)=T_{x}(s_1,s_2,\ldots,
s_N):\R^N\to\R^N$ is defined by
\begin{equation}\label{chfhh2009y888shtrihkkkllbjgjklljgkgjkjk}
T_{x}(s):=\sum\limits_{j=1}^{N}s_j\,\vec p_j(x)\,,
\end{equation}
Moreover, $\vec A\cdot\nabla v_\e$, $\e\nabla\{\vec A\cdot\nabla
v_\e\}$, $\vec B\cdot v_\e$ and $\nabla\{\vec B\cdot v_\e\}$ are
bounded in $L^\infty$ sequences, there exists a compact
$K\subset\subset\O$ such that $v_\e(x)=\psi_\e(x)$ for every
$0<\e<1$ and every $x\in\R^N\setminus K$ and we have
\er{bjdfgfghljjklkjkllllkkllkkkkkkllllkkllhffhhfhftfffffhhhhkkkkk}.

 So we only need to prove that
\begin{multline}\label{gkhljknmjkkjjhjffttryt}
\int\limits_{\O\cap J_{\vec A\cdot\nabla
v}}
\inf\limits_{L>0}\bigg(\inf\limits_{\sigma\in
\mathcal{W}_0(x,\vec k_1,\ldots,\vec
k_N)}E_x\big(\sigma(\cdot),L\big)\bigg)
\,d\mathcal{H}^{N-1}(x)=\int\limits_{\O\cap J_{\vec A\cdot\nabla
v}}\bigg\{\inf\limits_{\sigma\in
\mathcal{Q}(x),L\in(0,1)}H_x\big(\sigma(\cdot),L\big)
\bigg\}d\mathcal{H}^{N-1}(x),
\end{multline}
for any choice of the system $\{\vec k_1(x),\vec k_2(x),\ldots \vec
k_N(x)\}$ of linearly independent vectors in $\R^N$ satisfying $\vec
k_1(x)=\vec\nu(x)$ and $\vec k_j(x)\cdot\vec\nu(x)=0$ for $j\geq 2$.
By Proposition \ref{L2009cffgfProp8hhhjjj8887788} we have that the
left hand side of \er{gkhljknmjkkjjhjffttryt} is independent on the
choice of the system $\{\vec k_1(x),\vec k_2(x),\ldots \vec
k_N(x)\}$. Therefore, from now we may assume that $\vec k_j(x)=\vec
p_j(x)$ for every $x$ and $j$. Thus in particular $S_x=T_x$ and
$\mathcal{Q}(x)=\mathcal{W}'_x(\vec k_1,\vec k_2,\ldots \vec k_N)$.
On the other hand, for $\mathcal{H}^{N-1}$-a.e. $x\in J_{\vec
A\cdot\nabla v}\cap\O$ we have $\xi(z)\in\mathcal{W}_x(\vec k_1,\vec
k_2,\ldots \vec k_N)$ there exists $\zeta(z)\in\mathcal{W}'_x(\vec
k_1,\vec k_2,\ldots \vec k_N)$ such that $\vec
A\cdot\nabla\xi(z)\equiv\vec A\cdot\nabla\zeta(z)$, and therefore we
have
\begin{multline}\label{gkhljknmjkkjjhjffttrytrfrh}
\int\limits_{\O\cap J_{\vec A\cdot\nabla
v}}
\inf\limits_{\sigma\in
\mathcal{Q}(x),L\in(0,1)}H_x\big(\sigma(\cdot),L\big)\,
d\mathcal{H}^{N-1}(x)
=\int\limits_{\O\cap J_{\vec A\cdot\nabla
v}}
\inf\limits_{L\in(0,1)}\bigg(\inf\limits_{\sigma\in
\mathcal{W}_x(\vec k_1,\vec k_2,\ldots \vec
k_N)}H_x\big(\sigma(\cdot),L\big)
\bigg)
d\mathcal{H}^{N-1}(x).
\end{multline}
Next for $\mathcal{H}^{N-1}$-a.e. $x\in \O\cap J_{\vec A\cdot\nabla
v}$ there exists a distribution $\gamma_x(\cdot)\in
\mathcal{D}'(\R^N,\R^d)$ such that we have
\er{hhjhhkhkhjhghfhlllkkkjjjhhhgffdddd}. For any $\e>0$ and any
fixed $z\in\R^N$ set
\begin{equation}\label{a4543chejhgufydfdtjhyjikkk}
\bar\gamma_{x}(z):=\Big<\eta(y-z),\gamma_x(y)\Big>
\end{equation}
(see notations and definitions in the beginning of the paper). Then
$\bar\gamma_{x}(z)\in C^\infty(\R^N,\R^d)$. Moreover clearly
\begin{equation}\label{a4543chethtjkkkk}
\vec A\cdot \{\nabla \bar\gamma_{x}(z)\}=\int_{\R^N}\eta(y-z)
\cdot\{\vec A\cdot\nabla \gamma_x\}(y)\,dy=
\int_{\R^N}\eta(y)\cdot\big\{\vec A\cdot\nabla
\gamma_x\big\}(z+y)\,dy.
\end{equation}
Plugging it into \er{hhjhhkhkhjhghfhlllkkkjjjhhhgffdddd} we deduce
\begin{equation}\label{a4543chethtjkkkkgjhkjhukjjjjkkkk}
\vec A\cdot \{\nabla
\bar\gamma_{x}(z)\}=\Gamma\big(-z\cdot\vec\nu(x),x\big)\,.
\end{equation}
Thus for every $L\in(0,1)$,  the function
$\bar\gamma_{x,L}(z):=L\bar\gamma_{x}(z/L)$ belongs to
$\mathcal{W}'_0\big(x,\vec k_1(x),\vec k_2(x),\ldots \vec
k_N(x)\big)$
where
\begin{multline}\label{L2009Ddef2hhhjjjj77788hhhkkkkllkjjjjkkkhhhhffggdddkkkgjhikkjjkh}
\mathcal{W}'_0\big(x,\vec k_1(x),\vec k_2(x),\ldots \vec
k_N(x)\big):=\\
\bigg\{u\in C^\infty(\R^N,\R^d):
\;\;\vec A\cdot\nabla u(y)=(\vec A\cdot\nabla
v)^-(x)\;\text{ if }\;y\cdot\vec\nu(x)\leq-1/2,\\
\vec A\cdot\nabla u(y)=(\vec A\cdot\nabla v)^-(x)\;\text{ if }\;
y\cdot\vec\nu(x)\geq 1/2\;\text{ and }\;\vec A\cdot\nabla
u\big(y+\vec k_j(x)\big)=\vec A\cdot\nabla u(y)\;\;\forall j\geq
2\bigg\}\,.
\end{multline}
Moreover, $\vec A\cdot \{\nabla
\bar\gamma_{x,L}\}\big(T_x(s)\big)=\Gamma\big(-s_1/L,x\big)$ and
$\nabla\big(\vec A\cdot \{\nabla
\bar\gamma_{x,L}\}\big)\big(T_x(s)\big)=p(-s_1/L,x)\big\{(\vec
A\cdot\nabla v)^+(x)-(\vec A\cdot\nabla v)^-(x)\big\}\otimes\vec
\nu(x)$. Thus
\begin{multline}\label{gkhljknmjkkjjhjffttrytrfrhjhjhjhk}
\int\limits_{\O\cap J_{\vec A\cdot\nabla
v}}\bigg\{\inf\limits_{L\in(0,1)}\bigg(\inf\limits_{\sigma\in
\mathcal{W}_x(\vec k_1,\vec k_2,\ldots \vec
k_N)}H_x\big(\sigma(\cdot),L\big)
\bigg)\bigg\}\,d\mathcal{H}^{N-1}(x)\\=\int\limits_{\O\cap J_{\vec
A\cdot\nabla
v}}\bigg\{\inf\limits_{L\in(0,1)}\bigg(\inf\limits_{\sigma\in
\mathcal{W}'_0(x,\vec k_1,\ldots,\vec
k_N)}E_x\big(\sigma(\cdot),L\big)\bigg)
\bigg\}\,d\mathcal{H}^{N-1}(x)\,,
\end{multline}
and plugging it into \er{gkhljknmjkkjjhjffttrytrfrh} we obtain
\begin{multline}\label{gkhljknmjkkjjhjffttrytrfrhjhjhjhkgugyfhfkk}
\int\limits_{\O\cap J_{\vec A\cdot\nabla
v}}\bigg\{\inf\limits_{\sigma\in
\mathcal{Q}(x),L\in(0,1)}H_x\big(\sigma(\cdot),L\big)
\bigg\}\,d\mathcal{H}^{N-1}(x)\\=\int\limits_{\O\cap J_{\vec
A\cdot\nabla
v}}\bigg\{\inf\limits_{L\in(0,1)}\bigg(\inf\limits_{\sigma\in
\mathcal{W}'_0(x,\vec k_1,\ldots,\vec
k_N)}E_x\big(\sigma(\cdot),L\big)\bigg)
\bigg\}\,d\mathcal{H}^{N-1}(x)\,,
\end{multline}
Finally, using \er{gkhljknmjkkjjhjffttrytrfrhjhjhjhkgugyfhfkk} and
applying Lemma \ref{lijkkhnjjhjh} we obtain
\er{gkhljknmjkkjjhjffttryt}.
\end{proof}

By the same method we can prove the following more general result.
\begin{theorem}\label{vtporbound4ghkfgkhkgen}
Let $\Omega\subset\R^N$ be an open set.
Furthermore, let $\vec A\in \mathcal{L}(\R^{d\times N};\R^m)$, $\vec
B\in \mathcal{L}(\R^{d};\R^k)$ and let $F\in C^1\big(\R^{m\times
N^n}\times\R^{m\times N^{(n-1)}}\times\ldots
\times \R^{m\times N}\times \R^m\times \R^k\times\R^q,\R\big)$,
satisfying $F\geq 0$. Let $f\in BV_{loc}(\R^N,\R^q)\cap L^\infty$
and $v\in\mathcal{D}'(\R^N,\R^d)$ be such that $\vec A\cdot\nabla
v\in BV(\R^N,\R^m)\cap L^\infty(\R^N,\R^m)$ and $\vec B\cdot v\in
Lip(\R^N,\R^k)\cap W^{1,1}(\R^N,\R^k)\cap L^\infty(\R^N,\R^k)$,
$\|D(\vec A\cdot \nabla v)\|(\partial\Omega)=0$ and
$$F\Big(0,0,\ldots,0,\{\vec A\cdot\nabla v\}(x),\{\vec B\cdot
v\}(x),f(x)\Big)=0\quad\text{a.e.~in}\;\Omega\,.$$ Moreover, assume
that for $\mathcal{H}^{N-1}$-a.e. $x\in \O\cap J_{\vec A\cdot\nabla
v}$ there exists a distribution $\gamma_x(\cdot)\in
\mathcal{D}'(\R^N,\R^d)$ such that
\begin{equation}\label{hhjhhkhkhjhghfhlllkkkjjjhhhgffddddgen}
\{\vec A\cdot\nabla\gamma_x\}(z)=\begin{cases}(\vec A\cdot\nabla
v)^+(x)\quad\quad\text{if}\quad z\cdot\vec\nu(x)\geq 0\,,\\
(\vec A\cdot\nabla v)^-(x)\quad\quad\text{if}\quad z\cdot\vec\nu(x)<
0\,
\end{cases}
\end{equation}
(with $\vec\nu(x)$ denoting the orientation vector of $J_{\vec
A\cdot\nabla v}$). Finally we assume that for
$\mathcal{H}^{N-1}$-a.e. $x\in \O\cap J_{\vec A\cdot\nabla v}$ for
every system $\{\vec k_1(x),\vec k_2(x),\ldots \vec k_N(x)\}$ of
linearly independent vectors in $\R^N$ satisfying $\vec
k_1(x)=\vec\nu(x)$ and $\vec k_j(x)\cdot\vec\nu(x)=0$ for $j\geq 2$,
and
for every $\xi(z)\in\mathcal{W}_{(x,n)}(\vec k_1,\vec k_2,\ldots
\vec k_N)$ there exists $\zeta(z)\in\mathcal{W}'_{(x,n)}(\vec
k_1,\vec k_2,\ldots \vec k_N)$ such that $\vec
A\cdot\nabla\xi(z)\equiv\vec A\cdot\nabla\zeta(z)$, where
\begin{multline}\label{L2009Ddef2hhhjjjj77788hhhkkkkllkjjjjkkkhhhhffggdddkkkgen}
\mathcal{W}_{(x,n)}\big(\vec k_1(x),\vec k_2(x),\ldots \vec
k_N(x)\big):=
\bigg\{ u\in C^\infty(\R^N,\R^d):\;\vec A\cdot\nabla
u(y)=0\;\text{ if }\;|y\cdot\vec\nu(x)|\geq 1/2,\\
\text{ and }\;\vec A\cdot\nabla u\big(y+\vec k_j(x)\big)=\vec
A\cdot\nabla u(y)\;\;\forall j=2,3,\ldots, N\bigg\}\,,
\end{multline}
and
\begin{multline}\label{L2009Ddef2hhhjjjj77788nullkkkkkkjjjjjkkkgen}
\mathcal{W}'_{(x,n)}\big(\vec k_1(x),\vec k_2(x),\ldots \vec
k_N(x)\big):=\\
\bigg\{ u\in C^\infty(\R^N,\R^d)\cap L^\infty(\R^N,\R^d)
:\;\nabla^j u\in L^\infty(\R^N,\R^{d\times N^j})\;\text{for}\;j\leq n\,,\\
\vec A\cdot\nabla u(y)=0\;\text{ if }\; |y\cdot\vec\nu(x)|\geq 1/2\;
\text{ and }\;
u\big(y+\vec k_j(x)\big)=u(y) \;\;\forall j=2,3,\ldots, N\bigg\}\,.
\end{multline}
Then, for $\eta\in \mathcal{V}_0$,
for every $\delta>0$  there exist a sequence
$\{v_\e\}_{0<\e<1}\subset C^\infty(\R^N,\R^d)$
such that $\lim_{\e\to0^+}\vec A\cdot\nabla v_\e=\vec A\cdot \nabla
v$ in $L^p(\R^N,\R^m)$, $\lim_{\e\to0^+} \e^j\nabla^j\{\vec A\cdot
\nabla v_\e\}=0$ in $L^{p}(\R^N,\R^{m\times N^j})$ for every
$j=1,2,\ldots n$, $\lim_{\e\to0^+}\vec B\cdot v_\e=\vec B\cdot v$ in
$W^{1,p}(\R^N,\R^k)$, $\lim_{\e\to0^+}(\vec B\cdot v_\e-\vec B\cdot
v)/\e=0$ in $L^{p}(\R^N,\R^k)$  for every $p\geq 1$ and
\begin{multline}
\label{L2009limew03zeta71288888Contggiuuggyyyynew88789999vprop78899shtrihkkklljkhkhgghhhjhhhjkkkgen}
0\leq\lim\limits_{\e\to 0}\int\limits_{\O}\frac{1}{\e}
F\bigg(\e^n\nabla^n\{\vec A\cdot\nabla
v_{\e}\},\,\e^{n-1}\nabla^{n-1}\{\vec A\cdot\nabla
v_{\e}\},\,\ldots,\,\e\nabla\{\vec A\cdot\nabla v_{\e}\},\,\vec
A\cdot\nabla v_{\e},\,\vec B\cdot
v_{\e},\,f\bigg)\,dx\\-\int\limits_{\O\cap J_{\vec A\cdot\nabla
v}}\Bigg\{\inf\limits_{L>0}\bigg(\inf\limits_{\sigma\in
\mathcal{W}^{(n)}_0(x,\vec k_1,\ldots,\vec
k_N)}E_{x,n}\big(\sigma(\cdot),L\big)\bigg)
\Bigg\}\,d\mathcal{H}^{N-1}(x)<\delta\,,
\end{multline}
where
\begin{multline}
\label{L2009limew03zeta71288888Contggiuuggyyyynew88789999vprop78899shtrihkkkllyhjyukjkkmmmklklklhhhhkkffgghhjjjkkkllkkgen}
E_{x,n}\big(\sigma(\cdot),L\big):=
\int\limits_{I_{N}}\frac{1}{L}F\bigg(L^n\nabla^n\{\vec
A\cdot\nabla\sigma\}\big(S_{x}(s)\big),\,L^{n-1}\nabla^{n-1}\{\vec
A\cdot\nabla\sigma\}\big(S_{x}(s)\big),\,\ldots,\,\\ L\nabla\{\vec
A\cdot\nabla\sigma\}\big(S_{x}(s)\big),\,\{\vec
A\cdot\nabla\sigma\}\big(S_{x}(s)\big),\,\vec B\cdot
v(x),\,\zeta\big(s_1,f^+(x),f^-(x)\big)
\bigg)\,ds\,,
\end{multline}
\begin{equation}\label{vhjgkjghklhlhjgjkgujh}
\zeta(s,a,b):=
\begin{cases}
a\quad\text{if}\;\; s>0\,,\\
b\quad\text{if}\;\;s<0\,,
\end{cases}
\end{equation}
\begin{multline}\label{L2009Ddef2hhhjjjj77788hhhkkkkllkjjjjkkkhhhhffggdddkkkgjhikgen}
\mathcal{W}^{(n)}_0\big(x,\vec k_1(x),\vec k_2(x),\ldots \vec
k_N(x)\big):=\\
\bigg\{u\in \mathcal{D}'(\R^N,\R^d): \vec A\cdot\nabla u\in
C^n(\R^N,\R^m),\;\;\vec A\cdot\nabla u(y)=(\vec A\cdot\nabla
v)^-(x)\;\text{ if }\;y\cdot\vec\nu(x)\leq-1/2,\\
\vec A\cdot\nabla u(y)=(\vec A\cdot\nabla v)^+(x)\;\text{ if }\;
y\cdot\vec\nu(x)\geq 1/2\;\text{ and }\;\vec A\cdot\nabla
u\big(y+\vec k_j(x)\big)=\vec A\cdot\nabla u(y)\;\;\forall
j=2,3,\ldots, N\bigg\}\,,
\end{multline}
\begin{equation}\label{cubidepgghkkkkllllllljjjkkkkkjjjhhhhlkkkffffggggdddgen}
I_N:= \Big\{s\in\R^N:\; -1/2<s_j<1/2\quad\forall
j=1,2,\ldots, N\Big\}\,,
\end{equation}
$\{\vec k_1(x),\vec k_2(x),\ldots \vec k_N(x)\}$ is an arbitrary
system of linearly independent vectors in $\R^N$ satisfying $\vec
k_1(x)=\vec\nu(x)$ and $\vec k_j(x)\cdot\vec\nu(x)=0$ for $j\geq 2$,
the linear transformation $S_x(s)=S_{x}(s_1,s_2,\ldots,
s_N):\R^N\to\R^N$ is defined by
\begin{equation}\label{chfhh2009y888shtrihkkkllbjgjkllkjkjjjjujkkllkkkgen}
S_{x}(s):=\sum\limits_{j=1}^{N}s_j\,\vec k_j(x)\,,
\end{equation}
and we assume that the orientation of $J_f$ coincides with the
orientation of $J_{\vec A\cdot\nabla v}$ $\mathcal{H}^{N-1}$ a.e. on
$J_f\cap J_{\vec A\cdot\nabla v}$. Moreover, $\vec A\cdot\nabla
v_\e$, $\e\nabla\{\vec A\cdot\nabla v_\e\}$, $\e^2\nabla^2\{\vec
A\cdot\nabla v_\e\}$, $\ldots$, $\e^n\nabla^n\{\vec A\cdot\nabla
v_\e\}$, $\vec B\cdot v_\e$ and $\nabla\{\vec B\cdot v_\e\}$ are
bounded in $L^\infty$ sequences, there exists a compact
$K=K_\delta\subset\subset\O$ such that $v_\e(x)=\psi_\e(x)$ for
every $0<\e<1$ and every $x\in\R^N\setminus K$, where
\begin{equation}\label{a4543chejhgufydfdtgjkgkjghgh}
\psi_\e(x):=\frac{1}{\e^N}\Big<\eta\Big(\frac{y-x}{\e}\Big),v(y)\Big>\,,
\end{equation}
and
\begin{equation}\label{bjdfgfghljjklkjkllllkkllkkkkkkllllkkllhffhhfhftfffffhhhhkkkkkgen}
\limsup\limits_{\e\to 0^+}\Bigg|\frac{1}{\e}\bigg(\int_\O\vec
A\cdot\nabla v_\e(x)\,dx-\int_\O\{\vec A\cdot \nabla
v\}(x)\,dx\bigg)\Bigg|<+\infty\,,
\end{equation}
\end{theorem}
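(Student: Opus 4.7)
The proof of Theorem \ref{vtporbound4ghkfgkhkgen} will follow the same architecture as the proof of Theorem \ref{vtporbound4ghkfgkhk}, namely: (i) use the convolution-type primary sequence $\psi_\e$; (ii) locally modify it near the jump set of $\vec A\cdot\nabla v$ by an oscillating corrector supported in a strip of width $O(\e)$ around each elementary surface; (iii) exploit countable $\mathcal{H}^{N-1}$-rectifiability together with a measurable selection / mollification argument to pass from a pointwise infimum to a global one; (iv) use Lemma \ref{lijkkhnjjhjh} together with Proposition \ref{L2009cffgfProp8hhhjjj8887788} to conclude that the inner infimum is independent of the choice of basis $\{\vec k_j(x)\}$ and equals the one-sided limit as $L\to 0^+$; and (v) use the existence of the auxiliary distribution $\gamma_x$ from \eqref{hhjhhkhkhjhghfhlllkkkjjjhhhgffddddgen} to pass from the class $\mathcal{W}^{(n)}_0$ of admissible profiles with prescribed limits $(\vec A\cdot\nabla v)^{\pm}(x)$ to the ``null-boundary'' class $\mathcal{W}'_{(x,n)}$.

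The first step is an $n$-th order analog of Theorem \ref{vtporbound4}. For the primary sequence $\psi_\e$ of \eqref{a4543chejhgufydfdtgjkgkjghgh}, differentiating the convolution kernel $j$ times yields $\e^j\nabla^j\{\vec A\cdot\nabla\psi_\e\}(x)=O(1)$ on the strip $\{\dist(x,J_{\vec A\cdot\nabla v})\lesssim\e\}$ and tends to $0$ in $L^p$ outside it. The same fibration argument as in the cited Theorem \ref{vtporbound4} identifies the limit of the rescaled energy with an integral over $J_{\vec A\cdot\nabla v}$ of an expression involving $p(t,x)$ and its derivatives $\partial_t^{j-1}p(t,x)$ up to order $n-1$, which is exactly what appears upon specializing \eqref{L2009limew03zeta71288888Contggiuuggyyyynew88789999vprop78899shtrihkkklljkhkhgghhhjhhhjkkkgen} to $\sigma\equiv 0$. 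Next I would generalize Proposition \ref{mnogohypsi88}: starting from the same ansatz $u_\e=\psi_\e+\e h((x_1-g_\e(x'))/\e,x'/\e)$ with $h=(1/L)h_0(L\cdot,\cdot)$, one computes
\begin{equation*}
\e^j\nabla^j\bigl\{\vec A\cdot\nabla u_\e\bigr\}(x+g_\e(x')\vec e_1)
=\e^j\nabla^j\bigl\{\vec A\cdot\nabla\psi_\e\bigr\}(x+g_\e(x')\vec e_1)+\Theta^{(j)}_\e(x)+o(1),
\end{equation*}
where $\Theta^{(j)}_\e$ is an explicit combination of $\partial_{y_1}^{k_1}\nabla^{k_2}h(x/\e,x')$ with coefficients polynomial in $\nabla g(x')$; after changing variables $(z_1,z')=(y_1-\nabla_{x'}g(x')\cdot y',y')$ exactly as in \eqref{tildedeftt}--\eqref{tildedefttgrad} these coefficients reassemble into $L^j\nabla^j_y\{\vec A\cdot\nabla_y\tilde h\}(Q_{x'}(s),x')$.

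With this higher-order Proposition in hand, Lemma \ref{mnogohypsi88kkkll88} carries over verbatim: its proof is a measurable selection (Lusin) followed by mollification in $x'$, and the uniform $L^\infty$ bounds on $\nabla^j_y p_\rho$ for any fixed $j$ follow from \eqref{vhhdhcdhchfdhc}--\eqref{vhhdhcdhchfdhckkkkllll} applied to each derivative separately. The analog of Lemma \ref{mnogohypsi88kkkll88lpppopkook} then follows by the same reduction to finitely many pairwise disjoint elementary $C^1$ hypersurfaces $S'_k$ covering all of $J_{\vec A\cdot\nabla v}\cap\O$ up to arbitrarily small $\mathcal{H}^{N-1}$-measure, followed by the observation that, on each $S'_k$, the modified sequence $v_\e$ produces the correct profile density via the generalized Proposition. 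The passage $L\in(0,1)$ $\to$ $L\to 0^+$ uses the $n$-th order version of Lemma \ref{resatnulo}, whose proof is identical once one notes that convolution with a smooth radial kernel preserves the $L^\infty$ bound on every derivative, so the equality $R_L=\tilde R_L$ from Lemma \ref{lijkkhnjjhjh} extends to the class $\mathcal{W}'_{(x,n)}$ (where derivatives up to order $n$ must be bounded), and the inequality $R_L\le R_{KL}$ of \eqref{dihghigbh} is unchanged.

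Finally, we conclude exactly as in the proof of Theorem \ref{vtporbound4ghkfgkhk}: Proposition \ref{L2009cffgfProp8hhhjjj8887788} (which is stated at a purely algebraic level and hence applies to any $n$) reduces the computation to an arbitrary basis $\{\vec k_1(x),\ldots,\vec k_N(x)\}$; the function $\bar\gamma_{x,L}(z):=L\bar\gamma_x(z/L)$ with $\bar\gamma_x(z)=\langle\eta(y-z),\gamma_x(y)\rangle$ belongs to $\mathcal{W}^{(n)}_0(x,\vec k_1,\ldots,\vec k_N)$ and satisfies $\vec A\cdot\nabla\bar\gamma_{x,L}(S_x(s))=\Gamma(-s_1/L,x)$ together with $\nabla^j(\vec A\cdot\nabla\bar\gamma_{x,L})(S_x(s))=L^{1-j}\partial_t^{j-1}p(-s_1/L,x)\otimes\cdots$, so that $H_x(\sigma,L)$ from \eqref{L2009limew03zeta71288888Contggiuuggyyyynew88789999vprop78899shtrihkkkllyhjyukjkkmmmklklklhhhhkkbjjjhhjk} (in its $n$-th order form) rewrites as $E_{x,n}(\sigma+\bar\gamma_{x,L},L)$ with $\sigma+\bar\gamma_{x,L}$ ranging over $\mathcal{W}^{(n)}_0$; and the hypothesis that every $\xi\in\mathcal{W}_{(x,n)}$ admits some $\zeta\in\mathcal{W}'_{(x,n)}$ with $\vec A\cdot\nabla\zeta\equiv\vec A\cdot\nabla\xi$ identifies the infimum over $\mathcal{Q}(x)$ with the infimum over $\mathcal{W}'_{(x,n)}$, which is the left-hand side of \eqref{L2009limew03zeta71288888Contggiuuggyyyynew88789999vprop78899shtrihkkklljkhkhgghhhjhhhjkkkgen}.

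The main obstacle will be the explicit $n$-th order computation embedded in the generalized Proposition \ref{mnogohypsi88}: expanding $\nabla^j(u_\e)$ produces many cross terms of the form (polynomial in $\nabla^k g_\e(x')$)$\,\cdot\,$(derivative of $h$), and one must carefully separate those carrying an $\e$-prefactor (which vanish in $L^1$ thanks to the bound $\|\e\nabla^2 g_\e\|_{L^\infty}=o(1)$ from \eqref{L2009psijebn12387878} and its analogs for higher derivatives) from the $O(1)$ terms which, after the twist change of variable, collapse into $L^j\nabla^j_y\{\vec A\cdot\nabla_y\tilde h\}(Q_{x'}(s),x')$. The bookkeeping is purely algebraic, but must be done cleanly in order for the Dominated Convergence argument leading to the analog of \eqref{L2009limew03zeta71288888Contggiuuggyyyynew88789999vprop78899shtrih} to go through. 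All the genuinely analytic inputs (rectifiability, one-sided $BV$ traces, Volpert's chain rule, and the density result of Lemma \ref{lijkkhnjjhjh}) are unaffected by the passage from $n=1$ to general $n$.
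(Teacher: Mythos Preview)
Your proposal is correct and matches the paper's approach exactly: the paper gives no proof of Theorem \ref{vtporbound4ghkfgkhkgen} beyond the single sentence ``By the same method we can prove the following more general result,'' and your outline is precisely the $n$-th order adaptation of the proof of Theorem \ref{vtporbound4ghkfgkhk} that this sentence refers to. Your identification of the one nontrivial bookkeeping step---the higher-order Fa\`a-di-Bruno--type expansion of $\e^j\nabla^j u_\e$ in the analog of Proposition \ref{mnogohypsi88}, and the verification that all cross terms carrying a positive power of $\e$ vanish in the limit---is accurate, and your observation that the preliminary results (Proposition \ref{L2009cffgfProp8hhhjjj8887788}, Lemmas \ref{lijkkhnjjhjh} and \ref{resatnulo}) extend to $n\geq 1$ with no new analytic input is correct.
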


\section{The applications}
\begin{theorem}\label{vtporbound4ghkfgkhkhhhjj}
Let $\Omega\subset\R^N$ be an open set.
Furthermore,
let $$F\in C^1\Big(\R^{k\times N\times N}\times\R^{d\times N\times
N}\times\R^{m\times N}\times\R^{k\times N}\times\R^{d\times
N}\times\R^{m}\times \R^k\times\R^q\,,\,\R\Big)\,$$ satisfying
$F\geq 0$. Let $f\in BV_{loc}(\R^N,\R^q)\cap L^\infty$, $v\in
Lip(\R^N,\R^k)\cap L^1\cap L^\infty$, $\bar m\in BV(\R^N,\R^{d\times
N})\cap L^\infty$ and $\varphi\in BV(\R^N,\R^{m})\cap L^\infty$ be
such that $\nabla v\in BV(\R^N,\R^{k\times N})$,
$\|D(\nabla v)\|(\partial\Omega)=0$, $\|D \bar
m\|(\partial\Omega)=0$, $\|D \varphi\|(\partial\Omega)=0$, $div_x
\bar m(x)=0$ a.e.~in $\R^N$ and $F\big(0,0,0,\nabla v(x),\bar
m(x),\varphi(x),v(x),f(x)\big)=0$ a.e.~in $\Omega$. Then, for
$\eta\in \mathcal{V}_0$,
for every $\delta>0$ there exist sequences $\{v_\e\}_{0<\e<1}\subset
C^\infty(\R^N,\R^k)$, $\{m_\e\}_{0<\e<1}\subset
C^\infty(\R^N,\R^{d\times N})$ and $\{\psi_\e\}_{0<\e<1}\subset
C^\infty(\R^N,\R^{m})$
such that $div_x m_\e(x)\equiv 0$ in $\R^N$,
$\int_\O\psi_\e(x)\,dx=\int_\O \varphi(x)\,dx$,
$\lim_{\e\to0^+}v_\e=v$ in $W^{1,p}(\R^N,\R^k)$,
$\lim_{\e\to0^+}(v_\e-v)/\e=0$ in $L^{p}(\R^N,\R^k)$,
$\lim_{\e\to0^+}m_\e=\bar m$ in $L^{p}(\R^N,\R^{d\times N})$,
$\lim_{\e\to0^+}\psi_\e=\varphi$ in $L^{p}(\R^N,\R^{m})$,
$\lim_{\e\to0^+} \e\nabla^2 v_\e=0$ in $L^{p}(\R^N,\R^{k\times
N\times N})$, $\lim_{\e\to0^+} \e\nabla m_\e=0$ in
$L^{p}(\R^N,\R^{d\times N\times N})$, $\lim_{\e\to0^+}
\e\nabla\psi_\e=0$ in $L^{p}(\R^N,\R^{m\times N})$ for every $p\geq
1$ and
\begin{multline}
\label{L2009limew03zeta71288888Contggiuuggyyyynew88789999vprop78899shtrihkkklljkhkhgghhhjhhhjkkkhhhjj}
0\leq\lim\limits_{\e\to 0}\int\limits_{\O}\frac{1}{\e}
F\bigg(\e\nabla^2v_{\e}(x),\,\e\nabla
m_\e(x),\,\e\nabla\psi_\e(x),\,\nabla
v_{\e}(x),\,m_\e(x),\,\psi_\e(x),\,
v_{\e}(x),\,f(x)\bigg)\,dx\\-\int\limits_{\O\cap (J_{\nabla v}\cup
J_{\bar m}\cup J_{\varphi})}\Bigg( \inf\bigg\{\bar
E_x\big(\sigma(\cdot),\theta(\cdot),\gamma(\cdot),L\big):\;\;L>0,\\
\;\sigma\in \mathcal{W}^{(1)}_0(x,\vec k_1,\ldots,\vec
k_N),\,\theta\in \mathcal{W}^{(2)}_0(x,\vec k_1,\ldots,\vec
k_N),\,\gamma\in \mathcal{W}^{(3)}_0(x,\vec k_1,\ldots,\vec
k_N)\bigg\} \Bigg)\,d\mathcal{H}^{N-1}(x)<\delta\,,
\end{multline}
where
\begin{multline}
\label{L2009limew03zeta71288888Contggiuuggyyyynew88789999vprop78899shtrihkkkllyhjyukjkkmmmklklklhhhhkkffgghhjjjkkkllkkhhhjj}
\bar E_x\big(\sigma(\cdot),\theta(\cdot),\gamma(\cdot),L\big):=
\int\limits_{I^+_{\vec k_1,\ldots,\vec
k_N}}\frac{1}{L}F\bigg(L\nabla^2\sigma\big(y\big),
\,L\nabla\theta\big(y\big),
\,L\nabla\gamma\big(y\big), \,\nabla\sigma\big(y\big),\,\theta\big(y\big),\,\gamma\big(y\big),\,v(x),\,f^+(x) \bigg)\,dy\\
+\int\limits_{I^-_{\vec k_1,\ldots,\vec
k_N}}\frac{1}{L}F\bigg(L\nabla^2\sigma\big(y\big),
\,L\nabla\theta\big(y\big), \,L\nabla\gamma\big(y\big),
\,\nabla\sigma\big(y\big),\,\theta\big(y\big),\,\gamma\big(y\big),\,v(x),\,f^-(x)
\bigg)\,dy\,,
\end{multline}
\begin{multline}\label{L2009Ddef2hhhjjjj77788hhhkkkkllkjjjjkkkhhhhffggdddkkkgjhikhhhjj}
\mathcal{W}^{(1)}_0\big(x,\vec k_1(x),\vec k_2(x),\ldots \vec
k_N(x)\big):=
\bigg\{u\in C^2(\R^N,\R^k):\;\;\nabla u(y)=(\nabla
v)^-(x)\;\text{ if }\;y\cdot\vec\nu(x)\leq-1/2,\\
\nabla u(y)=(\nabla v)^+(x)\;\text{ if }\; y\cdot\vec\nu(x)\geq
1/2\;\text{ and }\;\nabla u\big(y+\vec k_j(x)\big)=\nabla
u(y)\;\;\forall j=2,3,\ldots, N\bigg\}\,,
\end{multline}
\begin{multline}\label{L2009Ddef2hhhjjjj77788hhhkkkkllkjjjjkkkhhhhffggdddkkkgjhikhhhjjhhhh}
\mathcal{W}^{(2)}_0\big(x,\vec k_1(x),\vec k_2(x),\ldots \vec
k_N(x)\big):=\\
\bigg\{\xi\in C^1(\R^N,\R^{d\times N}):\;\;div_y \xi(y)\equiv
0,\;\;\xi(y)=\bar m^-(x)\;\text{ if }\;y\cdot\vec\nu(x)\leq-1/2,\\
\xi(y)=\bar m^+(x)\;\text{ if }\; y\cdot\vec\nu(x)\geq 1/2\;\text{
and }\;\xi\big(y+\vec k_j(x)\big)=\xi(y)\;\;\forall j=2,3,\ldots,
N\bigg\}\,,
\end{multline}
\begin{multline}\label{L2009Ddef2hhhjjjj77788hhhkkkkllkjjjjkkkhhhhffggdddkkkgjhikhhhjjdddd}
\mathcal{W}^{(3)}_0\big(x,\vec k_1(x),\vec k_2(x),\ldots \vec
k_N(x)\big):=
\bigg\{\zeta\in C^1(\R^N,\R^m):\;\;\zeta(y)=\varphi^-(x)\;\text{ if }\;y\cdot\vec\nu(x)\leq-1/2,\\
\zeta(y)=\varphi^+(x)\;\text{ if }\; y\cdot\vec\nu(x)\geq
1/2\;\text{ and }\;\zeta\big(y+\vec k_j(x)\big)=\zeta(y)\;\;\forall
j=2,3,\ldots, N\bigg\}\,,
\end{multline}
\begin{equation}\label{cubidepgghkkkkllllllljjjkkkkkjjjhhhhlkkkffffggggdddhhhjj}
\begin{split}
I^-_{\vec k_1,\vec k_2,\ldots,\vec k_N}:=
\Big\{y\in\R^N:\;-1/2<y\cdot\vec k_1<0,\;\; |y\cdot\vec
k_j|<1/2\quad\forall
j=2,3,\ldots, N\Big\}\,,\\
I^+_{\vec k_1,\vec k_2,\ldots,\vec k_N}:=
\Big\{y\in\R^N:\;0<y\cdot\vec k_1<1/2,\;\; |y\cdot \vec
k_j|<1/2\quad\forall j=2,3,\ldots, N\Big\}\,,
\end{split}
\end{equation}
$\{\vec k_1(x),\vec k_2(x),\ldots \vec k_N(x)\}$ is an orthonormal
base in $\R^N$, satisfying $\vec k_1(x)=\vec\nu(x)$,
and we assume that the orientations of $J_{\nabla v}$, $J_{\bar m}$,
$J_\varphi$ and $J_f$ coincides $\mathcal{H}^{N-1}$ a.e. and given
by the vector $\vec\nu(x)$. Moreover, $\nabla v_\e$, $\e\nabla^2
v_\e$, $v_\e$, $m_\e$, $\e\nabla m_\e$, $\psi_\e$ and
$\e\nabla\psi_\e$
are bounded in $L^\infty$ sequences, and there exists a compact
$K=K_\delta\subset\subset\O$ such that $v_\e(x)=v^{(0)}_\e(x)$,
$m_\e(x)=m^{(0)}_\e(x)$ and $\psi_\e(x)=\psi^{(0)}_\e(x)$ for every
$0<\e<1$ and every $x\in\R^N\setminus K$, where
\begin{multline*}
v^{(0)}_\e(x)=\frac{1}{\e^N}\int_{\R^N}\eta\Big(\frac{y-x}{\e}\Big)\,v(y)\,dy\,,\quad
m^{(0)}_\e(x)=\frac{1}{\e^N}\int_{\R^N}\eta\Big(\frac{y-x}{\e}\Big)\,\bar
m(y)\,dy\,,
\\
\quad\psi^{(0)}_\e(x)=\frac{1}{\e^N}\int_{\R^N}\eta\Big(\frac{y-x}{\e}\Big)\,\varphi(y)\,dy\,.
\end{multline*}
\end{theorem}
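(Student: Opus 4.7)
The plan is to recast the triple $(v,\bar m,\varphi)$ as the image of a single distribution under fixed linear operators, so that Theorem \ref{vtporbound4ghkfgkhkgen} applies with $n=1$. I would introduce potentials: let $H:\R^N\to\R^{d\times N\times N}$ be antisymmetric in its last two indices with $\bar m_{ij}=\sum_{\ell}\partial_\ell H_{ij\ell}$, and let $P:\R^N\to\R^{m\times N}$ with $\varphi_i=\sum_\ell\partial_\ell P_{i\ell}$. Such bounded distributional potentials exist because $\bar m$ is divergence-free (and the divergence is surjective from antisymmetric tensor fields onto divergence-free fields), while for $\varphi$ an antiderivative along a single axis suffices. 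Set the combined distribution $w:=(v,H,P)\in\mathcal D'(\R^N,\R^D)$ with $D:=k+dN^2+mN$, together with $\vec B\in\mathcal{L}(\R^D;\R^k)$ projecting onto the first component (so $\vec B\cdot w=v$) and $\vec A\in\mathcal{L}(\R^{D\times N};\R^{kN+dN+m})$ extracting $(\nabla v,\bar m,\varphi)$ from $\nabla w$.

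\textbf{Verification of the hypotheses of Theorem \ref{vtporbound4ghkfgkhkgen}.} Under this reduction, $\vec A\cdot\nabla w\in BV(\R^N)\cap L^\infty$ with $\|D(\vec A\cdot\nabla w)\|(\partial\O)=0$, and $\vec B\cdot w\in Lip\cap L^\infty\cap W^{1,1}$; the degeneracy $F(0,\vec A\cdot\nabla w,\vec B\cdot w,f)=0$ a.e.\ is an immediate translation of the given hypothesis. I would construct the pointwise distribution $\gamma_x$ required by \er{hhjhhkhkhjhghfhlllkkkjjjhhhgffddddgen} piecewise affinely across the jump plane $\{y\cdot\vec\nu(x)=0\}$, taking $(g^\pm,H^\pm,P^\pm)$ with $g^\pm(y)=(\nabla v)^\pm(x)\,y$, $H^\pm_{ij\ell}(y)=\alpha^\pm_{ij\ell}\,(y\cdot\vec\nu)$, and $P^\pm_{i\ell}(y)=\varphi^\pm_i\,\nu_\ell(x)\,(y\cdot\vec\nu)$; continuity of $g^\pm$ at the interface reduces to the Hadamard rank-one compatibility $[\nabla v]=a\otimes\vec\nu$, which is automatic for Lipschitz $v$, while the solvability of $\sum_\ell\alpha^\pm_{ij\ell}\nu_\ell=\bar m^\pm_{ij}$ under antisymmetry of $\alpha^\pm$ in $(j,\ell)$ reduces to $\bar m^\pm\vec\nu=0$, which is precisely the trace of $\Div\bar m=0$ at the jump set. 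The equivalence between $\mathcal W_{(x,n)}$ and $\mathcal W'_{(x,n)}$ will follow by noting that any slab-compactly-supported bounded smooth triple $T=(T_1,T_2,T_3)=\vec A\cdot\nabla\xi$ can be integrated back to bounded $C^\infty$ potentials in $\mathcal W'_{(x,n)}$: a direct antiderivative along $\vec\nu$ for the $v$-part, a Bogovski\u\i-type solution in the antisymmetric class for the $H$-part, and a transverse antiderivative for the $P$-part, all inheriting the required periodicity in the directions orthogonal to $\vec\nu$.

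\textbf{Application and identification of the surface energy.} Theorem \ref{vtporbound4ghkfgkhkgen} will then produce, for each $\delta>0$, a sequence $w_\e\in C^\infty(\R^N,\R^D)$ realizing the general surface energy \er{L2009limew03zeta71288888Contggiuuggyyyynew88789999vprop78899shtrihkkkllyhjyukjkkmmmklklklhhhhkkffgghhjjjkkkllkkgen} up to error $\delta$. Writing $w_\e=(v_\e,H_\e,P_\e)$, I would extract $m_\e:=\sum_\ell\partial_\ell H_{\e,ij\ell}$ and $\psi_\e:=\sum_\ell\partial_\ell P_{\e,i\ell}$; the resulting $m_\e$ is automatically divergence-free provided $H_\e$ is taken antisymmetric in $(j,\ell)$. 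This antisymmetry can be preserved throughout because both the mollification \er{a4543chejhgufydfdt} and the local cell-profile corrections built in Lemma \ref{mnogohypsi88kkkll88lpppopkook} act linearly, so the entire construction may be carried out within the antisymmetric subspace without any change to the argument. The $I_N$-integral in the general surface energy will split into integrals over $I^\pm_{\vec k_1,\ldots,\vec k_N}$ by the piecewise structure of $\zeta(s_1,f^+,f^-)$, and the infimum over $\mathcal W^{(n)}_0$ will decompose into independent infima over $\mathcal W^{(1)}_0\times\mathcal W^{(2)}_0\times\mathcal W^{(3)}_0$ because the $\vec A\cdot\nabla$-image of $\mathcal W^{(n)}_0$ covers exactly the admissible triples; this yields the expression \er{L2009limew03zeta71288888Contggiuuggyyyynew88789999vprop78899shtrihkkkllyhjyukjkkmmmklklklhhhhkkffgghhjjjkkkllkkhhhjj} for $\bar E_x$.

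\textbf{Integral constraint and principal obstacle.} To enforce $\int_\O\psi_\e=\int_\O\varphi$, I would add a compactly-supported correction $c_\e\,\xi(x)$ with $\xi\in C^\infty_c(\O)$, $\int_\O\xi=1$, and $c_\e:=\int_\O(\varphi-\psi_\e)\to0$, which preserves every $L^p$ and $L^\infty$ convergence and leaves the limit of the energy unchanged by the continuity of $F$. The main obstacle will be the joint verification of the structural lemmas about antisymmetric potentials: producing $\gamma_x$ on the jump plane, producing bounded periodic profiles in $\mathcal W'$ from admissible triples, and checking that the antisymmetric structure is transported through every step of the proof of Theorem \ref{vtporbound4ghkfgkhkgen}. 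Once these linear-algebraic and potential-theoretic ingredients are in place, the reduction is purely mechanical.
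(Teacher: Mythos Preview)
Your reduction to Theorem \ref{vtporbound4ghkfgkhkgen} via potentials is the right strategy and matches the paper's own route, but your choice of antisymmetric tensor potentials for $\bar m$ introduces a genuine obstruction that the paper avoids. Your piecewise-affine ansatz $H^\pm_{ij\ell}(y)=\alpha^\pm_{ij\ell}\,(y\cdot\vec\nu)$ for $\gamma_x$ requires $\sum_\ell\alpha^\pm_{ij\ell}\nu_\ell=\bar m^\pm_{ij}$ with $\alpha^\pm$ antisymmetric in $(j,\ell)$; contracting with $\nu_j$ forces $\bar m^\pm\vec\nu=0$ for \emph{each side separately}. But $\Div\bar m=0$ with $\bar m\in BV$ yields only the Rankine--Hugoniot relation $(\bar m^+-\bar m^-)\vec\nu=0$, not that each one-sided trace annihilates $\vec\nu$. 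So your $\gamma_x$ fails in general, and the same defect contaminates your Bogovski\u\i\ step for $\mathcal W'_{(x,n)}$, where you again need antisymmetric potentials whose normal contraction reproduces prescribed data not orthogonal to $\vec\nu$.

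The paper sidesteps this by first finding a fixed direction $\vec r_0$ (taken to be $\vec e_1$ after rotation) with $\vec\nu(x)\cdot\vec e_1\neq0$ for $\mathcal H^{N-1}$-a.e.\ jump point $x$ (a short measure-theoretic pigeonhole on the sets $\{\vec\nu\cdot\vec k=0\}$), and then uses \emph{directional} antiderivatives: $\Phi(x)=\int_{-\infty}^{x_1}\varphi(s,x')\,ds$ and $M(x)=\int_{-\infty}^{x_1}m'(s,x')\,ds$ for the last $N{-}1$ columns $m'$ of $\bar m$, so that $\partial_1\Phi=\varphi$, $\partial_1 M=m'$, and $-\operatorname{div}_{x'}M=m_1$ by the divergence constraint. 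The operator $\vec A$ reads off $(\nabla v,\bar m,\varphi)$ from $\nabla(v,M,\Phi)$, and its image is automatically divergence-free in the $\bar m$-slot without any antisymmetry bookkeeping. The analogous $\gamma_x$ and the $\mathcal W\to\mathcal W'$ lifting become explicit $y_1$-antiderivatives, which terminate because $\nu_1(x)\neq0$.

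A second, smaller gap: your integral correction $c_\e\xi$ needs $c_\e=O(\e)$, not merely $c_\e\to0$, because the energy carries a factor $1/\e$. The paper extracts this rate from \er{bjdfgfghljjklkjkllllkkllkkkkkkllllkkllhffhhfhftfffffhhhhkkkkkgen} and then uses that $DF=0$ at the limiting configuration (since $F\geq0$ attains its minimum there) to kill the remaining $O(1)$ term; invoking only ``continuity of $F$'' is not enough.
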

\begin{proof}
%
%
%
%
%
%
%
%
%
%
Define the Borel sets:
\begin{align}\label{gfgfyfgdgdfggh}
&\mathcal{K}_j:=\Big\{x\in \O\cap (J_{\nabla v}\cup J_{\bar m}\cup
J_{\varphi}):\;\vec\nu(x)\cdot\vec e_j\neq 0\Big\}\quad\quad\forall
j=1,2,\ldots N\,,\\
&\mathcal{A}_j:=\Bigg\{\vec k\in
\R^N:\;\;\mathcal{H}^{N-1}\Big(\big\{x\in
\mathcal{K}_j:\;\vec\nu(x)\cdot\vec
k=0\big\}\Big)>0\Bigg\}\quad\quad\forall
j=1,2,\ldots N\,,\\
&\mathcal{A}:=\Bigg\{\vec k\in
\R^N:\;\;\mathcal{H}^{N-1}\Big(\big\{x\in \O\cap (J_{\nabla v}\cup
J_{\bar m}\cup J_{\varphi}):\;\vec\nu(x)\cdot\vec
k=0\big\}\Big)>0\Bigg\}\,,
\end{align}
where $\{\vec e_1,\vec e_2,\ldots,\vec e_N\}$ is the standard
orthonormal base in $\R^N$. We will prove now that
\begin{equation}\label{vvhcggchhhgghgkkk}
\mathcal{L}^N(\mathcal{A})=0\,.
\end{equation}
Indeed, since $\O\cap (J_{\nabla v}\cup J_{\bar m}\cup
J_{\varphi})=\bigcup_{j=1}^{N}\mathcal{K}_j$, we have
$\mathcal{A}=\bigcup_{j=1}^{N}\mathcal{A}_j$. Therefore it is
sufficient to prove
\begin{equation}\label{vvhcggchhhgghgkkkhjkjh}
\mathcal{L}^N(\mathcal{A}_j)=0\quad\quad\forall j=1,2,\ldots N\,.
\end{equation}
Without any loss of generality we can prove it only in the
particular case $j=1$. Then
$$\mathcal{A}_1:=\Bigg\{(k_1,k'):=\vec k\in
\R^N:\;\;\mathcal{H}^{N-1}\bigg(\bigg\{x\in \mathcal{K}_1:\;
k_1=-\frac{\nu'(x)\cdot k'}{\nu_1(x)}\bigg\}\bigg)>0\Bigg\}\,,$$
where $\vec k=(k_1,k')\in\R\times\R^{N-1}=\R^N$ and $\vec
\nu(x)=\big(\nu_1(x),\nu'(x)\big)\in\R\times\R^{N-1}=\R^N$. On the
other hand set
$$\mathcal{B}(k'):=\Bigg\{k_1\in
\R:\;\;\mathcal{H}^{N-1}\bigg(\bigg\{x\in \mathcal{K}_1:\;
k_1=-\frac{\nu'(x)\cdot
k'}{\nu_1(x)}\bigg\}\bigg)>0\Bigg\}\quad\quad\forall
k'\in\R^{N-1}\,.$$ Thus since
$$\bigg\{x\in \mathcal{K}_1:\;
a=-\frac{\nu'(x)\cdot k'}{\nu_1(x)}\bigg\}\cap\bigg\{x\in
\mathcal{K}_1:\; b=-\frac{\nu'(x)\cdot
k'}{\nu_1(x)}\bigg\}=\emptyset\quad\text{if}\; a\neq b\,,$$ and
since the set $\mathcal{K}_1$ is $\mathcal{H}^{N-1}$ $\sigma$-finite
we obtain that for every $k'\in\R^{N-1}$ the set $\mathcal{B}(k')$
is at most countable. Therefore, for every $k'\in\R^{N-1}$ we have
$\mathcal{L}^1\big(\mathcal{B}(k')\big)=0$ and thus
$$\mathcal{L}^N(\mathcal{A}_1)=\int_{\R^{N-1}}\mathcal{L}^1\big(\mathcal{B}(k')\big)dk'=0\,,$$
So we proved \er{vvhcggchhhgghgkkkhjkjh}, which implies
\er{vvhcggchhhgghgkkk}.

 In particular, by \er{vvhcggchhhgghgkkk} we deduce that
$S^{N-1}\setminus \mathcal{A}\neq\emptyset$. So there exists $\vec
r_0\in S^{N-1}\setminus \mathcal{A}$ and we have
\begin{equation}\label{vhgvtguyiiuijjkjkk}
\mathcal{H}^{N-1}\Big(\big\{x\in \O\cap (J_{\nabla v}\cup J_{\bar
m}\cup J_{\varphi}):\;\vec\nu(x)\cdot\vec r_0=0\big\}\Big)=0\,.
\end{equation}
Without loss of generality we may assume that $\vec r_0=\vec
e_1:=(1,0,0,\ldots,0)$. Therefore from this point we assume that
\begin{equation}\label{vhgvtguyiiuijjkjkkjggjkjjh}
\mathcal{H}^{N-1}\Big(\big\{x\in \O\cap (J_{\nabla v}\cup J_{\bar
m}\cup J_{\varphi}):\;\vec\nu(x)\cdot\vec e_1=0\big\}\Big)=0\,,
\end{equation}
i.e. $\nu_1(x)\neq 0$ for $\mathcal{H}^{N-1}$-a.e. $x\in \O\cap
(J_{\nabla v}\cup J_{\bar m}\cup J_{\varphi})$. Next define
$\Phi:\R^N\to\R^m$ and $M:\R^N\to\R^{d\times (N-1)}$ by
\begin{equation}\label{vhgvtguyiiuijjkjkkjggjkjjhjkkllg}
\Phi(x):=\int_{-\infty}^{x_1}\varphi(s,x')ds\,,\quad\text{and}\quad
M(x):=\int_{-\infty}^{x_1}m'(s,x')ds\quad\quad\forall
x=(x_1,x'):=(x_1,x_2,\ldots x_N)\in\R^{N}\,,
\end{equation}
where we denote by $m_1(x):\R^N\to\R^d$ and $m'(x):\R^N\to
\R^{d\times (N-1)}$ the first column and the rest of the matrix
valued function $\bar m(x):\R^N\to\R^{d\times N}$, so that
$\big(m_1(x),m'(x)\big):=\bar m(x):\R^N\to\R^{d\times N}$. Then,
since $div_x \bar m\equiv 0$, by
\er{vhgvtguyiiuijjkjkkjggjkjjhjkkllg} we obtain
\begin{equation}\label{vhgvtguyiiuijjkjkkjggjkjjhjkkllghjjhjhhkj}
\frac{\partial\Phi}{\partial x_1}(x)=\varphi(x)\,,\quad
\frac{\partial M}{\partial x_1}(x)=m'(x)\,,\quad\text{and}\quad
-div_{x'}M(x)=m_1(x)\quad\quad\text{for a.e.}\;\;
x=(x_1,x')\in\R^{N}\,.
\end{equation}
Next for every $x\in \O\cap J_{\nabla v}\cup J_{\bar m}\cup
J_{\varphi}$ define
\begin{multline}\label{ffghghjkkkhkgmjhkjjjjjkk}
Q_x(z):=\begin{cases}\varphi^+(x)\quad\quad\text{if}\quad z\cdot\vec\nu(x)\geq 0\,,\\
\varphi^-(x)\quad\quad\text{if}\quad z\cdot\vec\nu(x)< 0\,
\end{cases}
\quad
P_x(z):=\begin{cases}\bar m^+(x)\quad\quad\text{if}\quad z\cdot\vec\nu(x)\geq 0\,,\\
\bar m^-(x)\quad\quad\text{if}\quad z\cdot\vec\nu(x)< 0\,
\end{cases}\\ \text{and}\quad H_x(z):=\begin{cases}(\nabla v)^+(x)\quad\quad\text{if}\quad z\cdot\vec\nu(x)\geq 0\,,\\
(\nabla v)^-(x)\quad\quad\text{if}\quad z\cdot\vec\nu(x)< 0\,.
\end{cases}
\end{multline}
Since $div_x \bar m\equiv 0$ and $curl(\nabla v)\equiv 0$, clearly
for $\mathcal{H}^{N-1}$-a.e. $x\in \O\cap (J_{\nabla v}\cup J_{\bar
m}\cup J_{\varphi})$ we have $div_z P(z)=0$ and $curl_z H_x(z)=0$.
In particular, clearly exists a function $\gamma_x(z):\R^N\to\R^k$
such that $\nabla_z\gamma_x(z)=H_x(z)$. Moreover, since
$\nu_1(x)\neq 0$ for $\mathcal{H}^{N-1}$-a.e. $x\in \O\cap
(J_{\nabla v}\cup J_{\bar m}\cup J_{\varphi})$, then if $\nu_1(x)>0$
we define
\begin{equation}\label{vhgvtguyiiuijjkjkkjggjkjjhjkkllgkhhhhhlhklhklhkl}
R_x(z):=\int_{-\infty}^{z_1}\Big(Q_x(s,z')-\varphi^-(x)\Big)ds\;\text{and}\;
D_x(z):=\int_{-\infty}^{z_1}\Big(P'_x(s,z')-\big\{m^-(x)\big\}'\Big)ds\;\;\forall
z=(z_1,z')\in\R^{N}\,,
\end{equation}
where we denote by $Y_1:\R^N\to\R^d$ and $Y':\R^N\to \R^{d\times
(N-1)}$ the first column and the rest of the matrix
$Y:\R^N\to\R^{d\times N}$. Then for such $x$
we will have
\begin{multline}\label{vhgvtguyiiuijjkjkkjggjkjjhjkkllghjjhjhhkjkukuuk}
\frac{\partial}{\partial z_1}
\Big(R_x(z)+z_1\varphi^-(x)\Big)=Q(z),\quad \frac{\partial}{\partial
z_1}
\bigg(D_x(z)+z_1\{m'\}^-(x)-\frac{1}{N-1}m^-_1(x)\otimes z'\bigg)=P'_x(z),\\
-div_{z'}\bigg(D_x(z)+z_1\{m'\}^-(x)-\frac{1}{N-1}m^-_1(x)\otimes
z'\bigg)=\{P_x\}_1(z),\quad \nabla_z\gamma_x(z)=H_x(z)\;\;\forall
z=(z_1,z')\in\R^{N}.
\end{multline}
If $\nu_1(x)<0$ we interchange the role of $(\varphi^-, \bar m^-)$
by $(\varphi^+, \bar m^+)$ in
\er{vhgvtguyiiuijjkjkkjggjkjjhjkkllgkhhhhhlhklhklhkl}. Thus in
general for $\mathcal{H}^{N-1}$-a.e. $x\in \O\cap (J_{\nabla v}\cup
J_{\bar m}\cup J_{\varphi})$ we have
\begin{multline}\label{vhgvtguyiiuijjkjkkjggjkjjhjkkllghjjhjhhkjkukuukjkkjk}
\frac{\partial}{\partial z_1}
\Big(R_x(z)+z_1\varphi^\pm(x)\Big)=Q(z),\quad
\frac{\partial}{\partial z_1}
\bigg(D_x(z)+z_1\{m'\}^\pm(x)-\frac{1}{N-1}m^\pm_1(x)\otimes z'\bigg)=P'_x(z),\\
-div_{z'}\bigg(D_x(z)+z_1\{m'\}^\pm(x)-\frac{1}{N-1}m^\pm_1(x)\otimes
z'\bigg)=\{P_x\}_1(z),\quad \nabla_z\gamma_x(z)=H_x(z)\;\;\forall
z=(z_1,z')\in\R^{N}.
\end{multline}
Here we take the sign "$-$" if $\nu_1(x)>0$ and the sign "$+$" if
$\nu_1(x)<0$.

 Next for every $x\in \O\cap (J_{\nabla v}\cup J_{\bar m}\cup J_{\varphi})$ fix an
arbitrary system $\{\vec k_1(x),\vec k_2(x),\ldots \vec k_N(x)\}$ of
linearly independent vectors in $\R^N$ satisfying $\vec
k_1(x)=\vec\nu(x)$ and $\vec k_j(x)\cdot\vec\nu(x)=0$ for $j\geq 2$.
Then define
\begin{multline}\label{L2009Ddef2hhhjjjj77788hhhkkkkllkjjjjkkkhhhhffggdddkkkhhhjjhjjgghghgkffhhf}
\mathcal{W}^{(1)}_1(x):=
\bigg\{ u\in C^\infty(\R^N,\R^{k}):\;\;\nabla u(y)=0\;\text{ if }\;|y\cdot\vec\nu(x)|\geq 1/2,\\
\text{ and }\;\nabla u\big(y+\vec k_j(x)\big)=\nabla u(y)\;\;\forall
j=2,3,\ldots, N\bigg\}\,,
\end{multline}
\begin{multline}\label{L2009Ddef2hhhjjjj77788hhhkkkkllkjjjjkkkhhhhffggdddkkkhhhjjhjjggh}
\mathcal{W}^{(2)}_1(x):=
\bigg\{ \xi\in C^\infty(\R^N,\R^{d\times N}):\;div_y\xi(y)\equiv 0,\;\;\xi(y)=0\;\text{ if }\;|y\cdot\vec\nu(x)|\geq 1/2,\\
\text{ and }\;\xi\big(y+\vec k_j(x)\big)=\xi (y)\;\;\forall
j=2,3,\ldots, N\bigg\}\,,
\end{multline}
\begin{multline}\label{L2009Ddef2hhhjjjj77788hhhkkkkllkjjjjkkkhhhhffggdddkkkhhhjj}
\mathcal{W}^{(3)}_1(x):=
\bigg\{ \zeta\in C^\infty(\R^N,\R^m):\;\zeta(y)=0\;\text{ if }\;|y\cdot\vec\nu(x)|\geq 1/2,\\
\text{ and }\;\zeta\big(y+\vec k_j(x)\big)=\zeta(y)\;\;\forall
j=2,3,\ldots, N\bigg\}\,.
\end{multline}
Then since $\nu_1(x)\neq 0$ for $\mathcal{H}^{N-1}$-a.e. $x\in
\O\cap (J_{\nabla v}\cup J_{\bar m}\cup J_{\varphi})$, for such
fixed $x$ and for every $\xi\in\mathcal{W}^{(2)}_1(x)$ and every
$\zeta\in\mathcal{W}^{(3)}_1(x)$ we can define
$\Xi_\zeta(y):\R^N\to\R^m$ and $\Upsilon_\xi(y):\R^N\to\R^{d\times
(N-1)}$ by
\begin{equation}\label{vhgvtguyiiuijjkjkkjggjkjjhjkkllgkkhh}
\Xi_\zeta(y):=\int_{-\infty}^{y_1}\zeta(s,y')ds\,,\quad\text{and}\quad
\Upsilon_\xi(y):=\int_{-\infty}^{x_1}\xi'(s,y')ds\quad\quad\forall
y=(y_1,y')\in\R^{N}\,,
\end{equation}
Then, since $div_y \zeta\equiv 0$, we obtain
\begin{equation}\label{vhgvtguyiiuijjkjkkjggjkjjhjkkllghjjhjhhkjkjhhkhhk}
\frac{\partial\Xi_\zeta}{\partial y_1}(y)=\zeta(y)\,,\quad
\frac{\partial \Upsilon_\xi}{\partial
y_1}(x)=\xi'(y)\,,\quad\text{and}\quad
-div_{y'}\Upsilon_\xi(y)=\xi_1(y)\quad\quad\forall
y=(y_1,y')\in\R^{N}\,.
\end{equation}
Moreover clearly by the definition for all $x$ such that
$\nu_1(x)\neq 0$ i.e. for $\mathcal{H}^{N-1}$-a.e. $x\in \O\cap
J_{\nabla v}\cup J_{\bar m}\cup J_{\varphi}$ we have $\Xi_\zeta\in
C^\infty(\R^N,\R^m)\cap L^\infty(\R^N,\R^m)\cap Lip(\R^N,\R^m)$,
$\Upsilon_\xi\in C^\infty(\R^N,\R^{d\times(N-1)})\cap
L^\infty(\R^N,\R^{d\times(N-1)})\cap Lip(\R^N,\R^{d\times(N-1)})$
and $\Xi_\zeta\big(y+\vec k_j(x)\big)=\Xi_\zeta(y) \;\;\forall
j=2,3,\ldots, N$, $\Upsilon_\xi\big(y+\vec
k_j(x)\big)=\Upsilon_\xi(y) \;\;\forall j=2,3,\ldots, N$. On the
other hand, for every $u(y)\in\mathcal{W}^{(1)}_1(x)$ we clearly
have $u\in C^\infty(\R^N,\R^k)\cap L^\infty(\R^N,\R^k)\cap
Lip(\R^N,\R^k)$ and $u\big(y+\vec k_j(x)\big)=u(y) \;\;\forall
j=2,3,\ldots, N$. So
\begin{multline}\label{hhdhgdkgkdgdhghdh}
\Xi_\zeta\in C^\infty(\R^N,\R^m)\cap L^\infty\cap
Lip\;\;\;\;\text{and}\;\; \Xi_\zeta\big(y+\vec
k_j(x)\big)=\Xi_\zeta(y) \;\;\forall j=2,3,\ldots, N\,,\\
\Upsilon_\xi\in C^\infty(\R^N,\R^{d\times(N-1)})\cap L^\infty\cap
Lip\;\;\;\;\text{and}\;\;\Upsilon_\xi\big(y+\vec
k_j(x)\big)=\Upsilon_\xi(y) \;\;\forall j=2,\ldots, N\,,\\
u\in C^\infty(\R^N,\R^k)\cap L^\infty\cap
Lip\;\;\;\;\text{and}\;\;u\big(y+\vec k_j(x)\big)=u(y) \;\;\forall
j=2,3,\ldots, N\,.
\end{multline}
Then using
\er{vhgvtguyiiuijjkjkkjggjkjjhjkkllghjjhjhhkjkukuukjkkjk},
\er{vhgvtguyiiuijjkjkkjggjkjjhjkkllghjjhjhhkjkjhhkhhk},
\er{hhdhgdkgkdgdhghdh} and Theorem \ref{vtporbound4ghkfgkhk} we
deduce that for every $\delta>0$  there exist sequences
$\{v_\e\}_{0<\e<1}\subset C^\infty(\R^N,\R^k)$,
$\{M_\e\}_{0<\e<1}\subset C^\infty(\R^N,\R^{d\times (N-1)})$ and
$\{\Psi_\e\}_{0<\e<1}\subset C^\infty(\R^N,\R^{m})$ such that if we
denote
\begin{equation}\label{vhgvtguyiiuijjkjkkjggjkjjhjkkllghjjhjhhkjyjyju}
\frac{\partial\Psi_\e}{\partial x_1}(x)=\bar\psi_\e(x)\,,\quad
\frac{\partial M_\e}{\partial
x_1}(x)=m'_\e(x)\,,\quad\text{and}\quad
-div_{x'}M_\e(x)=(m_\e)_1(x)\quad\quad\forall x=(x_1,x')\in\R^{N}\,.
\end{equation}
then we will have
\begin{multline}
\label{L2009limew03zeta71288888Contggiuuggyyyynew88789999vprop78899shtrihkkklljkhkhgghhhjhhhjkkkhhhjjhjhlhjggjk}
0\leq\lim\limits_{\e\to 0}\int\limits_{\O}\frac{1}{\e}
F\bigg(\e\nabla^2v_{\e}(x),\,\e\nabla
m_\e(x),\,\e\nabla\bar\psi_\e(x),\,\nabla
v_{\e}(x),\,m_\e(x),\,\bar\psi_\e(x),\,
v_{\e}(x),\,f(x)\bigg)\,dx\\-\int\limits_{\O\cap (J_{\nabla v}\cup
J_{\bar m}\cup J_{\varphi})}\Bigg( \inf\bigg\{\bar
E_x\big(\sigma(\cdot),\theta(\cdot),\gamma(\cdot),L\big):\;\;L>0,\\
\;\sigma\in \mathcal{W}^{(1)}_0(x,\vec k_1,\ldots,\vec
k_N),\,\theta\in \mathcal{W}^{(2)}_0(x,\vec k_1,\ldots,\vec
k_N),\,\gamma\in \mathcal{W}^{(3)}_0(x,\vec k_1,\ldots,\vec
k_N)\bigg\} \Bigg)\,d\mathcal{H}^{N-1}(x)<\delta\,,
\end{multline}
and $\lim_{\e\to0^+}v_\e=v$ in $W^{1,p}(\R^N,\R^k)$,
$\lim_{\e\to0^+}(v_\e-v)/\e=0$ in $L^{p}$, $\lim_{\e\to0^+}m_\e=\bar
m$ in $L^{p}$, $\lim_{\e\to 0^+}\bar\psi_\e=\varphi$ in $L^{p}$,
$\lim_{\e\to0^+} \e\nabla^2 v_\e=0$ in $L^{p}$, $\lim_{\e\to0^+}
\e\nabla m_\e=0$ in $L^{p}$, $\lim_{\e\to0^+} \e\nabla\bar\psi_\e=0$
in $L^{p}$ for every $p\geq 1$. Moreover, $\nabla v_\e$, $\e\nabla^2
v_\e$, $v_\e$, $m_\e$, $\e\nabla m_\e$, $\bar\psi_\e$ and
$\e\nabla\bar\psi_\e$
will be bounded in $L^\infty$ sequences, for some compact $\bar
K\subset\subset\O$ we will have $v_\e(x)=v^{(0)}_\e(x)$,
$m_\e(x)=m^{(0)}_\e(x)$ and $\bar\psi_\e(x)=\psi^{(0)}_\e(x)$ for
every $x\in\R^N\setminus \bar K$ and we will have
\begin{equation}\label{bjdfgfghljjklkjkllllkkllkkkkkkllllkkllhffhhfhftfffffhhhhkkkkkhhhjj}
\limsup\limits_{\e\to
0^+}\Bigg|\frac{1}{\e}\bigg(\int_\O\bar\psi_\e(x)\,dx-\int_\O
\varphi(x)\,dx\bigg)\Bigg|<+\infty\,.
\end{equation}

 Finally we will slightly modify the sequence $\bar\psi_\e$, so that
all the properties, presented above, will preserve and moreover the
modified sequence $\psi_\e$ will satisfy the additional constraint
$\int_\O\psi_\e=\int_\O\varphi$. For this purpose we define
\begin{equation}\label{bjdfgfghljjklkjkllllkkllkkkkkkllllkkllhffhhfhftfffffhhhhkkkkkhhhjjjjkjkjkjjk}
d_\e:=\int_\O \varphi(x)\,dx-\int_\O\bar\psi_\e(x)\,dx\,.
\end{equation}
Then by
\er{bjdfgfghljjklkjkllllkkllkkkkkkllllkkllhffhhfhftfffffhhhhkkkkkhhhjj}
there exists a constant $C_0>0$, independent on $\e$ such that
\begin{equation}\label{bjdfgfghljjklkjkllllkkllkkkkkkllllkkllhffhhfhftfffffhhhhkkkkkhhhjjjjkjkjkjjkbhhhhjjk}
|d_\e|\le C_0\e\quad\forall \e\in (0,1)\,.
\end{equation}
Now fix some smooth function $\lambda(x)\in C_c^\infty(\O,\R)$, such
that $\int_\O\lambda(x)dx=1$, and a compact set $K\subset\subset\O$
such that $\supp\lambda\subset K$ and $\bar K\subset K$. Then define
$\psi_\e(x)\in C^\infty(\R^N,\R^m)$ by
\begin{equation}\label{bjdfgfghljjklkjkllllkkllkkkkkkllllkkllhffhhfhftfffffhhhhkkkkkhhhjjjjkjkjkjjkbhhhhjjkkllkklkkkkk}
\psi_\e(x):=\bar\psi_\e(x)+\lambda(x)d_\e\quad\forall
x\in\R^N\;\forall\e\in(0,1)\,.
\end{equation}
Thus clearly by
\er{bjdfgfghljjklkjkllllkkllkkkkkkllllkkllhffhhfhftfffffhhhhkkkkkhhhjjjjkjkjkjjk}
we have
\begin{equation}\label{bjdfgfghljjklkjkllllkkllkkkkkkllllkkllhffhhfhftfffffhhhhkkkkkhhhjjjjkjkjkjjkjjjjjjjkkk}
\int_\O\psi_\e(x)\,dx=\int_\O \varphi(x)\,dx\,.
\end{equation}
Moreover, clearly by the definition,
$\lim_{\e\to0^+}\psi_\e=\varphi$ in $L^{p}$ and $\lim_{\e\to0^+}
\e\nabla\psi_\e=0$ in $L^{p}$ for every $p\geq 1$, $\psi_\e$ and
$\e\nabla\psi_\e$ are bounded in $L^\infty$ sequences and
$\psi_\e(x)=\psi^{(0)}_\e(x)$ for every $x\in\R^N\setminus K$. Thus
for the final conclusion it is sufficient to prove
\begin{multline}
\label{L2009limew03zeta71288888Contggiuuggyyyynew88789999vprop78899shtrihkkklljkhkhgghhhjhhhjkkkhhhjjhjhlhjggjkghjhhlhlkkk}
\lim\limits_{\e\to 0}\Bigg\{\int\limits_{\O}\frac{1}{\e}
F\bigg(\e\nabla^2v_{\e},\,\e\nabla m_\e,\,\e\nabla\bar\psi_\e+\e
d_\e\otimes\nabla\lambda,\,\nabla
v_{\e},\,m_\e,\,\bar\psi_\e+\lambda d_\e,\,
v_{\e},\,f\bigg)\,dx-\\
\int\limits_{\O}\frac{1}{\e} F\bigg(\e\nabla^2v_{\e},\,\e\nabla
m_\e,\,\e\nabla\bar\psi_\e,\,\nabla v_{\e},\,m_\e,\,\bar\psi_\e,\,
v_{\e},\,f\bigg)\,dx\Bigg\}=0\,.
\end{multline}
Indeed
\begin{multline}
\label{L2009limew03zeta71288888Contggiuuggyyyynew88789999vprop78899shtrihkkklljkhkhgghhhjhhhjkkkhhhjjhjhlhjggjkghjhhlhlkkkphhhkk}
\int\limits_{\O}\frac{1}{\e} F\bigg(\e\nabla^2v_{\e},\,\e\nabla
m_\e,\,\e\nabla\bar\psi_\e+\e d_\e\otimes\nabla\lambda,\,\nabla
v_{\e},\,m_\e,\,\bar\psi_\e+\lambda d_\e,\,
v_{\e},\,f\bigg)\,dx-\\
\int\limits_{\O}\frac{1}{\e} F\bigg(\e\nabla^2v_{\e},\,\e\nabla
m_\e,\,\e\nabla\bar\psi_\e,\,\nabla v_{\e},\,m_\e,\,\bar\psi_\e,\,
v_{\e},\,f\bigg)\,dx=\\
\int_0^1\int_\O \Big(d_\e\otimes\nabla\lambda\Big):D_1
F\bigg(\e\nabla^2v_{\e},\,\e\nabla m_\e,\,\e\nabla\bar\psi_\e+t\e
d_\e\otimes\nabla\lambda,\,\nabla
v_{\e},\,m_\e,\,\bar\psi_\e+t\lambda d_\e,\, v_{\e},\,f\bigg)
dxdt\\+\int_0^1\int_\O \frac{d_\e}{\e}\cdot \nabla_2
F\bigg(\e\nabla^2v_{\e},\,\e\nabla m_\e,\,\e\nabla\bar\psi_\e+t\e
d_\e\otimes\nabla\lambda,\,\nabla
v_{\e},\,m_\e,\,\bar\psi_\e+t\lambda d_\e,\,
v_{\e},\,f\bigg)\lambda\, dxdt\,.
\end{multline}
Here $D_1 F$ is the gradient of $F$ on the argument in the place of
$\nabla\bar\psi_\e$ and $\nabla_2 F$ is the gradient of $F$ on the
argument in the place of $\bar\psi_\e$. On the other hand
$F(0,0,0,\nabla v,\bar m,\varphi,v,f)=0$ a.e. and therefore, since
$F\geq 0$ we also have $D F(0,0,0,\nabla v,\bar m,\varphi,v,f)=0$.
Thus, while being uniformly bounded,
$$D F\Big(\e\nabla^2v_{\e},\,\e\nabla
m_\e,\,\e\nabla\bar\psi_\e+t\e d_\e\otimes\nabla\lambda,\,\nabla
v_{\e},\,m_\e,\,\bar\psi_\e+t\lambda d_\e,\, v_{\e},\,f\Big)\to
0\quad\text{a.e. in}\;\O\,.$$ Therefore, since $\lambda$ has a
compact support and since by
\er{bjdfgfghljjklkjkllllkkllkkkkkkllllkkllhffhhfhftfffffhhhhkkkkkhhhjjjjkjkjkjjkbhhhhjjk}
$d_\e/\e$ is bounded, we deduce that the r.h.s. of
\er{L2009limew03zeta71288888Contggiuuggyyyynew88789999vprop78899shtrihkkklljkhkhgghhhjhhhjkkkhhhjjhjhlhjggjkghjhhlhlkkkphhhkk}
goes to zero as $\e\to 0^+$. So we proved
\er{L2009limew03zeta71288888Contggiuuggyyyynew88789999vprop78899shtrihkkklljkhkhgghhhjhhhjkkkhhhjjhjhlhjggjkghjhhlhlkkk}.
\end{proof}
By the same method, using Theorem \ref{vtporbound4ghkfgkhkgen}, we
can prove the following more general Theorem.
%
%
%
%
%
%
%
%
%
%
%
%
\begin{theorem}\label{vtporbound4ghkfgkhkhhhjjklkjjhliuik}
Let $\Omega\subset\R^N$ be an open set.
Furthermore,
let $F$ be a $C^1$ function defined on
$$
\big\{\R^{k\times N^{n+1}}\times\R^{d\times
N^{n+1}}\times\R^{m\times
N^n}\big\}\times\ldots\times\big\{\R^{k\times N\times
N}\times\R^{d\times N\times N}\times\R^{m\times
N}\big\}\times\big\{\R^{k\times N}\times\R^{d\times
N}\times\R^{m}\big\}\times \R^k\times\R^q,
$$
taking values in $\R$ and satisfying $F\geq 0$. Let $f\in
BV_{loc}(\R^N,\R^q)\cap L^\infty$, $v\in Lip(\R^N,\R^k)\cap L^1\cap
L^\infty$, $\bar m\in BV(\R^N,\R^{d\times N})\cap L^\infty$ and
$\varphi\in BV(\R^N,\R^{m})\cap L^\infty$ be such that $\nabla v\in
BV(\R^N,\R^{k\times N})$,
$\|D(\nabla v)\|(\partial\Omega)=0$, $\|D \bar
m\|(\partial\Omega)=0$, $\|D \varphi\|(\partial\Omega)=0$, $div_x
\bar m(x)=0$ a.e.~in $\R^N$ and
$$F\Big(0,0,\ldots,0,\nabla v,\bar m,\varphi,v,f\Big)=0
\quad\text{a.e.~in}\; \Omega\,.$$
Then, for $\eta\in \mathcal{V}_0$,
for every $\delta>0$  there exist sequences
$\{v_\e\}_{0<\e<1}\subset C^\infty(\R^N,\R^k)$,
$\{m_\e\}_{0<\e<1}\subset C^\infty(\R^N,\R^{d\times N})$ and
$\{\psi_\e\}_{0<\e<1}\subset C^\infty(\R^N,\R^{m})$
such that $div_x m_\e(x)\equiv 0$ in $\R^N$,
$\int_\O\psi_\e(x)\,dx=\int_\O \varphi(x)\,dx$,
$\lim_{\e\to0^+}v_\e=v$ in $W^{1,p}$, $\lim_{\e\to0^+}(v_\e-v)/\e=0$
in $L^{p}$, $\lim_{\e\to0^+}m_\e=m$ in $L^{p}$,
$\lim_{\e\to0^+}\psi_\e=\varphi$ in $L^{p}$, $\lim_{\e\to0^+}
\e^j\nabla^{1+j} v_\e=0$ in $L^{p}$, $\lim_{\e\to0^+} \e^j\nabla^j
m_\e=0$ in $L^{p}$, $\lim_{\e\to0^+} \e^j\nabla^j\psi_\e=0$ in
$L^{p}$ for every $p\geq 1$ and any $j\in\{1,\ldots,n\}$ and
\begin{multline}
\label{L2009limew03zeta71288888Contggiuuggyyyynew88789999vprop78899shtrihkkklljkhkhgghhhjhhhjkkkhhhjjjkkj}
0\leq\lim\limits_{\e\to 0}\int\limits_{\O}\frac{1}{\e}\times\\
F\bigg( \big\{\e^n\nabla^{n+1}v_{\e},\,\e^n\nabla^n
m_\e,\,\e^n\nabla^n\psi_\e\big\},\,\ldots\,,\big\{\e\nabla^2v_{\e},\,\e\nabla
m_\e,\,\e\nabla\psi_\e\big\},\,\big\{\nabla
v_{\e},\,m_\e,\,\psi_\e\big\},\,
v_{\e},\,f\bigg)\,dx\\-\int\limits_{\O\cap (J_{\nabla v}\cup J_{\bar
m}\cup J_{\varphi})}\Bigg( \inf\bigg\{\bar
E^{(n)}_x\big(\sigma(\cdot),\theta(\cdot),\gamma(\cdot),L\big):\;\;L>0,\\
\;\sigma\in \mathcal{W}^{(1)}_n(x,\vec k_1,\ldots,\vec
k_N),\,\theta\in \mathcal{W}^{(2)}_n(x,\vec k_1,\ldots,\vec
k_N),\,\gamma\in \mathcal{W}^{(3)}_n(x,\vec k_1,\ldots,\vec
k_N)\bigg\} \Bigg)\,d\mathcal{H}^{N-1}(x)<\delta\,,
\end{multline}
where
\begin{multline}
\label{L2009limew03zeta71288888Contggiuuggyyyynew88789999vprop78899shtrihkkkllyhjyukjkkmmmklklklhhhhkkffgghhjjjkkkllkkhhhjjjlkjjjlk}
\bar E^{(n)}_x\big(\sigma(\cdot),\theta(\cdot),\gamma(\cdot),L\big):=\\
\int\limits_{I^+_{\vec k_1,,\ldots,\vec
k_N}}\frac{1}{L}F\Bigg(\Big\{L^n\nabla^{n+1}\sigma(y),L^n\nabla^n\theta(y),
L^n\nabla^n\gamma(y)\Big\},\ldots,\Big\{\nabla\sigma(y),\theta(y),\gamma(y)\Big\},v(x),f^+(x) \Bigg)dy+\\
\int\limits_{I^-_{\vec k_1,\ldots,\vec
k_N}}\frac{1}{L}F\Bigg(\Big\{L^n\nabla^{n+1}\sigma(y),
L^n\nabla^n\theta(y), L^n\nabla^n\gamma(y)\Big\},\ldots,
\Big\{\nabla\sigma(y),\theta(y),\gamma(y)\Big\},v(x),f^-(x)
\Bigg)dy\,,
\end{multline}
\begin{multline}\label{L2009Ddef2hhhjjjj77788hhhkkkkllkjjjjkkkhhhhffggdddkkkgjhikhhhjjnkmklklk}
\mathcal{W}^{(1)}_n\big(x,\vec k_1(x),\vec k_2(x),\ldots \vec
k_N(x)\big):=
\bigg\{u\in C^{n+1}(\R^N,\R^k):\;\;\nabla u(y)=(\nabla
v)^-(x)\;\text{ if }\;y\cdot\vec\nu(x)\leq-1/2,\\
\nabla u(y)=(\nabla v)^+(x)\;\text{ if }\; y\cdot\vec\nu(x)\geq
1/2\;\text{ and }\;\nabla u\big(y+\vec k_j(x)\big)=\nabla
u(y)\;\;\forall j=2,3,\ldots, N\bigg\}\,,
\end{multline}
\begin{multline}\label{L2009Ddef2hhhjjjj77788hhhkkkkllkjjjjkkkhhhhffggdddkkkgjhikhhhjjhhhhklklklk}
\mathcal{W}^{(2)}_n\big(x,\vec k_1(x),\vec k_2(x),\ldots \vec
k_N(x)\big):=\\
\bigg\{\xi\in C^n(\R^N,\R^{d\times N}):\;\;div_y \xi(y)\equiv
0,\;\;\xi(y)=\bar m^-(x)\;\text{ if }\;y\cdot\vec\nu(x)\leq-1/2,\\
\xi(y)=\bar m^+(x)\;\text{ if }\; y\cdot\vec\nu(x)\geq 1/2\;\text{
and }\;\xi\big(y+\vec k_j(x)\big)=\xi(y)\;\;\forall j=2,3,\ldots,
N\bigg\}\,,
\end{multline}
\begin{multline}\label{L2009Ddef2hhhjjjj77788hhhkkkkllkjjjjkkkhhhhffggdddkkkgjhikhhhjjddddkkjkjlkjlk}
\mathcal{W}^{(3)}_n\big(x,\vec k_1(x),\vec k_2(x),\ldots \vec
k_N(x)\big):=
\bigg\{\zeta\in C^n(\R^N,\R^m):\;\;\zeta(y)=\varphi^-(x)\;\text{ if }\;y\cdot\vec\nu(x)\leq-1/2,\\
\zeta(y)=\varphi^+(x)\;\text{ if }\; y\cdot\vec\nu(x)\geq
1/2\;\text{ and }\;\zeta\big(y+\vec k_j(x)\big)=\zeta(y)\;\;\forall
j=2,3,\ldots, N\bigg\}\,,
\end{multline}
\begin{equation}\label{cubidepgghkkkkllllllljjjkkkkkjjjhhhhlkkkffffggggdddhhhjjkljjkljljlkj}
\begin{split}
I^-_{\vec k_1,\vec k_2,\ldots,\vec k_N}:=
\Big\{y\in\R^N:\;-1/2<y\cdot\vec k_1<0,\;\; |y\cdot\vec
k_j|<1/2\quad\forall
j=2,3,\ldots, N\Big\}\,,\\
I^+_{\vec k_1,\vec k_2,\ldots,\vec k_N}:=
\Big\{y\in\R^N:\;0<y\cdot\vec k_1<1/2,\;\; |y\cdot \vec
k_j|<1/2\quad\forall j=2,3,\ldots, N\Big\}\,,
\end{split}
\end{equation}
$\{\vec k_1(x),\vec k_2(x),\ldots \vec k_N(x)\}$ is an orthonormal
base
in $\R^N$ satisfying $\vec
k_1(x)=\vec\nu(x)$
and we assume that the orientations of $J_{\nabla v}$, $J_{\bar m}$,
$J_\varphi$ and $J_f$ coincides $\mathcal{H}^{N-1}$ a.e. and given
by the vector $\vec\nu(x)$. Moreover, $\nabla v_\e$,
$\e^j\nabla^{j+1} v_\e$, $v_\e$, $m_\e$, $\e^j\nabla^j m_\e$,
$\psi_\e$ and $\e^j\nabla^j\psi_\e$
are bounded in $L^\infty$ sequences for every $j\in\{1,\ldots,n\}$,
and there exists a compact $K=K_\delta\subset\subset\O$ such that
$v_\e(x)=v^{(0)}_\e(x)$, $m_\e(x)=m^{(0)}_\e(x)$ and
$\psi_\e(x)=\psi^{(0)}_\e(x)$ for every $0<\e<1$ and every
$x\in\R^N\setminus K$, where
\begin{multline*}
v^{(0)}_\e(x)=\frac{1}{\e^N}\int_{\R^N}\eta\Big(\frac{y-x}{\e}\Big)\,v(y)\,dy\,,\quad
m^{(0)}_\e(x)=\frac{1}{\e^N}\int_{\R^N}\eta\Big(\frac{y-x}{\e}\Big)\,\bar
m(y)\,dy\,,
\\
\quad\psi^{(0)}_\e(x)=\frac{1}{\e^N}\int_{\R^N}\eta\Big(\frac{y-x}{\e}\Big)\,\varphi(y)\,dy\,.
\end{multline*}
\end{theorem}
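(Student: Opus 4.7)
\medskip

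The plan is to mimic exactly the reduction used in the proof of Theorem \ref{vtporbound4ghkfgkhkhhhjj}, then invoke the higher-order statement Theorem \ref{vtporbound4ghkfgkhkgen} in place of Theorem \ref{vtporbound4ghkfgkhk}. First, by the same genericity argument with the Borel set $\mathcal{A}$ (whose $\mathcal{L}^N$-measure is zero because $\{x\in\O\cap(J_{\nabla v}\cup J_{\bar m}\cup J_\varphi):\vec\nu(x)\cdot\vec k=0\}$ has $\mathcal{H}^{N-1}$-measure zero for a.e.\ $\vec k\in S^{N-1}$), I can assume after a rotation that $\nu_1(x)\neq 0$ for $\mathcal{H}^{N-1}$-a.e.\ jump point $x$. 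Then I define the $x_1$-antiderivatives $\Phi(x):=\int_{-\infty}^{x_1}\varphi(s,x')\,ds$ and $M(x):=\int_{-\infty}^{x_1}m'(s,x')\,ds$; by $\div\bar m=0$ these satisfy $\partial_{x_1}\Phi=\varphi$, $\partial_{x_1}M=m'$ and $-\div_{x'}M=m_1$.

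Next I combine $(v,M,\Phi)$ into one distribution $V\in\mathcal{D}'(\R^N,\R^{k+d(N-1)+m})$ and choose linear operators $\vec A\in\mathcal{L}(\R^{(k+d(N-1)+m)\times N};\R^{kN+dN+m})$ and $\vec B\in\mathcal{L}(\R^{k+d(N-1)+m};\R^k)$ so that $\vec A\cdot\nabla V=(\nabla v,\bar m,\varphi)$ and $\vec B\cdot V=v$. With this identification the energy in the theorem becomes precisely the energy controlled by Theorem \ref{vtporbound4ghkfgkhkgen}: $\e^j\nabla^j\{\vec A\cdot\nabla V\}$ for $0\leq j\leq n$ encodes $\e^j\nabla^{j+1}v$, $\e^j\nabla^j\bar m$, $\e^j\nabla^j\varphi$, and $\vec B\cdot V=v$. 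The hypotheses of Theorem \ref{vtporbound4ghkfgkhkgen} on the existence of $\gamma_x$ (formula \eqref{hhjhhkhkhjhghfhlllkkkjjjhhhgffddddgen}) and on passing from $\mathcal{W}_{(x,n)}$ to $\mathcal{W}'_{(x,n)}$ are verified by exactly the same antiderivation trick: given $\xi\in\mathcal{W}^{(2)}_n$ and $\zeta\in\mathcal{W}^{(3)}_n$ for the target problem, form $\Upsilon_\xi:=\int_{-\infty}^{y_1}\xi'(s,y')\,ds$ and $\Xi_\zeta:=\int_{-\infty}^{y_1}\zeta(s,y')\,ds$; since $\nu_1(x)\neq 0$ these are bounded, Lipschitz, $C^n$ and periodic in the transverse directions, giving the required preimage in $\mathcal{W}'_{(x,n)}$.

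Applying Theorem \ref{vtporbound4ghkfgkhkgen} then furnishes sequences $V_\e\in C^\infty$ from which I read off $v_\e$, an intermediate $\bar\psi_\e$ and $m_\e$ (with $\div m_\e\equiv 0$ because it is forced by the structure of $\vec A\cdot\nabla V_\e$), all the required $L^p$ convergence of $\e^j\nabla^j$ quantities, $L^\infty$ bounds, and the coincidence with the mollification $V^{(0)}_\e$ outside some compact $K\subset\subset\O$. The only remaining discrepancy is that $\int_\O\bar\psi_\e$ might not equal $\int_\O\varphi$; this is repaired exactly as in the proof of Theorem \ref{vtporbound4ghkfgkhkhhhjj} by adding $\lambda(x)d_\e$ with $d_\e:=\int_\O(\varphi-\bar\psi_\e)\,dx$ and $\lambda\in C^\infty_c(\O)$ of integral one, since Theorem \ref{vtporbound4ghkfgkhkgen} gives $|d_\e|\leq C_0\e$; a Taylor expansion of $F$ together with $F(0,\ldots,0,\nabla v,\bar m,\varphi,v,f)=0$ (hence $DF=0$ at that argument) shows the perturbation is infinitesimal.

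The step I expect to be the main obstacle is the matching of the two energy formulations, specifically verifying that the inner integral appearing in \eqref{L2009limew03zeta71288888Contggiuuggyyyynew88789999vprop78899shtrihkkkllyhjyukjkkmmmklklklhhhhkkffgghhjjjkkkllkkgen} for $E_{x,n}$ translates exactly into the two-piece integral for $\bar E^{(n)}_x$ once one computes all orders of derivatives $\nabla^j\{\vec A\cdot\nabla V\}$ in terms of $(\nabla^{j+1}\sigma,\nabla^j\theta,\nabla^j\gamma)$. This requires a careful bookkeeping of the derivatives of $\Upsilon_\xi$ and $\Xi_\zeta$ up to order $n$ and checking that no additional tangential-derivative coupling is introduced; the orthonormality of $\{\vec k_j(x)\}$ and the fact that $\vec\nu(x)=\vec k_1(x)$ should make the two parametrizations of the unit cube compatible, but getting the constants and the $L^j\nabla^j$ scalings to line up rigidly is the one place where a nontrivial verification is needed.
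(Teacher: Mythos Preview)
Your proposal is correct and follows exactly the route the paper indicates: the paper states only that Theorem~\ref{vtporbound4ghkfgkhkhhhjjklkjjhliuik} is proved ``by the same method'' as Theorem~\ref{vtporbound4ghkfgkhkhhhjj}, replacing the appeal to Theorem~\ref{vtporbound4ghkfgkhk} by Theorem~\ref{vtporbound4ghkfgkhkgen}, and your write-up carries this out step for step (generic direction so that $\nu_1\neq 0$, $x_1$-antiderivatives $\Phi,M$, packaging into a single distribution with suitable $\vec A,\vec B$, verification of the $\gamma_x$ and $\mathcal{W}'_{(x,n)}$ hypotheses via $\Xi_\zeta,\Upsilon_\xi$, and the final $\lambda d_\e$ correction).

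Your stated ``main obstacle'' is not an obstacle at all: once $\vec A\cdot\nabla V=(\nabla v,\bar m,\varphi)$ by construction, one has $\nabla^j\{\vec A\cdot\nabla V\}=(\nabla^{j+1}v,\nabla^j\bar m,\nabla^j\varphi)$ tautologically, and passing from the $I_N$-integral in $E_{x,n}$ to the $I^\pm_{\vec k_1,\ldots,\vec k_N}$-integrals in $\bar E^{(n)}_x$ is just the isometric change of variables $y=S_x(s)$ (Jacobian $1$ since $\{\vec k_j\}$ is orthonormal) together with the split $s_1\gtrless 0$ encoded by $\zeta(s_1,f^+,f^-)$.
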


\appendix
\section{Appendix: Notations and basic results about $BV$-functions}
\label{sec:pre}
\begin{itemize}
\item For given a real topological linear space $X$ we denote by $X^*$ the dual space (the space of continuous linear functionals from $X$ to $\R$).
\item For given $h\in X$ and $x^*\in X^*$ we denote by $\big<h,x^*\big>_{X\times X^*}$ the value in $\R$ of the functional $x^*$ on the vector $h$.
\item For given two normed linear spaces $X$ and $Y$ we denote by $\mathcal{L}(X;Y)$ the linear space of continuous (bounded) linear operators from $X$ to $Y$.
\item For given $A\in\mathcal{L}(X;Y)$  and $h\in X$ we denote by $A\cdot h$ the value in $Y$ of the operator $A$ on the vector $h$.
\item For given two reflexive Banach spaces $X,Y$ and
$S\in\mathcal{L}(X;Y)$ we denote by $S^*\in \mathcal{L}(Y^*;X^*)$
the corresponding adjoint operator, which satisfies
\begin{equation*}\big<x,S^*\cdot y^*\big>_{X\times X^*}:=\big<S\cdot
x,y^*\big>_{Y\times Y^*}\quad\quad\text{for every}\; y^*\in
Y^*\;\text{and}\;x\in X\,.
\end{equation*}
\item Given open set $G\subset\R^N$ we denote by
$\mathcal{D}(G,\R^d)$  the real topological linear space of
compactly supported $\R^d$-valued test functions i.e.
$C^\infty_c(G,\R^d)$ with the usual topology.
\item
We denote $\mathcal{D}'(G,\R^d):=\big\{\mathcal{D}(G,\R^d)\big\}^*$
(the space of $\R^d$ valued distributions in $G$).
\item
Given $h\in\mathcal{D}'(G,\R^d)$ and $\delta\in\mathcal{D}(G,\R^d)$
we denote $<\delta,h>:=\big<\delta,h\big>_{\mathcal{D}(G,\R^d)\times
\mathcal{D}'(G,\R^d)}$ i.e. the value in $\R$ of the distribution
$h$ on the test function $\delta$.
\item
Given a linear operator $\vec A\in\mathcal{L}(\R^d;\R^k)$ and a
distribution $h\in\mathcal{D}'(G,\R^d)$ we denote by $\vec A\cdot h$
the distribution in $\mathcal{D}'(G,\R^k)$ defined by
\begin{equation*}
<\delta,\vec A \cdot h>:=<\vec A^*\cdot
\delta,h>\quad\quad\forall\delta\in\mathcal{D}(G,\R^k).
\end{equation*}
\item
Given $h\in\mathcal{D}'(G,\R^d)$ and $\delta\in\mathcal{D}(G,\R)$ by
$<\delta,h>$ we denote the vector in $\R^d$ which satisfy
$<\delta,h>\cdot \vec e:=<\delta\vec e,h>$ for every $\vec
e\in\R^d$.
\item
For a $p\times q$ matrix $A$ with $ij$-th entry $a_{ij}$ we denote
by $|A|=\bigl(\Sigma_{i=1}^{p}\Sigma_{j=1}^{q}a_{ij}^2\bigr)^{1/2}$
the Frobenius norm of $A$.

\item For two matrices $A,B\in\R^{p\times q}$  with $ij$-th entries
$a_{ij}$ and $b_{ij}$ respectively, we write\\
$A:B\,:=\,\sum\limits_{i=1}^{p}\sum\limits_{j=1}^{q}a_{ij}b_{ij}$.

\item For a $p\times q$ matrix $A$ with
$ij$-th entry $a_{ij}$ and for a $q\times d$ matrix $B$ with $ij$-th
entry $b_{ij}$ we denote by $AB:=A\cdot B$ their product, i.e. the
$p\times d$ matrix, with $ij$-th entry
$\sum\limits_{k=1}^{q}a_{ik}b_{kj}$.
\item We identify
a $\vec u=(u_1,\ldots,u_q)\in\R^q$ with the $q\times 1$ matrix
having $i1$-th entry $u_i$, so that for a $p\times q$ matrix $A$
with $ij$-th entry $a_{ij}$ and for $\vec
v=(v_1,v_2,\ldots,v_q)\in\R^q$ we denote by $A\,\vec v :=A\cdot\vec
v$ the $p$-dimensional vector $\vec u=(u_1,\ldots,u_p)\in\R^p$,
given by $u_i=\sum\limits_{k=1}^{q}a_{ik}v_k$ for every $1\leq i\leq
p$.
\item As usual $A^T$ denotes the transpose of the matrix $A$.
\item For
$\vec u=(u_1,\ldots,u_p)\in\R^p$ and $\vec
v=(v_1,\ldots,v_p)\in\R^p$ we denote by $\vec u\vec v:=\vec
u\cdot\vec v:=\sum\limits_{k=1}^{p}u_k v_k$ the standard scalar
product. We also note that $\vec u\vec v=\vec u^T\vec v=\vec v^T\vec
u$ as products of matrices.
\item For $\vec
u=(u_1,\ldots,u_p)\in\R^p$ and $\vec v=(v_1,\ldots,v_q)\in\R^q$ we
denote by $\vec u\otimes\vec v$ the $p\times q$ matrix with $ij$-th
entry $u_i v_j$ (i.e. $\vec u\otimes\vec v=\vec u\,\vec v^T$ as
product of matrices).
\item For
any $p\times q$ matrix $A$ with $ij$-th entry $a_{ij}$ and $\vec
v=(v_1,v_2,\ldots,v_d)\in\R^d$ we denote by $A\otimes\vec v$ the
$p\times q\times d$ tensor with $ijk$-th entry $a_{ij}v_k$.
\item
Given a vector valued function
$f(x)=\big(f_1(x),\ldots,f_k(x)\big):\O\to\R^k$ ($\O\subset\R^N$) we
denote by $Df$ or by $\nabla_x f$ the $k\times N$ matrix with
$ij$-th entry $\frac{\partial f_i}{\partial x_j}$.
\item
Given a matrix valued function
$F(x):=\{F_{ij}(x)\}:\R^N\to\R^{k\times N}$ ($\O\subset\R^N$) we
denote by $div\,F$ the $\R^k$-valued vector field defined by
$div\,F:=(l_1,\ldots,l_k)$ where
$l_i=\sum\limits_{j=1}^{N}\frac{\partial F_{ij}}{\partial x_j}$.
\item Given a
matrix valued function $F(x)=\big\{f_{ij}(x)\big\}(1\leq i\leq
p,\,1\leq j\leq q):\O\to\R^{p\times q}$ ($\O\subset\R^N$) we denote
by $DF$ or by $\nabla_x F$ the $p\times q\times N$ tensor with
$ijk$-th entry $\frac{\partial f_{ij}}{\partial x_k}$.
\item For every dimension $d$
we denote by $I$ the unit $d\times d$-matrix and by $O$ the null
$d\times d$-matrix.
\item Given a vector valued
measure $\mu=(\mu_1,\ldots,\mu_k)$ (where for any $1\leq j\leq k$,
$\mu_j$ is a finite signed measure) we denote by $\|\mu\|(E)$ its
total variation measure of the set $E$.
\item For any $\mu$-measurable function $f$, we define the product measure
$f\cdot\mu$ by: $f\cdot\mu(E)=\int_E f\,d\mu$, for every
$\mu$-measurable set $E$.
\item Throughout this paper we assume that
$\O\subset\R^N$ is an open set.
\end{itemize}

 In what follows we present some known results on BV-spaces. We rely mainly on the book \cite{amb}
 by Ambrosio, Fusco and Pallara. Other sources are the books
 by Hudjaev and Volpert~\cite{vol}, Giusti~\cite{giusti} and  Evans and Gariepy~\cite{evans}.
 We begin by introducing some notation.
For every $\vec\nu\in S^{N-1}$ (the unit sphere in $\R^N$) and $R>0$
we set
\begin{align}
B_{R}^+(x,\vec\nu)&=\{y\in\R^N\,:\,|y-x|<R,\,
(y-x)\cdot\vec\nu>0\}\,,\label{eq:B+}\\
B_{R}^-(x,\vec\nu)&=\{y\in\R^N:|y-x|<R,\,
(y-x)\cdot\vec\nu<0\}\,,\label{eq:B-}\\
H_+(x,\vec\nu)&=\{y\in\R^N:(y-x)\cdot\vec\nu>0\}\,,\label{HN+}\\
H_-(x,\vec\nu)&=\{y\in\R^N:(y-x)\cdot\vec\nu<0\}\label{HN-}\\
\intertext{and} H^0_{\vec \nu}&=\{ y\in\R^N\,:\, y\cdot\vec
\nu=0\}\label{HN}\,.
\end{align}
 Next we recall the definition of the space of functions with bounded
 variation. In what follows, ${\mathcal L}^N$ denotes the Lebesgue measure in $\R^N$.
\begin{definition}
Let $\Omega$ be a domain in $\R^N$ and let $f\in L^1(\Omega,\R^m)$.
We say that $f\in BV(\Omega,\R^m)$ if
\begin{equation*}
\int_\Omega|Df|:=\sup\bigg\{\int_\Omega\sum\limits_{k=1}^{m}f_k\Div\,\f_k\,d\mathcal{L}^N
:\;\f_k\in C^1_c(\Omega,\R^N)\;\forall
k,\,\sum\limits_{k=1}^{m}|\f_k(x)|^2\leq 1\;\forall
x\in\Omega\bigg\}
\end{equation*}
is finite. In this case we define the BV-norm of $f$ by
$\|f\|_{BV}:=\|f\|_{L^1}+\int_\Omega|D f|$.
\end{definition}
 We recall below some basic notions in Geometric Measure Theory (see
 \cite{amb}).
\begin{definition}\label{defjac889878}
Let $\Omega$ be a domain in $\R^N$. Consider a function
$f\in L^1_{loc}(\Omega,\R^m)$ and a point $x\in\Omega$.\\
i) We say that $x$ is a point of {\em approximate continuity} of $f$
if there exists $z\in\R^m$ such that
$$\lim\limits_{\rho\to 0^+}\frac{\int_{B_\rho(x)}|f(y)-z|\,dy}
{\mathcal{L}^N\big(B_\rho(x)\big)}=0\,.
$$
In this case $z$ is called an {\em approximate limit} of $f$ at $x$
and
 we denote $z$ by $\tilde{f}(x)$. The set of points of approximate continuity of
$f$ is denoted by $G_f$.\\
ii) We say that $x$ is an {\em approximate jump point} of $f$ if
there exist $a,b\in\R^m$ and $\vec\nu\in S^{N-1}$ such that $a\neq
b$ and \begin{equation}\label{aprplmin} \lim\limits_{\rho\to
0^+}\frac{\int_{B_\rho^+(x,\vec\nu)}|f(y)-a|\,dy}
{\mathcal{L}^N\big(B_\rho(x)\big)}=0,\quad \lim\limits_{\rho\to
0^+}\frac{\int_{B_\rho^-(x,\vec\nu)}|f(y)-b|\,dy}
{\mathcal{L}^N\big(B_\rho(x)\big)}=0.
\end{equation}
The triple $(a,b,\vec\nu)$, uniquely determined by \eqref{aprplmin}
up to a permutation of $(a,b)$ and a change of sign of $\vec\nu$, is
denoted by $(f^+(x),f^-(x),\vec\nu_f(x))$. We shall call
$\vec\nu_f(x)$ the {\em approximate jump vector} and we shall
sometimes write simply $\vec\nu(x)$ if the reference to the function
$f$ is clear. The set of approximate jump points is denoted by
$J_f$. A choice of $\vec\nu(x)$ for every $x\in J_f$ (which is
unique up to sign) determines an orientation of $J_f$. At a point of
approximate continuity $x$, we shall use the convention
$f^+(x)=f^-(x)=\tilde f(x)$.
\end{definition}
We recall the following results on BV-functions that we shall use in
the sequel. They are all taken from \cite{amb}. In all of them
$\Omega$ is a domain in $\R^N$ and $f$ belongs to $BV(\Omega,\R^m)$.
\begin{theorem}[Theorems 3.69 and 3.78 from \cite{amb}]\label{petTh}
  $ $ \\i) $\mathcal{H}^{N-1}$-almost every point in
$\Omega\setminus J_f$ is a point of approximate continuity of $f$.\\
ii) The set $J_f$  is a countably $\mathcal{H}^{N-1}$-rectifiable
Borel set, oriented by $\vec\nu(x)$. In other words, $J_f$ is
$\sigma$-finite with respect to $\mathcal{H}^{N-1}$, there exist
countably many $C^1$ hypersurfaces $\{S_k\}^{\infty}_{k=1}$ such
that
$\mathcal{H}^{N-1}\Big(J_f\setminus\bigcup\limits_{k=1}^{\infty}S_k\Big)=0$,
and for $\mathcal{H}^{N-1}$-almost every $x\in J_f\cap S_k$, the
approximate jump vector $\vec\nu(x)$ is  normal to $S_k$ at the
point $x$.\\iii) $\big[(f^+-f^-)\otimes\vec\nu_f\big](x)\in
L^1(J_f,d\mathcal{H}^{N-1})$.
\end{theorem}
\begin{theorem}[Theorems 3.92 and 3.78 from \cite{amb}]\label{vtTh}
The distributional gradient
 $D f$ can be decomposed
as a sum of three Borel regular finite matrix-valued measures on
$\Omega$,
\begin{equation*}
D f=D^a f+D^c f+D^j f
\end{equation*}
with
\begin{equation*}
 D^a f=(\nabla f)\,\mathcal{L}^N ~\text{ and }~ D^j f=(f^+-f^-)\otimes\vec\nu_f
\mathcal{H}^{N-1}\llcorner J_f\,.
\end{equation*}
$D^a$, $D^c$ and $D^j$ are called absolutely continuous part, Cantor
and jump part of $D f$, respectively, and  $\nabla f\in
L^1(\Omega,\R^{m\times N})$ is the approximate differential of $f$.
The three parts are mutually singular to each  other. Moreover we
have the
following properties:\\
i) The support of $D^cf$ is concentrated on a set of
$\mathcal{L}^N$-measure zero, but $(D^c f) (B)=0$ for any Borel set
$B\subset\Omega$ which is $\sigma$-finite with respect to
$\mathcal{H}^{N-1}$;\\ii) $[D^a f]\big(f^{-1}(H)\big)=0$ and $[D^c
f]\big(\tilde f^{-1}(H)\big)=0$ for every $H\subset\R^m$ satisfying
$\mathcal{H}^1(H)=0$.
\end{theorem}
\begin{theorem}[Volpert chain rule, Theorems 3.96 and 3.99 from \cite{amb}]\label{trTh}
Let $\Phi\in C^1(\R^m,\R^q)$ be a Lipschitz function satisfying
$\Phi(0)=0$ if $|\Omega|=\infty$. Then, $v(x)=(\Phi\circ f)(x)$
belongs to $BV(\Omega,\R^q)$ and we have
\begin{equation*}\begin{split}
D^a v = \nabla\Phi(f)\,\nabla f\,\mathcal{L}^N,\; D^c v =
\nabla\Phi(\tilde f)\,D^c f,\; D^j v =
\big[\Phi(f^+)-\Phi(f^-)\big]\otimes\vec\nu_f\,
\mathcal{H}^{N-1}\llcorner J_f\,.
\end{split}
\end{equation*}
\end{theorem}
We also recall that the trace operator $T$ is a continuous map from
$BV(\Omega)$, endowed with the strong topology (or more generally,
the topology induced by strict convergence), to
$L^1(\partial\Omega,{\mathcal H}^{N-1}\llcorner\partial\Omega)$,
provided that $\Omega$ has a bounded Lipschitz boundary (see
\cite[Theorems 3.87 and 3.88]{amb}).

\end{document}